
\documentclass[11pt]{amsart}

\usepackage{setspace} 
\onehalfspacing 

\usepackage{fullpage}

\usepackage{mathptmx} 
\usepackage[T1]{fontenc}
\usepackage{amsfonts}

\usepackage{mathtools}
\usepackage{graphicx}

\usepackage{braket} 

\pdfpagebox5 

\newtheorem{thm2}{Theorem}

\newtheorem{lem}[subsubsection]{Lemma}
\newtheorem{prop}[subsubsection]{Proposition}

\newtheorem{claim*}{Claim}

\newtheorem{rem}[subsubsection]{Remark}

\newcommand{\ve}{\varepsilon}
\newcommand{\wh}{\widehat}
\newcommand{\wt}{\widetilde}
\newcommand{\mc}{\mathcal}
\newcommand{\intr}{\text{int}\,}
\newcommand{\cl}{\text{cl}\,}
\newcommand{\fr}{\text{fr}\,}
\newcommand{\grp}[1]{\langle #1\rangle}

\newcommand{\ov}{\overline}

\newcommand{\bpr}{\begin{proof}}
\newcommand{\epr}{\end{proof}}

\newcommand{\mA}{\mathbb{A}}
\newcommand{\mD}{\mathbb{D}}
\newcommand{\mT}{\mathbb{T}}
\newcommand{\mZ}{\mathbb{Z}}
\newcommand{\mS}{\mathbb{S}}

\newcommand{\ben}{\begin{enumerate}}
\newcommand{\een}{\end{enumerate}}


\newcommand{\Figw}[4]{
\includegraphics[width=#1]{#2}
\caption{ #3 \label{#4} } }


\begin{document}

\title{The Kakimizu complex for genus one hyperbolic knots \linebreak[1] in the 3-sphere}%

\author[L. G. Valdez-S\'anchez]{Luis G. Valdez-S\'anchez}
\address{Department of Mathematical Sciences,
University of Texas at El Paso\\
El Paso, TX 79968, USA}
\email{lvsanchez@utep.edu}%

\subjclass[2020]{Primary 57K10; Secondary 57K30}%
\keywords{Hyperbolic knot, genus one knot, Seifert torus, Kakimizu complex.}%

\date{\today}

\begin{abstract}
The Kakimizu complex $MS(K)$ for a knot $K\subset\mS^3$ is the simplicial complex with vertices the isotopy classes of minimal genus Seifert surfaces in the exterior of $K$ and simplices any set of vertices with mutually disjoint representative surfaces.
 In this paper we determine the structure of the Kakimizu complex $MS(K)$ of genus one hyperbolic knots $K\subset\mS^3$. In contrast with the case of hyperbolic knots of higher genus, it is known that the dimension $d$ of $MS(K)$ is universally bounded by $4$, and we show that $MS(K)$ consists of a single $d$-simplex for $d=0,4$ and otherwise of at most two $d$-simplices which intersect in a common $(d-1)$-face. For the cases $1\leq d\leq 3$ we also construct infinitely many examples of such knots where $MS(K)$ consists of two $d$-simplices.
\end{abstract}

\maketitle

\section{Introduction}

Let $K$ be a knot in the 3-sphere $\mS^3$ with exterior $X_K=\mS^3\setminus \intr\,N(K)$, where $N(K)\subset\mS^3$ is regular neighborhood of $K$. The knot $K$ is the boundary of orientable, compact and connected surfaces embedded in $\mS^3$, called Seifert surfaces for the knot. Equivalently, there is a unique slope $J\subset\partial X_K$, the longitude of $K$, that bounds orientable compact surfaces in $X_K$ which correspond in a natural way to the Seifert surfaces in $\mS^3$ bounded by the knot.
The genus of the knot $K$, a topological invariant of $K$,  is then defined as the smallest genus of the Seifert surfaces bounded by the knot. 

The Kakimizu complex $MS(K)$ was defined in \cite{kakimizu3} for knots (and links) $K\subset\mS^3$ as the simplicial complex with vertices the equivalence isotopy classes of minimal genus Seifert surfaces for $K$ in $X_K$, such that any set of vertices with mutually disjoint representative surfaces comprise a simplex. For instance, it is well known that the figure eight knot bounds a unique Seifert torus and so its Kakimizu complex consists of a single 0-simplex.

For hyperbolic knots $K\subset\mS^3$ much is known about the complex $MS(K)$. It is a consequence of \cite{eisner1} that $MS(K)$ is a finite complex. It is also known that 
$MS(K)$ is a flag simplicial complex \cite{schultens1} which is 
connected \cite{schar6} and  contractible \cite{schultens2}.

For the family of hyperbolic knots $K\subset\mS^3$ of a fixed genus $g\geq 2$ no universal bound on the dimension of $MS(K)$ is known. Moreover, Y.\ Tsutsumi \cite{tsutsumi2} shows that for each genus $g\geq 2$ there are hyperbolic knots $K\subset\mS^3$ of genus $g$ such that the number of vertices of $MS(K)$, and hence of simplices, is arbitrarily large. 

In \cite{sakuma3} M.\ Sakuma and K.\ J.\ Schakleton provide a bound for the diameter of the (1-skeleton of) $MS(K)$, quadratic in the genus of the knot, and show that the diameter of $MS(K)$ is 1 or 2 for genus one knots, with an example that realizes the diameter of 2. These bounds on the diameter of $MS(K)$ do not however bound the number of top-dimensional simplices present in $MS(K)$.

In this paper we determine the structure of the Kakimizu complex for genus one hyperbolic knots $K\subset\mS^3$ and obtain a picture opposite to that of the case of genus $g\geq 2$.
Our main result is the following.

\begin{thm2}\label{main1}
If $K\subset\mS^3$ is a genus one hyperbolic knot and $d=\dim\,MS(K)$
then

\begin{enumerate}
\item
$0\leq d\leq 4$,

\item
$MS(K)$ consists of at most two $d$-dimensional simplices which intersect in a common $(d-1)$-face, and exactly one $d$-simplex if $d=0,4$,

\item
for each integer $1\leq d\leq 3$ 
there are infinitely many genus one hyperbolic knots $K\subset\mS^3$ such that $MS(K)$ consists of two $d$-simplices.
\end{enumerate}
\end{thm2}

A simplex of $MS(K)$ corresponds to a collection of mutually disjoint and nonparallel Seifert tori $\mT=T_1\sqcup\cdots\sqcup T_N\subset X_K$. We refer to $\mT$ as a {\it simplicial collection} of Seifert tori for short and assume that its components are labeled consecutively as they appear around $X_K$, as shown in Fig.~\ref{f21}. The collection $\mT$ is maximal if its number of components $|\mT|=N$ is largest among all possible simplicial collections of Seifert tori in $X_K$, in which case $\dim\,MS(K)=N-1$.
Components $T_i,T_{j}$ of $\mT$ then cobound a region $R_{i,j}$ in $X_K$ with boundary a surface of genus two that contains the longitude slope $J$ of $K$. 

It was proved in \cite{valdez14} that the exterior $X_K$ of a genus one hyperbolic knot in $\mS^3$ contains at most 5 mutually disjoint and nonparallel Seifert tori, which gives the bound $\dim\,MS(K)\leq 4$ in Theorem~\ref{main1}(1). This was achieved
by studying the properties of maximal simplicial collections $\mT$ of Seifert tori in $X_K$ with at least 5 components; all but at most one of the complementary regions $R_{i,i+1}\subset X_K$ of $\mT$ are then genus two handlebodies whose structure was determined in \cite{valdez14}. 

It is the low genus of the complementary handlebody regions of $\mT$ that makes it possible to establish Theorem~\ref{main1}(2) and construct the examples in Theorem~\ref{main1}(3). One of the difficulties here, which need not be handled in detail in \cite{valdez14}, is that some region $R_{i,i+1}\subset X_K$ in a maximal simplicial collection may not be a handlebody.

The paper is organized as follows. Most of Sections~\ref{prelim} and \ref{ann3} contain background material and extension of definitions from \cite{valdez14} adapted to the needs of the present paper. Specifically, in Section~\ref{prelim} we introduce the notation and some basic results that are used throughout the paper. The definition of a pair $(H,J)$ given in \cite{valdez14}, that $H$ is a genus two handlebody and $J\subset\partial H$ a separating circle which is nontrivial in $H$, is updated to include 3-manifolds other than handlebodies. 
Pairs of the form $(R_{i,j},J)$, where $J\subset\partial R_{i,j}$ is the longitudinal slope in $\partial X_K$, are then naturally produced by any maximal simplicial collection of Seifert tori $\mT\subset X_K$.
The structure of several types of handlebody pairs is also described in some detail.

In Section~\ref{stk} (see Lemma~\ref{new01c}) the structure of $MS(K)$ is shown  to depend on the presence of {\it annular pairs} $(R_{i,j},J)$: pairs such that $R_{i,j}$ contains an incompressible spanning annulus with one boundary component in $T_i$ and the other in $T_j$. 

The properties of general annular pairs are established in Section~\ref{ann3}, while the properties of the annular pairs in $X_K\subset\mS^3$ for $K$ a hyperbolic knot are developed further in Section~\ref{min5}. Two major restrictions arise once we restrict our attention to annular pairs $(R_{i,j},J)$ in $\mS^3$. The first one concerns the case where $R_{i,j}$ is not a genus two handlebody. In this case $R_{i,j}$ must be an atoroidal, irreducible and boundary irreducible manifold which is the complement in $\mS^3$ of a genus two handlebody $V\subset\mS^3$. In the context of \cite{ozawa01}, $V$ is a nontrivial genus two handlebody knot in $\mS^3$ and as such it has a restricted structure outlined in Lemma~\ref{koz1-b}. The second one, the content of Proposition~\ref{annmin}, restricts even further the structure of the annular pair $(R_{i,j},J)$ so that $R_{i,j}=R_{i,i+1}$ or $R_{i,j}=R_{i,i+2}$; that is, $R_{i,j}$ may contain at most one Seifert torus not parallel to $T_i$ or $T_j$. It is also established in Proposition~\ref{annmin} that $MS(K)$ has more than one top-dimensional simplex iff for each maximal simplicial collection $\mT\subset X_K$ there is at least one annular pair of the form $(R_{i,i+2},J)$.

We call an annular pair of the form $(R_{i,i+2},J)$ an {\it exchange pair}, given that 
there is a Seifert torus $T'_{i+1}$ in $R_{i,i+2}$ that can be exchanged with $T_{i+1}\subset R_{i,i+2}$ to construct a new simplicial collection $\mT'$ not isotopic to $\mT$ in $X_K$. The properties of this {\it exchange trick} are established in Sections~\ref{52} and \ref{trick}.

In Section~\ref{classif} we show that any maximal simplicial collection $\mT\subset X_K$ of size $5$ does not produce exchange pairs and hence that such a collection $\mT$ is unique up to isotopy. Maximal simplicial collections $\mT\subset X_K$ of size $2\leq |\mT|\leq 4$ are handled in Section~\ref{maxcoll}, where it is shown in Proposition~\ref{mainp} that $\mT$ produces at most one exchange pair and hence that, up to isotopy, there are most two maximal simplicial collections of Seifert tori in $X_K$.
The results so far make possible the proof of parts (1) and (2) of Theorem~\ref{main1} by the end of Section~\ref{maxcoll}.

Section~\ref{examples} is devoted to the construction of examples of genus one hyperbolic knots in $\mS^3$ with maximal simplicial collections satisfying various conditions.
The extreme examples in \S\ref{82} where $|\mT|=4$ and $MS(K)$ consists of two 3-simplices are particularly challenging to construct since each of the 4 complementary regions $R_{i,i+1}$ of $\mT$ must be genus two handlebodies. 
These examples establish Theorem~\ref{main1}(3) in the case $\dim\,MS(K)=3$.

For the cases $\dim\,MS(K)=1,2$ in Theorem~\ref{main1}(3) there are two possible types of examples, depending on whether the exchange region $R_{i,i+2}$ is a handlebody or not. These examples  are constructed in Sections~\ref{hk23} and \ref{hk23-2}.

For the examples in Section~\ref{hk23-2} where the exchange region $R_{i,i+2}$ is not a handlebody we use the construction in \cite{ozawa01} of genus two nontrivial handlebody knots in $\mS^3$ whose exteriors contain essential annuli and of
excellent 1-submanifolds of 3-manifolds in \cite{myers1}. Separating the cases $\dim\,MS(K)=1$ and $\dim\,MS(K)=2$ requires the use of another special type of handlebody pair, the general {\it basic pair}, whose classification is discussed in Section~\ref{FF}.
The proof of Theorem~\ref{main1}(3) is given at the end of
Section~\ref{hk23-2}.

Examples of knots $K\subset\mS^3$ with a maximal simplicial collection of size 4 as above are not easy to render in a regular projection. However, making use of basic hyperbolic pairs, in Section~\ref{ht4} we construct an infinite family of genus one hyperbolic knots $K(p,q)\subset\mS^3$ with $MS(K)$ consisting of a single $3$-simplex, all of which have the simple projections shown in Fig.~\ref{k04-3}. The smallest member $K(2,2)$ of this family is represented in Fig.~\ref{k04-2e} with a crossing number of 141 along with the structure of the pairs in their exteriors $X_K$ (see Section \S\ref{hpairs} for definitions).

\begin{figure}
\Figw{4in}{m18b2}{A genus one hyperbolic knot $K=K(2,2)\subset\mS^3$ with  $MS(K)$ a single 3-simplex.}{k04-2e}
\end{figure}


In Section~\ref{hk23} an infinite family of genus one hyperbolic knots $K=K(-1,n,2)$, $|n|\geq 2$, with at most $14+6|n|$ crossings is constructed such that $MS(K)$ consists of two 2-dimensional simplices. The structure of their exteriors is represented in Fig.~\ref{m19} (where $\boxed{n}$ stands for $n$ full twists).

\begin{figure}
\Figw{5.5in}{m19b}{The genus one hyperbolic knot $K=K(-1,n,2)\subset\mS^3$ with  $MS(K)$ consisting of two 2-simplices.}{m19}
\end{figure}

Examples of genus one satellite knots $K\subset\mS^3$ for which the dimension of $MS(K)$ is arbitrarily large, showing that the restriction to hyperbolic knots is necessary, can be explicitly constructed as follows: Let $A_1,A_2\subset\mS^3$ be trivial unlinked and  untwisted annuli connected by a band $B$ whose core follows the pattern of a connected sum of nontrivial knots $K_1\,\#\cdots\#\,K_n$, $n\geq 2$, with $K$ the boundary component of the resulting pair of pants indicated in Fig.~\ref{k30}. The knot $K$ is then a nontrivial zero winding number satellite of each knot $K_i$.
Attaching an annulus to the free boundary components of $A_1$ and $A_2$ that swallows the factors $K_1\,\#\cdots\#\,K_s$ and follows the factors $K_{s+1}\,\#\cdots\#\,K_n$ produces a Seifert torus $T_s\subset X_K$. It is not hard to see that the Seifert tori $T_1,\dots,T_n\subset X_K$ can be constructed so as to be mutually disjoint and hence nonparallel in $X_K$.

\begin{figure}
\Figw{3.5in}{k30}{The satellite knot $K\subset\mS^3$ and the swallow-follow Seifert torus $T_s\subset X_K$.}{k30}
\end{figure}

On the structure of maximal simplicial collections $\mT\subset X_K$ some questions remain unresolved. 
For instance, an infinite family of genus one hyperbolic knots $K\subset\mS^3$ with $|\mT|=5$ was constructed in \cite{valdez14}, all of whose pairs $(R_{i,i+1},J)$ are of a type called {\it simple} (see Section~\ref{hpairs}). 
It is not known if hyperbolic knots with a simplicial collection of size $|\mT|=5$ can be constructed where at least one pair $(R_{i,i+1},J)$ is not simple. 

More specifically, in the case $|\mT|=5$ there are two options for a nonsimple pair $(R_{i,i+1},J)$: a {\it primitive pair} or a {\it hyperbolic pair} (see Lemma~\ref{annp}). 
Realizing the case $|\mT|=5$ where one of the pairs $(R_{i,i+1},J)$ is {\it primitive} could produce an example of a hyperbolic knot in $\mS^3$ with a nonintegral Seifert surgery. One example realizing such a primitive pair is constructed in Proposition~\ref{K4} (see Fig.~\ref{k15b-3}, bottom) but with a maximal simplicial collection of size $|\mT|=3$.

We want to thank J.\ Schultens and M.\ Eudave-Mu\~noz, organizers of the 2021 BIRS-CMO virtual workshop {\it Knots, surfaces and 3-manifolds}, and K. Baker, organizer of the 2023 conference {\it Low Dimensional Topology and Circle-valued Morse Functions} at UMiami-IMSAC, where preliminary versions of some of the results in this paper were presented. This research was partially funded by a generous gift from Microsoft Co.\ (2022-2023).

\section{Preliminaries}\label{prelim}

We work in the PL category. For definitions of basic concepts see \cite{hempel} or \cite{jaco}. 
We will make use of many of the definitions and results in \cite{valdez14}, some of which are reproduced throughout the paper.

Unless otherwise stated, submanifolds are assumed to be properly embedded.  For $A$ a submanifold of $B$, $\cl(A)$, $\intr(A)$ and $\fr(A)=\cl(\partial A\setminus\partial B)$ denote the closure, interior and frontier  of $A$ in $B$, respectively.

If $A$ is a finite set or a manifold then $|A|$ denotes the cardinality or the number of components of $A$, respectively.

The isotopy class of a circle in a surface is the {\it slope} of the circle. The circle is {\it nontrivial} if it does not bound a disk in the surface. 

Any two circles $\alpha,\beta$ in a surface can be isotoped so as to intersect transversely and minimally, in which case $|\alpha\cap\beta|_{\text{min}}$ denotes their minimal number of intersections. 

The algebraic intersection number between 1-submanifolds $\alpha,\beta$ of a surface will be denoted by $\alpha\cdot\beta\in\mZ$.

Let $M$ be a 3 manifold and $F\subset M$ a surface. The components of $\partial F\subset\partial M$ are sometimes indexed as  $\partial F=\partial_1F\sqcup\partial_2F\sqcup\cdots$.

The manifold  obtained by cutting $M$ along the surface $F\subset M$ is denoted by $M|F=\cl[M\setminus N(F)]$.
Two surfaces $F,G$ in $M$ are {\it parallel} if they cobound a product region in $M$ of the form $F\times I$, where $F\times\{0\}$ corresponds to $F$ and $F\times\{1\}$ to $G$. 

Observe that if two properly embedded surfaces $F$ and $G$ in $M$ intersect transversely and $|\partial F\cap\partial G|$ is not as small as possible then it is possible to isotope $F$ and $G$ near $\partial M$ to reduce $|\partial F\cap\partial G|$ without increasing $|F\cap G|$. Hence we will say that $F$ and $G$ intersect {\it minimally} if they intersect transversely so that the pair $(|\partial F\cap\partial G|,|F\cap G|)$ is smallest in the lexicographic order, in which case $|F\cap G|_{\text{min}}$ denotes the number of components in the such minimal intersection.

For a 1-submanifold $\Gamma\subset\partial M$ we let $M(\Gamma)$ denote the 3-manifold obtained by adding 2-handles to $\partial M$ along the components of $\Gamma$ and capping off any resulting 2-sphere boundary components with 3-balls. For a surface $F\subset M$ we denote by $\wh{F}\subset M(\partial F)$ the closed surface obtained by capping off $\partial F$ with disks.

Let $\gamma\subset\partial M$ be a circle which is nontrivial in $M$. An annulus $A\subset M$ is a {\it companion annulus} for $\gamma$ if the circles $\partial A$ cobound an annular regular neighborhood of $\gamma$ in $\partial M$ and $A$ is not parallel to $\partial M$. The following result on the properties of companion annuli is established in \cite[Lemma 5.1]{valdez7}.

\begin{lem}[\cite{valdez7}]\label{compa2}
Let $M$ be an irreducible and atoroidal 3-manifold with boundary. If a circle $\gamma\subset \partial M$ has a companion annulus $A\subset M$ then
\begin{enumerate}
\item
$A$ is unique up to isotopy,

\item
$A$ cobounds with $\partial M$ a companion solid torus $V$ around which $A$ runs $p\geq 2$ times.
\hfill\qed
\end{enumerate}
\end{lem}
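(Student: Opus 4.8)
The plan is to analyze the torus obtained by capping the companion annulus $A$ with an annular neighborhood of $\gamma$, and then to exploit the hypotheses that $M$ is irreducible and atoroidal. Let $A'\subset\partial M$ be the annular neighborhood of $\gamma$ with $\partial A'=\partial A$, and let $\wh T$ be the torus $A\cup A'$ pushed slightly into $\intr M$. Since the core of $A$ is isotopic to $\gamma$, which is nontrivial in $M$, irreducibility and Dehn's lemma give that $\gamma$ is not null-homotopic, so $A$ is incompressible. As $M$ is atoroidal, $\wh T$ is either compressible or boundary-parallel.

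First I would rule out the boundary-parallel case using the companion hypothesis that $A$ is not parallel to $\partial M$: if $\wh T$ were parallel to a component $F_0$ of $\partial M$, the product region would supply an ambient isotopy carrying $\wh T$ onto $F_0$, hence taking the sub-annulus $A\subset\wh T$ to a sub-annulus of $F_0\subset\partial M$ and making $A$ boundary-parallel, a contradiction. Therefore $\wh T$ compresses along some disk $D$. Since $M$ is irreducible, surgering $\wh T$ along $D$ yields a $2$-sphere bounding a ball, and reversing the surgery exhibits $\wh T$ as the boundary of a solid torus $V\subset M$; absorbing the product collar over $A'$ into $V$, I can arrange $\partial V=A\cup A''$ with $A''\subset\partial M$, so that $V$ is the desired companion solid torus. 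The slope $\partial D$ cannot be isotopic on $\wh T$ to the core $\gamma$, for otherwise $\gamma$ would bound a disk in $M$; thus $A$ is an incompressible annulus in $V$ which, by hypothesis, is not boundary-parallel. The classification of essential annuli in a solid torus then forces the core of $A$ to wind $p\geq 2$ times around the core of $V$ (a winding of $p=1$ would make $A$ boundary-parallel), giving part (2).

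For the uniqueness in part (1), given two companion annuli $A_1,A_2$ I would first isotope their boundaries off each other in $\partial M$ (all four circles being parallel to $\gamma$) and then place $A_1,A_2$ in minimal position. By incompressibility and irreducibility every circle of $A_1\cap A_2$ is essential in both annuli, hence core-parallel and isotopic to $\gamma$; an innermost/outermost exchange argument removes all such circles, so minimality forces $A_1\cap A_2=\emptyset$. Finally, using the companion solid torus $V_1$ of $A_1$, the disjoint annulus $A_2$ lies either inside $V_1$ or in its exterior: either $A_2$ is already parallel to $A_1$, or it is an essential annulus in the solid torus $V_1$, and the uniqueness of essential annuli in a solid torus yields $A_2$ isotopic to $A_1$.

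The main obstacle I anticipate is the uniqueness in part (1), specifically the reduction to the disjoint case and the exclusion of two disjoint, non-parallel companion solid tori: the exchange reducing $|A_1\cap A_2|$ and the concluding step must invoke atoroidality carefully to prevent the cores of two distinct solid tori from independently absorbing the single curve $\gamma$. A secondary, more bookkeeping difficulty is identifying the correct side of $\wh T$ on which $V$ lies and verifying that the collar over $A'$ can be absorbed so that $\partial V$ meets $\partial M$ in exactly one annulus.
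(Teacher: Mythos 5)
The paper itself gives no proof of this lemma (it is quoted from \cite[Lemma 5.1]{valdez7} with a \emph{qed}), so I am comparing your sketch against the standard ``cap $A$ off to a torus'' argument that it follows in outline. The first genuine gap is your exclusion of the boundary-parallel case. The annulus $A$ is \emph{not} a sub-annulus of $\wh T$ --- $\wh T$ only contains a parallel push-off of $A$ --- and if the product region between $\wh T$ and a boundary component $F_0$ lies on the side of $\wh T$ away from $A$ and $A'$, the isotopy carrying $\wh T$ onto $F_0$ moves nothing relevant and produces no contradiction. Worse, this configuration actually occurs under the hypotheses as stated: in the cable space $M=\mA^2(p)$, which is irreducible and atoroidal and has two torus boundary components, take $\gamma$ to be the cabling slope on the boundary torus coming from the cable knot and $A$ the cabling annulus. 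Then $A$ is a genuine companion annulus (the companion solid torus is the fibered neighborhood of the exceptional fiber), yet $\wh T$ is \emph{incompressible and parallel to the other boundary torus}. Your proof ``rules out'' a case that happens, and in that case the compressing disk your part (2) needs does not exist. What makes the boundary-parallel case harmless in this paper's applications is that there $\partial M$ is a connected genus two surface, so a torus cannot be parallel to it for genus reasons; and when $\partial M$ is a single torus the correct argument is that $A$ would then lie inside the product region $W\cong T^2\times I$ with both boundary circles on $W\cap\partial M$, forcing $A$ to be boundary-parallel. Your argument never uses the connectivity of $\partial M$, and without some hypothesis of this kind the step cannot be repaired.

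The second gap is in part (1): after your first cleanup every circle of $A_1\cap A_2$ is parallel to the cores of both annuli, and such \emph{essential} intersection circles cannot be removed by innermost/outermost exchanges --- those only cancel inessential intersections. This is exactly the crux you yourself flag as the main obstacle, but your proof then simply asserts that ``minimality forces $A_1\cap A_2=\emptyset$.'' The paper's own treatment of the analogous situation, Lemma~\ref{annp}(1) for two spanning annuli, shows what is actually required: a nonempty essential intersection is not cancelled but is instead used to build a region $N(A_1\cup V_1\cup V_2)$ which is a Seifert fiber space over the disk with two exceptional fibers, whose frontier torus is then shown to violate atoroidality. An argument of that type must replace your exchange step. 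Two further (smaller, but real) slips: a solid torus contains \emph{no} essential annuli, so ``the classification/uniqueness of essential annuli in a solid torus'' is not the fact you want --- what you need is that every incompressible annulus in a solid torus is boundary-parallel, which is also what yields $p\geq 2$ in part (2); and in the disjoint case, parallelism of $A_2$ into $\partial V_1$ must be pushed to the $A_1$-side (parallelism into the $\partial M$-annulus contradicts $A_2$ being a companion annulus), while the case where $A_2$ lies \emph{outside} $V_1$ is not ``already parallel to $A_1$'' by fiat and needs its own argument.
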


We denote by $F(a,b,\dots)$ a Seifert fiber space over the surface $F$ with singular fibers of indices $a,b,\dots\geq 1$. Typically the surface $F$ will be a disk $\mD^2$, an annulus $\mA^2$ or a 2-sphere $\mS^2$.
$L_p\neq\mS^3,\mS^1\times\mS^2$ stands for a lens space with finite fundamental group of order $p\geq 2$

\subsection{Genus one hyperbolic knots}

With very few exceptions, for the rest of this paper we restrict our attention to hyperbolic knots in $\mS^3$.
For notation, let $K\subset\mS^3$ be a genus one hyperbolic knot and $J\subset\partial X_K=\partial N(K)$ the longitudinal slope of $K$. 

Recall that by a {\it simplicial collection of Seifert tori in $X_K$} we mean a collection $\mT\subset X_K$ of mutually disjoint and nonparallel Seifert tori in $X_K$. 
The collection $\mT$ is {\it maximal} if its number of components $|\mT|$ is as large as possible. By \cite{valdez14} we have that $|\mT|\leq 5$, with $|\mT|=5$ being the largest attainable bound.

The components of $\mT$ are labeled consecutively following their order of appearance around $\partial X_K$ as $T_1,T_2,\dots,T_N$, $N=|\mT|$. For $|\mT|>2$ we denote by $R_{i,i+1}$ the region in $X_K$ cobounded by $T_i$ and $T_{i+1}$ which contains no components of $\mT$ other than $T_i,T_{i+1}$; if $|\mT|=2$ then a region cobounded by $T_1\sqcup T_2$ is chosen as $R_{1,2}$, and if $\mT=T_1$ then we define $R_{1,1}$ as the complement of a product region $\cl(X_K\setminus T_1\times[-1,1])$. Here we interpret a label $i$ modulo $N=|\mT|$, so $N+1=1$ etc. 
Notice that $\partial T_i$ and $\partial T_{i+1}$ cobound the annulus $\partial R_{i,i+1}\cap\partial X_K$ whose core has slope $J$ in $\partial X_K$.

More generally we set $R_{i,i}=\cl(X_K\setminus T_i\times [-1,1]\,)$ and $R_{i,j}=R_{i,i+1}\cup\cdots\cup R_{j-1,j}$. A simplicial collection $\mT\subset X_K$ of size $|\mT|=5$ is represented in Fig.~\ref{f21}. 

\begin{figure}
\Figw{2.5in}{f21-2}{The knot $K\subset\mS^3$ and a simplicial collection $\mT=T_1\sqcup\cdots\sqcup T_5\subset X_K$.}{f21}
\end{figure}

The next result summarizes the general properties of the regions $R_{i,j}\subset X_K$, which follow from \cite[Lemmas 3.7 and 4.1]{valdez14}.

\begin{lem}[\cite{valdez14}]\label{genprop}
Let $K\subset\mS^3$ be a hyperbolic knot with a simplicial collection of Seifert tori $\mT\subset X_K$.
\begin{enumerate}
\item[(P1):] The manifold $R_{i,j}$ is either a genus two handlebody or an irreducible, boundary irreducible, atoroidal 3-manifold, 

\item[(P2):] if $R_{i,j}$ is not a handlebody then $R_{j,i}$ is a handlebody,

\item[(P3):] if $R_{k,\ell}\subseteq R_{i,j}$ and $R_{i,j}$ is a handlebody then $R_{k,\ell}$ is a handlebody,

\item[(P4):] at most one region $R_{i,i+i}$ is not a handlebody, and if such a region is present then $|\mT|\leq 4$.
\hfill\qed
\end{enumerate}
\end{lem}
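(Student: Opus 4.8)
The plan is to dispatch the elementary pieces (P1), (P3), and the first half of (P4) by standard incompressible-surface and free-group arguments, and to isolate (P2) together with the bound $|\mT|\leq 4$ as the two statements that genuinely require the position of $R_{i,j}$ in $\mS^3$. Throughout I use that each $T_k$, being a minimal genus Seifert surface, is an incompressible once-punctured torus, and that $X_K$ is irreducible, atoroidal and boundary-irreducible. An Euler characteristic count gives $\chi(\partial R_{i,j})=\chi(T_i)+\chi(T_j)+\chi(A)=-2$, so $\partial R_{i,j}=T_i\cup A\cup T_j$ is a closed genus two surface, where $A=\partial R_{i,j}\cap\partial X_K$ is a longitudinal annulus. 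For (P1) I would first inherit irreducibility: cutting the irreducible $X_K$ along the two-sided incompressible $T_k$ leaves each piece irreducible. For atoroidality, an incompressible torus $F\subset R_{i,j}$ would be incompressible in $X_K$, hence $\partial$-parallel, bounding a collar $W\cong\partial X_K\times I$ of $\partial X_K$; but then $T_i\subset\partial R_{i,j}$, being disjoint from $F$ yet meeting $\partial X_K$ along $\partial T_i$, would be trapped in $W$, which is impossible since $\pi_1 T_i$ is free of rank two while $\pi_1 W\cong\mZ^2$. Finally the dichotomy is the claim that a compressible $\partial R_{i,j}$ forces a handlebody: one compression yields a torus which, not being parallel to the genus two $\partial R_{i,j}$, must by atoroidality compress again, and iterating together with irreducibility exhibits $R_{i,j}$ as a union of balls and $1$-handles.

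Property (P3) I would prove algebraically, which is cleaner than a disk-chasing argument. If $R_{i,j}$ is a handlebody then $\pi_1 R_{i,j}$ is free; the subregion $R_{k,\ell}$ is a component of $R_{i,j}$ cut along the incompressible two-sided surfaces $T_k,T_\ell$, so the inclusion is $\pi_1$-injective and $\pi_1 R_{k,\ell}$ is a subgroup of a free group, hence free. By (P1), $R_{k,\ell}$ is irreducible with nonempty boundary, and a compact orientable irreducible $3$-manifold with nonempty boundary and free fundamental group is a handlebody (compress any positive genus boundary component by the Loop Theorem and induct, the base case being a ball). The first half of (P4) then follows formally from (P2) and (P3): if $R_{a,a+1}$ and $R_{b,b+1}$ with $a\neq b$ were both non-handlebodies, (P2) would make $R_{a+1,a}$ a handlebody while $R_{b,b+1}\subseteq R_{a+1,a}$, so (P3) would force $R_{b,b+1}$ to be a handlebody, a contradiction.

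This leaves (P2) and the bound $|\mT|\leq 4$, which is where I expect the real difficulty, since nothing above prevents $R_{j,i}$ from being boundary-irreducible. For (P2) I would pass to the closed genus two surface $\partial R_{i,j}\subset\mS^3$, separating $\mS^3$ into $R_{i,j}$ and $V=\cl(\mS^3\setminus R_{i,j})=R_{j,i}\cup N(K)$. When $R_{i,j}$ is a non-handlebody its boundary is incompressible on the $R_{i,j}$ side, and since a closed surface in $\mS^3$ cannot be incompressible on both sides it compresses into $V$; the substantive point is then that $R_{i,j}$ is the exterior of a nontrivial genus two handlebody knot, i.e.\ that $V$ is itself a genus two handlebody, after which $R_{j,i}=\cl(V\setminus N(K))$ is recovered as a handlebody from the way $N(K)$ sits in $V$.

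The main obstacle is exactly these last two assertions. Showing that the compressible complement $V$ is a genuine handlebody, rather than some irreducible manifold with compressible genus two boundary, is not formal: it is the point where the restricted structure of nontrivial genus two handlebody knots in $\mS^3$ (Lemma~\ref{koz1-b}, following \cite{ozawa01}) is needed. The same structure, together with the fine count of how many mutually disjoint, non-parallel Seifert tori can be arranged around such a region, is what yields $|\mT|\leq 4$ once a non-handlebody region is present, and this is the heart of the argument in \cite{valdez14}. I would therefore expect the bulk of the work to lie in this handlebody-knot analysis, with (P1), (P3) and the first half of (P4) serving as the routine scaffolding.
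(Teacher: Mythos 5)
Your handling of (P1), (P3), and the ``at most one'' clause of (P4) is sound and essentially complete: irreducibility descends to complementary regions of incompressible surfaces, your collar argument (a rank two free group cannot inject into $\mZ^2$) correctly rules out incompressible tori in $R_{i,j}$, the compression dichotomy then follows from atoroidality plus irreducibility, the recognition theorem (irreducible, free $\pi_1$, nonempty boundary $\Rightarrow$ handlebody) gives (P3), and the deduction of ``at most one non-handlebody region'' from (P2) and (P3) is formal. For calibration: the paper itself offers no proof whatsoever of this lemma --- it is imported with a citation to \cite[Lemmas 3.7 and 4.1]{valdez14} --- so the only question is whether your blind attempt establishes the statement independently, and it does not: (P2) and the bound $|\mT|\leq 4$ are exactly the clauses you defer to the literature, and they are where the content lies.

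Two concrete points about that gap. First, the ``substantive point'' you isolate --- that $V=\cl(\mS^3\setminus R_{i,j})$ is a genus two handlebody whenever $R_{i,j}$ is boundary-irreducible --- genuinely cannot follow from the general position of a genus two surface in $\mS^3$: take a nontrivial knot $L$ and an arc $\tau\subset N(L)$ with excellent exterior as in \cite{myers1}; then the closed genus two surface $\partial(X_L\cup N(\tau))$ has one side $\cl[N(L)\setminus N(\tau)]$ boundary-irreducible while the other side $X_L\cup N(\tau)$ is compressible but is \emph{not} a handlebody, since its fundamental group $\pi_1(X_L)\ast\mZ$ is not free. So the proof of (P2) must use the Seifert-surface structure of $T_i,T_j$ (in \cite{valdez14} this is where Gabai's theorem \cite{gabai03} and the surrounding analysis enter), and your pointer to Lemma~\ref{koz1-b} as the missing ingredient cannot serve: within this paper the proof of Lemma~\ref{koz1-b} invokes Lemma~\ref{genprop} (P1) and (P2), so the appeal is circular, and moreover Lemma~\ref{koz1-b} assumes an annular pair, a hypothesis unavailable here. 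Second, in the opposite direction, the final step you wave at (``$R_{j,i}$ is recovered from the way $N(K)$ sits in $V$'') is actually fine and you could have closed it with tools you already used: $N(K)$ meets $\partial V$ in a longitudinal annulus, so $V$ deformation retracts onto $R_{j,i}$, whence $\pi_1(R_{j,i})\cong\pi_1(V)$ would be free, and your own (P3) recognition argument then makes $R_{j,i}$ a handlebody. Thus your proposal stands or falls with the unproved claim that $V$ is a handlebody, together with the unproved bound $|\mT|\leq 4$ (which in \cite{valdez14} comes from a Gabai meridional planar surface analysis of the kind reproduced in Lemma~\ref{f1}); both remain open in your write-up.
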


In \cite{valdez14} a pair $(H,J)$ consists of  a genus two handlebody $H$ and a separating circle $J\subset\partial H$ which is nontrivial in $H$; they were used to model the handlebody regions $R_{i,j}$ produced by a simplicial collection $\mT\subset X_K$.
By \cite[Lemma 4.3]{valdez14}, if $|\mT|= 5$ then all regions $R_{i,i+1}$ are genus two handlebodies, but in the cases $|\mT|\leq 4$ some region $R_{i,i+1}$ may not be a handlebody. In the next section we update this definition of a pair appropriately so as to be able to deal with nonhandlebody regions $R_{i,j}$.

\subsection{Pairs}
A {\it pair} $(H,J)$ consists of an irreducible, atoroidal, connected 3-manifold $H$ with boundary a genus two surface and $J\subset \partial H$ a separating circle which in $H$ is nontrivial and has no companion annulus.

In the pair $(H,J)$ the circle $J$ separates $\partial H$ into two once-punctured tori $T_1,T_2$ such that $\partial H=T_1\cup_{J}T_2$, each of which is necessarily incompressible in $H$.

For convenience, a once-punctured torus in $H$ with boundary slope $J$ will be called a {\it $J$-torus}.

A pair $(H,J)$ is {\it minimal} if any $J$-torus in $H$ is parallel into $T_1$ or $T_2$.

The next result shows that handlebodies and atoroidal regions $R_{i,j}\subset X_K$ for arbitrary genus one knots $K\subset\mS^3$ satisfy this more general definition of pair.

\begin{lem}\label{Rpair}
\begin{enumerate}
\item
If $(H,J)$ is a pair then
\begin{enumerate}
\item
$H$ is either boundary irreducible or a genus two handlebody.

\item
If $H$ is a handlebody then $(H,J)$ is a pair for any nontrivial separating circle $J\subset\partial H$.
\end{enumerate}

\item
If $K\subset\mS^3$ is a genus one knot and $\mT=T_1\sqcup\cdots\sqcup T_N\subset X_K$ is a simplicial collection of Seifert tori such that the region $R_{i,j}$ is atoroidal, as is the case when $K$ is hyperbolic, then $(R_{i,j},J)$ is a pair.
\end{enumerate}
\end{lem}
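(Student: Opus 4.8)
The plan is to prove the three claims (1a), (1b), and (2) in sequence, since (2) will rely on the general structural facts about pairs established in (1). The whole argument turns on the definition of a pair: $H$ is irreducible, atoroidal, and connected with genus two boundary, and $J\subset\partial H$ is a nontrivial separating circle with no companion annulus. Throughout I will use that $J$ splits $\partial H$ into two once-punctured tori $T_1,T_2$, each incompressible in $H$ (as already noted in the excerpt), so every $J$-torus is incompressible.

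For part (1a), I would argue by contradiction: suppose $H$ is boundary reducible, so some essential compressing disk $D\subset H$ exists with $\partial D\subset\partial H$ nontrivial in $\partial H$. The first step is to isotope $\partial D$ to meet $J$ minimally and consider the two cases according to whether $\partial D$ meets $J$ or is disjoint from it. If $\partial D$ is disjoint from $J$, then $\partial D$ lies in one of the once-punctured tori $T_i$; since $T_i$ is incompressible in $H$, $\partial D$ must bound a disk in $T_i$, contradicting that $\partial D$ is essential in $\partial H$. Hence $\partial D$ must cross $J$. Here the key move is to use the compressing disk to cut $\partial H$ and analyze the resulting picture: compressing $\partial H$ along $D$ reduces the genus, and since $\partial H$ has genus two, after the compression we obtain either a torus or two tori. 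The plan is to push this analysis, together with irreducibility of $H$, to conclude that $H$ must in fact be a handlebody of genus two whenever it is boundary reducible — this is the standard fact that a boundary-reducible irreducible manifold with a genus two boundary, once one tracks the compressions, is a handlebody (one compression drops to genus one, the incompressibility of the remaining torus boundary forces it to bound a solid torus by irreducibility and atoroidality, giving a handlebody structure). I expect this is the most delicate step, since I must rule out the manifold being a solid-torus-with-handle that is knotted in a way creating an essential torus; atoroidality is exactly what excludes this.

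For part (1b), the converse direction, I would start with a genus two handlebody $H$ and an arbitrary nontrivial separating circle $J\subset\partial H$, and verify each condition in the definition of a pair. Irreducibility and atoroidality of a handlebody are standard, and connectivity and the genus two boundary are given. The substance is to show $J$ is nontrivial in $H$ and has no companion annulus. Since $J$ is separating and nontrivial in $\partial H$, and $H$ is a handlebody, I would show nontriviality in $H$ directly (a separating essential curve on the boundary of a handlebody cannot bound a disk unless it is inessential on the boundary). For the companion annulus condition, the plan is to invoke Lemma \ref{compa2}: were a companion annulus present, it would run $p\geq 2$ times around a companion solid torus, forcing $J$ to be a nontrivial cable slope, which is incompatible with $J$ being separating and the handlebody structure; I would need to check this incompatibility carefully, as it is where the separating hypothesis on $J$ does genuine work.

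For part (2), I would verify that $(R_{i,j},J)$ meets the definition of a pair when $R_{i,j}$ is atoroidal. The properties (P1) and (P4) of Lemma \ref{genprop} give that $R_{i,j}$ is either a genus two handlebody or an irreducible, boundary irreducible, atoroidal manifold; in either case irreducibility, atoroidality, connectivity, and genus two boundary hold, and $J$ is the separating longitudinal slope, which is nontrivial since the $J$-tori are the incompressible once-punctured Seifert tori $T_i,T_j$. The only remaining point is that $J$ has no companion annulus in $R_{i,j}$: such an annulus would, by Lemma \ref{compa2}, produce a companion solid torus and force $J$ to be a cable of its core, but $J$ is the preferred longitude of a genus one knot $K$, and a companion annulus for $J$ in $R_{i,j}\subset X_K$ would contradict the hyperbolicity (atoroidality and anannularity) of $X_K$ — this is where the hypothesis that $K$ is hyperbolic, or more generally that $R_{i,j}$ is atoroidal, is essential. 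I expect the companion annulus exclusion in both (1b) and (2) to be the main recurring obstacle, and I would handle it uniformly by appealing to Lemma \ref{compa2} and the longitudinal (hence noncabled) nature of $J$.
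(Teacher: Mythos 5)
The main gap is in part (2). Your contradiction is that a companion annulus for $J$ ``would contradict the hyperbolicity (atoroidality and anannularity) of $X_K$,'' but hyperbolicity of $K$ is not a hypothesis of (2): the lemma asserts the conclusion for an \emph{arbitrary} genus one knot whose region $R_{i,j}$ is atoroidal (hyperbolicity is mentioned only as a sufficient condition for that hypothesis). For a satellite genus one knot, $X_K$ can contain essential annuli and tori, so no contradiction is available from the ambient exterior; and atoroidality of $R_{i,j}$ by itself cannot exclude companion annuli, because atoroidal irreducible manifolds do contain circles with companion annuli --- power circles on the boundary of a genus two handlebody are exactly such examples, and Lemma~\ref{compa2} \emph{produces} a companion solid torus from a companion annulus rather than yielding a contradiction. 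The paper closes this with an idea absent from your proposal: if $J$ had a companion annulus, then by Lemma~\ref{compa2} it runs $p\geq 2$ times around a companion solid torus, so the $2$-handled manifold $R_{i,j}(J)$ would contain a lens space summand $L_p$; on the other hand, by Gabai's theorem \cite[Corollary 8.3]{gabai03} the zero-surgery manifold $X_K(J)$ is irreducible and the capped-off tori $\wh{T}_i$ are incompressible in it, so the submanifold $R_{i,j}(J)\subset X_K(J)$ is irreducible --- a contradiction. This input is what makes (2) true for all genus one knots. (Even in the hyperbolic case your sketch needs the extra check that the companion annulus, which by definition is only not parallel into $\partial R_{i,j}$, is essential as a properly embedded annulus in $X_K$, i.e.\ not boundary-parallel through $R_{j,i}$.)

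There is also an error in (1b): your parenthetical claim that a separating essential curve on the boundary of a handlebody cannot bound a disk in the handlebody is false --- the boundary of a separating compressing disk of a genus two handlebody (the disk splitting it into two solid tori) is essential in $\partial H$ yet bounds a disk in $H$. Accordingly, ``nontrivial'' in (1b) must be read as nontrivial \emph{in} $H$ (with the weaker reading the statement itself would be false), so that condition needs no proof; the real content of (1b) is that a separating circle which is nontrivial in $H$ has no companion annulus, which you flag as needing care but leave open. That fact is \cite[Lemma 3.3]{valdez14} (equivalently: by \cite{cassongor} a circle with a companion annulus is a power circle, and power circles are necessarily nonseparating), and it does not follow from Lemma~\ref{compa2} alone. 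Your outline of (1a) --- boundary-reducible plus irreducible, atoroidal, genus two boundary forces a handlebody, with atoroidality ruling out knotted $1$-handles --- is the right argument and matches the proof the paper cites from \cite[Lemma 4.1]{valdez14}.
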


\begin{proof}
Part (1)(a) follows as in the proof of \cite[Lemma 4.1]{valdez14} and (1)(b) from \cite[Lemma 3.3]{valdez14}.

For (2), if $R_{i,j}$ is atoroidal and the circle $J\subset\partial R_{i,j}$ has a companion annulus then by Lemma~\ref{compa2} $J$ has a companion solid torus $V\subset R_{i,j}$ around which it runs $p\geq 2$ times, which implies that $R_{i,j}(J)$ has a lens space connected summand $L_p$. However, by \cite[Corollary 8.3]{gabai03} the manifold $X_K(J)$ is irreducible and each torus $\wh{T}_i\subset X_K(J)$ is incompressible, hence the manifold $R_{i,j}(J)\subset X_K(J)$ is irreducible. This contradiction shows that $J$ has no companion annuli in $R_{i,j}$ and hence that $(R_{i,j},J)$ is a pair.
\end{proof}

\subsection{Handlebody pairs}\label{hpairs}

If $H$ is a genus two handlebody then we call $(H,J)$ a {\it handlebody pair}. Handlebody pairs were introduced in \cite{valdez14} and play a prominent role in the structure of a genus one knot exterior. In this section we gather the main examples and properties of handlebody pairs. 

As usual we write $\partial H=T_1\cup_J T_2$. Two nonseparating circles in $T_1$ and $T_2$ are {\it coannular} if they cobound an annulus in $H$.

\subsubsection{Primitive and power circles.}\label{prim-pwr}

The fundamental group of handlebody $H$ (of any genus) is a free group. We say that a circle in $\partial H$ is {\it primitive} or a {\it power} in $H$ if it represents a primitive or a power $p\geq 2$ of a nontrivial element in the fundamental group $\pi_1(H)$ of $H$, in which case the circle must be nonseparating in $\partial H$ (cf \cite{valdez14}). By \cite[Lemma 3.3]{valdez14},
a circle $\omega\subset\partial H$ is primitive or a power in $H$ iff the surface $\partial H\setminus\omega$ compresses in $H$.

\medskip
By \cite{cassongor}, if $\omega\subset\partial H$ is a power circle then $\omega$ is a power of a primitive element in $\pi_1(H)$. Equivalently, the circle $\omega\subset\partial H$ is a power circle iff it has a companion annulus in $H$, in which case $\omega$ is a power of primitive element of $\pi_1(H)$ represented by the core of its companion solid torus.

In the special case that $H$ is a genus two handlebody, if $w(x,y)$ is a cyclically reduced word representing a primitive element of the free group $\pi_1(H)=\grp{x,y \ | \ -}$ other than $x,x^{-1},y,y^{-1}$ then by \cite{cohen} there is $\ve\in\{\pm1\}$ and an integer $n\in\mZ$ such that in $w(x,y)$ all exponents of $x$ ($y$) are $\ve$ and all exponents of $y$ ($x$, respectively) are $n$ or $n+1$. The same conclusion holds when $w(x,y)$ is a power of some primitive word $w'(x,y)$, as $w'(x,y)$ must then be cyclically reduced.

\subsubsection{Circles with companion annuli in general pairs.}\label{pcgp}
By \cite[Lemma 3.1]{valdez14}, if $(H,J)$ is a general pair with $\partial H=T_1\cup_JT_2$ and $i\in\{1,2\}$ then up to isotopy there is at most one circle $\omega_i\subset T_i$ which has a companion annulus and companion solid torus in $H$, and these companion objects are unique in $H$ up to isotopy.

\medskip

Handlebody pairs $(H,J)$ include the following types.
Here we write $\partial H=T_1\cup_J T_2$.

\subsubsection{Trivial pair}
$H=T\times I$ for $T$ a once-punctured torus and $J$ the slope of the core of the annulus $(\partial T)\times I$. By \cite[Lemma 3.7(4)]{valdez14} a handlebody pair $(H,J)$ with $\partial H=T_1\cup_JT_2$ is trivial iff $H(J)\approx\wh{T}_1\times I$.

\subsubsection{Simple pair:}\label{simple}
$H=(T\times I)\cup_B V$ where $V$ is a solid torus, $B=(T\times\{0\})\cap\partial V$ is a closed annular neighborhood of a nonseparating circle $\omega\times\{0\}$ in $T\times\{0\}$, and $B$ runs $p_1\geq 2$ times around $V$. The separating circle $J$ corresponds to $\partial T\times\{1/2\}$. The core of the annulus $\partial V\setminus B\subset T_2$ is then coannular in $H$ to $\omega\times\{0\}\subset T_1$.

Simple pairs are discussed in detail in 
\cite[\S3.2 and \S6.1]{valdez14}.
In this case, for $\partial H=T_1\cup_J T_2$, there are coannular $p$-power circles $\omega_1=\omega\times \{0\}\subset T_1$ and $\omega_2\subset T_2$ which cobound a nonseparating annulus $A$ in $H$ and there is a nonseparating disk $D\subset H\setminus A$, all unique under isotopy. 

The minimal intersection of $D$ and $J$ satisfies $|D\cap J|=2$. In fact, by \cite[Lemma 3.11]{valdez14} a handlebody pair $(H,J)$ is trivial or simple iff there is a disk in $H$ which minimally intersects $J$ in two points.

\begin{figure}
\Figw{5.5in}{k26}{The core knot $K_1$ and the annuli $A_1,A'_1$ in a simple pair $(H,J)$.}{k26}
\end{figure}

The core $K_1$ (defined up to isotopy) of the solid torus $H|D$ obtained by cutting $H$ along $D$ is called the {\it core of the pair $(H,J)$}. 

Thus $K_1$ is isotopic in $H=(T\times I)\cup_B V$ to the core of the solid torus $V$. It follows that, in the exterior $XH_{K_1}=H\setminus\intr\,N(K_1)$ of $K_1$ in $H$, there are disjoint annuli $A_1,A'_1$ such that 
\[
\partial_1A_1=\omega_1\subset T_1,\quad \partial_1A'_1=\omega'_1\subset T_2,\quad
\partial_2A_1\sqcup\partial_2A'_1\subset\partial N(K_1)
\]
and each circle $\partial_2A_1,\partial_2A'_1$ has nonintegral boundary slope in $\partial N(K_1)$ of the form $r_1=a_1/p_1$, $\gcd(a_1,p_1)=1$. These objects are represented in Fig.~\ref{k26}. Fig.~\ref{m02}, top left, shows an actual simple pair $(H,J)$ with the disk $D\subset J$ that intersects $J$ in two points.

The {\it index of the simple pair $(H,J)$} is defined as the integer $p_1\geq 2$; we also say that its core $K_1$ has index $p_1$.

\subsubsection{Operations with simple pairs}\label{oper}
Let $(H,J)$ be a pair with $\partial H=T_1\cup_JT_2$ and $T\subset H$ a $J$-torus such that $H|T$ consists of two components $H_1,H_2$, with $H=H_1\cup_TH_2$ and $\partial H_i=T\cup_JT_i$. Suppose that $(H_1,J)$ is a simple pair of index $p\geq 2$ and $\omega_1\subset T_1$ and $\omega\subset T$ are the coannular $p$-power circles in $H_1$ in \S\ref{simple}. The following observations will be useful in the analysis of general pairs.

\begin{enumerate}
\item
{\it If $V_1\subset H_1$ is the companion solid torus of $\omega_1\subset T_1$ then $H_2\approx\cl[H\setminus V_1]$.} Equivalently, if $A_1\subset H_1$ is the companion annulus of $\omega_1\subset T_1$ then the components of $H|A_1$ are homeomorphic to $H_2$ and $V_1$. 

\item
{\it If $V\subset H_1$ is the companion solid torus of $\omega\subset T$ then $H\approx H_2\cup V$.}

\item
{\it $H$ is a handlebody iff $H_2$ is a handlebody and $\omega\subset T$ is a primitive circle in $H_2$.}

\item
{\it If $H$ is a handlebody and $T$ is not parallel into $\partial H$ then at least one of the pairs $(H_1,J)$, $(H_2,J)$ is simple, hence there is a circle in $T_1$ or $T_2$ which is a power in $H$.}
\end{enumerate}
(1) and (2) follow by construction of the simple pair $(H_1,J)$ and Fig.~\ref{k26}. (3) is the content of \cite[Lemma 6.3]{valdez14}, and (4) follows from \cite[Lemma 3.7(3)]{valdez14}.

\subsubsection{Primitive pair:} \label{primitive} 
A nontrivial pair $(H,J)$ such that there is a nonseparating annulus $A\subset H$ with each boundary component $\partial_1 A\subset T_1$ and $\partial_2 A\subset T_2$ a primitive circle in $H$. By \cite[Lemma 6.9]{valdez14} the circle 
$\partial_iA\subset T_i$ is the unique circle in $T_i$ which is primitive in $H$, and $A$ is also unique up to isotopy.

\subsubsection{Basic pair:}\label{basic}
We say that a pair of circles $\omega_1\subset T_1$ and $\omega_2\subset T_2$ are basic in $H$ if, relative to some base point $*$, the circles represent a basis for the free group $\pi_1(H,*)$. By \cite[\S3]{valdez14} this is equivalent to saying that in $H$ the circles $\omega_1\subset T_1$ and $\omega_2\subset T_2$ are primitive and separated by a disk.

A pair $(H,J)$ is {\it basic} if it contains a pair of basic circles as above.

\subsubsection{Double pair:} \label{double}
This is a variation of a simple pair, essentially the union of two simple pairs. Let $T$ be a once-punctured torus and $\omega_0,\omega_1\subset T$ two circles which intersect minimally in one point. Then the circles $\omega_0\times\{0\}\subset T\times\{0\}$ and $\omega_1\times\{1\}\subset T\times\{1\}$ are basic in $T\times [0,1]$. Attaching solid tori $V_0,V_1$ to $T\times I$ along annular neighborhoods $B_0\subset T\times\{0\},B_1\subset T\times\{1\}$ of $\omega_0\times\{0\},\omega_1\times\{1\}$, with the circles $\omega_0\times\{0\},\omega_1\times\{1\}$ running $p_0,p_1\geq 2$ times around $V_0,V_1$, respectively, produces a handlebody $H=T\times I\cup_{B_0}V_0\cup_{B_1}V_1$, with $J$ corresponding to the circle $(\partial T)\times\{1/2\}$.

The $J$-torus $T\times\{1/2\}\subset H$ separates $H$ into two handlebodies $H_0\supset T\times\{0\}$ and $H_1\supset T\times\{1\}$ such that $(H_0,J)$ and $(H_1,J)$ are simple pairs of indices $p_0,p_1$, respectively, with the power circles $\omega_0\times\{1/2\}\subset T\times\{1/2\}\subset H_0$ and $\omega_1\times\{1/2\}\subset T\times\{1/2\}\subset H_1$ intersecting minimally in one point in $T\times\{1/2\}$.

By \cite[Lemma 6.8(2)(a)]{valdez14}, any $J$-torus in the double pair $(H,J)$ is parallel to a boundary $J$-torus or to the $J$-torus $T$ that splits it into the simple subpairs $(H_0,J)$ and $(H_1,J)$.

Fig.~\ref{m02}, top right, shows a double pair $(H,J)$ that splits into simple subpairs of index 2.

\subsubsection{Maximal pair:} \label{max}
If $(H,J)$ is a genus two handlebody pair then there are at most two $J$-tori in $H$ that are mutually disjoint and nonparallel, and not parallel to $\partial H$; this follows from Lemma~\ref{compa2} and the fact that any $J$-torus in $H$ is boundary compressible (cf \cite{tsutsumi1}). The pair $(H,J)$ is {\it maximal} if it contains two such $J$-tori.

By \cite[Lemma 6.8]{valdez14}, a maximal pair $(H,J)$ contains two disjoint $J$-tori $T'_1,T'_2\subset H$ such that, in $H$, the $J$-tori $T_i$ and $T'_i$ cobound a simple pair $(H_i,J)$ for $i=1,2$ and $T'_1,T'_2$ cobound a nontrivial basic pair $(H_0,J)$. Specifically, the circles $\omega'_1\subset T'_1$ and $\omega'_2\subset T'_2$ that are power circles in $H_1$ and $H_2$, respectively, are basic circles in $H_0$. The situation is represented in Fig.~\ref{m02}, bottom left.

Moreover no circle in $T_1$ or $T_2$ is primitive in $H$, as otherwise by 
\S\ref{oper}(2),(3) it would be possible to construct a handlebody pair $(H',J)$ that contains 3 $J$-tori that are mutually disjoint and nonparallel, and not parallel to $\partial H'$, which is impossible.

\subsubsection{Induced simple pair and induced $J$-torus:}\label{induced}
Suppose that $(H,J)$ is a general pair and $\omega_1\subset T_1$ a circle with companion annulus $A\subset H$. Let $V\subset H$ be the companion solid torus cobounded by $A$ and an annulus neighborhood $B\subset T_1$ of $\omega_1$ (cf Lemma~\ref{compa2}), such that $\omega_1$ runs $p\geq 2$ times around $V$.

Then a small regular neighborhood $H'=N(T_1\cup V)\subset H$ is a genus two handlebody and $(H',J)$ is a simple pair of index $p$ with 
$\omega_1$ a $p$-power circle in $H'$. 

We say that the pair $(H',J)$ and $J$-torus $T'_1=\fr(H')\subset H$ are the simple pair and $J$-torus {\it induced} by $T_1$, and more specifically by the power circle $\omega_1\subset T_1$. Since by \S\ref{pcgp} the $J$-torus $T_1$ contains at most one circle with a companion annulus and solid torus, all of which are unique up to isotopy, it follows that the $J$-torus $T'_1$ and simple pair $(H',J)$ induced by $T_1$ are unique in $H$ up to isotopy.
The situation is represented in Fig.~\ref{m02}, bottom right.

\begin{figure}
\Figw{5.7in}{m02}{Examples of simple, double, maximal and induced pairs.}{m02}
\end{figure}

\

Minimal handlebody pairs are characterized as follows.

\begin{lem}\label{ann2}
If $(H,J)$ is a minimal nontrivial handlebody pair then 
\begin{align*}
H(J)=
\begin{cases}
\mA^2(p) & \text{if }(H,J)\text{ is a simple pair of index }p\geq 2\\
\text{toroidal} & \text{if }(H,J)\text{ is a primitive pair,}\\
\text{hyperbolic} & \text{otherwise.}
\end{cases}
\end{align*}
\end{lem}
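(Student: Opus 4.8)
The plan is to first record the topology of $H(J)$ and reduce the statement to a geometric trichotomy, and then treat the three cases separately. Since $J\subset\partial H$ is separating, attaching a $2$-handle along $J$ and capping leaves $\partial H(J)=\wh T_1\sqcup\wh T_2$, two tori. I would first show that for a minimal nontrivial handlebody pair $H(J)$ is irreducible and boundary irreducible. A reducing sphere, or a $\wh T_i$-compressing disk, put in minimal position with respect to the $2$-handle, restricts to a surface $F\subset H$ whose boundary consists of $k$ parallel copies of $J$ (plus, in the disk case, an arc on $\wh T_i$); the cases $k=0,1,2$ are ruled out using in turn irreducibility of the handlebody $H$, nontriviality of $J$ in $H$, and the absence of a companion annulus for $J$ (all part of the definition of a pair), while larger $k$ and the compressing-disk case reduce to these after standard innermost-disk and boundary-compression moves, with the trivial case $H(J)\approx\wh T_1\times I$ (\cite[Lemma 3.7(4)]{valdez14}) excluded by hypothesis. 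As $H(J)$ is then Haken with toral boundary, Thurston's hyperbolization theorem reduces the lemma to showing that $H(J)=\mA^2(p)$ exactly in the simple case, that $H(J)$ is toroidal exactly in the primitive case, and that otherwise $H(J)$ is atoroidal and not Seifert fibered.

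For a simple pair of index $p$ I would argue directly from the model $H=(T\times I)\cup_B V$ of \S\ref{simple}. Capping the core curve $J=(\partial T)\times\{1/2\}$ turns $T\times I$ into $\wh T\times I\cong T^2\times I$, so $H(J)\cong(T^2\times I)\cup_B V$: a torus cross interval with a solid torus $V$ glued along an annular neighborhood $B$ of a nonseparating curve that runs $p$ times around $V$. Fibering $V$ by its core, an exceptional fiber of order $p$, and extending the fibration across $T^2\times I$ so that the regular fiber is a push-off of that curve exhibits $H(J)$ as a Seifert fibration over the annulus with a single exceptional fiber of order $p$; that is, $H(J)=\mA^2(p)$. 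Since the base annulus carries no essential nonperipheral curve, this fibration has no vertical essential torus and no horizontal one, so the simple case is atoroidal and Seifert fibered, consistent with the trichotomy.

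For a primitive pair I would build an essential torus from the primitive annulus $A\subset H$ of \S\ref{primitive}, whose boundary circles $\omega_i=\partial_i A\subset T_i$ are primitive, hence nonseparating in the tori $\wh T_i$. In $H(J)$ the annulus $A$ joins the two distinct boundary tori, so it is not boundary parallel, and tubing $\wh T_1$ to $\wh T_2$ along $A$—removing annular neighborhoods of $\omega_1,\omega_2$ and inserting two parallel copies of $A$—produces a closed orientable surface $\Sigma$ of Euler characteristic $0$, i.e.\ a torus, separating the two boundary tori from the rest of $H(J)$. Because $H(J)\not\approx\wh T_1\times I$, this $\Sigma$ is not boundary parallel, so toroidality follows once $\Sigma$ is shown to be incompressible. \emph{Establishing this incompressibility is the main obstacle}: a compression of $\Sigma$ must be traced through its intersection with the $2$-handle and shown to contradict the primitivity of $\omega_1,\omega_2$ together with the incompressibility of $\wh T_1,\wh T_2$; this is precisely where the hypothesis that the boundary slopes of $A$ are primitive, rather than mere power circles as in the simple case, is used essentially.

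Finally, for a minimal nontrivial handlebody pair that is neither simple nor primitive I would show $H(J)$ is atoroidal and not Seifert fibered, whence hyperbolic. An essential torus $\Sigma$, put in minimal position with respect to the $2$-handle, meets $H$ in an incompressible surface $F$ with $\partial F$ equal to $k$ copies of $J$ and $\chi(F)=-k$; when $k=1$ the surface $F$ is a $J$-torus, which by minimality of the pair is parallel into $T_1$ or $T_2$ and makes $\Sigma=\wh F$ boundary parallel, while for $k\ge 2$ a combinatorial analysis of the incompressible components of $F$—using that $H$ is a genus two handlebody and the uniqueness of coannular and companion objects from Lemma~\ref{compa2} and \S\ref{prim-pwr}—recombines $\Sigma$ into either a non-boundary-parallel $J$-torus, contradicting minimality, or a coannular pair of circles in $T_1,T_2$; such a coannular pair consists of power circles, forcing a simple pair, or of primitive circles, forcing a primitive pair, both excluded. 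Likewise, were $H(J)$ Seifert fibered then, being atoroidal with two torus boundary components and distinct from $\wh T_1\times I$, it would fiber over the annulus and contain a vertical essential annulus joining $\wh T_1$ to $\wh T_2$; pulling this annulus back past the $2$-handle yields coannular power circles, again forcing the simple case. With reducibility, boundary reducibility, toroidality and Seifert fibering all excluded, $H(J)$ is hyperbolic by Thurston's theorem. The principal difficulty throughout is the $k\ge 2$ recombination here together with the incompressibility argument in the primitive case, where the fine distinction between primitive and power boundary slopes is exactly what separates the toroidal from the Seifert fibered conclusion.
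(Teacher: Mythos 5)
Your trichotomy, your reduction to Thurston's theorem, and your direct computation of $H(J)=\mA^2(p)$ in the simple case all run parallel to the paper (which quotes irreducibility and boundary irreducibility of $H(J)$ from \cite[Lemma 3.7(1)]{valdez14} rather than reproving it). But the two remaining cases rest on steps you acknowledge without carrying out, and both are genuine gaps. In the primitive case you construct the same torus as the paper --- the frontier $T$ of a regular neighborhood $W=N(\wh{T}_1\cup\wh{T}_2\cup A)$, a composing space --- but you leave its incompressibility as ``the main obstacle,'' to be settled by tracing a compressing disk through the $2$-handle. The paper's argument is different and concrete: if $T$ compressed then, since $H(J)$ is irreducible and boundary irreducible and $T$ separates $\partial H(J)$ from the rest, $T$ would bound a solid torus $V$, so $H(J)=W\cup_T V$ would be a Seifert fiber space of the form $\mA^2(*)$; then for any circle $\beta\subset T_1$ with $\Delta(\beta,\partial_1A)\geq 2$ the filling $H(\beta)=H(J)(\beta)$ would be an atoroidal Seifert space of the form $\mD^2(*,*)$, contradicting \cite[Lemma 6.9]{valdez14}, which says such fillings of a primitive pair are toroidal. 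Some substitute for this is required; primitivity of $\partial A$ never enters via a disk-tracing argument in the paper.

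The second gap is in the residual (hyperbolic) case. Your ``combinatorial recombination'' of the pieces of an essential torus when $k\geq 2$, and likewise the pull-back of the vertical annulus in your Seifert-fibered exclusion, is precisely the content of the $2$-handle addition theorems of \cite[Theorems 1 and 2]{eudave3} that the paper invokes: if $H(J)$ contains an essential annulus or torus which is not boundary parallel, then $H$ contains an essential annulus disjoint from $J$. This is a substantive theorem, not a routine innermost-disk/outermost-arc manipulation, and your sketch supplies no actual mechanism for it; it also omits one outcome of the descent, namely an annulus with both boundary circles on the same $T_i$, which is a companion annulus for a power circle and forces the simple case by \cite[Lemma 3.3(2)]{valdez14} and \cite[Lemma 6.2]{valdez14}. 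Once that descent is granted, your concluding case analysis (spanning annulus with both boundary circles powers gives a simple pair, both primitive gives a primitive pair, each excluded by hypothesis) is correct and coincides with the paper's, resting on \cite[Lemma 3.4]{valdez14}.
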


\begin{proof}
By \cite[Lemma 3.7(1)]{valdez14} the manifold $H(J)$ is irreducible and boundary irreducible, so if $H(J)$ is anannular and atoroidal then it is hyperbolic by \cite{thurs2}.

Since $H$ is a handlebody, by \cite[Theorems 1 and 2]{eudave3} if $H(J)$ contains an incompressible annulus or torus which is not boundary parallel then $H$ contains an incompressible annulus $A$ disjoint from $J$ which is not boundary parallel.

Let $\partial H=T_1\cup_J T_2$. If $\partial A\subset T_i$ 
then by \cite[Lemma 3.3(2)]{valdez14} $A$ is a companion annulus for a power circle in $T_i$ and hence the pair $(H,J)$ is simple by \cite[Lemma 6.2]{valdez14}. In this case the manifold $H(J)$ is a cable space of the form $\mA^2(*)$.

Suppose now that $\partial_1A\subset T_1$ and $\partial_2A\subset T_2$. By \cite[Lemma 3.4]{valdez14} the components of $\partial A$ are both power circles or both primitive circles in $H$. In the first case the pair $(H,J)$ is simple by \cite[Lemma 6.2]{valdez14}. 

In the second case the pair $(H,J)$ is primitive. Let $W$ be a regular neighborhood of $\wh{T}_1\cup\wh{T}_2\cup A$ in $H(J)$. Then $W$ is a composing space of the form $P\times\mS^1$ for some pants $P$ and $\partial W=\wh{T}_1\sqcup\wh{T}_2\sqcup T$ where $T$ is a torus. Since $H(J)$ is irreducible and boundary irreducible, if $T$ compresses in $H(J)$ then it bounds a solid torus $V\subset H(J)$ and hence $H(J)=W\cup_T V$ is a Seifert fiber space of the form $\mA^2(*)$. Thus $H(J)(\beta)$ is an atoroidal Seifert fiber space of the form $\mD^2(*,*)$ for each circle $\beta\subset T_1$ with $\Delta(\beta,\partial_1A)\geq 2$, contradicting \cite[Lemma 6.9]{valdez14} that the manifold  $H(\beta)=H(J)(\beta)$ is toroidal. Therefore $H(J)$ is a toroidal manifold.
\end{proof}

In light of Lemma~\ref{ann2}, we will say that a handlebody pair $(H,J)$ is {\it hyperbolic} if the manifold $H(J)$ is hyperbolic. 

Examples of hyperbolic pairs are provided by some basic pairs as established in the next result.

\begin{lem}\label{prim-2}
\ben
\item  
Primitive, simple, basic and hyperbolic handlebody pairs are minimal,

\item
a primitive pair is neither basic nor simple,

\item
a basic pair is trivial, simple or hyperbolic.
\een
\end{lem}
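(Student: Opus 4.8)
The plan is to reduce part (1) to the structure of the capped manifold $H(J)$ by means of a single \emph{capping principle}: if a handlebody pair $(H,J)$ contains a $J$-torus $T'$ not parallel into $\partial H$, then the capped torus $\wh T'\subset H(J)$ is incompressible and not boundary parallel, so $H(J)$ is toroidal; equivalently, $H(J)$ atoroidal forces $(H,J)$ to be minimal. Incompressibility of $\wh T'$ follows from that of $T'$ in $H$ together with the nontriviality of $J$: cleaning up a hypothetical compressing disk against the cocore of the $2$-handle glued along $J$ (using that $H(J)$ is irreducible, \cite[Lemma 3.7(1)]{valdez14}) would produce a compression of $T'$ in $H$; and if $\wh T'$ were parallel to $\wh T_1$ or $\wh T_2$, the product region would restrict to a parallelism of $T'$ into $T_1$ or $T_2$. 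Granting this, three cases of (1) are immediate: a trivial pair is a product; a hyperbolic pair has $H(J)$ atoroidal by definition; and for a simple pair $H=(T\times I)\cup_B V$ of index $p$, adding the $2$-handle along $J=\partial T\times\{1/2\}$ (which is disjoint from $V$) gives $H(J)=(T^2\times I)\cup_B V=\mA^2(p)$, a cable space, which is atoroidal. The minimality of primitive pairs I would take from \cite[Lemma 6.9]{valdez14}: via \S\ref{oper}(4) a non-boundary-parallel $J$-torus would yield a power circle in some $T_i$, distinct from the unique, companion-annulus-free primitive circle there.

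For (2), a primitive pair is not basic: its circles $\partial_1A\subset T_1$ and $\partial_2A\subset T_2$ cobound $A$, hence are freely homotopic and so represent conjugate elements of $\pi_1(H)$, while two distinct members of a free basis of the rank-two free group $\pi_1(H)$ are never conjugate (their images in the abelianization $\mZ^2$ are distinct and nonzero); by the uniqueness in \cite[Lemma 6.9]{valdez14}, any basic circles would have to be exactly these conjugate circles, which is absurd. A primitive pair is not simple either: both types are minimal by (1), so Lemma~\ref{ann2} applies and gives $H(J)=\mA^2(p)$ (atoroidal) in the simple case but $H(J)$ toroidal in the primitive case, a contradiction.

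Finally I would treat the basic case of (1) together with (3) by first showing that a basic pair has $H(J)$ atoroidal. If it did not, then by \cite[Theorems 1 and 2]{eudave3} the handlebody $H$ would contain an essential annulus $A$ disjoint from $J$; running through the same trichotomy as in the proof of Lemma~\ref{ann2}, if $\partial A$ lies in a single $T_i$, or if both components of $\partial A$ are power circles, then $(H,J)$ is simple with $H(J)=\mA^2(p)$ atoroidal, a contradiction, while if both components of $\partial A$ are primitive then $(H,J)$ is a primitive pair, contradicting (2). Hence $H(J)$ is atoroidal, so by the capping principle the basic pair is minimal, completing (1); then Lemma~\ref{ann2} makes it trivial, simple, primitive, or hyperbolic, and (2) excludes primitive, which yields (3). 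I expect the main obstacle to be precisely this last argument: extracting a usable essential annulus from \cite{eudave3} in a basic pair and, in the delicate ``both primitive'' subcase, identifying its boundary circles with the given basic circles (through uniqueness of the primitive circles) so that the clash with (2) is genuine rather than merely apparent.
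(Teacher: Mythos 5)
Your capping principle, your treatment of the simple, hyperbolic and primitive cases of (1), and your proof of (2) are all correct, and in places genuinely different from the paper's: the paper quotes \cite[Lemma 3.9(2)]{valdez14} for minimality of simple pairs where you compute $H(J)=(\wh{T}_1\times I)\cup_B V=\mA^2(p)$ and use atoroidality, and it derives ``primitive is not simple'' from \cite[Lemma 6.9]{valdez14} together with \S\ref{oper}(4) where you instead play the simple and primitive cases of Lemma~\ref{ann2} against each other. Also, your worry about the ``both primitive'' subcase is misplaced: no identification of $\partial A$ with the given basic circles is needed, since by definition any \emph{nontrivial} pair admitting a spanning annulus with primitive boundary circles is a primitive pair, and (2) then applies directly; the only point to add there is that a trivial basic pair has $H(J)\approx\wh{T}_1\times I$, which is atoroidal anyway.

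The genuine gap is in the other two branches of your trichotomy for the basic case, exactly where you thought you were safe. The inference ``$\partial A\subset T_i$, or $\partial A$ spanning with both components power circles, implies $(H,J)$ is simple'' is borrowed from the proof of Lemma~\ref{ann2}, but there it rests on \cite[Lemma 6.2]{valdez14} \emph{under the standing hypothesis that the pair is minimal} --- and minimality of the basic pair is precisely what you are trying to prove, so your argument is circular. Without minimality the inference is simply false: a double pair (\S\ref{double}) contains a companion annulus for a power circle in each $T_i$, an essential annulus disjoint from $J$ of your first type, yet it is not simple and its $H(J)=\mA^2(p_0)\cup_{\wh{T}}\mA^2(p_1)$ is toroidal. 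Note that in these two branches you never use the basic hypothesis at all, so if the reasoning were valid it would show that \emph{every} handlebody pair with toroidal $H(J)$ admitting such an annulus is simple, which the double pair contradicts. What is missing is precisely an argument that a basic pair cannot contain a non-boundary-parallel $J$-torus (equivalently, cannot contain such companion annuli); this is the heart of the lemma, and it is what the paper supplies by a different idea that your route cannot avoid importing: intersect a hypothetical essential $J$-torus $T$ with the separating disk $D$ of the basic pair, take an outermost arc of $D\cap T$ cutting off a subdisk $E$ with $|E\cap J|=2$, conclude via \cite[Remark 3.8 and Lemma 3.11]{valdez14} that the sub-pair $(H_1,J)$ containing $E$ is simple, and then contradict $\alpha_1\cap D=\emptyset$, since the primitive circle $\alpha_1$ must meet the distinguished disk of a simple pair in one point by \cite[Lemma 6.2(5)]{valdez14}.
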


\bpr
Simple pairs are minimal by \cite[Lemma 3.9(2)]{valdez14}.

If $(H,J)$ is a handlebody pair and $T\subset H$ is a $J$-torus not parallel into $\partial H=T_1\cup_J T_2$ then 
$H_1(J)\neq \wh{T}\times I\neq H_2(J)$ by \cite[Lemma 3.7(4)]{valdez14}
and so $\wh{T}$ is an incompressible torus in $H(J)=H_1(J)\cup_{\wh{T}}H_2(J)$ that is not boundary parallel. It follows that any hyperbolic handlebody pair is minimal. 

We claim that if a handlebody pair $(H,J)$ is simple or nonminimal then there is a circle $\beta\subset\partial H\setminus J$ which is a $p$-power circle in $H$ for some $p\geq 2$. In such case $H(\beta)$ is a reducible manifold of the form 
$H(\beta)=\mS^1\times\mD^2\,\#\,L_p$ for some lens space $L_p$ with finite fundamental group of order $p$.

\medskip

In the case where $(H,J)$ is a simple pair a $p$-power circle $\beta$ exists in each component of $\partial H\setminus J$ by definition.
If there is a $J$-torus $T\subset H$ which is not parallel to $T_1$ or $T_2$ then by \cite[Lemma 3.7(3)]{valdez14} $T$ separates $H$ into two genus two handlebodies $H_1$, $H_2$ with $\partial H_i=T\cup T_i$, where we may assume that $(H_1,J)$ is a simple pair. Thus there is a circle $\beta\subset T_1$ which is a power circle in $H_1$ and hence in $H$.

Now, if $(H,J)$ is a primitive pair then by \S\ref{primitive}
the circles $\alpha_1\subset T_1$ and $\alpha_2\subset T_2$ which are primitive in $H$ are unique and coannular, hence not basic in $H$, so $(H,J)$ is not a basic pair, and by \cite[Lemma 6.9]{valdez14} the manifold 
$H(\beta)$ is irreducible for each circle $\beta\subset T_1$ other than the primitive circle $\alpha_1$, so $(H,J)$ is minimal and not a simple pair by \S\ref{oper}(4).

Suppose now that the pair $(H,J)$ is basic, with basic circles $\alpha_1\subset T_1$ and $\alpha_2\subset T_2$ that are separated by a disk $D\subset H$ (see \S\ref{basic}), and $T\subset H$ is a $J$-torus which is not parallel to $T_1$ or $T_2$. We may assume that $D$ intersects $T$ minimally, so that $D\cap T$ consists of a nonempty collection of arcs. Let $E\subset D$ be a subdisk cut off by an arc in $D\cap T$ which is outermost in $D$, with $(\intr\, E)\cap T=\emptyset$, and suppose that $E\subset H_1$. Then $|E\cap J|=|E\cap\partial T|=2$ and so the pair $(H_1,J)$ is simple by
\cite[Remark 3.8 and Lemma 3.11]{valdez14}. As the circle $\alpha_1\subset T_1$ is primitive in $H$, it must be primitive in $H_1$ and hence it must intersect $E$ minimally in one point by  \cite[Lemma 6.2(5)]{valdez14}. Thus $\alpha_1$ intersects $D$, which is not the case. This contradiction shows that the basic  pair $(H,J)$ is minimal. Therefore (1) and (2) hold, and (3) follows now by Lemma~\ref{ann2}
\end{proof}

\subsection{Construction of basic pairs} \label{baspa}
Recall from Lemma~\ref{prim-2} that any basic pair is trivial, simple or hyperbolic. In this section we construct all basic pairs and give simple conditions to determine their nature.

In preparation for this we set up the following items:
\begin{enumerate}
\item[(i)]
a genus two handlebody $H$,

\item[(ii)]
circles $\omega_1,\omega_2\subset\partial H$ that are basic in $H$ and separated by a disk $D\subset H$,

\item[(iii)]
a decomposition $H=V_1\cup\,(D\times I)\,\cup V_2$ where $V_1,V_2$ are solid tori with meridian disks $D_1\subset V_1$, $D_2\subset V_2$, such that $|D_1\cap\omega_1|_{\text{min}}=1=|D_2\cap\omega_2|_{\text{min}}$,

\item[(iv)]
a decomposition $\partial H=S_1\cup A\cup S_2$ with the once-punctured tori and annulus
$S_1=\partial V_1\cap\partial H$, $S_2=\partial V_2\cap\partial H$, and $A=(\partial D)\times I$.

\item[(v)]
We remark that $D_1$ and $D_2$ are up to isotopy the only disks in $H$ that satisfy the conditions 
\[
|D_i\cap\omega_j|=\delta_{i,j}=\begin{cases}
1 &i=j\\
0 & i\neq j
\end{cases} \quad\text{for all }\{i,j\}=\{1,2\}
\]
\end{enumerate}

If $(H,J)$ is a basic pair with basic circles $\omega_1,\omega_2$ then $J$ can be isotoped in $\partial H\setminus(\omega_1\sqcup\omega_2)$ to intersect the circles $\partial D_1\sqcup\partial D_2\sqcup\partial A$ minimally, in which case

\begin{figure}
\Figw{4.7in}{k04-6c}{Construction of the basic pair $(H,J)$.}{k04-6}
\end{figure}

\begin{enumerate}
\item[(vi)]
for $i=1,2$, $J\cap S_i$ is a collection of $2m\geq 2$  parallel arcs disjoint from $\omega_i$,

\item[(vii)]
the arcs $J\cap A$ split into 4 collections of parallel arcs each of size $n$ or $2m-n$, where $n$ is an integer such that $1\leq n\leq m$ and $\gcd(n,2m)=1$.
\end{enumerate}
The situation is represented in Fig.~\ref{k04-6}, where each arc represents one of the collections of parallel arcs in (vi)-(vii) of the size indicated by the number in the box of top of the arc.

It is then always possible to construct a nonseparating disk $E\subset H$ which satisfies the following properties:
\begin{enumerate}
\item[(E1)] $E\cap D_1=\emptyset=E\cap D_2$,

\item[(E2)] $E$ intersects each circle $\omega_1$ and $\omega_2$ minimally in one point,

\item[(E3)] $E$ intersects $J$ minimally in $2n$ points.
\end{enumerate}
The boundary of one such disk $E$ is shown in Fig.~\ref{k04-6}.

\begin{lem}\label{FF}
Any disk in $H$ which satisfies (E2) is isotopic to a disk obtained by performing some number of half-Dehn twists to $E$ along the separating disk $D$ and hence intersects $J$ in at least $2n$ points.
\end{lem}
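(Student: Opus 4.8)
The plan is to analyze an arbitrary disk $E'$ satisfying (E2) by cutting $H$ along the separating disk $D$. Recall from (iii) that $D$ is the cocore of the $1$-handle in the decomposition $H=V_1\cup(D\times I)\cup V_2$, so that $H|D$ consists of two solid tori $H_1\supset V_1$ and $H_2\supset V_2$ in which $\omega_1$ and $\omega_2$ are respectively longitudes, each meeting the meridian disk $D_1$, $D_2$ once by (iii). First I would put $E'$ in minimal position with respect to $D$. Since $H$ is irreducible and $E',D$ are disks, an innermost-disk argument removes all simple closed curves of $E'\cap D$, so that $E'\cap D$ is a collection of arcs cutting $E'$ into sub-disks, each lying in $H_1$ or $H_2$.

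The key step is to show that $E'\cap D$ is a single arc. A properly embedded disk in a solid torus is either boundary parallel or a meridian disk. If some sub-disk of $E'$ were boundary parallel in its solid torus, then choosing one whose region of parallelism in $\partial H_i$ is innermost would allow an isotopy of $E'$ reducing $|E'\cap D|$, contradicting minimality; hence in minimal position every sub-disk is a meridian disk of $H_1$ or $H_2$. A meridian disk of $H_i$ meets the longitude $\omega_i$ exactly once, so the number of sub-disks of $E'$ in $H_i$ equals $|E'\cap\omega_i|$, which is $1$ by (E2). Thus $E'$ has exactly one sub-disk in each of $H_1,H_2$, so $E'\cap D$ is a single arc $\gamma$ and $E'=\Delta_1'\cup_\gamma\Delta_2'$ with $\Delta_i'=E'\cap H_i$ a meridian disk of $H_i$. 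The same analysis applies to $E$, giving $E=\Delta_1\cup\Delta_2$, where by (E1) each $\Delta_i$ is disjoint from, hence parallel to, $D_i$.

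Since meridian disks of a solid torus are unique up to isotopy, $\Delta_i'$ is isotopic in $H_i$ to $\Delta_i$, and the only remaining freedom is how the two meridian disks are reassembled across the neck $D\times I$, i.e. the relative winding of $\partial\Delta_1'$ and $\partial\Delta_2'$ about $\partial D$. This freedom is generated precisely by the half-twist along $D$, so $E'$ is obtained from $E$ by some number of half-twists. Verifying that this regluing ambiguity is exactly by half-twists, rather than by full twists or something coarser, is the main obstacle: it requires tracking the boundary arcs $\partial\Delta_i'\cap\partial H$ in the once-punctured tori $S_i$ and checking that the symmetry of a meridian disk under the involution of the solid torus $H_i$ makes the half-twist, and not the full twist, the generator of the distinct embedded disks.

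Finally, to deduce $|E'\cap J|\geq 2n$, I would note that $E'\cap J=\partial E'\cap J$ is an intersection of curves on $\partial H$, and that $\partial E'$ meets $\partial D$ in exactly the two points $\partial\gamma$. Writing $\partial E'=\tau_{\partial D}^{k/2}(\partial E)$ for some integer $k$, the count $|\tau_{\partial D}^{k/2}(\partial E)\cap J|$ is a convex function of $k$ whose growth is governed by $|\partial E\cap\partial D|=2$ and $|J\cap\partial D|$, the latter read off from (vi)--(vii); in particular it increases with $|k|$. Since by (E3) the disk $E$ realizes the value $2n$ at $k=0$, this value is the minimum of the convex function, and no half-twist of $E$ can meet $J$ in fewer than $2n$ points, which is the desired bound.
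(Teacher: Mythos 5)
Your route is genuinely different from the paper's: you cut $H$ along the separating disk $D$ and classify the pieces of the disk in the two solid tori, whereas the paper never cuts along $D$ at all — it puts $F$ in minimal position with respect to $D_1\sqcup D_2$, takes an outermost arc on $D_1$, and uses the uniqueness property (v) to show $F$ can be isotoped off $D_1\sqcup D_2$, after which the half-twist classification is read off in the complementary ball. Your skeleton up through the structural conclusion is sound: circles of $E'\cap D$ die by irreducibility; a properly embedded disk in a solid torus is boundary-parallel or a meridian disk; boundary-parallel pieces contradict minimality (though the reduction is slightly subtler than you state — pushing across the parallelism region first trades the arcs of $\partial\Delta\cap D$ for an equal number of arcs parallel to $\partial D$, which must then be removed by a second isotopy); and the count against $\omega_1,\omega_2$ via (E2) forces $E'\cap D$ to be a single arc, so $E'$ is two meridian disks glued along an arc. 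That is a clean alternative reduction.

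However, two steps that carry the actual content of the lemma are not proved. First, the passage from ``two meridian disks glued across $D\times I$'' to ``$E'$ is isotopic to a half-twist image of $E$'' is exactly the conclusion of the lemma, and you explicitly defer it (``the main obstacle''). Asserting that the regluing freedom ``is generated precisely by the half-twist'' is not a proof; nothing in your argument pins down how the gluing arc sits on $D$ relative to $\partial\Delta_1'$ and $\partial\Delta_2'$, nor why the generator is the half-twist rather than the full twist. One way to close this within your framework: since $\Delta_i'$ and $D_i$ are meridian disks of the solid torus $H_i$, isotope $E'$ further so that $\Delta_i'\cap D_i=\emptyset$ for $i=1,2$; then $E'$ is disjoint from $D_1\sqcup D_2$, it lies in the ball $H|(D_1\sqcup D_2)$, and the classification by half-twists along $D$ follows as in the paper. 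Without some such step the main claim remains unestablished.

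Second, your proof of $|E'\cap J|\geq 2n$ is circular. (E3) is a statement about the isotopy class of $E$ alone; the twisted disks $\tau_D^{k/2}(E)$ are not isotopic to $E$ for $k\neq 0$, so (E3) cannot tell you that $k=0$ minimizes $k\mapsto|\tau_D^{k/2}(\partial E)\cap J|$. A convex, properly growing function that equals $2n$ at $k=0$ can perfectly well be smaller at $k=\pm1$; ``it increases with $|k|$'' is precisely what has to be shown. The true reason the minimum is $2n$ is arithmetic: a single half-twist replaces the $2n$ intersections by $2(2m-n)$, and $2(2m-n)\geq 2n$ exactly because $1\leq n\leq m$, i.e.\ $n\leq 2m-n$, which is condition (vii). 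The paper invokes this inequality explicitly; your argument never uses $n\leq m$, and for a hypothetical $n>m$ the conclusion would be false, so no argument avoiding that inequality can succeed.
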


\begin{proof}
It suffices to show that any disk in $H$ which satisfies (E2) can be isotoped to satisfy (E2) and be disjoint from $D_1\sqcup D_2$.

So let $F\subset H$ be a disk that satisfies (E2). It is then possible to isotope $F$ so that it satisfies (E2) and intersects $D_1\sqcup D_2$ minimally; in particular $F\cap(D_1\sqcup D_2)$ has no circle components.

If $F\cap (D_1\sqcup D_2)\neq\emptyset$, say $F\cap D_1\neq\emptyset$ for definiteness, then there is an outermost arc $c$ of the graph $F\cap D_1\subset D_1$ which cobounds a disk face $D_0\subset D_1$ disjoint from $\omega_1$. The frontier of $N(F\cup D_0)$ then consists of three disks $F_0,F_1,F_2$ properly embedded in $H$, say with $F_0$ parallel to $F$.

If the arc $c\subset F$ separates the points $F\cap\omega_1$ and  $F\cap\omega_2$ then may assume that $|F_i\cap\omega_j|=\delta_{i,j}$ for all $\{i,j\}=\{1,2\}$. By (v) it follows that in $H$ the disks $F_1$ and $F_2$ are isotopic to $D_1$ and $D_2$, respectively, and hence that $F$ can be isotoped to be disjoint from $D_1\sqcup D_2$, contradicting our hypothesis. 

If the arc $c\subset F$ does not separate the points $F\cap\omega_1$ and $F\cap\omega_2$ then one of the disks, say $F_1$, is disjoint from the basic circles $\omega_1\sqcup\omega_2$ and hence must be parallel into $\partial H$. This implies that it is possible to reduce $|F\cap(D_1\sqcup D_2)|$ by an isotopy that replaces a component of $F\setminus c$ with the disk $D_0$, again contradicting our hypothesis. 

Therefore $F$ may be assumed to be disjoint from $D_1\sqcup D_2$ and hence isotopic to a disk obtained by performing some number of half-Dehn twists to the disk $E$ along $D$. Since $1\leq n\leq m$ holds by (vii) and hence $n\leq 2m-n$ it is then not hard to see that $F$ must intersect $J$ in at least $2n$ points.
\end{proof}

The next result classifies the basic pair $(H,J)$ in terms of the numbers $m$ and $n$ in (vi)-(vii).

\begin{lem}\label{baspa2}
The basic pair $(H,J)$ is trivial for $m=1$, simple of index $m\geq 2$ if $n=1$, and otherwise hyperbolic.
\end{lem}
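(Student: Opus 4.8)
We must classify the basic pair $(H,J)$ constructed in \S\ref{baspa} according to the parameters $m$ and $n$ from (vi)-(vii). The plan is to read off the nature of $(H,J)$ — trivial, simple, or hyperbolic — by computing the manifold $H(J)$ and appealing to Lemma~\ref{ann2}, or equivalently by using the characterizations of trivial and simple pairs in terms of disks meeting $J$ minimally. The key technical input is Lemma~\ref{FF}, which pins down the minimal intersection number of any nonseparating disk satisfying (E2) with $J$ as exactly $2n$.

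**The cases $m=1$ and $n=1$.**

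First I would handle $m=1$. Here (vi) gives $J\cap S_i$ a single pair of arcs and (vii) forces $n=1$. In this degenerate case the circle $J$ can be isotoped off the attaching regions so that $(H,J)$ has the product structure $\wh{T}_1\times I$; concretely I would exhibit a $J$-torus splitting $H$ into two trivial pieces and check $H(J)\approx\wh{T}_1\times I$, so $(H,J)$ is trivial by the criterion in \S\ref{trivial} (the trivial pair characterization $H(J)\approx\wh{T}_1\times I$). Next, for $m\geq 2$ and $n=1$, Lemma~\ref{FF} shows the disk $E$ satisfies $|E\cap J|_{\text{min}}=2n=2$, so $H$ contains a disk meeting $J$ minimally in two points; by \cite[Lemma 3.11]{valdez14} the pair $(H,J)$ is then trivial or simple. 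Since $m\geq 2$ rules out the trivial case (one checks the relevant $J$-torus is nontrivial), $(H,J)$ is simple, and I would identify the power circle and its companion annulus — the core of $V_1$ (resp. $V_2$) runs $m$ times around, so the index is $m$. This uses the construction of \S\ref{simple} directly: $\omega_1$ is an $m$-power circle coannular to a circle in $T_2$.

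**The hyperbolic case $m\geq 2$, $n\geq 2$.**

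The substantive case is $m\geq 2$ and $2\leq n\leq m$. By Lemma~\ref{prim-2}(3) a basic pair is trivial, simple, or hyperbolic, so it suffices to rule out the first two. Triviality and simplicity both imply, by \cite[Lemma 3.11]{valdez14}, the existence of a disk in $H$ meeting $J$ minimally in two points. But Lemma~\ref{FF} shows every disk satisfying (E2) meets $J$ in at least $2n\geq 4$ points; I must upgrade this to \emph{every} nonseparating disk in $H$, not merely those satisfying (E2). This is the step I expect to be the main obstacle. The resolution I would pursue is that any nonseparating disk $D'$ in $H$ with $|D'\cap J|_{\text{min}}=2$ would, together with $J$, force a simple-pair structure whose power circle intersects $D'$ minimally in one point; tracking this power circle back through the basic decomposition $H=V_1\cup(D\times I)\cup V_2$ shows it must be isotopic to $\omega_1$ or $\omega_2$ (using the uniqueness in \S\ref{pcgp} and \S\ref{simple}), so $D'$ satisfies (E2) up to isotopy, contradicting $2n\geq 4$. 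Having excluded trivial and simple, Lemma~\ref{prim-2}(3) leaves only hyperbolic, completing the classification. A cleaner alternative, if the direct disk argument proves delicate, is to compute $H(J)$ as a graph manifold glued from the two cable spaces coming from $V_1,V_2$ and verify via Lemma~\ref{ann2} that it is anannular and atoroidal precisely when $n\geq 2$, invoking \cite{thurs2} for hyperbolicity; the $n=1$ thresholds would then reappear as exactly the values where an essential annulus survives.
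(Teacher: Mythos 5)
You follow the same skeleton as the paper's proof ($m=1$ gives the trivial pair; $n=1$, $m\geq 2$ gives a simple pair of index $m$; for $n\geq 2$ one rules out the trivial and simple cases and invokes Lemma~\ref{prim-2}(3)), and you correctly isolate the crux: one must show that if $(H,J)$ were trivial or simple, then some disk meeting $J$ minimally in two points would satisfy (E2), so that Lemma~\ref{FF} forces $2\geq 2n$. But your resolution of that crux rests on false claims. You argue that the hypothetical simple structure has a power circle meeting $D'$ minimally in one point, and that this power circle ``must be isotopic to $\omega_1$ or $\omega_2$.'' This is impossible: $\omega_1,\omega_2$ are basic, hence primitive in $\pi_1(H)$, whereas the power circle of a simple pair represents a proper power $w^p$, $p\geq 2$, and a primitive element of a free group is never (conjugate to) a proper power --- already in the abelianization a basis element is indivisible. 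The premise is wrong on its own as well: in \S\ref{simple} the canonical disk $D$ lies in $H\setminus A$, and the two power circles are exactly the boundary components of the spanning annulus $A$, so the power circles are \emph{disjoint} from $D$; it is the \emph{primitive} circles of $T_1,T_2$ that meet $D$ minimally in one point. So the ``tracking back'' step has nothing to track, and your argument collapses precisely at the step you yourself flagged as the main obstacle.

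The correct mechanism, and the one the paper uses, goes through primitivity rather than powers: since the basic circles $\omega_1,\omega_2$ are primitive in $H$, \cite[Lemma 6.2(5)]{valdez14} shows that each of them meets the disk of a simple pair minimally in one point, so the disk $F$ with $|F\cap J|_{\text{min}}=2$ furnished by \S\ref{simple} \emph{automatically} satisfies (E2), and Lemma~\ref{FF} then gives $2\geq 2n$, i.e.\ $n=1$. Your fallback alternative does not rescue the argument either: $H(J)$ admits no natural decomposition into ``two cable spaces coming from $V_1,V_2$,'' and if $H(J)$ really were such a graph manifold it could never be hyperbolic, contradicting the very statement being proved. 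The same power/primitive confusion also appears in your $n=1$ case, where you call $\omega_1$ ``an $m$-power circle'': the index-$m$ power circle is a different curve (the paper exhibits one representing $(D_1D_2)^m$ in $\pi_1(H)=\grp{D_1,D_2 \ | \ -}$). That slip is repairable, but the one in the $n\geq 2$ case occurs at the load-bearing step, so the proposal has a genuine gap.
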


\begin{proof}
Let $\partial H=T_1\cup_J T_2$. For $m=1$ it is not hard to see that $H\approx T_1\times I$ and hence that $(H,J)$ is a trivial pair. 

If $n=1$ and $m\geq 2$ then it is not hard to find a circle in, say, $T_1$, which represents the power $(D_1D_2)^m$ in $\pi_1(H)=\grp{D_1,D_2 \ | \ -}$. Since by Lemma~\ref{prim-2}(1) the pair $(H,J)$ is minimal, by \S\ref{induced} it must be simple.

Conversely, if $(H,J)$ is a simple pair then by \S\ref{simple} and \cite[Lemma 6.2(5)]{valdez14} there  is a disk $F\subset H$ that intersects $J$ minimally in two points and satisfies (E2). Since by Lemma~\ref{FF} the disk $F$ must intersect $J$ in at least $2n$ points it follows that $n=1$.

Finally, if $n\geq 2$ then by the above argument the pair $(H,J)$ is neither primitive nor simple and hence must be hyperbolic by Lemma~\ref{prim-2}(3).
\end{proof}

\begin{rem}
The simplest example of a basic hyperbolic pair $(H,J)$ is constructed in Section~\ref{ht4} and represented in Fig.~\ref{k04}, top. In Proposition~\ref{Khyper} this hyperbolic pair is used to construct an example of a genus one hyperbolic knot $K\subset\mS^3$ with a simplicial collection $\mT\subset X_K$ which decomposes $X_K$ into two simple pairs and two hyperbolic pairs homeomorphic to $(H,J)$.
\end{rem}

\section{Annular pairs} \label{ann3}
Here we generalize the notions of simple and primitive handlebody pairs to arbitrary pairs.

\subsection{Spanning annuli}

Let $(H,J)$ be a pair with $\partial H=T_1\cup_J T_2$.
A {\it spanning annulus} for $(H,J)$ is an annulus $A\subset H$ with $\partial_1A\subset T_1$ and $\partial_2A\subset T_2$ nonseparating circles, in which case we say that the circles $\partial A\subset\partial H$ are {\it coannular} in $H$.

\medskip

Any spanning annulus $A$ for a pair $(H,J)$ is nonseparating and incompressible. If $H$ is a genus two handlebody then by \cite[Lemma 3.4]{valdez14} the boundary circles $\partial_1A=A\cap T_1$ and $\partial_2A=A\cap T_2$ are both primitive or both $p$-power circles in $H$ for some $p\geq 2$ and cobound at most two nonisotopic spanning annuli in $H$. The next result generalizes these facts to arbitrary pairs.

\begin{lem}\label{annp}
Let $(H,J)$ be a pair with $\partial H=T_1\cup_{J}T_2$.
\ben
\item
Any two spanning annuli $A_1,A_2$ for the pair $(H,J)$ which intersect transversely with $\partial A_1\cap\partial A_2\cap T_i=\emptyset$ for some $i=1,2$ can be isotoped so as to be mutually disjoint.

\item
If $(H,J)$ contains two spanning annuli with nonempty minimal intersection then $(H,J)$ is a trivial pair.

\item
A nontrivial pair $(H,J)$ admits at most two isotopy classes of spanning annuli. Specifically, any two nonisotopic spanning annuli $A_1,A_2$ for the pair $(H,J)$ that intersect minimally are mutually disjoint and cobound a solid torus region $V\subset H$ such that $A_1$ and $A_2$ each run $p\geq 2$ times around $V$. 
\een
In particular, for any nontrivial pair $(H,J)$ with a spanning annulus,
\begin{enumerate}
\setcounter{enumi}{3}
\item
the boundary slopes $\omega_1\subset T_1$ and $\omega_2\subset T_2$ of spanning annuli in $H$ are unique up to isotopy,

\item
$(H,J)$ admits two nonisotopic spanning annuli iff
$\omega_1\subset T_1$ or $\omega_2\subset T_2$ has a companion annulus in $H$, in which case
\begin{enumerate}
\item
in $H$ each slope $\omega_1\subset T_1$ and $\omega_2\subset T_2$ has a companion annulus and a companion solid torus around which the slope runs $p\geq 2$ times,

\item
if the pair $(H,J)$ is minimal then it is simple.
\end{enumerate}
\end{enumerate}
\end{lem}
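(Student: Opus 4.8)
The plan is to reduce every statement to an analysis of two spanning annuli $A_1,A_2$ placed in minimal position, exploiting the fact (Lemma~\ref{Rpair}(1)(a)) that $H$ is either boundary irreducible or a genus two handlebody, together with the irreducibility and atoroidality built into the definition of a pair. Throughout I write $\partial H=T_1\cup_J T_2$. Since both annuli are incompressible and $H$ is irreducible, standard innermost-disk arguments remove every inessential circle of $A_1\cap A_2$, and standard outermost-arc arguments remove every boundary-parallel arc, so that after minimizing each circle of $A_1\cap A_2$ is a core of both annuli and each arc is essential in both.

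For part (1) I assume $\partial A_1\cap\partial A_2\cap T_i=\emptyset$, say $i=1$. Then every arc of $A_1\cap A_2$ has both its endpoints on $\partial_2A_1\subset T_2$; but an arc of an annulus with both endpoints on one boundary circle is inessential, so by the previous paragraph there are no arcs and $A_1\cap A_2$ consists only of core circles. To remove a core circle I take one innermost on the $T_1$ side of $A_1$, cobounding in $A_1$ an annulus $A_1'$ with $\partial_1A_1$ whose interior misses $A_2$, and pair it with the annulus $A_2'\subset A_2$ cobounded by $c$ and $\partial_1A_2$. Because two disjoint nonseparating circles on the once-punctured torus $T_1$ are necessarily isotopic, $\partial_1A_1$ and $\partial_1A_2$ are isotopic in $T_1$; hence $A_1'\cup A_2'$ together with the annulus they cut off in $T_1$ is a torus, which by atoroidality and irreducibility of $H$ is compressible and bounds a solid torus or is boundary parallel, yielding an isotopy of $A_2$ that removes $c$. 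Iterating makes $A_1$ and $A_2$ disjoint.

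For part (2) I suppose the minimal intersection is nonempty; by part (1) the boundaries then meet on both $T_1$ and $T_2$, so there are essential arcs, and since a spanning arc of $A_1$ crosses every core of $A_1$ no core circle can coexist with an arc in the embedded $1$-manifold $A_1\cap A_2$. Thus $A_1\cap A_2$ is a nonempty family of spanning arcs. The \emph{main obstacle}, and the technical heart of the lemma, is to convert this essential intersection into an essential disk $D\subset H$ meeting $J$ transversely and minimally in exactly two points: cutting $H$ along $A_1$ turns $A_2$ into rectangles with two opposite sides on the copies of $A_1$ and the remaining sides on $T_1$ and $T_2$, and an outermost such rectangle band-summed with a strip of $A_1$ produces a compressing disk crossing $J$ exactly twice. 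Once $D$ is in hand, $H$ cannot be boundary irreducible, so by Lemma~\ref{Rpair}(1)(a) it is a genus two handlebody, and by the characterization that a handlebody pair is trivial or simple exactly when it carries a disk meeting $J$ in two points (\S\ref{simple}), $(H,J)$ is trivial or simple. By the explicit structure in \S\ref{simple} the spanning annuli of a simple pair cobound a solid torus and hence are disjoint, so two spanning annuli with nonempty minimal intersection cannot occur in a simple pair; therefore $(H,J)$ is trivial.

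Finally, for parts (3)--(5) let $(H,J)$ be nontrivial with nonisotopic spanning annuli $A_1,A_2$. By part (2) their minimal intersection is empty, so they are disjoint; their boundary circles are then disjoint nonseparating circles on the once-punctured tori $T_1,T_2$ and hence isotopic, which already gives the uniqueness of the slopes $\omega_1\subset T_1,\omega_2\subset T_2$ in part (4). The circles $\partial_1A_1,\partial_1A_2$ cobound an annulus $a\subset T_1$ and $\partial_2A_1,\partial_2A_2$ an annulus $b\subset T_2$, so $A_1\cup a\cup A_2\cup b$ is a torus in $H$; pushed into the interior it is compressible by atoroidality, and compressing it on the side between $A_1$ and $A_2$ shows that side is a solid torus $V$. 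As $A_1,A_2$ are nonisotopic they are not boundary parallel in $V$, so each runs $p\geq 2$ times around $V$, proving part (3); the solid torus structure also forces any spanning annulus with slopes $\omega_1,\omega_2$ to be isotopic to $A_1$ or $A_2$, bounding the number of classes by two. For part (5), the core of $a$ is $\omega_1$ and runs $p\geq 2$ times around $V$, so $V$ supplies a companion annulus and companion solid torus for $\omega_1$, and symmetrically $b$ does so for $\omega_2$, giving (5a); conversely, a companion annulus for $\omega_i$ exhibits $\omega_i$ as a power circle (\S\ref{prim-pwr}) winding $p\geq 2$ times around its companion solid torus (Lemma~\ref{compa2}), and rerouting a spanning annulus around that solid torus produces a second, nonisotopic spanning annulus, establishing the equivalence. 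Part (5)(b) then follows since the companion solid torus determines an induced simple subpair (\S\ref{induced}) which, by minimality of $(H,J)$, must exhaust $H$, so $(H,J)$ is simple.
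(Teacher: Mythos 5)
Your proof breaks down in parts (1) and (2), which are the technical core of the lemma; parts (3)--(5) essentially follow the paper's route and are fine modulo their dependence on (1)--(2).

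In part (1), the innermost-circle removal is not justified. After you form the torus $T=A_1'\cup A_2'\cup B_1$ (with $B_1\subset T_1$ the annulus between $\partial_1A_1$ and $\partial_1A_2$), atoroidality only tells you that $T$ is compressible, and in an irreducible manifold a compressible torus bounds a solid torus of \emph{arbitrary} winding, or a knot exterior contained in a ball. Neither option hands you an isotopy of $A_2$ removing $c$: if the region $R$ enclosed by $T$ is a solid torus around which $A_1'$ and $A_2'$ wind $p\geq 2$ times, then $A_2'$ is not parallel across $R$ to $A_1'\cup B_1$ and no such isotopy exists. This is exactly the configuration that minimality produces (if the annulus $A_1'\cup A_2'$ were parallel into $T_1$ you could already reduce $|A_1\cap A_2|$; otherwise it is a companion annulus and, by Lemma~\ref{compa2}, $R$ is a companion solid torus with winding $\geq 2$). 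The paper's proof concedes that no local move is available and instead argues globally: it builds companion solid tori $V_1,V_2$ at \emph{both} ends of $A_1$ and observes that $N(A_1\cup V_1\cup V_2)$ is a Seifert fibered space over the disk with two exceptional fibers, whose boundary torus then contradicts Lemma~\ref{compa2}. Your one-ended argument cannot reach any contradiction, so (1) is unproved.

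In part (2), the disk you propose cannot exist as constructed. Every boundary arc of a rectangle of $A_2$ cut along $A_1$, and of any strip of $A_1$, lies either in the interior of $T_1\cup T_2$ or on $A_1$ inside the interior of $H$; hence any surface assembled from such pieces has boundary \emph{disjoint} from $J$ and cannot meet $J$ in two points. (Moreover, gluing the rectangle to a strip of $A_1$ along the two copies of the intersection arc has Euler characteristic $1+1-2=0$: what you obtain is the resolved spanning annulus, not a disk.) Worse, an essential disk whose boundary misses $J$ would compress $T_1$ or $T_2$, which are incompressible in any pair, so no repair of this construction is possible. Tellingly, your argument for (2) never invokes the defining hypothesis that $J$ has no companion annulus in $H$, yet this is precisely what makes (2) true: the paper takes the frontier of $N(A_1\cup A_2)\approx S_1\times I$, an annulus whose two boundary circles are parallel to $J$, and uses the absence of companion annuli for $J$ to force this frontier annulus to be parallel into $\partial H$, whence $H\approx S_1\times I$ is a trivial pair. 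Without that hypothesis the statement fails -- gluing a solid torus to the frontier annulus of $S_1\times I$ with winding $\geq 2$ gives an irreducible atoroidal manifold containing two spanning annuli crossing in one arc which is not a product -- so any proof of (2) must use it. Finally, in your part (3), the assertion that the torus compresses ``on the side between $A_1$ and $A_2$'' should likewise be justified by the companion-annulus mechanism of Lemma~\ref{compa2} (as the paper does), not by atoroidality alone; the remainder of (3)--(5), via slope uniqueness on once-punctured tori and the induced simple pair of \S\ref{induced}, matches the paper.
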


\bpr
For part (1), let $A_1,A_2\subset H$ be any two spanning annuli for the pair $(H,J)$ which intersect transversely. We assume that $\partial_iA_j\subset T_i$ for all $i,j\in\{1,2\}$, and that $\partial_2A_1\cap\partial_2A_2=\emptyset$ for definiteness. It follows that any arc component of $A_1\cap A_2$ is parallel to $\partial_1A_1$ and $\partial_1A_2$ in $A_1$ and $A_2$, respectively, and hence, by a standard outermost arc/innermost circle argument (using the fact that $H$ is irreducible and $T_1$ is incompressible in $H$), that $A_1$ and $A_2$ can be isotoped to intersect minimally so that each component of $A_1\cap A_2$, if any, is a nontrivial circle in $A_1$ and $A_2$.

\begin{figure}
\Figw{3.2in}{h04b}{The spanning annuli $A_1,A_2$ in $H$.}{h04}
\end{figure}

Now, each pair of circles $\partial_1A_1\sqcup\partial_1A_2\subset T_1$ and $\partial_2A_1\sqcup\partial_2A_2\subset T_2$ cobound annuli $B_1\subset T_1$ and $B_2\subset T_2$, respectively. Let $C_1,C_2\subset A_2$ be the annular closures of the components of $A_2\setminus A_1$ that contain the circles $\partial_1A_2,\partial_2A_2$, respectively, and let $C'_1,C'_2\subset A_1$ be the annuli cobounded by the pairs of circles $(C_1\cap A_1)\sqcup\partial_1A_1$ and $(C_2\cap A_1)\sqcup\partial_2A_1$, respectively. By the minimality of $A_1\cap A_2$, for $i=1,2$ the annulus $C_i\cup C'_i$ is a companion annulus in $H$ for the core circle of $B_i$, and hence $B_i$ and $C_i\cup C'_i$ cobound a solid torus $V_i\subset H$, with $B_i$ running at least twice around $V_i$. 
The situation is represented in Fig.~\ref{h04}.

It follows that the manifold $M=N(A_1\cup V_1\cup V_2)\subset H$ is a Seifert fiber space over the disk with two singular fibers, contradicting Lemma~\ref{compa2} applied to the torus $\partial M\subset H$. Therefore we must have $A_1\cap A_2=\emptyset$ and so (1) holds.

\medskip

For (2), let $A_1,A_2\subset H$ be any two spanning annuli for the pair $(H,J)$ which intersect minimally with $A_1\cap A_2\neq\emptyset$. By (1),
for $i=1,2$ the circles $\partial_iA_1,\partial_iA_2\subset T_i$ intersect minimally and, as $T_i$ is a once-punctured torus, coherently in $T_i$, and so $A_1\cap A_2$ consists of a nonempty disjoint collection of mutually parallel spanning arcs in $A_1$ and $A_2$. 
If $A_1\cap A_2$ has at least two arc components then,
from the closure of a rectangular component of $A_2\setminus A_1$, it is possible to construct a spanning annulus $A'_2$ for $(H,J)$ which intersects $A_1$ minimally in one arc. We may therefore assume that $A_1\cap A_2$ is a single spanning arc, in which case $S_1=N(\partial_1A_1\cup\partial_1A_2)\subset T_1$ is a once-punctured torus with $\partial S_1$ parallel in $T_1$ to $J=\partial T_1$, while $N(A_1\cup A_2)\subset H$ is homeomorphic to the genus two handlebody $S_1\times I$, with $A_1\cup A_2$ corresponding to $(\partial_1A_1\cup\partial_1A_2)\times\{1/2\}$.

It follows that the frontier $A$ of $N(A_1\cup A_2)\subset H$ is an annulus with boundary circles $\partial A$ parallel to $J$ in $H$. Since the circle $J$ has no companion annuli in $H$, the annulus $A$ must be parallel to $\partial H$ in $H$, which implies that $H$ is homeomorphic to $S_1\times I$ and hence that the pair $(H,J)$ is trivial, so (2) holds.

\medskip
Suppose now that $(H,J)$ is a nontrivial pair. By (1) any two spanning annuli $A_1$ and $A_2$ for $(H,J)$ can be isotoped so as to be disjoint, whence the circles $\partial_1A_1,\partial_1A_2\subset T_1$ and  $\partial_2A_1,\partial_2A_2\subset T_2$ cobound annuli $B_1\subset T_1,B_2\subset T_2$, respectively. By Lemma~\ref{compa2}, the torus $A_1\cup A_2\cup B_1\cup B_2$ bounds a solid torus $V(A_1,A_2)\subset H$, with each annulus $A_1,A_2\subset\partial V(A_1,A_2)$ running $n(A_1,A_2)\geq 1$ times around $V(A_1,A_2)$. 

If $n(A_1,A_2)=1$ for all spanning annuli $A_1,A_2$ of $(H,J)$ then any two such spanning annuli are mutually isotopic, so $(H,J)$ contains a unique spanning annulus. 

Otherwise, suppose that $p=n(A_1,A_2)\geq 2$ for some mutually disjoint spanning annuli $A_1,A_2\subset H$, and let $A\subset H$ be any spanning annulus for $(H,J)$. Isotope $A$ so as to be disjoint from $A_1$, whence $A$ and $A_1$ have the same boundary slope, and then isotope $A$ so as to intersect $A_2$ minimally subject to $A\cap A_1=\emptyset$. An argument similar to the one used in the proof of part (1) then shows that we must have $A\cap A_2=\emptyset$ too, whence $A$ can be isotoped so as to be disjoint from $A_1\sqcup A_2$. It follows that either $A\subset V(A_1,A_2)$ or $A_i\subset V(A,A_j)$ for some $\{i,j\}=\{1,2\}$, which implies that $A$ is parallel to $A_1$ or $A_2$. 

Therefore $A_1$ and $A_2$ are up to isotopy the only spanning annuli in $H$, hence their boundary slopes are the only slopes in $T_1$ and $T_2$ that cobound a spanning annulus in $H$; and as $p\geq 2$ the solid torus $V(A_1,A_2)$ is a companion solid torus for each slope $\omega_1=\partial_1A_1\subset T_1$ and $\omega_2=\partial_2A_1\subset T_2$.

Conversely, if $A$ is a spanning annulus for $(H,J)$ and $V\subset H$ is a companion solid torus for, say, the circle $\omega_1=A\cap T_1\subset T_1$, so that $\omega_1$ runs $p\geq 2$ times around $V$, then $A$ can be isotoped so that $A\cap\intr\,V=\emptyset$ and $A\cap V=A\cap T_1$, in which case $N(A\cup V)\subset H$ is a solid torus whose frontier consists of two disjoint spanning annuli for $(H,J)$, each of which runs $p$ times around $N(A\cup V)$. Therefore the first part of (5) holds.

Finally, let $V_1$ be the solid torus obtained by pushing $V(A_1,A_2)$ slightly off $T_2$. Then $V_1$ is a companion solid torus for $\omega_1=\partial_1A_1\subset T_1$ which by \S\ref{induced} induces a $J$-torus $T\subset H$ that splits $H$ into genus two handlebodies $H_1=N(T_1\cup V_1)$ and $H_2\subset H$, with $\partial H_1=T_1\cup_JT$ and $\partial H_2=T_2\cup_JT$, such that $(H_1,J)$ is a simple pair of index $p\geq 2$. Thus if the pair $(H,J)$ is minimal then $T$ is parallel to $T_2$ and so the pair $(H,J)$ is simple.
A similar conclusion holds if we push the solid torus $V(A_1,A_2)$ slightly off $T_1$ to obtain a companion solid torus for $\omega_2=\partial_2 A_1\subset T_2$.
Therefore (4) and (5) hold.
\epr

\subsection{The index of an annular pair.}\label{index}

We make the following definitions and observations based on the properties obtained in Lemma~\ref{annp}.
\begin{itemize}
\item[(A1)]
A pair $(H,J)$ is said to be {\it annular} if it is nontrivial and contains a spanning annulus.

\item[(A2)] 
If $(H,J)$ is an annular pair and $A\subset H$ is a spanning annulus then the {\it index} of $(H,J)$ and $A$ is the number $p=n(A_1,A_2)\geq 2$ given in Lemma~\ref{annp}(3) if $A$ is not unique, and otherwise it is 1. 

\item[(A3)] 
By \cite[Lemma 3.4(4)(a)]{valdez14}, a handlebody pair $(H,J)$ is annular of index 1 iff it is a primitive pair.

\item[(A4)]
For an annular pair $(H,J)$ with a spanning annulus $A$ of index $p\geq 2$, the solid torus $V=V(A_1,A_2)\subset H$ in Lemma~\ref{annp}(3) is unique up to isotopy and its core is called the {\it core knot of $(H,J)$}.  

\begin{figure}
\Figw{2.2in}{h11}{The $J$-tori $T'_1,T'_2\subset R_{1,2}$ induced along $T_1$ and $T_2$ in the annular pair $(R_{1,2},J)$.}{h11}
\end{figure}

Also, following the notation in the proof of Lemma~\ref{annp}(3), for each $\{i,j\}=\{1,2\}$ the manifold $W_i=N(T_i\cup V)\subset H$ is a handlebody which, after being pushed slightly off from $T_j$, produces a simple pair $(W_i,J)$ of index $p\geq 2$ cobounded by its two frontier $J$-tori $T_i$ and $T'_i\subset H$. As in \S\ref{induced} the pair $(W_i,J)$ and $T'_i$ are the simple pair and $J$-torus {\it induced by the annular pair $(H,J)$ along $T_i$}, unique up to isotopy in $H$. The situation is represented in Fig.~\ref{h11}. 
\end{itemize}

\section{Seifert tori in $X_K$.}\label{stk}
In this section we establish several properties of Seifert tori in the exterior of a hyperbolic knot $K\subset\mS^3$. In particular we determine the structure of the pairs generated by a simplicial collection of Seifert tori in $X_K$ in the presence of a Seifert torus not isotopic to any of those in the collection.

\subsection{General properties}
We use the following notation.
Let $T\subset X_K$ be a $J$-torus and $T\times[-1,1]$ a product neighborhood of $T$ in $X_K$ with $T$ corresponding to $T\times\{0\}$. For a surface $F\subset X_K$, not necessarily properly embedded, such that $T\cap \intr\,F=\emptyset$ and $T\cap\partial F\neq\emptyset$, we say that {\it $F$ locally lies on one side of  $T$} if $F\cap (T\times[-1,0])=\emptyset$ or $F\cap (T\times[0,1])=\emptyset$, and otherwise that {\it $F$ locally lies on both sides of $T$.} For instance, a companion annulus for a slope in $T$ locally lies on one side of $T$, while $\partial X_K$ locally lies on both sides of $T$.

\begin{lem}\label{ann-powers}
Let $T_1,T_2,T_3\subset X_K$ be $J$-tori in the exterior of a hyperbolic knot $K\subset\mS^3$.
\ben
\item
If $F\subset X_K$ is a properly embedded surface which intersects $T_1$ transversely with $(\partial T_1)\cap(\partial F)=\emptyset
$ then the number of circle components of $T_1\cap F$ that are nonseparating in $T_1$ is even. 

\item
If $A$ is an annulus in $X_K$ with $A\cap T_1=(\partial A)\cap T_1$ such that each of the circles $\partial A$ is nonseparating in $T_1$ and $\Delta(\partial_1A,\partial_2A)=0$ then $A$ is a companion annulus that locally lies on one side of $T_1$.

\item
Any two companion annuli for a circle in $T_1$ locally lie on the same side of $T_1$ and are mutually isotopic.

\item
If $T_1,T_2$ are mutually disjoint and $A$ is a spanning annulus for the pair $(R_{1,2},J)$ then the circles $\partial A$ are not coannular in $R_{2,1}$ and not both have companion annuli in $R_{2,1}$.
Moreover, if a component of $\partial A$ has a companion annulus in $R_{2,1}$ then $A\subset R_{1,2}$ has index 1.

\item
If $B\subset R_{1,2}$ is an annulus with $\partial_1 B$ a nonseparating circle in $T_1$ and $\partial_2B$ a circle in $T_i$ for some $i=1,2$  then $\partial_2B$ is also a nonseparating circle in $T_i$.

\item
Suppose that  $T_1,T_2,T_3\subset X_K$ are mutually disjoint and nonparallel Seifert tori with $T_2\subset R_{1,3}$. 
If $A\subset R_{1,3}$ is a spanning annulus which intersects $T_2$ minimally then $A_1=A\cap R_{1,2}$ and $A_2=A\cap R_{2,3}$ are spanning annuli in $R_{1,2}$ and $R_{2,3}$, respectively, and $(R_{1,3},J)$ has index $p\geq 1$ iff one of the pairs $(R_{1,2},J)$, $(R_{2,3},J)$ has index  1 and the other index $p\geq 1$.

\item
If the $J$-tori $T_1,T_2,T_3$ are mutually disjoint and nonparallel in $X_K$, $T_2$ lies in the region $R_{1,3}$, and the pair $(R_{1,2},J)$ is simple with $\omega\subset T_2$ a $p\geq 2$ power circle in $R_{1,2}$, then $R_{1,3}$ is a handlebody iff $R_{2,3}$ is a handlebody and $\omega$ is a primitive circle in $R_{2,3}$.

In particular, if the pair $(R_{2,3},J)$ is primitive then $R_{1,3}$ is a handlebody iff the slopes of the spanning annuli in $R_{1,2}$ and $R_{2,3}$ agree on $T_2$.
\een
\end{lem}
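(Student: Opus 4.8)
The plan is to recognize part (7) as a direct instance of the splitting operation for simple pairs recorded in \S\ref{oper}, and to deduce the ``in particular'' clause from the uniqueness of the primitive circle in a primitive pair.

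First I would set up the decomposition. Since $T_2\subset R_{1,3}$ is a $J$-torus and $T_1,T_2,T_3$ are mutually disjoint, cutting along $T_2$ gives $R_{1,3}=R_{1,2}\cup_{T_2}R_{2,3}$ with $\partial R_{1,2}=T_2\cup_J T_1$ and $\partial R_{2,3}=T_2\cup_J T_3$. Because $K$ is hyperbolic, $R_{1,3}$ is atoroidal, so by Lemma~\ref{Rpair}(2) the pair $(R_{1,3},J)$ is a genuine pair; this is the one place where hyperbolicity enters, and it is what licenses the use of the machinery of \S\ref{oper}. By hypothesis $(R_{1,2},J)$ is simple of index $p\geq2$, and by the structure of a simple pair described in \S\ref{simple} the power circle $\omega\subset T_2$ is coannular in $R_{1,2}$ with a $p$-power circle $\omega_1\subset T_1$. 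Thus, after matching labels, the hypotheses of \S\ref{oper} hold with $H=R_{1,3}$, dividing $J$-torus $T_2$, simple piece $H_1=R_{1,2}$, and remaining piece $H_2=R_{2,3}$. Applying \S\ref{oper}(3) (that is, \cite[Lemma 6.3]{valdez14}) then yields at once that $R_{1,3}$ is a handlebody iff $R_{2,3}$ is a handlebody and $\omega$ is primitive in $R_{2,3}$, which is the first assertion.

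For the ``in particular'' clause I would use that a primitive pair is a handlebody pair, so if $(R_{2,3},J)$ is primitive then $R_{2,3}$ is automatically a handlebody and the first assertion reduces to: $R_{1,3}$ is a handlebody iff $\omega$ is primitive in $R_{2,3}$. By \S\ref{primitive} (and \cite[Lemma 6.9]{valdez14}) the unique circle in $T_2$ that is primitive in $R_{2,3}$ is the $T_2$-boundary slope of the spanning annulus of the primitive pair $(R_{2,3},J)$; call it $\omega'$. Hence $\omega$ is primitive in $R_{2,3}$ iff $\omega=\omega'$ as slopes on $T_2$. Since $\omega$ is precisely the $T_2$-boundary slope of the spanning annulus of the simple pair $(R_{1,2},J)$, this equality says exactly that the spanning-annulus slopes of $(R_{1,2},J)$ and $(R_{2,3},J)$ agree on $T_2$, completing the proof.

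I expect no serious obstacle here: the essential topological content is already packaged in \S\ref{oper}(3)$/$\cite[Lemma 6.3]{valdez14}. The only care needed is bookkeeping---verifying that $(R_{1,3},J)$ is a pair via hyperbolicity and Lemma~\ref{Rpair}, correctly identifying $\omega$ simultaneously as the $p$-power circle of the simple pair and as the $T_2$-slope of its spanning annulus, and invoking the uniqueness of the primitive circle in a primitive pair to translate ``$\omega$ primitive in $R_{2,3}$'' into the equality of annulus slopes.
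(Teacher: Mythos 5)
Your treatment of part (7) is correct and coincides with the paper's own proof of that part: the paper disposes of the first assertion by citing \S\ref{oper}(2),(3) (that is, \cite[Lemma 6.3]{valdez14}) applied to the splitting $R_{1,3}=R_{1,2}\cup_{T_2}R_{2,3}$, and obtains the ``in particular'' clause from \cite[Lemma 6.9(2)]{valdez14}: the spanning-annulus slope $\omega'\subset T_2$ is the \emph{unique} primitive circle of the primitive pair $(R_{2,3},J)$, so ``$\omega$ is primitive in $R_{2,3}$'' is equivalent to the two spanning-annulus slopes agreeing on $T_2$. Your preliminary check that $(R_{1,3},J)$ is a genuine pair (atoroidality of $R_{1,3}$ for $K$ hyperbolic, then Lemma~\ref{Rpair}(2)) is exactly the right justification for invoking the machinery of \S\ref{oper}, and your identification of $\omega$ with the canonical coannular power circle of the simple pair is the correct bookkeeping.

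The genuine gap is one of scope: the statement is the whole of Lemma~\ref{ann-powers}, which has seven parts, and your proposal proves only part (7). Parts (1)--(6) require arguments that nothing in your proposal supplies. In the paper, part (1) is a $\mathbb{Z}_2$-homology computation: after capping off, the closed surfaces $\wh{T}_1,\wh{F}\subset X_K(J)$ lie in the unique nontrivial class of $H_2(X_K(J);\mathbb{Z}_2)=\mathbb{Z}_2$, so their mod $2$ intersection numbers with a curve $\beta$ dual to the nonseparating circles of $T_1\cap F$ must agree, forcing that count to be even. Part (2) follows from (1); part (3) is quoted from \cite{valdez14}; part (4) combines (1), (3) and Lemma~\ref{annp}(5); part (5) rests on irreducibility of $X_K(J)$ from \cite{gabai03}; and part (6), which part (7) does not subsume, analyzes how a spanning annulus of $R_{1,3}$ meets $T_2$ using (3) and (5). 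So, as a proof of the stated lemma, the proposal is substantially incomplete, even though the one part it does address is handled correctly and by the same route as the paper.
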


\bpr
For part (1), observe that if $\partial F\neq\emptyset$ then, after suitably capping off with disks any boundary components of $\partial F$ that are trivial in $X_K$, we may assume that each component of $\partial F$ is a nontrivial circle in $X_K$ of slope $J$.

$T_1\cap F$ has no arc components since $(\partial T_1)\cap(\partial F)=\emptyset$ and so each component of $T_1\cap F$ is a circle which either is parallel to $\partial T_1$, bounds a disk in $T_1$, or does not separate $T_1$. Let $\mc{N}\subseteq T_1\cap F$ be the collection of circles that are nonseparating in $T_1$ and assume that $\mc{N}\neq\emptyset$. As the circles in $\mc{N}$ are mutually parallel in $T$, there is a circle $\beta\subset T_1$ which intersects each component of $\mc{N}$ transversely in one point and is disjoint from $T_1\cap F\setminus \mc{N}$. After pushing $\beta$ slightly away from $T_1$, we may assume that $\beta$ is disjoint from $T_1$ and intersects $F$ transversely in $|\mc{N}|$ points. If $F$ separates $X_K$ then $|\mc{N}|$ is even, so we may further assume that $F$ does not separate $X_K$.
Since $H_2(X_K(J);\mathbb{Z}_2)=\mathbb{Z}_2$, the nonseparating closed surfaces $\wh{T}_1,\wh{F}\subset X_K(J)$ belong to the only nontrivial homology class of $H_2(X_K(J);\mathbb{Z}_2)$, hence $\wh{T}_1$ and $\wh{F}$ must have the same homological intersection number mod 2 with $\beta$, ie 
\[
0\equiv \beta\cdot\wh{T}_1 \equiv \beta\cdot\wh{F} \equiv| \mc{N}|\mod 2
\]
and so $|\mc{N}|$ is even.

\medskip

For part (2) suppose that $A$ does not locally lie on one side of $T_1$. Since $\Delta(\partial_1A,\partial_2A)=0$, the circles $\partial A$ have the same slope in $T_1$ and hence $A$ can be isotoped in $X_K$ so that $A\cap T_1=(\partial A)\cap T_1$ and $\partial_1A=\partial_2A$, in which case the resulting closed surface $A$ contradicts the conclusion of part (1). Therefore $A$ locally lies on one side of $T_1$.

\medskip
Part (3) is the content of \cite[Lemmas 3.1 and 5.1]{valdez14}.

\medskip
For part (4), if $B$ is an annulus in $R_{2,1}$ with $\partial B=\partial A$ then $A\cup B\subset X_K$ is a closed surface in $X_K$ that intersects $T_1$ minimally in one circle, contradicting (1); thus the circles $\partial A$ are not coannular in $R_{2,1}$.

Suppose now  that $B_1,B_2\subset R_{2,1}$ are companion annuli for the circles $\partial_1A,\partial_2A$, respectively, isotoped so as to intersect minimally, and let $V_1,V_2\subset R_{2,1}$ be the companion solid tori cobounded by $B_1,T_1$ and $B_2,T_2$, respectively. Let $A_1,A_2$ be mutually disjoint spanning annuli for the pair $(R_{1,2},J)$ that are parallel to $A$ with $\partial A_1\sqcup\partial A_2=\partial B_1\sqcup\partial B_2$. 

If $B_1\cap B_2\neq\emptyset$ then each component of $B_1\cap B_2$ is a core circle of $B_1$ and $B_2$, so it is possible to construct a spanning annulus $B$ for the pair $(R_{2,1},J)$ with $\partial B=\partial A$, contradicting the argument above. And if $B_1\cap B_2=\emptyset$ then $B_1$ and $A_1\cup B_2\cup A_2$ are companion annuli for the circle $T_1\cap \partial A$ that lie on opposite sides of $T_1$, contradicting (3). 

Finally, if $A$ has index $p\geq 2$ then each circle $\partial_iA\subset T_i$ has a companion annulus in $R_{1,2}$ by
Lemma~\ref{annp}(5) and hence by (3) cannot have a companion annulus in $R_{2,1}$. Therefore (4) holds.

\medskip

For part (5) if in $\wh{T}_i\subset X_K(J)$ the circle $\partial_2B\subset T_i$ bounds a disk then the disk $\wh{B}_1$ compresses the nonseparating torus $\wh{T}_1$ in $X_K(J)$ into a nonseparating 2-sphere, contradicting \cite[Corollary 8.3]{gabai03} that the manifold $X_K(J)$ is irreducible. Therefore $\partial_2B$ is nonseparating in $\wh{T}_i$ and hence in $T_i$.

\medskip
For part (6), each component of $A\cap T_2$ is a nontrivial circle in $A$ and in $T_2$ and so each component of $A\cap R_{1,2}$ and $A\cap R_{2,3}$ is an annulus. 

Let $A_1$ be the component of $A\cap R_{1,2}$ with $A\cap T_1\subseteq A_1\cap T_1$. Then necessarily $A_1\cap T_2\neq\emptyset$ and so by (5) the circle $\alpha_1=A_1\cap T_2$ is nonseparating in $T_2$, hence $A_1$ is a spanning annulus in $R_{1,2}$. Similarly the component $A_2$ of $A\cap R_{2,3}$ with $A\cap T_3\subseteq A_2\cap T_3$ is a spanning annulus in $R_{2,3}$ with $\alpha_2=A_2\cap T_2$ a nonseparating circle in $T_2$.
In particular either $\alpha_1=\alpha_2$ or $\alpha_1,\alpha_2$ are disjoint and mutually parallel circles in $T_2$. 
 
If $\alpha_1\neq\alpha_2$ then by (5) the component of $A\cap R_{2,3}$ which contains $\alpha_1$ is a companion annulus for $\alpha_1$ in $R_{2,3}$, and similarly 
$\alpha_1$ has a companion annulus in $R_{1,2}$, contradicting (3). Therefore $\alpha_1=\alpha_2$ and so $A=A_1\cup A_2$. 

Suppose now that $B\subset R_{1,3}$ is a spanning annulus disjoint from $A$. By the argument above we may assume that $B_1=B\cap R_{1,2}\subset R_{1,2}$ and $B_2=B\cap R_{2,3}\subset R_{2,3}$ are spanning annuli. Let $V$ be the  region in $R_{1,3}$ cobounded by $A$ and $B$, and let $C\subset V$ be the annulus cobounded by the circles $(A\sqcup B)\cap T_2$. By Lemma~\ref{annp}(3) the region $V$ is a solid torus and so $C$ separates $V$ into two solid tori $V_1=V\cap R_{1,2}$ and $V_2=V\cap R_{2,3}$. Necessarily $C$ runs once around one of the solid tori $V_1$ or $V_2$.

If the pair $(R_{1,3},J)$ has index $p\geq 2$ and $A$ runs $p$ times around $V$ then necessarily $A_1$, say, runs $p$ times around $V_1$. Therefore the pair $(R_{1,2},J)$ has index $p\geq 2$, while by 
Lemma~\ref{annp}(5) the core of $C\subset T_2$ has a companion annulus in $R_{1,2}$ and  so by (3) the core of $C$ cannot have a companion annulus in $R_{2,3}$, which implies that the pair $(R_{2,3},J)$ has index 1.

Conversely, if $(R_{1,2},J)$, say, has index $p\geq 2$ then there is a spanning annulus and $A'_1\subset R_{1,2}$ disjoint from $A_1$ that cobounds with $A_1$ a solid torus $V'\subset R_{1,2}$ around which each annulus runs $p$ times. Thus $W=N(A_1\cup V')$ is a solid torus in $R_{1,3}$ and its frontier consists of two spanning annuli in $R_{1,3}$ each of which runs $p$ times around $W$, so the pair $(R_{1,3},J)$ has index $p$. Therefore (6) holds.

\medskip
The first part of (7) follows from \S\ref{oper}(2),(3).
If the pair $(R_{2,3},J)$ is primitive then by \cite[Lemma 6.9(2)]{valdez14} 
the slope $\omega'\subset T_2$ of the spanning annulus in $R_{2,3}$ is the unique circle which is primitive in $R_{2,3}$, while $\omega\subset T_2$ is the slope of the spanning annulus in $R_{1,2}$. Therefore $R_{1,3}$ is a handlebody iff $\omega$ and $\omega'$ have the same slope in $T_2$.
\epr

\subsection{Intersections of Seifert tori}

By \cite[Lemma 5.2]{sakuma3} the minimal intersection between two nonisotopic Seifert tori in $X_K\subset\mS^3$ (which is assumed to be only atoroidal) consists of two circles which are nonseparating in each of the surfaces. 
Here we extend this result to give a more detailed picture of the minimal intersection between a Seifert torus $S$ and a simplicial collection of Seifert tori $\mT\subset X_K$. In particular we will see that a nontrivial such intersection produces an annular pair of index 1 within a complementary region of $\mT$.

The next result is the first approximation to the main classification given in Proposition~\ref{mainp}.

\begin{lem}\label{new01c}
Let $\mT=T_1\sqcup\cdots\sqcup T_N$ be a simplicial collection of Seifert tori in $X_K$ and let $S\subset X_K$ be a Seifert torus which is not isotopic in $X_K$ to any component of $\mT$, such that either 
\begin{enumerate}
\item[(i)]
$S$ intersects the collection $\mT$ minimally,

\item[(ii)]
$N\geq 2$, $S\subset R_{i,j}$ ($i=j$ allowed), and $S$ intersects the collection $\mT\cap R_{i,j}$ minimally.
\end{enumerate} 
Then 
\begin{enumerate}
\item
for each $j$, $S\cap T_j$ is either empty or consists of two circle components that are nonseparating in $S$ and $T_j$; in particular $S$ and $T_j$ intersect minimally in $X_K$,

\item
the closure of each component of $S\setminus\mT$ is either a pants $P$ or an annulus,

\item
$P\cap\mT=P\cap T_i$ for some $1\leq i\leq N$,

\item
there is a Seifert torus $T\subset X_K\setminus (P\cup\mT)$ such that if $R,R'\subset X_K$ are the regions cobounded by $T\sqcup T_i$  and $P\subset R$ then 
\begin{enumerate}
\item
$R\cap\mT=T_i$,

\item
the pair $(R,J)$ is annular of index 1 with spanning annulus $A_R\subset R$,

\item
the pair $(R',J)$ is nontrivial and $A=S\cap R'$ is a companion annulus; moreover $P$ and $A$ lie on opposite sides of $T_i$ and the annuli $A,A_R$ have the same boundary slope in $T_i$.
\end{enumerate}
\end{enumerate}
\end{lem}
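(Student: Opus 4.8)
=== PROOF PROPOSAL FOR LEMMA~\ref{new01c} ===

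\textbf{Overview of strategy.} The plan is to analyze the minimal intersection $S\cap\mT$ by studying circle components and using the homological parity constraint from part (1), then decompose $S$ along $\mT$ and classify the resulting pieces. Throughout I would exploit that $S$ and each $T_j$ are $J$-tori (once-punctured tori with boundary slope $J$) so that $\partial S$ and $\partial T_j$ are parallel in $\partial X_K$, hence $S\cap T_j$ has no arc components and $(\partial S)\cap(\partial T_j)=\emptyset$. This lets me invoke Lemma~\ref{ann-powers}(1) directly to control the parity of nonseparating intersection circles.

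\textbf{Proof of (1).} First I would establish that each nonempty $S\cap T_j$ consists of exactly two nonseparating circles. By minimality of the intersection, no component of $S\cap T_j$ can be trivial in $S$ or in $T_j$ (a trivial circle would bound a disk that could be used, via irreducibility of $X_K$ and incompressibility of the $J$-tori established after the definition of a pair, to reduce $|S\cap\mT|$), and no component can be boundary-parallel (again removable by isotopy reducing the intersection). Hence every component of $S\cap T_j$ is nonseparating in both surfaces, so by Lemma~\ref{ann-powers}(1) applied with $F=S$ and $T_1=T_j$ the number of such circles is even; since all these circles are mutually parallel nonseparating curves in the torus $\wh T_j$, there can be at most two distinct parallel classes, and minimality forces the count to be exactly two (if there were four or more, an innermost pair would cobound a product region allowing an isotopy that reduces $|S\cap\mT|$). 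That $S$ and $T_j$ then intersect minimally in $X_K$ follows because two nonseparating circles is the known minimum from \cite[Lemma 5.2]{sakuma3}.

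\textbf{Proof of (2)--(3).} Since each $S\cap T_j$ is two nonseparating circles, cutting the once-punctured torus $S$ along the collection $S\cap\mT$ of such circle pairs yields planar pieces. Each component of $\cl(S\setminus\mT)$ is a subsurface of $S$ whose boundary consists of arcs of $\partial S$ together with intersection circles; an Euler-characteristic count (each nonseparating cut circle drops $\chi$ by the appropriate amount) shows the closures are either annuli (bounded by two intersection circles, or by one intersection circle and $\partial S$) or a single pants $P$ containing the arc $\partial S$. To see that the pants meets only one $T_i$, I would argue that the two cut-circles bounding the non-$\partial S$ side of $P$ must lie on the same torus: if they lay on distinct tori $T_i,T_k$, the two circles would be nonseparating and parallel in $S$ yet force $\partial S$-slope incompatibilities, contradicting that the regions $R_{i,k}$ are nested consistently around $\partial X_K$. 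This is where I would use the cyclic ordering of $\mT$ around $\partial X_K$ and the parity already obtained. I expect this to be the \emph{main obstacle}: ruling out a pants straddling two different Seifert tori requires care with how the two intersection circles sit relative to the product structure near $\partial X_K$, and may need a separate innermost-circle/minimality argument.

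\textbf{Proof of (4).} Finally, with $P\cap\mT=P\cap T_i$ consisting of two nonseparating circles bounding an annulus-complement pattern, I would construct the torus $T$ and region $R$ as follows. The pants $P$ has its two non-$\partial S$ boundary circles on $T_i$; capping these appropriately and using that the complementary annuli of $S$ together with $P$ reassemble $S$, I obtain that the closure of the component $A=S\cap R'$ on the opposite side of $T_i$ from $P$ is an annulus with both boundary circles nonseparating on $T_i$ of the same slope. By Lemma~\ref{ann-powers}(2), since $\Delta(\partial_1A,\partial_2A)=0$, this $A$ is a companion annulus locally lying on one side of $T_i$, giving (4)(c) together with the slope agreement. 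To produce $T$ and verify (4)(a),(4)(b): the pants $P$ union a suitable annulus neighborhood in $T_i$ yields a new $J$-torus $T$ disjoint from $\mT$ (here minimality of the intersection guarantees $T\not\subset$ the other regions), and the region $R$ cobounded by $T\sqcup T_i$ containing $P$ meets $\mT$ only in $T_i$ by construction. The spanning annulus $A_R\subset R$ arises as the frontier of a neighborhood of $P$, and since it is a single spanning annulus it has index $1$ by Lemma~\ref{annp}(3) and the definition in \S\ref{index}(A2); its boundary slope on $T_i$ matches that of $A$ because both come from the same pair of nonseparating circles $P\cap T_i=A\cap T_i$. Nontriviality of $(R',J)$ follows since $A$ is an essential companion annulus, precluding a product structure. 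I would assemble these observations, invoking Lemma~\ref{ann-powers}(3) to pin down that companion annuli on a given side are unique up to isotopy, to complete (4).
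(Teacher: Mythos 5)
Your proposal follows the paper's general skeleton (parity via Lemma~\ref{ann-powers}(1), cutting $S$ into a pants plus annuli, then building $T$ and the region $R$), but it has genuine gaps at precisely the two places where the real work happens. The first is in your proof of (1), at the step $|S\cap T_j|=2$: you assert that if there were four or more intersection circles, ``an innermost pair would cobound a product region allowing an isotopy that reduces $|S\cap\mT|$.'' That is exactly what needs proof, and it is false as a general principle: the annulus of $S$ between two adjacent intersection circles need not be parallel into $T_j$ --- it can wind $p\geq 2$ times around a solid torus (i.e., be a companion annulus), in which case no intersection-reducing isotopy exists. The paper's mechanism is different: the components of $S\setminus T_j$ are one pants and an odd number of annuli; each annulus piece is a companion annulus for the common slope (Lemma~\ref{ann-powers}(2)); pieces adjacent across a circle of $S\cap T_j$ lie locally on \emph{opposite} sides of $T_j$, while Lemma~\ref{ann-powers}(3) (resting on atoroidality via Lemma~\ref{compa2}) forces all companion annuli for that slope to lie on the \emph{same} side --- so there is exactly one annulus piece and hence exactly two circles. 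Nothing in your sketch substitutes for this alternation/same-side contradiction. (A smaller gap of the same kind: deleting boundary-parallel intersection circles is not a routine isotopy either; the paper glues the two boundary-parallel annuli into a properly embedded annulus of slope $J$ and uses hyperbolicity of $K$ to conclude it is boundary-parallel, which is what licenses the swap.)

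The second gap is conclusion (3), which you flag yourself as the ``main obstacle'': your appeal to ``$\partial S$-slope incompatibilities'' and ``nested regions'' is not an argument. In the paper this is where the single-torus statement pays off: if the two non-$\partial S$ boundary circles of $P$ lay on distinct tori $T_a,T_b$, these would have to be the two boundary tori of the region containing $P$, and the single-torus conclusion (pants and companion annulus of $S\setminus T_a$, resp.\ $S\setminus T_b$, lie on opposite sides) pins down in which region each annulus piece of $S\setminus\mT$ lies; one then gets either two companion annuli for one slope on opposite sides of a torus (against Lemma~\ref{ann-powers}(3)) or a spanning annulus both of whose boundary slopes have companion annuli in the complementary region (against Lemma~\ref{ann-powers}(4)). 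Finally, in (4) your justification of index $1$ (``since it is a single spanning annulus'') is circular: index $1$ \emph{means} the spanning annulus is unique up to isotopy, and the paper gets this from Lemma~\ref{ann-powers}(4) --- the companion annulus $A$ lies on the opposite side of $T_i$, so by Lemma~\ref{annp}(5) an index $\geq 2$ spanning annulus in $R$ would produce a companion annulus for the same slope inside $R$, contradicting the same-side property. Likewise, nontriviality of $(R,J)$ (needed for ``annular'') is proved in the paper via Waldhausen's parallelism criterion \cite{wald1} plus minimality; your sketch omits it.
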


\bpr
(I) Since $S$ and $\mT$ have the same boundary slopes, by conditions (i),(ii) we have that $\partial S\cap\partial\mT=\emptyset$ and so $S\cap\mT$ is a nonempty collection of circles that are nontrivial in $S$ and $\mT$.

\medskip
(II) {\it For each $j$, $S\cap T_j$ consists of a collection circles that are nonseparating in $S$ and $T_j$, hence mutually disjoint and parallel in $S$ and $T_j$. Thus (1) holds.} 
\\  
For let $c$ be a circle component of $S\cap \mT$; by (I) $c$ is nontrivial in $S$ and $\mT$. Consider the case where $c$ is parallel to $\partial S$ in $S$; the case where $c$ is parallel to $\partial\mT$ can be dealt with in a similar way. We may assume that $c$ is outermost in $S$, that is, $c$ cobounds with $\partial S$ an annulus $A_c\subset S$ with interior disjoint from $\mT$.

Let $T_j\subset\mT$ be the component containing $c$.
By Lemma~\ref{ann-powers}(5) the circle $c$ separates $T_j$ and so it cobounds an annulus $A'_c\subset T_j$ with $\partial T_j$. The annulus $A_c\cup_c A'_c$ is then properly embedded in $X_K$ with $J$ as boundary slope, and as the knot $K$ is hyperbolic this annulus must be parallel in $X_K$ into an annulus $B\subset\partial X_K$. 
Thus the annuli $A_c\cup_c A'_c$ and $B$ cobound a solid torus $V\subset X_K$ around which each annulus runs once and such that $V\cap\mT=A'_c$.
It is then possible to reduce $|S\cap\mT|$ by an isotopy of $\mT$ that exchanges the annulus $A'_c$ with $A_c$ within the solid torus $V$ and pushes the resulting surface slightly off $S$, contradicting the minimality of $|S\cap\mT|$. 

\medskip

(III) {\it  $S\cap T_j$ has an even number of components:}
by Lemma~\ref{ann-powers}(1).

\medskip

(IV) 
{\it If $S\cap T_j\neq\emptyset$ then $|S\cap T_j|=2$ and the closures of the components of $S\setminus T_j$ are a pants $P_j$ and a companion annulus $A_j$ that locally lie on opposite sides of $T_j$, with $P_j\cap T_j=A_j\cap T_j=\partial A_j$.}\\
By (II)-(III) the closures of the components of $S\setminus T_j$ consist of a pants component $P_j$ and an odd number of annuli. By Lemma~\ref{ann-powers}(3), each such annulus component is a companion annulus for the slope of the circles $S\cap T_j\subset S$, and all such annular components lie on the same side of $T_j$. Therefore there can be only one such annular component $A_j$, so $|S\cap T_j|=2$ and the rest of the properties of $P_j$ and $A_j$ follow.

\medskip

(V) Similarly, by (II)-(III) $S\cap\mT$ consists of an even number of circle components which are nonseparating in $S$ and so the closures of the components of $S\setminus\mT$ consist of a pants  component $P$ and an odd number of annuli. If $\partial P=\partial S\sqcup\alpha_1\sqcup\alpha_2$ then by (IV) $P\cap T_i=S\cap T_i=\alpha_1\sqcup\alpha_2$ for some $T_i\subset\mT$ and the annulus $A=\cl[S\setminus P]$ is a companion annulus of the slope of the circles $S\cap T_i$, with $P$ and $A$ lying on opposite sides of $T_i$ and $P\cap\mT=P\cap T_j$. Thus (2) and (3) hold.

\medskip

(VI) Let $P,A,T_i$ be as in (V) so that $S=P\cup A$. Since $P$ and $\mT\setminus T_i$ are disjoint, there is  regular neighborhood $N(P\cup T_i)\subset X_K$ which is disjoint from $\mT\setminus T_i$. The frontier of $N(P\cup T_i)\subset X_K$ contains two $J$-tori $T_P,T_P'$, with $T_P$ on the same side of $T_i$ as $P$ and $T_P'$ on the opposite side and parallel to $T_i$.

Let $R,R'\subset X_K$ be the two regions cobounded by $T_P$ and $T_i$, with $P=S\cap R$ and $A=S\cap R'$. If $T_P$ and $T_i$ are parallel in $R$ or $R'$ then by \cite[Corollary 3.2]{wald1} $P$ or $A$ is parallel in $R$ or $R'$ into $T_i$, respectively, and so $S$ can be isotoped by pushing $P$ or $A$ across and to the other side of $T_i$, thus reducing $|S\cap\mT|$, which is not possible. 
Therefore $T_P$ and $T_i$ are not parallel in $X_K$ and so the pairs $(R,J)$ and $(R',J)$ are nontrivial.

Let $B\subset T_i$ be the annulus cobounded by the circles $\alpha_1\sqcup\alpha_2=P\cap T_i$. Then the $J$-torus $P\cup B\subset R$ is parallel in $R$ to $T_P$, that is, the region in $R$ between $T_P$ and $P\cup B$ is a product of the form $(P\cup B)\times[0,1]$, with $T_P=(P\cup B)\times\{0\}$ and $P\cup B=(P\cup B)\times\{1\}$. It follows that $A_R=\alpha_1\times[0,1]$ is a spanning annulus for the region $R$. Since the annulus $A=S\cap R'$ is a companion for the slope $\alpha_1\subset T_i$ outside of $R$, $A_R$ has index 1 by Lemma~\ref{ann-powers}(4) and so the pair $(R,J)$ is annular of index 1. Therefore (4) holds.
\epr

\section{Minimality of index 1 annular pairs in $X_K$.}\label{min5}

Suitably gluing together two annular pairs of index 1 results in a new annular pair of index 1 which is not minimal. In this section we show that an annular pair of index 1 produced by two Seifert tori in the exterior of a hyperbolic knot in $\mS^3$ must be minimal. 

\subsection{Annular pairs in $X_K$.}

Let $K\subset\mS^3$ be a genus one hyperbolic knot and let  $(R,J)$, $(R',J)$ be pairs cobounded by two mutually disjoint and nonparallel Seifert tori in $X_K$, so that $X_K=R\cup R'$.

If $(R',J)$ is an annular pair and the region $R'\subset X_K$ is boundary irreducible then by Lemma~\ref{genprop} (P2) the region $R\subset\mS^3$ is a genus two handlebody, an example of a {\it nontrivial handlebody knot in $\mS^3$.} In \cite[Lemma 3.8]{ozawa01} Y.\ Koda and M.\ Ozawa classify $R$ as a a certain type of handlebody knot using a result of C.\ Gordon in \cite[Lemma 3.6]{ozawa01}, along with  that any 4-punctured sphere with integral boundary slope in a knot exterior in $\mS^3$ is compressible. We remark that the compressibility of many-punctured spheres with nonintegral and nonmeridional boundary slope follows from the results in \cite[\S2.5--2.6]{cgls}, in particular Proposition 2.5.6.

We use a similar strategy to impose restrictions on the pairs $(R,J)$ or $(R',J)$ in $X_K$ whenever one of them is an annular pair. A classification of handlebody annular pairs is obtained which will be extended and refined in Proposition~\ref{annmin} to arbitrary annular pairs in a knot exterior $X_K$. We will see in Section~\ref{maxcoll} that the properties of this type of pair are the key to bound the number of maximal simplicial collections of Seifert tori in $X_K$.

\begin{lem}\label{koz1-b}
Let $K\subset\mS^3$ be a genus one hyperbolic knot and  $T_1\sqcup T_2\subset X_K$ a simplicial collection of Seifert tori such that the pair $(R_{1,2},J)$ is annular with spanning annulus $A\subset R_{1,2}$. Then one of the following holds:
\ben
\item
the region $R_{1,2}$ is a genus two handlebody and the pair $(R_{1,2},J)$ is either primitive, simple, or splits along some $J$-torus in $R_{1,2}$ into a simple and a primitive pair,

\item
the region $R_{2,1}$ is a genus two handlebody, the circles $\partial_1 A\subset T_1$ and $\partial_2A\subset T_2$ are separated in $R_{2,1}$
and one of the following holds: 
\ben
\item
the pair $(R_{2,1},J)$ is basic, with the components of $\partial A$ as basic circles in $R_{2,1}$, 

\item
one of the two components of $\partial A$, say $\partial_2A\subset T_2$, is a power circle in $R_{2,1}$ which induces a $J$-torus $T_3\subset R_{2,1}$,  such that the pair $(R_{2,3},J)$ is simple with spanning annulus $B$ of index $p\geq 2$
and the pair $(R_{3,1},J)$ is basic with $A\cap T_1$ and $B\cap T$ basic circles in $R_{3,1}$; in particular,
any $J$-torus in $R_{2,1}$ is isotopic in $R_{2,1}$ to $T_1,T_2$ or $T_3$.
\een
\een
\end{lem}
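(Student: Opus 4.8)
The plan is to argue according to the two possibilities for $R_{1,2}$ afforded by Lemma~\ref{genprop}(P1): either $R_{1,2}$ is a genus two handlebody, or it is boundary irreducible, in which case $R_{2,1}$ is a handlebody by (P2). Suppose first that $R_{1,2}$ is a handlebody, so that $(R_{1,2},J)$ is an annular handlebody pair with spanning annulus $A$ of index $p\geq 1$ (Lemma~\ref{annp}). If $p=1$ the pair is primitive by (A3). If $p\geq 2$, the annular pair induces along $T_1$ a simple subpair $(W_1,J)$ of index $p$ cobounded by $T_1$ and an induced $J$-torus $T_1'$ (A4); writing $R''=\cl(R_{1,2}\setminus W_1)$, a handlebody by (P3), the restriction of $A$ makes $(R'',J)$ annular, and since $(W_1,J)$ has index $p\geq 2$, Lemma~\ref{ann-powers}(6) forces $(R'',J)$ to have index $1$. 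If $T_1'$ is parallel to $T_2$ then $(R_{1,2},J)=(W_1,J)$ is simple; otherwise $(R'',J)$ is a nontrivial annular handlebody pair of index $1$, hence primitive by (A3), and $(R_{1,2},J)$ splits along $T_1'$ into a simple and a primitive pair. This settles case (1).

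Assume now that $R_{1,2}$ is not a handlebody, so $R_{2,1}$ is a genus two handlebody (P2). Set $\omega_1=\partial_1A\subset T_1$ and $\omega_2=\partial_2A\subset T_2$; these are nonseparating, and by Lemma~\ref{ann-powers}(4) they are not coannular in $R_{2,1}$ and do not both have companion annuli there. The whole of case (2) reduces to a single geometric assertion: that $\omega_1$ and $\omega_2$ are \emph{separated by a disk} in $R_{2,1}$, meaning there is a separating disk $D\subset R_{2,1}$ cutting it into solid tori $V_1\supset\omega_1$ and $V_2\supset\omega_2$. Indeed, granting this, each $\omega_i$ is a curve on $\partial V_i$ that cannot bound a disk in $V_i$ (being nonseparating in $\partial R_{2,1}$), so it runs $p_i\geq 1$ times around $V_i$ and is therefore primitive in $R_{2,1}$ when $p_i=1$ and a $p_i$-power circle when $p_i\geq 2$.

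The hard part is precisely the separation claim, which cannot follow from handlebody topology alone: in a double or maximal pair two disjoint noncoannular nonseparating curves need not be separated by a disk, so the essential spanning annulus $A$ sitting in the boundary irreducible complement $R_{1,2}$ must be brought to bear. Here $R_{2,1}$ is a nontrivial genus two handlebody knot in $\mS^3$ whose exterior contains the essential annulus $A$, and I would follow the strategy of Koda and Ozawa \cite[Lemmas 3.6 and 3.8]{ozawa01}: analyze the intersection of $\omega_1\cup\omega_2$ with a complete meridian disk system of $R_{2,1}$, and use Gordon's lemma together with the compressibility of planar surfaces of the integral longitudinal slope $J$ in $X_K$---guaranteed by \cite[\S2.5--2.6]{cgls} and the irreducibility of $X_K(J)$ established in \cite{gabai03}---to rule out all configurations except one yielding a separating disk of $R_{2,1}$ with $\omega_1$ and $\omega_2$ on opposite sides. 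It is in this step that the hyperbolicity of $K$, through the irreducibility and atoroidality of $X_K(J)$, is indispensable.

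Once the separation is in hand, Lemma~\ref{ann-powers}(4) gives that $\omega_1,\omega_2$ are not both powers, so $\min(p_1,p_2)=1$. If $p_1=p_2=1$ then $\omega_1,\omega_2$ are primitive and separated by a disk, hence basic circles (\S\ref{basic}), so $(R_{2,1},J)$ is basic and we are in case (2)(a). If instead $p_2\geq 2$ (so $p_1=1$), then $\omega_2$ is a power circle whose companion solid torus $V_2$ induces, via \S\ref{induced}, a simple pair $(R_{2,3},J)$ of index $p_2$ with $J$-torus $T_3=\fr N(T_2\cup V_2)$ and spanning annulus $B$. The solid torus $V_1$ survives in $R_{3,1}=\cl(R_{2,1}\setminus R_{2,3})$, so $\omega_1\subset T_1$ remains primitive there, while by \S\ref{oper}(3) the slope $B\cap T_3\subset T_3$ is primitive in $R_{3,1}$; as these two circles are still separated by $D$, the pair $(R_{3,1},J)$ is basic, giving case (2)(b). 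Finally, any $J$-torus in $R_{2,1}$ can be isotoped off $T_3$ into $R_{2,3}$ or $R_{3,1}$, and since both a simple and a basic pair are minimal (Lemma~\ref{prim-2}(1)), it must be parallel to $T_1$, $T_2$, or $T_3$.
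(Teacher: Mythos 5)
Your Case 1 (when $R_{1,2}$ is a handlebody) is correct and complete: the index dichotomy, the induced simple pair $(W_1,J)$ along $T_1$, and Lemma~\ref{ann-powers}(6) forcing the complementary subpair to have index $1$, hence to be primitive, give conclusion (1) by essentially the same route as the paper. The problem is Case 2. There you reduce everything to the claim that $\partial_1A$ and $\partial_2A$ are separated by a disk in $R_{2,1}$, correctly identify this as the heart of the matter, and then do not prove it: ``I would follow the strategy of Koda and Ozawa'' is a declaration of intent, not an argument. Since this separation claim is exactly what the lemma exists to establish, deferring it leaves a genuine gap.

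Moreover, the sketch you give of that missing step points at the wrong objects. The paper's proof does not analyze intersections of $\omega_1\cup\omega_2$ with a meridian system of $R_{2,1}$, nor does it invoke planar surfaces of slope $J$ in $X_K$ or the irreducibility of $X_K(J)$. Instead it takes the core $L$ of the spanning annulus $A$, identifies $X_L$ with $\mS^3\setminus\intr\,N(A)$, and observes that $P=\cl[(T_1\cup T_2)\setminus N(A)]$ is a 4-punctured sphere of \emph{integral} boundary slope in $\partial X_L$ separating $X_L$ into $W_1=\cl[R_{1,2}\setminus N(A)]$ and $W_2=R_{2,1}\cup N(A)$. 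Gordon's result \cite[Lemma 3.6]{ozawa01} then produces a compressing disk $E$ for $P$, and the dichotomy (1) versus (2) of the lemma is governed by which side of $P$ the disk $E$ lies on, not by which region happens to be a handlebody. When $E\subset W_2$, it is properly embedded in $R_{2,1}$ and disjoint from $\partial A$; if $E$ were nonseparating, then \cite[Lemma 3.4]{valdez14} would make $\partial_1A$ and $\partial_2A$ coannular in $R_{2,1}$, contradicting Lemma~\ref{ann-powers}(4); hence $E$ is separating and separates $\omega_1$ from $\omega_2$. That contradiction argument, ruling out the nonseparating (``mixed'') compressions, is the content your sketch never supplies. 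A smaller gap of the same kind occurs at the end: you assert that any $J$-torus in $R_{2,1}$ ``can be isotoped off $T_3$,'' but that is precisely what needs proof; the paper obtains it from Lemma~\ref{new01c} applied to the collection $T_3\subset R_{2,1}$, whose alternative would force one of $(R_{2,3},J)$, $(R_{3,1},J)$ to be an index 1 annular (primitive) pair, impossible by Lemma~\ref{prim-2}(2).
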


\bpr
Let the knot $L\subset\mS^3$ be the core of $A$. Then $N(A)\subset R_{1,2}$ is a solid torus neighborhood of $L$ and so $X_L$ can be identified with $\mS^3\setminus \intr N(A)$. Extending $T_1$ and $T_2$ radially in $N(K)$ so that $\partial T_1=K=\partial T_2$ yields the genus two surface $F=T_1\cup T_2$ such that $P=\cl[F\setminus N(A)]\subset F$ is a 4-punctured 2-sphere in $X_L$. 
Notice that $P$ has integral boundary slope in $\partial X_L$ and separates $X_L$ into two components $W_1=\cl[R_{1,2}\setminus N(A)]\subset R_{1,2}$ and $W_2=R_{2,1}\cup N(A)\supset R_{2,1}$.

By \cite[Lemma 3.6]{ozawa01} the 4-punctured sphere $P$ compresses in $X_L$ along some disk $E$. We consider two cases.

\medskip
{\bf Case 1:} $E\subset W_1$. 

Then $E\subset R_{1,2}$, so $\partial R_{1,2}$ compresses in $R_{1,2}$ and so the region $R_{1,2}$ is a genus two handlebody by Lemma~\ref{genprop} (P1); hence the pair $(R_{1,2},J)$ is primitive if $A$ has index 1.

Suppose now that $A$ has index $p\geq 2$, so that $\partial_1A=A\cap T_1$ is a $p$-power circle in $R_{1,2}$. 
If the pair $(R_{1,2},J)$ is minimal then it is simple by Lemma~\ref{annp}(5).
If the pair $(R_{1,2},J)$ is not minimal and $T_a\subset R_{1,2}$ is some $J$-torus not parallel to $T_1,T_2$ then by 
\cite[Lemma 3.7(2)(3)]{valdez14}
each region $R_{1,a},R_{a,2}\subset R_{1,2}$ is a handlebody and we may assume that $(R_{1,a},J)$ is simple, hence annular of index $p$, and hence by Lemma~\ref{ann-powers}(6) that the pair $(R_{a,2},J)$ is annular of index 1, hence primitive.
Therefore (1) holds.

\medskip

{\bf Case 2:} $E\subset W_2$.

The disk $E$ is disjoint from $N(A)$ and so it is properly embedded in  $R_{2,1}$; therefore $R_{2,1}$ is a genus two handlebody by Lemma~\ref{genprop} (P1). We also have that the circles $\partial A,\partial E$ are mutually disjoint and, as $T_1,T_2$ are incompressible in $R_{2,1}$, $\partial E$ is not parallel to any component of $\partial A$.

If the disk $E\subset R_{2,1}$ is nonseparating then by \cite[Lemma 3.4]{valdez14} the circles $\partial A$ are coannular in $R_{2,1}$, contradicting Lemma~\ref{ann-powers}(4).
Therefore $E$ must be a separating disk in $R_{2,1}$ and so by \cite[Lemma 3.4]{valdez14} each circle $\partial_1A,\partial_2A$ is a primitive or power circle in $R_{2,1}$, and by Lemma~\ref{ann-powers}(4) not both can be power circles. 

In $R_{2,1}$, if the separated circles $\omega_1=\partial_1 A\subset T_1$ and $\omega_2=\partial_2A\subset T_2$ are both primitive then by \S\ref{basic} they are basic circles in $R_{2,1}$ and so the pair $(R_{2,1},J)$ is basic. 

Suppose now for definiteness that, in $R_{2,1}$, $\omega_1\subset T_1$ is a primitive circle and $\omega_2\subset T_2$ is a $p\geq 2$ power circle. Since $\omega_2$ is disjoint from the separating disk $E\subset R_{2,1}$ a companion solid torus of $V_2\subset R_{2,1}$ of $\omega_2$ can be isotoped so as to be disjoint from $E$. 
Therefore we may assume that the $J$-torus $T_3\subset R_{2,1}$ induced by $\omega_2$ as in \S\ref{induced} is disjoint from $E$, the pair $(R_{2,3},J)$ is simple of index $p$, and $E\subset R_{3,1}$.

Let $\omega_3\subset T_3$ be the power circle of the simple pair $(R_{2,3},J)$. As $R_{2,1}$ is a handlebody, by \S\ref{oper}(3)
the circle $\omega_3\subset T_3$ is primitive in $R_{3,1}$. The circle $\omega_1\subset T_1$ is primitive in $R_{2,1}$ and hence  it must be primitive in $R_{3,1}$. Since $E\subset R_{3,1}$ separates $\omega_3$ and $\omega_1$ it follows from \S\ref{basic} that the circles $\omega_3,\omega_1$ are basic in $R_{3,1}$ and hence that the pair $(R_{3,1},J)$ is basic.

Suppose now that $S$ is a $J$-torus in $R_{2,1}$. If $S$ is not isotopic to $T_1,T_2$ or $T_3$ in $R_{2,1}$ then by Lemma~\ref{new01c}(3) applied to $S$ and the collection $T_3\subset R_{2,1}$ we have that one of the pairs $(R_{2,3},J)$ or $(R_{3,1},J)$ must be primitive, which is not the case by Lemma~\ref{prim-2}(2). Therefore $S$ is isotopic in $R_{2,1}$ to $T_1,T_2$ or $T_3$ and hence (2) holds.
\epr

\begin{rem}\label{remkoz}
\begin{enumerate}
\item
The handlebody region $R_{1,2}$ in conclusion (1) of Lemma~\ref{koz1-b} which splits into a primitive and a simple pair is an example of an {\rm exchange region}. These regions are classified in general in Section~\ref{trick} and their properties will be used in Sections \ref{classif} and \ref{maxcoll} to obtain the restricted structure of of the complex $MS(K)$ in Theorem~\ref{main1}.

\item
The 4-punctured sphere $P\subset X_L$ constructed in the proof of Lemma~\ref{koz1-b} compresses in $X_L$ on one of its sides along a disk which is also a compression disk of either $\partial R_{1,2}$ in $R_{1,2}$ or $\partial R_{2,1}$ in $R_{2,1}$, corresponding to conclusions (1) and (2) of the lemma. If $P$ compresses on both sides then conclusions (1) and (2) hold simultaneously and hence a maximal simplicial collection of Seifert tori in $X_K$ has at most 4 components.

An example where the surface $P\subset X_L$ compresses only on its side contained in the annular pair $(R_{1,2},J)$ is provided by the family of knots constructed in Proposition~\ref{Khyper}(1) and represented in Fig.~\ref{k05}, right. In these examples the pair $(R_{2,3},J)$ is simple, hence annular, but $R_{3,2}$ does not satisfy conclusion (2) of Lemma~\ref{koz1-b}.
\end{enumerate}

\end{rem}

\subsection{Index 1 annular pairs in $X_K$.}
\label{52}
By Lemma~\ref{genprop}, for an index 1 annular pair $(R_{i,j},J)$ in $X_K$ the region $R_{i,j}$ may be a handlebody or a boundary irreducible manifold, and in the latter case the complementary region $R_{j,i}$ must be a handlebody. In this section we will see that this relationship between the regions $R_{i,j}$ and $R_{j,i}$, whose union is the exterior $X_K$ of the knot $K\subset\mS^3$, greatly limits the topology of the annular pair $(R_{i,j},J)$.

We first establish a technical result that applies to manifolds like the regions  $R_{i,j}\subset X_K$.

\begin{lem}\label{hhbdy}
Let $H$ be an irreducible manifold with $\partial H$ a surface of genus two, and let $\alpha,\beta,\gamma\subset\partial H$ be nonseparating circles such that 
\begin{enumerate}
\item $\alpha$ is disjoint from $\beta\cup\gamma$, 

\item $\beta$ and $\gamma$ intersect minimally in one point,

\item
$\alpha$ and $\beta$ cobound an annulus $A\subset H$,

\item $H(\alpha)$ and $H(\gamma)$ are solid tori.
\end{enumerate}
Then $H(\alpha\sqcup\gamma)=\mS^3$ and $H$ is either a genus two handlebody or a toroidal irreducible manifold with irreducible  boundary.
\end{lem}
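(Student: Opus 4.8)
\emph{Strategy.} The plan is to first identify $H(\alpha\sqcup\gamma)$ with $\mS^3$ by working inside the solid torus $V=H(\alpha)$, and then to read off the structure of $H$ from the resulting embedding $H\subset\mS^3$. Since $\alpha$ is disjoint from $\beta\cup\gamma$ by (1), both $\beta$ and $\gamma$ survive the $\alpha$-filling and lie on the torus $T=\partial V$, and their unique intersection point persists, so $|\beta\cap\gamma|=1$ on $T$. The annulus $A$ of (3) lies in $H\subset V$, and capping its boundary circle $\alpha$ with the core disk of the $2$-handle attached along $\alpha$ turns $A$ into a disk $D\subset V$ with $\partial D=\beta$; thus $\beta$ bounds a disk in the solid torus $V$. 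As $\beta$ meets $\gamma$ transversely in a single point, $\beta$ has algebraic intersection $\pm1$ with the closed curve $\gamma$ on $T$ and is therefore essential on $T$; being an essential curve that bounds a disk in $V$, it must be the meridian $\mu$ of $V$. Consequently $\gamma$ meets $\mu=\beta$ exactly once on $T$, so $\gamma$ is a longitude of $V$, and filling a solid torus along a longitude gives $H(\alpha\sqcup\gamma)=V(\gamma)=\mS^3$. (I also use that $\alpha,\gamma$ are homologically independent on $\partial H$, which is immediate from $\beta\cdot\gamma=1$ and $\alpha\cdot\beta=\alpha\cdot\gamma=0$, so that the two $2$-handles compress $\partial H$ to a sphere and $H(\alpha\sqcup\gamma)$ is indeed closed.)

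\emph{Structure of $H$.} With $M:=H(\alpha\sqcup\gamma)=\mS^3$ in hand, the complementary region $W:=\cl(\mS^3\setminus H)$ is obtained from $\partial H\times I$ by attaching $2$-handles along the disjoint nonseparating curves $\alpha,\gamma$ and capping a sphere, so $W$ is a genus two handlebody in which $\alpha$ and $\gamma$ form a complete meridian system. Hence $H$ is the exterior in $\mS^3$ of the genus two handlebody $W$, carrying the essential spanning annulus $A$ with $\partial_1A=\alpha,\ \partial_2A=\beta$. Since $H$ is irreducible by hypothesis, the argument splits according to whether $\partial H$ compresses in $H$. If $\partial H$ is compressible, then the exterior of the handlebody-knot $W$ is boundary reducible, so by the classification of such objects (cf.\ \cite{ozawa01}) $W$ is a trivial genus two handlebody-knot and $H$ is a genus two handlebody, exactly as in Lemma~\ref{Rpair}(1)(a).

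\emph{The toroidal case.} If instead $\partial H$ is incompressible, then $H$ is irreducible and boundary irreducible and still contains the essential annulus $A$; I claim $H$ is toroidal. A Seifert fibered structure on $H$ is impossible because its boundary would be a union of tori rather than the single genus two surface $\partial H$, and an $I$-bundle structure is impossible because $H(\alpha)$ is a solid torus, which would force the surface group $\pi_1(H)$ to become infinite cyclic upon killing a single element. Thus $H$ cannot be atoroidal: by the Annulus Theorem the essential annulus $A$ lies in the characteristic submanifold, and with the Seifert and $I$-bundle options excluded this piece must be bounded by an essential torus. Equivalently, running the companion-annulus mechanism of Lemmas~\ref{compa2}, \ref{annp} and \ref{ann2}, one of the slopes $\alpha,\beta$ acquires a companion solid torus $V'\subset H$ whose frontier torus is essential (it is not boundary parallel, since $\partial H$ is incompressible and $V'$ is nontrivial). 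Hence $H$ is a toroidal irreducible manifold with irreducible boundary.

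\emph{Main obstacle.} The delicate part is the incompressible case: extracting a genuinely essential, non-boundary-parallel torus from the spanning annulus $A$ and rigorously excluding both the Seifert fibered and the $I$-bundle alternatives. By contrast, the identification $H(\alpha\sqcup\gamma)=\mS^3$ is clean and follows directly from the meridian/longitude argument above, once one observes that passing to $V=H(\alpha)$ keeps \emph{both} $\beta$ and $\gamma$ on $\partial V$.
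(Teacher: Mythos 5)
Your first half is correct and is essentially the paper's argument: capping $A$ with the core of the $2$-handle attached along $\alpha$ exhibits $\beta$ as the boundary of a meridian disk of the solid torus $H(\alpha)$, so $\gamma$ is a longitude and $H(\alpha\sqcup\gamma)=\mS^3$ (symmetrically, $\alpha$ is a longitude of $H(\gamma)$, a fact the paper exploits later and you never use). The second half, however, has genuine gaps in both branches of your dichotomy. In the compressible case, the claim that a boundary-reducible genus two handlebody-knot exterior must be a handlebody (``$W$ is trivial'') is false, and no such classification appears in \cite{ozawa01}: take $W$ to be a regular neighborhood of a trefoil joined by an arc to a split unknotted circle. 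Its exterior $H$ is irreducible and boundary-reducible (compress along the disk bounded by the unknotted circle), yet compressing $\partial H$ along that disk leaves a trefoil exterior, whose boundary torus is incompressible in $H$ (on one side it bounds the trefoil exterior, on the other a compression body in which it is the negative boundary); so $H$ is toroidal, not a handlebody. Hence compressibility of $\partial H$ alone cannot yield the handlebody conclusion --- one must use $A$, $\beta$, $\gamma$, which your Case A never does.

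In the incompressible case the inference ``essential annulus, not Seifert fibered, not an $I$-bundle $\Rightarrow$ toroidal'' is also false: the characteristic submanifold component containing $A$ may be just a regular neighborhood of $A$ (an $I$-bundle over the annulus), whose frontier consists of essential annuli, not tori. Atoroidal, irreducible, boundary-irreducible manifolds with genus two boundary containing essential spanning annuli do exist --- this paper constructs exactly such manifolds in Section~\ref{hk23-2} (the exterior $W=M_e\cup N(A)$), and they are the main objects of \cite{ozawa01}. Your fallback ``companion-annulus mechanism'' is likewise unsupported: Lemmas~\ref{compa2}, \ref{annp} and \ref{ann2} describe companion annuli when they exist but produce none here. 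The tell is that your Case B never invokes hypotheses (2) and (4) --- the curve $\gamma$ and the fact that $H(\gamma)$ is a solid torus --- and without them the desired conclusion is simply not true of the class of manifolds you are considering. The paper's proof uses precisely this data, and this is the missing idea: since $\alpha$ is a longitude of the solid torus $H(\gamma)$, there is a meridian disk $E$ of $H(\gamma)$ whose boundary meets $\alpha$ once and avoids $\beta$ away from the attaching annulus; after minimizing its intersection with the core arc $\tau$ of the $\gamma$-handle, $F=E\cap H$ becomes an annulus meeting $A$ in a single spanning arc, so $N(A\cup F)$ is a product $T_0\times I$ over a once-punctured torus. The complementary region of this product in $H$ is a submanifold of $\mS^3$ bounded by a torus, hence either a solid torus --- in which case it is a parallelism and $H\cong T_0\times I$ is a handlebody --- or a nontrivial knot exterior, in which case its boundary torus is essential and $H$ is boundary-irreducible and toroidal. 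That construction, not a general position in JSJ theory, is what forces the dichotomy in the statement.
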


\begin{proof}
The conditions (1)--(4) imply that the circles $\alpha,\beta,\gamma$ are nontrivial in $H$ and the circle $\alpha$ is not parallel in $\partial H$ to $\beta$ or $\gamma$.
Also the nonseparating annulus $A\subset H$ in (3) turns into the meridian disk $\wh{A}$ of the solid torus $H(\alpha)$ with $\beta=\partial \wh{A}$ intersecting $\gamma\subset \partial H(\alpha)$ minimally in one point. Therefore $\gamma$ is a longitude of $H(\alpha)$ and so 
\[
\mS^3=H(\alpha)(\gamma)=
H(\alpha\sqcup\gamma)=H(\gamma)(\alpha)
\] which implies that $\alpha$ is a longitude of the solid torus $H(\gamma)$.

\medskip

Let $\tau$ be the core of the 2-handle $\mD^2\times I$ used in the construction of $H(\gamma)$. Then $\tau$ is an arc properly embedded in the solid torus $H(\gamma)$ with regular neighborhood $N(\tau)=\mD^2\times I\subset H(\gamma)$, such that 

\begin{itemize}
\item[(i)]
$H\subset H(\gamma)$ is the closure of $H(\gamma)\setminus N(\tau)$,

\item[(i)]
$N(\gamma)=(\partial\mD^2)\times I\subset\partial H$ is an annular neighborhood of $\gamma$ in $\partial H$,

\item[(iii)]
$S_0=\cl[\partial H(\gamma)\setminus N(\tau) ] = \cl[\partial H\setminus N(\gamma)]$ is a twice punctured torus such that $\partial H=S_0\cup\partial N(\gamma)$,

\item[(iv)]
$\beta=\beta_1\cup\beta_2$, where
\begin{itemize}
\item
$\beta_1=\beta\cap S_0$ is an arc properly embedded in $S_0$ connecting the boundary components of $S_0$,
\item
$\beta_2=\beta\cap N(\gamma)$ is a spanning  arc of the annulus $N(\gamma)$ which intersects $\gamma$ minimally in one point.
\end{itemize}
\end{itemize}
The situation is represented in Fig.~\ref{m01}, top.

\medskip

Since the circle $\alpha$ and the arc $\beta_1$ are disjoint and properly embedded in the twice punctured torus $S_0\subset\partial H(\gamma)$ and the circle $\alpha$ is a longitude of the solid torus $H(\gamma)$, there is a meridian disk $E\subset H(\gamma)$ such that $\partial E$ lies in $S_0\subset\partial H(\gamma)$, intersects $\alpha$ minimally in one point, is disjoint from $\beta_1$, and intersects the arc $\tau\subset H(\gamma)$ minimally among all meridian disks of $H(\gamma)$ satisfying the previous conditions.

\begin{figure}
\Figw{3.5in}{m01-2}{The circles $\alpha,\beta=\beta_1\cup\beta_2,\gamma,\partial E$ in $\partial H$ (top) and the graph $G_A=A\cap E\subset A$ (bottom).}{m01}
\end{figure}

We may therefore assume that $F=E\cap H$ is a punctured disk with boundary the circle $\partial E\subset S_0\subset\partial H$ and some circle components parallel to $\gamma$ in the annulus $N(\gamma)\subset\partial H$. Keeping $\partial F$ fixed, we further isotope $F$ in $H$ so as to intersect the annulus $A\subset H$ minimally.

Since $\alpha\subset\partial A$ intersects $\partial E\subset\partial F$ in one point, and each component of $\partial F$ in $N(\gamma)\subset\partial H$ intersects $\beta\subset\partial A$ in one point, it follows that the graph $G_A=A\cap F\subset A$ consists of one spanning arc $a_0\subset A$ and perhaps some arcs  $b_i$, $1\leq i\leq n$, each with both boundary points on the subarc $\beta_2$ of the circle $\beta\subset\partial A$.

The presence of the spanning arc $a_0$ implies that any circle component of $A\cap F$ is trivial in $A$. If an innermost such circle component $c$ is nontrivial in $F$ then surgerying $E$ along the disk bounded in $A$ by $c$ produces a meridian disk for $H(\gamma)$ satisfying all conditions above but having fewer intersections with $A$, contradicting the minimality of $A\cap F$. Therefore $A\cap F$ has no circle components and so the graph $G_A$ consists only of arc components, as represented in Fig.~\ref{m01}, bottom.

If the arcs $b_i$ are present then the graph $G_A$ has a disk face $D_j$ cobounded by a subarc of $\beta$ and an outermost arc $b_j$; but then the disk $D_j$ may be used to boundary compress $F$ in $H$ and reduce by 2 the number of intersections in $H(\gamma)$ between $E$ and $\tau$, contradicting the minimality of $E\cap\tau$.

\medskip

Therefore $E\cap\tau$ consists of a single point, so $F\subset H$ is an annulus and $A\cap F$ consists of the single arc $a_0$. This final situation is represented in Fig.~\ref{m01}, top.

\medskip

It follows that $W=N(A\cup F)\subset H$ is a product of the form $T_0\times I$, where $T_0=T_0\times\{0\}$ is the once-punctured torus $N(\alpha\cup\partial E)\subset\partial H$ and $T_0\times\{1\}$ the once-punctured torus $N(\beta\cup\gamma)\subset\partial H$. The circles $\partial T_0$ and $\partial T_1$ are then separating and disjoint in $\partial H$ and hence cobound an annulus $B_0\subset\partial H$; moreover $B=\fr\, W=(\partial T_0)\times I$ is a separating annulus properly embedded in $H$ with $\partial B=\partial B_0$.

\medskip

Let $V=\cl[H\setminus W]\subset H$ be the region in $H$ cobounded by $B_0$ and $B$, so that $H=W\cup_BV$. Since $\partial V=B_0\cup B$ is a torus and $V\subset H\subset\mS^3$, $V$ is either a solid torus or the exterior of a nontrivial knot in $\mS^3$. Since the circle $\partial T_0\subset\partial V$ bounds the surface $T_0$ outside $V$, if $V\subset\mS^3$ is a solid torus then 
$\partial T_0$ runs once around $V$. Therefore $\partial T_0\subset\partial B_0$ is a longitude of $V$ and so $V$ is a parallelism between the annuli $B_0$ and $B$ in $H=W\cup_BV$. It follows that $H\approx W=T_0\times I$ is a genus two handlebody.

Suppose now that $V$ is the exterior of a nontrivial knot in $\mS^3$. Then the torus $\partial V$ is incompressible in $V$ and the annulus $B=(\partial T_0)\times I$, which is incompressible in $W=T_0\times I$, is therefore incompressible in $H=W\cup_BV$. It follows that $H$ is an irreducible and boundary irreducible manifold and that $\partial V$, when pushed slightly into the interior of $H$, is an incompressible torus in $H$.
\end{proof}

With the help of Lemmas~\ref{koz1-b} and \ref{hhbdy} we now obtain more information about the topology of an index 1 annular pair $(R_{i,j},J)$ in $X_K$ and spanning annulus $A\subset R_{i,j}$ by analyzing the manifold $R_{i,j}(\partial A)$.

\begin{lem}\label{k2}
Let $T_1\sqcup T_2\subset X_K$ be a simplicial collection of Seifert tori
that cobound an annular pair $(R_{1,2},J)$ of index 1 with spanning annulus $A\subset R_{1,2}$, such that $\partial_1A\subset T_1$ and $\partial_2A\subset T_2$. Then
\begin{enumerate}
\item
there is a closed 3-manifold $M$ such that, for $\{i,j\}=\{1,2\}$, $R_{1,2}(\partial_i A)=\mS^1\times\mD^2\,\#\,M$ with $\partial_{j}A$ the slope of the meridian of the solid torus summand $\mS^1\times \mD^2$,

\item
if $R_{1,2}(\partial_1A)$ is a solid torus then $R_{1,2}$ is a handlebody and so the pair $(R_{1,2},J)$ is primitive, hence minimal.
\end{enumerate}
\end{lem}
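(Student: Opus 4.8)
The plan is to prove (1) by a connected-sum argument built on a regular neighborhood, and to deduce (2) from (1) together with Lemmas~\ref{hhbdy} and \ref{koz1-b}.

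\emph{Part (1).} In $R_{1,2}(\partial_1A)$ the $2$-handle attached along $\partial_1A$ caps the spanning annulus $A$ off into a disk $\wh A_1$ with $\partial\wh A_1=\partial_2A$. First I would record two facts. The disk $\wh A_1$ is nonseparating, because cutting $R_{1,2}(\partial_1A)$ along $\wh A_1$ is homeomorphic to $R_{1,2}|A$, which is connected since the spanning annulus $A$ is nonseparating. And $\partial_2A$ is essential on the torus boundary $\mathcal T=\partial R_{1,2}(\partial_1A)$, since $\partial_2A\subset T_2$ is nonseparating and disjoint from the surgery curve $\partial_1A\subset T_1$. A regular neighborhood $W=N(\mathcal T\cup\wh A_1)$ is then a punctured solid torus (a solid torus minus an open ball): it is obtained from a collar $\mathcal T\times I$ by attaching a $2$-handle along $\partial_2A$, so its boundary is $\mathcal T\sqcup\mS^2$ and $\partial_2A$ is the meridian of $W$ filled in by a ball. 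The complement $R_{1,2}(\partial_1A)\setminus\intr\,W$ has a single $\mS^2$ boundary component; capping it with a ball yields a closed $3$-manifold $M$, and $R_{1,2}(\partial_1A)=(\mS^1\times\mD^2)\,\#\,M$ with $\partial_2A$ the meridian of the solid torus summand. The symmetric construction gives $R_{1,2}(\partial_2A)=(\mS^1\times\mD^2)\,\#\,M'$ with $\partial_1A$ the meridian; filling the remaining meridian in either expression yields $R_{1,2}(\partial_1A\sqcup\partial_2A)=(\mS^1\times\mS^2)\,\#\,M=(\mS^1\times\mS^2)\,\#\,M'$, whence $M\approx M'$ by uniqueness of prime decomposition. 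This establishes (1).

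\emph{Part (2).} If $R_{1,2}(\partial_1A)$ is a solid torus then by (1) we must have $M\approx\mS^3$, so $R_{1,2}(\partial_1A)\approx\mS^1\times\mD^2$ with $\partial_2A$ the meridian. I would then apply Lemma~\ref{hhbdy} to $H=R_{1,2}$ with $\alpha=\partial_1A$, $\beta=\partial_2A$ (so that $A$ is the annulus of hypothesis~(3) and $H(\alpha)$ is a solid torus), and $\gamma\subset T_2$ a curve meeting $\partial_2A$ minimally in one point. Since $R_{1,2}$ is atoroidal, being a pair by Lemma~\ref{genprop}, the toroidal alternative in the conclusion of Lemma~\ref{hhbdy} is excluded, forcing $R_{1,2}$ to be a genus two handlebody. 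Once $R_{1,2}$ is a handlebody, Lemma~\ref{koz1-b}(1) applied to the index~$1$ annular pair $(R_{1,2},J)$ shows the pair is primitive, and then $(R_{1,2},J)$ is minimal by Lemma~\ref{prim-2}(1).

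\emph{Main obstacle.} The one nontrivial input for Lemma~\ref{hhbdy} is its hypothesis~(4) that $H(\gamma)=R_{1,2}(\gamma)$ is also a solid torus. From the setup one gets only that $R_{1,2}(\gamma)$ is the exterior of a knot in $\mS^3$: since $\gamma$ is a longitude of $R_{1,2}(\partial_1A)\approx\mS^1\times\mD^2$ we have $R_{1,2}(\partial_1A)(\gamma)=\mS^3$, hence $R_{1,2}(\gamma)(\partial_1A)=\mS^3$ and $R_{1,2}(\gamma)$ is a knot exterior. The point is therefore to see that this knot is unknotted, equivalently that the co-core arc of the $\partial_1A$-handle, which meets the meridian disk $\wh A_1$ of $R_{1,2}(\partial_1A)$ exactly once, is trivial. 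I expect to pin this down using the uniqueness of the spanning annulus when the index is~$1$ (Lemmas~\ref{compa2} and \ref{annp}) to locate $\gamma$ so that a meridian disk of $R_{1,2}(\gamma)$ becomes visible; with hypothesis~(4) in hand, Lemma~\ref{hhbdy} and atoroidality deliver the handlebody conclusion and complete (2).
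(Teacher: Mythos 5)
Your Part (1) is correct and is essentially the paper's own argument: cap $A$ off to a disk in $R_{1,2}(\partial_iA)$, extract the $\mS^1\times\mD^2$ summand with meridian $\partial_jA$, and identify the two closed summands by filling the remaining slope and invoking uniqueness of prime decomposition.

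Part (2), however, has a genuine gap, and it sits exactly where you placed your ``main obstacle'': hypothesis (4) of Lemma~\ref{hhbdy}, that $R_{1,2}(\gamma)$ is a solid torus, is never established, and the route you sketch for it cannot work. First, your observation that $R_{1,2}(\gamma)$ is a knot exterior in $\mS^3$ carries no information: in the filling $R_{1,2}(\gamma)(\partial_1A)=\mS^3$ the slope $\partial_1A$ is by construction the meridian of the core of the filling solid torus, and \emph{every} knot exterior yields $\mS^3$ under meridional filling, so no appeal to properties of $\mS^3$ surgeries (Gordon--Luecke or otherwise) can be made from this alone. Second, producing such a $\gamma$ by an argument intrinsic to $(R_{1,2},A)$ (uniqueness of the spanning annulus, Lemmas~\ref{compa2} and \ref{annp}) begs the question: if $R_{1,2}$ is irreducible, atoroidal and \emph{not} a handlebody, then Lemma~\ref{hhbdy} itself shows that no curve $\gamma$ satisfying its hypotheses can exist, so the existence of $\gamma$ is equivalent to the conclusion you are trying to prove. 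The missing input has to come from the complementary side of $X_K$.

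That is how the paper closes the gap. It argues by contradiction: if $R_{1,2}$ is not a handlebody, then by Lemma~\ref{koz1-b}(2) the region $R_{2,1}$ is a genus two handlebody in which $\partial_1A$ and $\partial_2A$ are separated by a disk and (after using the symmetry of part (1)) $\partial_1A$ is primitive; this yields a disk $D\subset R_{2,1}$ meeting $\partial_1A$ in one point and disjoint from $\partial_2A$. Taking $L$ to be the core of the solid torus $R_{2,1}|D$, one has $X_L\approx R_{1,2}(\partial D)$, and part (1) gives
\[
X_L(\partial_2A)=R_{1,2}(\partial D)(\partial_2A)=R_{1,2}(\partial_2A)(\partial D)=\mS^3 ,
\]
since $R_{1,2}(\partial_2A)$ is a solid torus with meridian $\partial_1A$ and $\Delta(\partial D,\partial_1A)=1$. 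Because $\partial_2A$ is not a meridian of $L$ (it bounds no disk in $R_{2,1}$, hence none in $V=R_{2,1}|D$), Gordon--Luecke forces $L$ to be trivial, so $R_{1,2}(\partial D)$ \emph{is} a solid torus; only then does Lemma~\ref{hhbdy} (applied with $\gamma=\partial D$, which is dual to $\partial_1A$ rather than lying in $T_2$) combine with atoroidality from Lemma~\ref{genprop} to give the contradiction. So your overall frame (Lemma~\ref{hhbdy} plus atoroidality, then primitivity and minimality) matches the paper, but the decisive step requires the complementary handlebody and Gordon--Luecke, not an intrinsic choice of $\gamma$ in $T_2$.
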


\begin{proof}
Observe that for $\{i,j\}=\{1,2\}$ the boundary of the manifold $R_{1,2}(\partial_i A)$ is a torus. Since 
the spanning annulus $A\subset R_{1,2}$ turns into a disk in $R_{1,2}(\partial_i A)$ with boundary the circle $\partial_{j}A\subset\partial R_{1,2}(\partial_i A)$, it follows that 
$R_{1,2}(\partial_iA)=\mS^1\times\mD^2\#M_i$ for some closed 3-manifold $M_i$, where the meridian slope of the solid torus factor $\mS^1\times\mD^2$ is $\partial_{j}A$. 
Therefore
\begin{align*}
R_{1,2}(\partial A)&=R_{1,2}(\partial_1A)(\partial_2A)=\mS^1\times\mS^2\,\#\,M_1\\
&=R_{1,2}(\partial_2A)(\partial_1A)=\mS^1\times\mS^2\,\#\,M_2
\end{align*}
whence $M_1\approx M_2$ by uniqueness of prime factorization. Thus (1) holds.

\medskip
For part (2) suppose that $R_{1,2}(\partial_1A)$ is a solid torus and $R_{1,2}$ is not a handlebody. By Lemma~\ref{koz1-b} the region $R_{2,1}$ is a handlebody, the circles $\partial_1A\subset T_1$ and $\partial_2A\subset T_2$ are separated by a disk in $R_{2,1}$, and we may assume that $\partial_1A\subset T_1$ is a primitive circle in $R_{2,1}$.
Therefore there is a disk $D\subset R_{2,1}$ which intersects $\partial_1A$ transversely in one point and is disjoint from $\partial_2A$. 

Let $V\subset R_{2,1}$ be the solid torus $R_{2,1}|D$ and denote its core by $L\subset V$. The exterior of the knot $L\subset\mS^3=R_{1,2}\cup R_{2,1}$ is then the manifold $X_L=\mS^3\setminus\intr\,V\approx R_{1,2}(\partial D)$

By (1) the manifold $R_{1,2}(\partial_2A)$ is a solid torus with meridian slope the circle $\partial_1A$, and since $\Delta(\partial D,\partial_1A)=1$ and $\partial D\cap\partial_2A=\emptyset
$ it follows that  
\begin{align*}
\mS^3&=R_{1,2}(\partial_2A)(\partial D)\\
&= R_{1,2}(\partial D)(\partial_2A)\\
&= X_{L}(\partial_2A)
\end{align*}
Since $\partial_2A$ does not bound a disk in $R_{2,1}$, hence neither in the solid torus $V$, by \cite{gordonlu1} the knot $L\subset\mS^3$ is trivial and hence $R_{1,2}(\partial D)\approx X_L$ is a solid torus.
But then by Lemma~\ref{hhbdy} applied to the 4-tuple
$(H,\alpha,\beta,\gamma)=(R_{1,2},\partial_1A,\partial_2A,\partial D)$ the manifold $R_{1,2}$ must be toroidal, contradicting Lemma~\ref{genprop}(P1). Therefore $R_{1,2}$ is a handlebody and so the pair $(R_{1,2},J)$ is primitive, hence minimal by Lemma~\ref{prim-2}.
\end{proof}

The above results can now be combined to obtain the minimality of any index 1 annular pair in $X_K$. As a consequence we extend the classification of handlebody annular pairs in $X_K$ given in Lemma~\ref{koz1-b}(1) to include nonhandlebody such pairs.

For convenience we may denote by $R_{S,S'}$ and $R_{S',S}$ the regions in $X_K$ cobounded by two disjoint Seifert tori $S,S'\subset X_K$, so that $X_K=R_{S,S'}\cup R_{S',S}$.

\begin{prop}\label{annmin}
Let $\mT=T_1\sqcup\cdots\sqcup T_N$, $N\geq 1$, be a simplicial collection of  Seifert tori in $X_K$ with minimal pairs. Then
\begin{enumerate}
\item
any index 1 annular pair $(R_{i,j},J)$ ($i=j$ allowed) in $X_K$ is minimal, and if $R_{i,j}$ is not a handlebody then $|\mT|\leq 3$,

\item
if $|\mT|\geq 2$ and for some $k\geq 2$ the nonminimal pair $(R_{1,1+k},J)$ is annular of index $p\geq 2$ then $k=2$ and $R_{1,1+k}=R_{1,3}$, and if $T'_1,T'_3\subset R_{1,3}$ are the $J$-tori induced by $T_1,T_3$, respectively, then
\begin{enumerate}
\item
$T'_1$ and $T'_3$ are not isotopic in $X_K$,

\item
any $J$-torus in $R_{1,3}$ ($T_{2}$ for instance) is isotopic to $T_1$, $T_{3}$, $T'_1$ or $T'_3$.

\item
the pair $(R_{T_1,T'_1},J)$ is simple of index $p$ and $(R_{T'_1,T_3},J)$ is annular of index 1,

\item
the pair $(R_{T_1,T'_3},J)$ is annular of index 1 and $(R_{T'_3,T_3},J)$ is simple of index $p$.
\end{enumerate}
\end{enumerate}
\end{prop}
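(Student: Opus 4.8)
The plan is to use the handlebody/non-handlebody dichotomy of Lemma~\ref{genprop} together with the annulus-splitting rule of Lemma~\ref{ann-powers}(6) as the organizing tools. For part~(1), if $R_{i,j}$ is a handlebody then by (A3) an index~1 annular handlebody pair is primitive, hence minimal by Lemma~\ref{prim-2}(1). The substance is the non-handlebody case, where Lemma~\ref{genprop}(P2) makes $R_{j,i}$ a genus two handlebody whose structure is given by Lemma~\ref{koz1-b}(2). To prove minimality I would suppose a $J$-torus $S\subset R_{i,j}$ is not parallel to $T_i$ or $T_j$; then Lemma~\ref{ann-powers}(6) splits the spanning annulus into two index~1 pieces, and I would feed this into Lemma~\ref{k2} (writing $R_{i,j}(\partial A)=\mS^1\times\mD^2\,\#\,M$) and the longitude bookkeeping of Lemma~\ref{hhbdy} to force $R_{i,j}$ to be reducible or toroidal, contradicting Lemma~\ref{genprop}(P1). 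With minimality in hand $R_{i,j}=R_{i,i+1}$, and the bound $|\mT|\le 3$ follows because Lemma~\ref{koz1-b}(2) confines every $J$-torus of $R_{j,i}$ to the classes of $T_i$, $T_j$, and at most one induced torus.

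For part~(2) the pair has index $p\ge 2$ and is nonminimal, so Lemma~\ref{annp} supplies a unique core solid torus $V$ and (A4) produces induced $J$-tori $T'_1,T'_{1+k}$ with induced simple pairs of index $p$ along $T_1$ and $T_{1+k}$. Splitting the index-$p$ spanning annulus across $T'_1$ and across $T'_{1+k}$ and applying Lemma~\ref{ann-powers}(6) forces the respective complementary regions to have index~1; these are conclusions (c) and (d). Conclusion (a) is then immediate, since if $T'_1$ and $T'_{1+k}$ were isotopic the same region would be simultaneously simple of index $p$ and of index~1. For (b) I would invoke part~(1): each index-1 complementary region is minimal and each induced simple pair is minimal by Lemma~\ref{prim-2}(1), so every $J$-torus of $R_{1,1+k}$ is isotopic to one of $T_1,T_{1+k},T'_1,T'_{1+k}$.

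To extract $k=2$ I would count the intermediate collection tori $T_2,\dots,T_k$. By (b) each is isotopic to $T'_1$ or $T'_{1+k}$, and being a simplicial collection these are mutually nonparallel, so at most one of each class occurs. The point is that both cannot occur at once: the simple pair $R_{T_1,T'_1}$ of (c) is minimal, so it contains no interior $J$-torus other than copies of $T_1$ or $T'_1$, which forbids a disjoint nonparallel representative of the $T'_{1+k}$-class lying inside it; checking the indices via Lemma~\ref{ann-powers}(6) for each cyclic ordering of $\{T_1,T'_1,T'_{1+k},T_{1+k}\}$ shows every ordering contradicts either (a) or the minimality of the simple and index-1 sub-pairs. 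Hence exactly one intermediate torus survives and $R_{1,1+k}=R_{1,3}$.

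The hard part will be the non-handlebody minimality in part~(1). Abstractly one can glue two index-1 annular pairs into a nonminimal one, so the argument must genuinely use that $R_{i,j}\cup R_{j,i}=X_K$ lies in $\mS^3$ with $R_{j,i}$ a handlebody; it is the filling-and-longitude analysis of Lemmas~\ref{k2} and \ref{hhbdy} that converts a stray $J$-torus into a forbidden toroidal or reducible summand and thereby rules it out.
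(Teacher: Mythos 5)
Your part (1) has the right skeleton (handlebody case via (A3) and Lemma~\ref{prim-2}(1); non-handlebody case by contradiction), but as written the contradiction cannot be executed: Lemma~\ref{k2}(2) only applies once you know that $R_{i,j}(\partial_1A)$ is a solid torus, and having two index-1 sub-pairs does not give you that. The paper inserts two steps you skip. First it shows that the two sub-regions cut off by the essential torus $S$ are themselves genus two handlebodies: if, say, $R_{T_i,S}$ were not, then by Lemma~\ref{genprop}(P2),(P3) the region $R_{S,T_j}$ would be a handlebody, so $(R_{S,T_j},J)$ would be primitive by (A3); but Lemma~\ref{koz1-b}(2), applied to the non-handlebody annular pair $(R_{T_i,S},J)$, forces its complementary handlebody pair to be basic (hence minimal, impossible since it contains $T_j$) or forces $(R_{S,T_j},J)$ to be simple, contradicting Lemma~\ref{prim-2}(2). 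Only after both sub-pairs are known to be primitive does the filling computation $R_{i,j}(\partial_1A)=R_{S,T_j}(\omega_2)=\text{solid torus}$ go through, and only then does Lemma~\ref{k2}(2) yield that $R_{i,j}$ is a handlebody, contradicting your case hypothesis. (The ``reducible or toroidal'' dichotomy you cite, via Lemma~\ref{hhbdy}, is internal to the proof of Lemma~\ref{k2}; it is not the contradiction one quotes at this level.)

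The most serious gap is (2)(a), which you call immediate. If $T'_1$ and $T'_{1+k}$ were isotopic in $X_K$, the ambient isotopy carrying one to the other need not fix $T_1$ or $T_{1+k}$, so it does not carry $R_{T_1,T'_1}$ to $R_{T_1,T'_{1+k}}$, and no ``one region with two indices'' contradiction arises; moreover the two induced tori intersect essentially in two circles, so the regions you want to compare do not even coexist until the tori are made disjoint, which is precisely what is at issue. This is why the paper proves (2)(a) separately as Lemma~\ref{nonpar}, using the product-region criterion of Lemma~\ref{sakuma}: one must exclude product regions between $T'_1$ and $T'_3$ everywhere in $X_K$, including the candidate region between the pants pieces $P_1$ and $P_3$, which runs through the complementary region $R_{3,1}$ and is ruled out only because it would contain the solid torus $W$ and thus make $\alpha$ and $\beta$ simultaneously primitive circles and $p$-th powers in $P_1\times I$. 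A smaller gap of the same kind sits in your (2)(b): minimality of the sub-pairs only disposes of $J$-tori disjoint from the induced tori; a torus meeting $T'_1$ essentially must be handled, as in the paper, by splitting the index-$p$ spanning annulus along it (Lemma~\ref{ann-powers}(6)) and invoking the uniqueness of induced tori from (A4). Your remaining points are sound: the handlebody case of (1), the deduction $|\mT|\le 3$ from Lemma~\ref{koz1-b}(2), conclusions (c) and (d) from Lemma~\ref{ann-powers}(6) and \S\ref{induced}, and the $k=2$ count, which indeed needs only (b), part (1), and the fact that distinct collection tori are nonparallel.
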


\begin{proof}
If the annular pair $(R_{i,j},J)$ is minimal and $R_{i,j}$ is not a handlebody then $R_{j,i}$ is a handlebody by Lemma 2.1.1 (P2), in which case by Lemma 5.1.1(2) either 
\begin{itemize}
\item
the pair $(R_{j,i},J)$ is basic, hence minimal by Lemma 2.3.12(1); in this case we obtain $|\mT|\leq 2$, where $|\mT|=1$ if the basic pair $(R_{j,i},J)$ is trivial,

\item
$R_{j,i}$ contains exactly one $J$-torus not parallel to $T_i$ or $T_j$, in which case $|\mT|=3$.
\end{itemize}
Therefore the second part of (1) follows from the first part. For the first part of (1) we argue by contradiction.
For definiteness suppose that the pair $(R_{1,j},J)$ is annular of index 1 for some $j\neq 1,2$, so that $R_{1,3}\subseteq R_{1,j}$ with $(R_{1,3},J)$ a nonminimal pair. By Lemma~\ref{ann-powers}(6) we may therefore assume that $j=3$. 
Let $A\subset R_{1,3}$ be a spanning annulus. 

\medskip
(I) 
By Lemma~\ref{ann-powers}(6) we may assume that 
$A_1=A\cap R_{1,2}$ and $A_2=A\cap R_{2,3}$ are spanning annuli of index 1 in $R_{1,2}$ and $R_{2,3}$, respectively. Therefore each of the pairs $(R_{1,2},J)$ and $(R_{2,3},J)$ is annular of index 1 and so the region $R_{1,3}$ is not a handlebody by \cite[Lemma 3.7(3)]{valdez14}. 

\medskip
(II) {\it $R_{1,2}$ and $R_{2,3}$ are handlebodies.}\\
For if $R_{1,2}$ is not a handlebody then $R_{2,3}\subseteq R_{2,1}$ is a handlebody by Lemma~\ref{genprop}(P2),(P3) and hence the pair $(R_{2,3},J)$ is primitive by (I). This contradicts Lemma~\ref{koz1-b}(2)(b) since a primitive pair is neither basic nor simple by Lemma~\ref{prim-2}. Therefore $R_{1,2}$ is a handlebody, and by a similar argument $R_{2,3}$ is also a handlebody.

\medskip
(III) {\it $R_{1,3}(\partial_1A)$ is a solid torus.}\\
For by (I) and (II) the pairs $(R_{1,2},J)$ and $(R_{2,3},J)$ are primitive with spanning annuli $A_1\subset R_{1,2}$ and $R_{2,3}$. 
Denote the boundary components of $A_1$ by $\omega_1=A_1\cap T_1$ and $\omega_2=A_1\cap T_2$; these are primitive circles in $R_{1,2}$, and $\omega_2=A_2\cap T_2$ is primitive in $R_{2,3}$. 
Therefore $R_{1,2}(\omega_1)$ is a solid torus with meridian disk $\wh{A}_1$ such that $\partial\wh{A}_1=\omega_2\subset\partial R_{1,2}(\omega_1)$, and $R_{2,3}(\omega_2)$ is also a solid torus.

For $i=1,2$ each manifold $R_{i,i+1}(J)$ has boundary the tori $\wh{T}_i$ and $\wh{T}_{i+1}$, and $R_{1,3}(J)=R_{1,2}(J)\cup_{\wh{T}_2}R_{2,3}(J)$.

Therefore
\begin{align*}
R_{1,3}(\partial_1A)=R_{1,3}(\omega_1)&=R_{1,3}(J)(\omega_1)=[R_{2,3}(J)\cup_{\wh{T}_2} R_{1,2}(J)](\omega_1)\\
&=R_{2,3}(J)\cup_{\wh{T}_2} [R_{1,2}(J)(\omega_1)]
\\
&=R_{2,3}(J)(\partial\wh{A}_1)=R_{2,3}(\omega_2)=\text{ solid torus}
\end{align*}
This contradicts Lemma~\ref{k2}(2) since by (I) the region $R_{1,3}$ is not a handlebody. Therefore (1) holds.

\medskip
Part (2)(a) will be established in the next section in Lemma~\ref{nonpar}.
Parts (2)(c) and (2)(d) follow from Lemma~\ref{ann-powers}(6) and the properties of induced tori in \S\ref{induced}.

\medskip
For part (2)(b) suppose that $S\subset R_{1,3}$ is a $J$-torus that is not parallel to $T_1$ or $T_3$. By Lemma~\ref{ann-powers}(6) we may assume that the pair $(R_{T_1,S},J)$ is annular of index $p\geq 2$ while $(R_{S,T_3},J)$ is annular of index 1. 

By \S\ref{index} (A4) the $J$-torus $T'_1$ induced by $T_1$ in $R_{T_1,S}$ is isotopic to the $J$-torus induced by $T_1$ in $R_{1,3}$, so it is not parallel to $T_3$ in $R_{1,3}$, and cobounds a region $R_{T_1,T'_1}\subset R_{T_1,S}$ such that the pair $(R_{T_1,T'_1},J)$ is simple of index $p\geq 2$.

By Lemma~\ref{ann-powers}(6) the pair $(R_{T'_1,T_3},J)$ is then  annular of index 1 and hence minimal by (1). As $R_{T'_1,T_3}=R_{T'_1,S}\cup_S R_{S,T_3}$ and the pair $(R_{S,T_3},J)$ is nontrivial, it follows that the pair $(R_{T'_1,S},J)$ must be trivial and hence that $S$ is parallel to $T'_1$. Therefore (2)(b) holds.
\end{proof}

Examples of genus one hyperbolic knots in $\mS^3$ realizing the conditions of Proposition~\ref{annmin}(1), with $R_{i,j}$ a handlebody or not, can be found in Sections~\ref{82}, \ref{hk23} and \ref{hk23-2}.

\subsection{Exchange regions and the exchange trick.}\label{trick}

By Proposition~\ref{annmin}(2)(c)(d), 
given an annular pair $(R_{1,3},J)$ of index $p\geq 2$ of a simplicial collection of Seifert tori $\mT\subset X_K$ with minimal pairs, the region $R_{1,3}\subset X_K$ contains two nontrivial minimal subpairs $(R_{T_1,T},J)$ and $R_{T,T_3},J)$ where the nature of each subpair alternates between being simple of index $p$ or annular of index 1 depending on the choice of splitting $J$-torus $T\subset R_{1,3}$. 

We will refer to any region in $X_K$ with properties similar to those of the region $R_{1,3}\subset X_K$ as an {\it exchange region}, to the pair $(R_{1,3},J)$ as an {\it exchange pair,}
and to the switch of type of subpair in $R_{1,3}$ adjacent to $T_1$ or $T_3$ between a simple and an index 1 pair as the {\it exchange trick}.

In this section we prove Lemma~\ref{nonpar} which states that the two induced $J$-tori in an exchange region are not isotopic in $X_K$, thus completing the proof of Proposition~\ref{annmin}(2)(a). 
We first review the construction of the induced $J$-tori $T'_1,T'_3\subset R_{1,3}$ given in \S\ref{induced} and \S\ref{index}(A4).

By hypothesis the pair $(R_{1,3},J)$ is annular of index $p\geq 2$ and so by Lemma~\ref{annp}(5) there are disjoint spanning annuli $A,A'\subset R_{1,3}$ which cobound a solid torus $V\subset R_{1,3}$ around which each spanning annulus runs $p$ times.

Push $V$ off $T_3$ and into $R_{1,3}$ to obtain a companion solid torus $V_1\subset R_{1,3}$ of the circle $\omega_1=A\cap T_1$. Similarly the circle $\omega_3\subset T_3$ has a companion solid torus $V_3\subset R_{1,3}$. For $i=1,3$ the frontier of a thin regular neighborhood $N(T_i\cup V_i)\subset R_{1,3}$ then consists of $T_i$ and the $J$-torus $T'_i$ induced by $T_i$. The situation is represented in Fig.~\ref{h11-2}.

Notice that $T'_1$ and $T'_3$ intersect transversely in two circles $T'_1\cap T'_3=\alpha\sqcup\beta$ that are nonseparating in $T'_1$ and $T'_3$.
The closures of the components of $T'_1\setminus T'_3$ consist of an annulus $B_1$ and a pants $P_1$, and those of $T'_3\setminus T'_1$ of an annulus $B_{3}$ and a pants $P_{3}$, with $\partial B_1=\alpha\sqcup\beta=\partial B_3$ as shown in Fig.~\ref{h11-2}.

By the construction of the induced tori $T'_1,T'_3$, the torus $B_1\cup B_3\subset R_{1,3}$ bounds a solid torus $W\subset R_{1,3}$ obtained by pushing the solid torus $V\subset R_{1,3}$ off from $T_1,T_3$, as represented in Fig.~\ref{h11-2}). Since the index of $(R_{1,3},J)$ is $p\geq 2$, each spanning annulus runs $p$ times around $V$ and hence each circle $\alpha,\beta$ runs $p$ times around $W$. 

\begin{figure}
\Figw{3in}{h11-2}{The induced $J$-tori $T'_1,T'_3$ in the exchange region $R_{1,3}\subset X_K$.}{h11-2}
\end{figure}

In order to establish the isotopy properties of the induced $J$-tori $T'_1,T'_3\subset R_{1,3}$ we use the following result of 
\cite[Proposition 4.8]{sakuma4},
an elaboration of the results in \cite{wald1}.

Let $P,Q$ be surfaces properly embedded in a 3-manifold $M$ and which intersect transversely. A {\it product region} between $P$ and $Q$ is an embedded copy of a manifold of the form $\wt{\Sigma}=\Sigma\times I/\sim$ in $M$, where $\Sigma$ is a surface, $b$ is a compact 1-submanifold of $\partial\Sigma$, and
\begin{enumerate}
\item[(i)]
for each $x\in b$ the relation $\sim$ collapses the arc $\{x\}\times I$ to a point,

\item[(ii)]
$\Sigma\times\{0\}\subset P$, $\Sigma\times\{1\}\subset Q$, and 
$\cl[(\partial\Sigma-b)]\times I\subset\partial M$,

\item[(iii)]
$P\cap\intr\,\wt{\Sigma}=\emptyset$, and $Q\cap\intr\,\wt{\Sigma}$ may be nonempty only when $\Sigma$ is a disk and $P\cap\partial M$ is connected.
\end{enumerate}

\begin{lem}[\cite{sakuma4}]\label{sakuma}
Let $M$ be a Haken 3-manifold with incompressible boundary, and let $P,Q\subset M$ be properly embedded incompressible and boundary incompressible surfaces which intersect transversely with $\partial P\cap\partial Q=\emptyset$. If in $M$ the surfaces $P$ and $Q$ are isotopic or $P$ is isotopic to a surface disjoint from $Q$ then there is a product region between $P$ and $Q$.
\hfill\qed
\end{lem}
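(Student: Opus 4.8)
The plan is to merge the two hypotheses into one standard configuration and then run a sweepout that converts the hypothesized isotopy into an embedded product region. First note that the hypotheses coincide: if $P$ is isotopic to $Q$ then, pushing that isotoped copy slightly to one side of $Q$, $P$ is also isotopic to a surface $Q^{*}$ with $Q^{*}\cap Q=\emptyset$, so in every case I may assume $P$ is isotopic to a surface disjoint from $Q$. Since $M$ is irreducible (being Haken) and $P,Q$ are incompressible, an innermost-disk cleanup removes, by an ambient isotopy not affecting the hypothesis, every circle of $P\cap Q$ that is trivial in $P$ or in $Q$; and because $\partial P\cap\partial Q=\emptyset$ the intersection $P\cap Q$ has no arc components. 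Thus I may assume $P\cap Q$ consists of circles essential in both surfaces. If this intersection is empty then $P$ and $Q$ are disjoint and isotopic, so by Waldhausen's parallelism theorem \cite{wald1} they cobound a product $P\times I$, a product region with $b=\emptyset$, $\Sigma=P$ and $\cl[\partial\Sigma]\times I\subset\partial M$. From now on I assume $P\cap Q\neq\emptyset$.

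Next I would turn the isotopy into a sweepout. Choose an ambient isotopy $h_{t}$ of $M$ with $h_{0}=\mathrm{id}$ and $h_{1}(P)\cap Q=\emptyset$, arranged so that $h_{t}(\partial P)\cap\partial Q=\emptyset$ for all $t$ (possible since $\partial P,\partial Q$ are disjoint $1$-manifolds in $\partial M$ that are disjoint at the two ends). Define $g\colon P\times I\to M$ by $g(x,t)=h_{t}(x)$ and put $g$ in general position with respect to $Q$ rel the ends. Then $S=g^{-1}(Q)\subset P\times I$ is a properly embedded surface whose boundary lies entirely on $P\times\{0\}$, where $\partial S=P\cap Q$, which is disjoint from $P\times\{1\}$ (since $h_{1}(P)\cap Q=\emptyset$) and from $\partial P\times I$ (by the choice of $h_{t}$). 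Using again that $Q$ is incompressible and $M$ irreducible, surgering $g$ along innermost disks lets me take $S$ incompressible in $P\times I$; note $S\neq\emptyset$ since $\partial S=P\cap Q\neq\emptyset$.

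Then I would extract the product region from the $I$-bundle structure of $P\times I$. An incompressible surface in the product $P\times I$ whose boundary consists of essential circles lying only on the face $P\times\{0\}$, and which is disjoint from the opposite face $P\times\{1\}$ and from the vertical boundary $\partial P\times I$, must by the classification of incompressible surfaces in a product have a component parallel into $P\times\{0\}$. Choosing a topmost such component $\Sigma_{0}$ (one cutting off a product $\Sigma\times I'\subset P\times I$ whose interior misses $S$), the restriction of $g$ carries $\Sigma\times\{0\}$ into $P$, carries $\Sigma_{0}$ into $Q$, sends the circles of $\partial\Sigma$ lying in $P\cap Q$ to the collapsed hinge locus $b$, and sends the arcs of $\partial\Sigma$ lying on $\partial P$ into $\partial M$. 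This is exactly the data required by conditions (i)--(iii) in the definition of a product region.

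The main obstacle is that $g$ is only a level-preserving immersion, not an embedding, so $g(\Sigma\times I')$ need not be embedded. To repair this I would exploit the topmost choice of $\Sigma_{0}$, whose image has interior disjoint from $Q$: passing to the complementary region of $P\cup Q$ in $M$ that this piece faces, and applying Waldhausen's parallelism theorem \cite{wald1} inside the Haken manifold with incompressible boundary obtained by cutting $M$ along $Q$, one obtains an honest embedded product between a subsurface of $P$ and a subsurface of $Q$, which is the embedded product region sought. Reconciling this region with the precise boundary bookkeeping of the definition---matching $b$ with the hinge circles of $P\cap Q$, matching $\cl[\partial\Sigma-b]$ with the part running along $\partial M$, and verifying condition (iii) on when $Q$ may meet $\intr\,\widetilde{\Sigma}$---is the delicate point that the full argument of \cite{sakuma4} carries out.
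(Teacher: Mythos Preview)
The paper does not prove this lemma; it is quoted from \cite{sakuma4} (Sakuma, Proposition~4.8) with a \qed, so there is no ``paper's own proof'' to compare against. Your sweepout outline is in the spirit of the Waldhausen-type arguments that underlie Sakuma's result, and the key maneuver---pulling $Q$ back through the isotopy to get an incompressible surface $S\subset P\times I$ with $\partial S\subset P\times\{0\}$, then using the classification of incompressible surfaces in a product---is the right engine.

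Two points to flag. First, a small slip: after reducing both hypotheses to ``$P$ is isotopic to a surface disjoint from $Q$'', you then treat the case $P\cap Q=\emptyset$ by asserting ``$P$ and $Q$ are disjoint and isotopic''. That does not follow from the reduced hypothesis (it only followed from the first hypothesis, which you discarded). In practice the lemma is applied when $P\cap Q\neq\emptyset$, so this is a boundary case you could simply exclude, but as written the logic is broken there.

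Second, and more substantively, you correctly identify the real difficulty: $g$ is an immersion, so the candidate region $g(\Sigma\times I')$ need not be embedded. Your proposed fix---cut $M$ along $Q$ and apply Waldhausen to the piece $\Sigma\subset P$ in $M|Q$---is the right direction, but it is not free. You need that $\Sigma$ is incompressible and \emph{boundary incompressible} in $M|Q$ (the latter is not automatic from incompressibility of $P$ in $M$), and you need the correct version of Waldhausen's theorem for surfaces with boundary homotopic rel~$\partial$ into $\partial(M|Q)$. You also need to check that the resulting embedded parallelism actually satisfies condition~(iii), in particular that $P$ does not re-enter its interior. These are exactly the ``delicate points'' you defer to \cite{sakuma4}; your sketch is an honest outline, but it stops short of a proof.
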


In the presence of a product region $\wt{\Sigma}$ between $P$ and $Q$  it is possible to reduce $|P\cap Q|$ by an isotopy that exchanges $\Sigma\times\{1\}\subset Q$ with $\Sigma\times\{0\}$ and pushes the resulting new surface $Q$ slightly off $P$.

We will apply the lemma to surfaces with disjoint boundary, in which case the 1-submanifold $b\subset\partial\Sigma$ is simply a union of components of $\partial\Sigma\subset \partial P\sqcup\partial Q\sqcup (P\cap Q)$.

\begin{lem}\label{nonpar}
The Seifert tori $T'_1,T'_3\subset X_K$ are nonisotopic in $X_K$.
\end{lem}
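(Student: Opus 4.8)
The plan is to argue by contradiction, assuming that $T'_1$ and $T'_3$ are isotopic in $X_K$, and to derive a contradiction with the fact that each of $\alpha,\beta$ runs $p\geq 2$ times around the solid torus $W\subset R_{1,3}$. Since $T'_1$ and $T'_3$ are incompressible $J$-tori with disjoint boundaries lying in the Haken manifold $X_K$ with incompressible boundary, Lemma~\ref{sakuma} applies: if $T'_1$ is isotopic to $T'_3$ then there is a product region $\wt{\Sigma}$ between them. First I would observe that, since $\partial T'_1\cap\partial T'_3=\emptyset$, the $1$-submanifold $b\subset\partial\Sigma$ consists of whole components of $\partial\Sigma$, and that the intersection $T'_1\cap T'_3=\alpha\sqcup\beta$ is exactly the pair of nonseparating circles described just before the statement. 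The product region $\wt\Sigma$ then has its horizontal boundary $\Sigma\times\{0\}$ in $T'_1$ and $\Sigma\times\{1\}$ in $T'_3$, with vertical boundary lying in $\partial X_K$ and in $\alpha\sqcup\beta$.

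The key step is to identify $\Sigma$ and the product region concretely in terms of the decompositions $T'_1=B_1\cup P_1$ and $T'_3=B_3\cup P_3$ from the preamble, where $B_1,B_3$ are annuli with $\partial B_1=\alpha\sqcup\beta=\partial B_3$ and $P_1,P_3$ are the complementary pants. The boundary slopes $\partial T'_1$ and $\partial T'_3$ are both of slope $J$ in $\partial X_K$, and the curves $\alpha,\beta$ are the only components of $T'_1\cap T'_3$; so $\Sigma\times\{0\}$ must be one of the pieces $B_1$ or $P_1$ of $T'_1$ cut along $\alpha\sqcup\beta$, and correspondingly $\Sigma\times\{1\}$ must be a matching piece of $T'_3$. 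I would next argue that the piece involved cannot be the pants $P_1$ paired with $P_3$: a product region with $\Sigma$ a pants would force $P_1$ and $P_3$ to cobound a product $P\times I$ in $X_K$, which (together with the vertical annulus in $\partial X_K$) would exhibit $T'_1$ and $T'_3$ as parallel across a region disjoint from $W$, contradicting that the circles $\alpha,\beta$ wrap $p\geq 2$ times around $W$. Hence the relevant product region has $\Sigma=B_1\cong$ annulus, identifying the annuli $B_1\subset T'_1$ and $B_3\subset T'_3$ as the two ends of an annular product $B_1\times I$ whose vertical boundary is $\alpha\sqcup\beta$.

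The heart of the argument, and the step I expect to be the main obstacle, is to rule out this remaining annular product region by using the geometry of $W$. The annuli $B_1,B_3$ lie on $\partial W$, where $B_1\cup B_3=\partial W$ is the torus bounding the solid torus $W$, and both $\alpha$ and $\beta$ run $p\geq 2$ times longitudinally around $W$. A product region $B_1\times I$ between $B_1$ and $B_3$ would have to sit either inside $W$ or inside $\cl[X_K\setminus W]$; in either case it would provide an isotopy in $X_K$ carrying $B_1$ onto $B_3$ rel $\alpha\sqcup\beta$ that is incompatible with the winding number $p\geq 2$. Concretely, pushing $\Sigma\times\{1\}=B_3$ across the product onto $\Sigma\times\{0\}=B_1$ and off $T'_1$ would reduce $|T'_1\cap T'_3|$ by removing $\alpha\sqcup\beta$, but that would make $T'_1$ and $T'_3$ disjoint and force $W$ to be a trivial parallelism region, contradicting that each spanning annulus $A,A'$ runs $p\geq 2$ times around $V$ and hence that $\alpha,\beta$ wrap $p$ times around $W$. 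I would make this precise by computing the homology class of $\alpha$ (equivalently $\partial B_1$) in $H_1(\partial W)$ on the two ends of the purported product and observing that a product region forces these classes to agree as longitudes of $W$, whereas the induced-torus construction forces them to be $p$-fold multiples. This contradiction shows no product region exists, so by Lemma~\ref{sakuma} the tori $T'_1$ and $T'_3$ are not isotopic in $X_K$.
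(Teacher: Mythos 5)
Your overall skeleton matches the paper's: reduce the statement, via Lemma~\ref{sakuma}, to showing there is no product region between $T'_1$ and $T'_3$, then split into the annulus--annulus case ($B_1,B_3$) and the pants--pants case ($P_1,P_3$). Your annulus case is essentially correct, and in fact more self-contained than the paper's: the only candidate product region between $B_1$ and $B_3$ is $W$ itself, and a product structure $B_1\times I/\!\sim$ on $W$ would make $\alpha$ (and $\beta$) a longitude of $W$, contradicting that these circles run $p\geq 2$ times around $W$. (The paper instead rules this case out by first proving that no product region can lie inside $R_{1,3}$ at all.)

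The genuine gap is in the pants case, and it is twofold. First, there are two candidate regions cobounded by $P_1\cup P_3$ together with an annulus in $\partial X_K$: the component lying inside $R_{1,3}$, disjoint from $W$, and the component on the other side, which contains $W$ and runs through $R_{3,1}$. You address only the first, and your claimed contradiction for it does not follow: the winding of $\alpha,\beta$ around $W$ constrains $\pi_1(W)$ and says nothing about a region disjoint from $W$; moreover, a product region between the pants pieces does not ``exhibit $T'_1$ and $T'_3$ as parallel''---it only lets you isotope $P_3$ across onto $P_1$, after which the tori are disjoint but not a priori parallel. Ruling out this region is precisely where the paper needs structural input beyond winding numbers: by Proposition~\ref{annmin}(2)(c)(d) the subpairs adjacent to $T_1$ determined by $T'_1$ and by $T'_3$ have different indices ($p\geq 2$ versus $1$), which forces $T'_1$ and $T'_3$ to intersect minimally within $R_{1,3}$, so by Lemma~\ref{sakuma} no product region is contained in $R_{1,3}$. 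Second, for the candidate on the far side you never consider, the paper's argument is that such a product region $\wt{\Sigma}\approx P_1\times I$ is a genus two handlebody in which $\alpha$ and $\beta$ are primitive circles, while $W\subset\wt{\Sigma}$ forces them to be $p$-th powers, which is impossible in a free group. Your winding-number idea is exactly the right ingredient for this second subcase, but as written your pants case rules out neither region, so the proof is incomplete.
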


\begin{proof}
By Lemma~\ref{sakuma} it suffices to show that there are no product regions in $X_K$ between $T'_1$ and $T'_3$.

By Proposition~\ref{annmin}(2) for $i=1,3$ the pairs $(R_{T_1,T'_i},J)$ and $(R_{T'_i,T_3},J)$ are nontrivial  and so
the induced $J$-tori $T'_1,T'_3$ are not parallel to $T_1$ nor $T_3$ in $R_{1,3}$. Therefore $T'_1$ and $T'_3$ intersect minimally in $R_{1,3}$ and so by Lemma~\ref{sakuma} there are no product regions between $T'_1$ and $T'_3$ contained in the region $R_{1,3}$.  

%

By the above construction of the induced tori $T'_1$ and $T'_3$
any product region $\wt{\Sigma}$ in $X_K$ between $T'_1$ and $T'_3$ must run between the following subsurfaces of $T'_1$ and $T'_3$:

\medskip
{\it (a) $B_1$ and $B_3$:} Here the only possible product region $\wt{\Sigma}\subset X_K$ must be of the form
\[
\Sigma=B_1,  \quad 
b=\partial B_1=\partial B_{3},  \quad
\wt{\Sigma}\cap T'_1=
\Sigma\times\{0\}=B_1, \quad
\wt{\Sigma}\cap T'_3=
\Sigma\times\{1\}=B_{3}, \quad
\wt{\Sigma}\cap\partial X_K=\emptyset
\]
whence necessarily $\wt{\Sigma}=W\subset R_{1,3}$, contradicting the argument above that $\wt{\Sigma}\not\subset R_{1,3}$.

\medskip
{\it (b)  $P_1$ and $P_{3}$:}
The surface $P_1\cup P_{3}$ is a separating twice punctured torus properly embedded in $X_K$ and so any product region $\wt{\Sigma}$ between $P_1$ and $P_{3}$ must be constructed from 
\[
\Sigma=P_1,  \quad 
b=\partial P_1\cap\partial P_{3}=\alpha\sqcup\beta,  \quad
\wt{\Sigma}\cap T'_1=
\Sigma\times\{0\}=P_1, \quad
\wt{\Sigma}\cap T_{3}=
\Sigma\times\{1\}=P_{3}, \quad
\wt{\Sigma}\cap\partial X_K=(\partial T'_1)\times I
\]
Thus $\wt{\Sigma}\approx P_1\times I$ is a genus two handlebody such that each component of $\partial P_1\supset\alpha\sqcup\beta$ is a primitive circle in $\wt{\Sigma}$.
However, as $\wt{\Sigma}\not\subset R_{1,3}$, we must have $W\subset\wt{\Sigma}$ (see Fig.~\ref{h11-2}) which implies that $\alpha$ and $\beta$ are $p\geq 2$ power circles in $\wt{\Sigma}$. 

This last contradiction shows that there are no product regions in $X_K$ between $T'_1$ and $T'_3$.
\end{proof}

\section{No exchange regions for $|\mT|=5$.}\label{classif}

In this section we assume that $K\subset\mS^3$ is a genus one hyperbolic knot which bounds a maximal simplicial collection of five Seifert tori $\mT=T_1\sqcup\cdots\sqcup T_5\subset X_K$.
Our goal is to prove the following result:

\begin{prop}\label{maxcase2}
If $|\mT|=5$ then no pair $(R_{i,i+2},J)$ is an exchange pair and the maximal simplicial collection $\mT\subset X_K$ is unique up to isotopy.
\end{prop}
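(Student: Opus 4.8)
The plan is to prove the two assertions in order, extracting uniqueness from the nonexistence of exchange pairs. For the uniqueness statement I would invoke the characterization established in Proposition~\ref{annmin}: $MS(K)$ has more than one top-dimensional simplex if and only if every maximal simplicial collection in $X_K$ contains an annular pair of the form $(R_{i,i+2},J)$. Such a pair necessarily has index $p\geq 2$: an index-$1$ annular pair is minimal by Proposition~\ref{annmin}(1), yet $T_{i+1}$ is a $J$-torus in $R_{i,i+2}$ not parallel to $T_i$ or $T_{i+2}$, so minimality would fail. Hence an annular pair of the form $(R_{i,i+2},J)$ is exactly an exchange pair, and once I show the size-$5$ collection $\mT$ admits no exchange pair, the ``if and only if'' forces $MS(K)$ to have a single top-dimensional simplex, i.e.\ $\mT$ is unique up to isotopy.

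The core of the work is therefore to rule out exchange pairs, which I would do by contradiction. Suppose $(R_{1,3},J)$ is an exchange pair of index $p\geq 2$ (after relabeling). Since $|\mT|=5$, by \cite{valdez14} every region $R_{i,i+1}$ is a genus two handlebody, and since $\mT$ is maximal each pair $(R_{i,i+1},J)$ is minimal (a nonparallel interior $J$-torus would enlarge $\mT$ beyond size $5$). Splitting a spanning annulus $A\subset R_{1,3}$ along $T_2$ as in Lemma~\ref{ann-powers}(6) shows that, in some order, one of $(R_{1,2},J),(R_{2,3},J)$ has index $p$ and the other index $1$; being minimal handlebody pairs, the index-$p$ one is simple by Lemma~\ref{annp}(5) (cf.\ Lemma~\ref{ann2}) and the index-$1$ one is primitive by \S\ref{index}(A3). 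By Lemma~\ref{ann-powers}(7) the two spanning slopes agree on $T_2$, so $R_{1,3}$ is itself a handlebody; using the exchange symmetry $T'_1\leftrightarrow T'_3$ I may assume $(R_{1,2},J)$ is simple and $(R_{2,3},J)$ is primitive.

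Next I would pass to the handlebody-knot setup of Lemma~\ref{koz1-b}: let $L\subset\mS^3$ be the core of $A$ and $P=\cl[(T_1\cup T_3)\setminus N(A)]$ the associated $4$-punctured sphere, separating $X_L$ into $W_1=\cl[R_{1,3}\setminus N(A)]$ and $W_2=R_{3,1}\cup N(A)$. The simple-plus-primitive splitting just obtained means $P$ compresses on the $W_1$ side. The goal is to show $P$ also compresses on the $W_2$ side, for then Remark~\ref{remkoz}(2) gives $|\mT|\leq 4$, contradicting $|\mT|=5$. To locate the second compression I would analyze $R_{3,1}$, recording via Lemma~\ref{ann-powers}(4) that, since $A$ has index $p\geq 2$, neither $\omega_1=A\cap T_1$ nor $\omega_3=A\cap T_3$ is coannular or has a companion annulus in $R_{3,1}$, and via the side-uniqueness of companion annuli (Lemma~\ref{ann-powers}(3)) that their companion annuli are confined to the $R_{1,3}$ side. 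When $R_{3,1}$ is a genus two handlebody this should yield a compressing disk of $\partial R_{3,1}$ disjoint from $N(A)$, hence a $W_2$-compression of $P$ and the desired contradiction. An alternative form of the same contradiction I would keep in reserve is the direct production of a sixth Seifert torus disjoint from and nonparallel to $\mT$, violating the bound $|\mT|\leq 5$ of \cite{valdez14}.

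The main obstacle is precisely the analysis of the complementary region $R_{3,1}$, which unlike the $R_{i,i+1}$ need not be a handlebody and is a union of three handlebody regions glued cyclically. The genuinely delicate case is when $R_{3,1}$ is boundary irreducible: then Lemma~\ref{koz1-b} leaves us in case~(1) only, $P$ need not compress on the $W_2$ side, and Remark~\ref{remkoz}(2) does not apply directly, so a separate argument is required. Here I expect the power-circle data to do the work: $\omega_3\subset T_3$ is primitive in $R_{2,3}$ but a $p$-power in $R_{1,3}$, its companion annulus pinned to the $R_{1,3}$ side by Lemma~\ref{ann-powers}(3), while Lemma~\ref{nonpar} and Proposition~\ref{annmin}(2)(b) limit the $J$-tori available in $R_{1,3}$ to $T_1,T_3,T'_1,T'_3$. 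The task is to show that this rigidity, combined with the all-handlebody cyclic structure of the four remaining regions and the constraint that $R_{3,1}$ must accommodate the nonparallel tori $T_4,T_5$, is untenable, thereby completing the contradiction and hence both parts of the proposition.
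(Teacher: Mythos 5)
Your opening reductions are sound and match the paper's framework: an exchange pair must have index $p\geq 2$, for $|\mT|=5$ all regions $R_{i,i+1}$ are minimal handlebody pairs, splitting the spanning annulus along $T_2$ yields one simple and one primitive subpair, and consequently $R_{1,3}$ is a handlebody. The fatal problem is the pivotal step of your main case: the claim that once $R_{3,1}$ is a genus two handlebody you can find a compressing disk of $\partial R_{3,1}$ disjoint from $N(A)$, hence a compression of $P$ on the $W_2$ side, and then get $|\mT|\leq 4$ from Remark~\ref{remkoz}(2). This step is not merely unjustified; it is false, provably so inside the paper's own framework. Since $|\mT|=5$, the handlebody pair $(R_{3,1},J)$ contains the two disjoint, nonparallel, non-boundary-parallel $J$-tori $T_4,T_5$, so it is a \emph{maximal} pair, and by \S\ref{max} no circle in $T_1$ or $T_3$ is primitive in $R_{3,1}$. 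On the other hand, because the spanning annulus $A\subset R_{1,3}$ has index $p\geq2$, Lemma~\ref{annp}(5) places companion annuli for $\omega_1=A\cap T_1$ and $\omega_3=A\cap T_3$ inside $R_{1,3}$, so by Lemma~\ref{ann-powers}(3)--(4) neither circle has a companion annulus in $R_{3,1}$, i.e.\ neither is a power circle there. By \cite[Lemma 3.3]{valdez14} (see \S\ref{prim-pwr}), $\partial R_{3,1}\setminus\omega_1$ is then incompressible in $R_{3,1}$: \emph{every} compressing disk of the handlebody $R_{3,1}$ meets $\omega_1$ (and likewise $\omega_3$). Hence $P$ never compresses on the $W_2$ side when $|\mT|=5$, conclusions (1) and (2) of Lemma~\ref{koz1-b} are mutually exclusive in this situation, Remark~\ref{remkoz}(2) can never be triggered, and your reserve plan (directly producing a sixth Seifert torus) has no source either.

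This is precisely why the paper's proof is long. After showing via a Gabai meridional planar surface (Lemma~\ref{f1}) that of any two consecutive pairs at least one is simple, the paper proves in (E1)--(E2) that $R_{2,4}$ is not a handlebody, that $(R_{4,2},J)$ is a maximal (double) pair, that the core knots $K_3,K_5$ are Eudave-Mu\~noz knots of index $2$ (using \cite{ozawa01}, \cite{gordonlu6}, \cite{eudave2}), and---nontrivially---that $R_{3,1}$ \emph{is} a handlebody; then (E3) shows the core knot $K_1$ of the simple subpair of the exchange region admits a nonintegral surgery yielding $\mS^3$ or a lens space, hence is trivial or a torus knot by \cite{cgls,gordonlu1}; finally \S\ref{RK} builds an explicit Heegaard diagram for $R_{1,3}\cup R_{3,1}=\mS^3$ and uses free-group and homology computations (Lemmas~\ref{string} and \ref{redd}) together with \cite{cgls} to rule out both possibilities for $K_1$. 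None of this machinery appears in your proposal, and your treatment of the case where $R_{3,1}$ is boundary irreducible is a statement of hope rather than an argument (in the paper that case is eliminated by the Eudave-Mu\~noz analysis of (E2), not by rigidity of $J$-tori in $R_{1,3}$). Your framing of the statement---reducing uniqueness to the nonexistence of exchange pairs, as in Lemma~\ref{new01c}(4) and Proposition~\ref{mainp}---is correct, but the heart of the proof is missing.
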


A sketch of the proof goes like this. Both an exchange region, say $R_{1,3}$, and its complementary region $R_{3,1}$ must be genus two handlebodies; thus the pair $(R_{3,1},J)$ is maximal. 
At this point we use a method developed in \cite[\S 7.3]{valdez14} to construct a Heegaard diagram from the Heegaard decomposition $R_{1,3}\cup R_{3,1}$ which applies whenever one of the pairs $(R_{1,3},J)$ or $(R_{3,1},J)$ is maximal. 
That the Heegaard decomposition $R_{1,3}\cup R_{3,1}$ cannot correspond to $\mS^3$ can then be detected from the fact that otherwise the core knot of a simple pair in $R_{1,3}$ should be a trivial or torus knot in $\mS^3$, which we show cannot be the case.

\subsection{The regions $R_{i,i+1}$.}
\label{61}
The following general result restricts the types of pairs $(R_{i,i+1},J)$ produced by the simplicial collection $\mT\subset X_K$. Its proof relies on an analysis of the essential graphs of intersection between $\mT$ and a {\it Gabai} meridional planar surface for $\mT$ from \cite{gabai03}, along with some basic results from \cite[\S2.1--2.2]{valdez14}.

\begin{lem}\label{f1}
For each $1\leq i\leq 5$ the region $R_{i,i+1}$ is a handlebody, the pair $(R_{i,i+1},J)$ is minimal, and at least one of the pairs $(R_{i,i+1},J)$ or $(R_{i+1,i+2},J)$ is simple.
\end{lem}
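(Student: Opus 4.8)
The plan is to dispatch the three assertions in turn, the first two quickly and the third as the substantial one.

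\textbf{Handlebodies and a trichotomy.} The first assertion is immediate: since $|\mT|=5>4$, Lemma~\ref{genprop}(P4) forbids any $R_{i,i+1}$ from failing to be a handlebody, so each $R_{i,i+1}$ is a genus two handlebody and each $(R_{i,i+1},J)$ is a handlebody pair; it is nontrivial because $T_i,T_{i+1}$ are nonparallel. Once minimality is in hand, Lemma~\ref{ann2} classifies each pair as simple, primitive, or hyperbolic, with ``non-simple'' meaning primitive or hyperbolic; I record that the hyperbolic case is precisely the non-annular one, which is why a purely combinatorial tool is needed in the third step.

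\textbf{Minimality.} Here I would argue from maximality of $\mT$ rather than from the graphs. Suppose some $(R_{i,i+1},J)$ is not minimal; it then contains a Seifert torus $S$ not parallel in $R_{i,i+1}$ to $T_i$ or $T_{i+1}$. As $S\subset\intr R_{i,i+1}$ it is disjoint from every component of $\mT$, so it suffices to show $S$ is isotopic in $X_K$ to no $T_j$: then $\mT\cup\{S\}$ is a simplicial collection of six tori, contradicting $|\mT|=5$. Pass to the irreducible manifold $X_K(J)$, in which the capped tori $\wh S,\wh T_1,\dots,\wh T_5$ are incompressible and pairwise nonparallel by \cite[Corollary 8.3]{gabai03}. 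An isotopy $S\simeq T_j$ would cap off to disjoint parallel tori $\wh S,\wh T_j$ cobounding a product $\wh S\times I$; inspecting the cyclic position of $\wh S$, which lies in the interior of $R_{i,i+1}(J)$, this product either lies in $R_{i,i+1}(J)$ and makes $S$ parallel in $R_{i,i+1}$ to a boundary torus (excluded), or contains some $\wh T_k$ ($k\neq j$) in its interior, forcing $\wh T_k\simeq\wh S\simeq\wh T_j$ and contradicting pairwise nonparallelism. Hence every $(R_{i,i+1},J)$ is minimal.

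\textbf{Consecutive simplicity.} This is the heart of the lemma, and where the Gabai meridional planar surface enters. Suppose, for a contradiction, that neither $(R_{i,i+1},J)$ nor $(R_{i+1,i+2},J)$ is simple. I would fix a meridional planar surface $Q\subset X_K$ as supplied by \cite{gabai03} --- properly embedded, incompressible and meridionally incompressible, with $\partial Q$ a nonempty family of meridians --- isotoped to meet $\mT$ minimally, and form the intersection graphs $G_Q=Q\cap\mT\subset Q$ and $G_{i+1}=Q\cap T_{i+1}\subset T_{i+1}$. By the essential-graph results of \cite[\S2.1--2.2]{valdez14} no face of $G_Q$ is a trivial disk and no edge of $G_{i+1}$ is boundary parallel, so every disk face of $G_Q$ lying in a single region $R_{k,k+1}$ is a properly embedded essential disk there, meeting $J$ in one point for each arc of $\partial Q$ in its boundary. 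The decisive translation is \S\ref{simple} (that is, \cite[Lemma 3.11]{valdez14}): a face of $G_Q$ meeting $\partial Q$ in exactly two arcs is a meridian disk of its region meeting $J$ in two points, forcing that region's pair to be simple. Thus my hypothesis forces $G_{i+1}$ to have no two-arc face on either of its two sides, and the goal is to show this is incompatible with $\chi(T_{i+1})=-1$: a count of the arcs of $G_{i+1}$ and the essential disk faces abutting $T_{i+1}$ on both sides should always produce such a two-arc face, forcing one of the two adjacent pairs to be simple.

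\textbf{Main obstacle.} The real work is this last local count. Because a hyperbolic pair carries no spanning annulus (its $J$-filling is anannular by Lemma~\ref{ann2}), the annulus-exchange techniques of Lemmas~\ref{ann-powers} and \ref{annp} are unavailable for the non-simple regions, and one must control the faces abutting $T_{i+1}$ purely combinatorially. Setting up $Q$ so that its graph is genuinely essential across both adjacent regions, and extracting a two-arc face from an Euler-characteristic and valence count on the once-punctured torus $T_{i+1}$, is where the difficulty lies.
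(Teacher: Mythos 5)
Your first two assertions are correct and essentially follow the paper: each $R_{i,i+1}$ is a handlebody because $|\mT|=5$ rules out a nonhandlebody region (Lemma~\ref{genprop}(P4); the paper cites \cite[Lemma 4.1(3)]{valdez14}, same content), and minimality of each $(R_{i,i+1},J)$ is exactly what maximality of $\mT$ gives --- the paper records this in one line, and your product-region elaboration is a sound way of justifying that a $J$-torus in $R_{i,i+1}$ not parallel into $\partial R_{i,i+1}$ cannot be isotopic in $X_K$ to any component of $\mT$.

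The third assertion, which you rightly call the heart of the lemma, has a genuine gap, and the difficulty you flag is not just unfinished bookkeeping: the claim you propose to verify is false as stated. A two-arc disk face of $G_Q$ is the same thing as a pair of parallel edges of $G_Q$, and nothing forces $G_Q$ to contain parallel edges; when it has none, there is no two-arc face anywhere in $Q$, so no Euler-characteristic or valence count at $T_{i+1}$ can produce one. The paper's proof is organized around exactly this contingency: it intersects $Q$ with only the three consecutive tori $T_i\sqcup T_{i+1}\sqcup T_{i+2}$, so that every vertex of $G_Q$ has degree $3$; if $G_Q$ has no parallel edges it is reduced, and \cite[Lemma 2.3(2)]{valdez14} then yields a $4$-sided disk face; the translation needed is therefore \cite[Lemma 6.1]{valdez14}, not \cite[Lemma 3.11]{valdez14} alone --- a handlebody pair admitting an essential disk meeting $J$ in $2$ or $4$ points is either simple or a nonminimal double pair, and in the two regions adjacent to $T_{i+1}$ the double option dies by the minimality already established. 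Your plan is also missing a second ingredient: even when the guaranteed disk face exists, nothing in your setup places it in $R_{i,i+1}$ or $R_{i+1,i+2}$. Since you intersect $Q$ with all of $\mT$, the face could lie in any of the five regions and would then prove simplicity of the wrong pair; in the paper's three-torus setup the only bad region is $R_{i+2,i}$, and it is excluded precisely because it contains the two remaining tori $T_{i+3},T_{i+4}$, which is incompatible both with being simple (simple pairs are minimal) and with being a double pair (in a double pair every $J$-torus is parallel to a boundary torus or to the unique splitting torus, \S\ref{double}). Without the $4$-gon/double-pair dichotomy and without this exclusion argument, the core of the lemma remains unproven.
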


\bpr
Each pair $(R_{i,i+1},J)$ is minimal since the simplicial collection $\mT$ is maximal, and each region $R_{i,i+1}$ is a handlebody by \cite[Lemma 4.1(3)]{valdez14}. We show that of any two consecutive pairs, say $(R_{1,2},J)$ and $(R_{2,3},J)$, at least one of them is simple. 

Let $\mT'=T_1\sqcup T_2\sqcup T_3$. By \cite{gabai03}
there is a planar surface $Q\subset X_K$ with meridional boundary slope which intersects $\mT'$ transversely in essential graphs $G_Q=Q\cap\mT'\subset Q$ and $G'=Q\cap\mT'\subset\mT'$. Necessarily  each cycle around a face of $G_Q$ has an even number of edges.

If the graph $G_Q$ has no parallel edges then it is a reduced graph and the degree of each of its vertices is 3; therefore by \cite[Lemma 2.3(2)]{valdez14} $G_Q$ has a disk face $D_4$ with 4 edges around its boundary. Otherwise $G_Q$ has parallel edges, that is, $G_Q$ has a disk face $D_2$ with 2 edges around its boundary.

The disk face $E\in\{D_2,D_4\}$ of $G_Q$ lies in one of the regions $R_{1,2},R_{2,3}, R_{3,1}$ and by \cite[Lemma 2.1(3)]{valdez14} intersects $J$ minimally in 2 or 4 points. 

If $E\subset R_{i,j}$ then $R_{i,j}$ is a handlebody by Lemma~\ref{genprop} (P1) and so, by \cite[Lemma 6.1]{valdez14}, $(R_{i,j},J)$ is either a simple pair, which is minimal, or a nonminimal double pair.
However, by \S\ref{double}, in a double pair any $J$-torus is parallel to a boundary $J$-torus or to the $J$-torus separating the double pair into simple subpairs.
Since $R_{3,1}$ contains the two $J$-tori $T_4, T_5$, which are not parallel to the boundary nor to each other, it follows that $R_{i,j}\neq R_{3,1}$. 

Therefore $E\subset R_{1,2}$ or $E\subset R_{2,3}$, in which case, respectively, the minimal pair $(R_{1,2},J)$ or $(R_{2,3},J)$ is simple by \cite[Lemma 6.1]{valdez14}.
\epr

For the rest of Section~\ref{classif} we assume that $R_{1,3}$ is the exchange region for the simplicial collection $\mT$ with exchange $J$-tori $T_2$ and $T_{2'}$ as shown in Fig.~\ref{k10b}. In the figure the elements of each simple pair will be represented using the notation set up in Section~\ref{simple} and Fig.~\ref{k26}. Circles of distinct slope, like $\omega_1,\omega'_5\subset T_1$, are represented as nonoverlapping. 

In the exchange region $R_{1,3}$ the pairs $(R_{1,2},J)$ and $(R_{2',3},J)$ in Fig.~\ref{k10b} are simple with core knots $K_1$ and $K_{2'}$, respectively.

\subsection{The regions $R_{i,j}$.}
\label{62}
In this section we establish some of the general properties of the pairs $(R_{i,j},J)$. We will see that indeed each of the pairs $(R_{i,i+1},J)$ is simple for $i=3,4,5$ as represented in Fig.~\ref{k10b}.

We will use the following notation: A circle $\gamma$ in the boundary of a genus two handlebody $H$ is a {\it Seifert circle} if $H(\omega)=\mD^2(p,q)$ for some integers $p,q\geq 2$.

\begin{figure}
\Figw{5.5in}{k10b}{The knot $K\subset\mS^3$ with the primitive pair $(R_{2,3},J)$.}{k10b}
\end{figure}

\begin{enumerate}
\item[(E1)]
By Lemma~\ref{f1} applied to the regions $R_{2,4}$ and $R_{5,2'}$, each of the pairs $(R_{3,4},J)$ and $(R_{5,1},J)$ is simple, with cores the knots $K_3,K_5$, respectively. 

In Fig.~\ref{k10b}, left, $\omega'_2\neq \omega_3$ by Lemma~\ref{ann-powers}(4) and so the region $R_{2,4}$ is not a handlebody by Lemma~\ref{ann-powers}(7). Therefore the region $R_{4,2}$ is a handlebody by Lemma~\ref{genprop} (P2) and so the pair $(R_{4,2},J)$ is maximal. By \cite[Lemma 6.8]{valdez14} it follows that 

\begin{enumerate}
\item[(a)]
$(R_{4,5},J)$ is a simple pair with core the knot $K_4$,

\item[(b)]
the simple pair $(R_{5,1},J)$ is a basic pair with basic circles $\omega'_4\subset T_5$ and $\omega_1\subset T_1$,

\item[(c)]
$\Delta(\omega'_4,\omega_5)=1=\Delta(\omega'_5,\omega_1)$.
\end{enumerate}

A similar argument using Fig.~\ref{k10b}, right, shows that 

\begin{enumerate}
\item[(d)]
the simple pair $(R_{3,4},J)$ is a basic pair with basic circles $\omega'_2\subset T_3$ and $\omega_4\subset T_4$,

\item[(e)]
$\Delta(\omega'_2,\omega_3)=1=\Delta(\omega'_3,\omega_4)$.
\end{enumerate}
\end{enumerate}

Let $p_1,p_2,p_3,p_4,p_5\geq 2$ be the indices of the simple pairs 
$(R_{1,2},J)$, $(R_{2',3},J)$, $(R_{3,4},J)$, $(R_{4,5},J)$, $(R_{5,1},J)$, respectively.

\medskip
\begin{enumerate}
\item[(E2)]
{\it The region $R_{3,1}$ is a handlebody and the core knots $K_3,K_5$ are hyperbolic Eudave-Mu\~noz knots of indices $p_3=2=p_5$.}
\\
Let $W_3$ be the solid torus neighborhood of $K_3\sqcup A'_3$ in $R_{3,4}$ indicated in Fig.~\ref{k10b}, left, where $A'_3$ is the annulus constructed in \S\ref{simple} (see also Fig.~\ref{k26}) such that the slope $r_3$ of the boundary circle $A'_3\cap\partial N(K_3)$ is nonintegral (relative to $N(K_3)$) of the form $*/p_3$. We identify the exterior $X_{K_3}\subset\mS^3$ of the core knot $K_3$ with $\mS^3\setminus\intr\,W_3$. 
   Back in Fig.~\ref{k10b}, left, observe that
\begin{itemize}
\item
by (E1), $(R_{3,4},J)$ is a simple pair and $R_{2,4}$ is not a handlebody,

\item
$R_{3,2}$ is not a handlebody by Lemma~\ref{genprop} and \S\ref{max}.
\end{itemize}
Therefore \cite[Lemma 7.1(1)]{valdez14} applies to the simple pair $(R_{3,4},J)$ (denoted $(R_{3,4},K)$ in \cite{valdez14}) and the $J$-tori $T_2,T_3,T_4$ to conclude that the separating two-punctured torus $F=\cl[T_2\cup T_4\setminus W_3]$ is incompressible in $X_{K_3}$ and the torus $\wh{F}$ is incompressible in $X_{K_3}(r_3)$.
   Moreover the closures $F^+,F^-$ of the components of $X_{K_3}\setminus F$ can be identified with the handlebodies $F^+=R_{4,2}$ and 
$F^-=\cl[R_{2,4}\setminus W_3]\approx R_{2,3}$ (by \S\ref{oper}(1)), and since the slope $r_3\subset\partial X_{K_3}$ is nonintegral (with denominator $p_3\geq 2$) it follows from \cite[Lemma 3.14]{ozawa01} that $K_3$ is a hyperbolic Eudave-Mu\~noz knot. 

By \cite{gordonlu6} $r_3$ is the unique nonintegral toroidal slope for $K_3$ and we must have $p_3=2$. By \cite[Theorem 2.1 and Proposition 2.2]{eudave2} the torus $\wh{F}$ is the unique essential torus in $X_{K_3}(r_3)$ and it decomposes $X_{K_3}(r_3)$ into a union of two Seifert fiber spaces of the form $\mD^2(*,*)$ for $*\geq 2$. As
\[
X_{K_3}(r_3)\approx R_{2,3}(\omega'_2) \cup_{\wh{F}} R_{4,2}(\omega'_3)
\]
it follows that $R_{4,2}(\omega'_3)$ is a space of the form $\mD^2(*,*)$ for $*\geq 2$ and hence that $\omega'_3\subset T_4$ is a Seifert circle in the handlebody $R_{4,2}$. As $\partial R_{4,2}\setminus\omega'_3$ contains the power circle $\omega'_1\subset T_2$, \cite[Lemma 6.10(2)(b)]{valdez14} applied to the pair $(R_{4,2},K)$ and the $J$-torus $T_1\subset R_{4,2}$ yields that $\omega'_3$ is a primitive circle in the handlebody $R_{4,1}$, which by \cite[Lemma 7.2(5)]{valdez14} implies that $R_{3,1}$ is a handlebody.

\medskip
Using Fig.~\ref{k10b}, right, a symmetric argument shows that $K_5$ is also a hyperbolic Eudave-Mu\~noz knot and $p_5=2$.

\medskip
\item[(E3)]
{\it The core knot $K_1$ (Fig.~\ref{k10b}, left) is a trivial or torus knot.}
\\
Since the pair $(R_{2,3},J)$ is primitive with spanning annulus $A$, $R_{2,3}(\omega'_1)$ is a solid torus with meridian disk $\wh{A}$ and hence meridian slope $\partial\wh{A}=\omega'_2\subset T_3$. As the circles $\omega'_2,\omega_4$ are basic in $R_{3,4}$, it follows that 
$V_3=R_{2,4}(\omega'_1)=R_{2,4}(J)(\omega'_1)$ is a solid torus with meridian slope that intersects $\omega_4$ in one point. Similarly, as the circles $\omega_1,\omega'_4$ are basic in $R_{5,1}$, $V_5=R_{5,1}(\omega_1)=R_{5,1}(J)(\omega_1)$ is a solid torus with meridian slope that intersects $\omega'_4$ in one point. Therefore, if $r_1\subset \partial N(K_1)$ is the nonintegral slope $*/p_1$ of the boundary circle $A_1\cap\partial N(K_1)$ (see Fig.~\ref{k10b}, left) constructed in \S\ref{simple}, then we have 
\begin{align*}
X_{K_1}(r_1)&\approx R_{2,1}(\omega_1\sqcup\omega'_1)=
R_{2,1}(J)(\omega_1\sqcup\omega'_1)\\
&=\left( R_{2,4}\cup R_{4,5}\cup R_{5,1} \right)(J)(\omega_1\sqcup\omega'_1)
\\
&\approx R_{4,5}(J)\cup V_3\cup V_5\\
&=\mS^2(p_4,1,1)
\end{align*}
and so $X_{K_1}(r_1)$ is either $\mS^3$, $\mS^1\times\mS^2$ or a lens space. In fact, as $r_1$ is a nonintegral slope, for homological reasons $X_{K_1}(r_1)$ cannot be $\mS^1\times\mS^2$.
Therefore by \cite{cgls} and \cite{gordonlu1}
$K_1$ is a trivial or torus knot.

\medskip
\item[(E4)]
{\it The circles $\omega_1,\omega'_2$ are Seifert circles in $R_{3,1}$.}
\\
Since the region $R_{4,2}$ in Fig.~\ref{k10b}, left, is a handlebody, the circle $\omega_1$ is primitive in $R_{4,1}$ by \cite[Lemma 6.8(1)(b)]{valdez14}; hence $\omega_1$ is a Seifert circle in $R_{3,1}$ by \cite[Lemma 6.8(1)(d)]{valdez14}.
\\
A similar argument applied to the handlebody region $R_{2',5}$ of Fig.~\ref{k10b}, right, shows that $\omega'_2$ is also a Seifert circle in $R_{3,1}$.
\een

\subsection{The maximal pair $(R_{3,1},K)$.}\label{RK}
For the rest of this section we assume that the regular neighborhood $N(K)\subset\mS^3$ has been retracted radially onto $K$, so that the circles $J$ and $\partial T_i$ become identified with $K$. Thus we use the notation $(R_{i,i+1},K)$ for the pairs $(R_{i,i+1},J)$

We construct a complete disk system for the maximal pair $(R_{3,1},K)$ as follows.
First observe that by Lemma~\ref{ann-powers}(6) the maximal pair $(R_{3,1},K)$ is not annular, hence by \cite[Lemma 3.4]{valdez14} there is a disk $E\subset R_{3,1}$, unique up to isotopy, which separates the power circles $\omega_3,\omega'_5\subset R_{3,1}$.
Thus $R_{3,1}|E$ consists of two solid tori $V_3,V_5$ with the power circles $\omega_3\subset\partial V_3$ and $\omega'_5\subset\partial V_3$ intersecting meridian disks $D_3\subset V_3,D_5\subset V_5$ in $p_3=2, p_5=2$ points, respectively. 
Using the method outlined in \cite[Section 7.3 and Lemma 7.6(2)]{valdez14},  the 6-tuple $(\partial E,K,\omega_1,\omega'_2,\omega_3,\omega'_5)$ is homeomorphic to the 6-tuple in Fig.~\ref{k11}, top, or to the 6-tuple obtained by reflecting $\partial R_{3,1}$ across the plane of the page. The two 6-tuples are then homeomorphic and hence we only consider the case of Fig.~\ref{k11}, top.

The meridian circles $\partial D_3,\partial D_5\subset\partial R_{3,1}$ can then be constructed as homological sums of the form
\[
\partial D_3=q_3\omega_3+a_3\beta_3
\quad\text{and}\quad
\partial D_5=q_5\omega_5'+a_5\beta_5
\]
where $\gcd(a_3,q_3)=1=\gcd(a_5,q_5)$ and $\beta_3\subset\partial V_3$ and $\beta_5\subset\partial V_5$ are the two circles indicated in Fig.~\ref{k11}, bottom, such that 
\[
\Delta(\beta_3,\omega_3)=1=\Delta(\beta_5,\omega'_5)
\]

The values of $a_3,a_5,q_3,q_5$ can be found by performing some  computations in the fundamental group of $R_{3,1}$. To this end we set $x=\partial D_3$, $y=\partial D_5$, and $\pi_1(R_{3,1})=\grp{x,y \ | \  -}$ relative to some base point. Thus a circle $c\subset\partial R_{3,1}$ which intersects $x\sqcup y$ transversely is represented by an unreduced word $c(x,y)\in \pi_1(R_{3,1})=\grp{x,y \ | \ -}$ obtained by reading the consecutive signed intersections of $c$ with $x$ and $y$ without introducing any cancellations, relative to a base point in $c\setminus(x\sqcup y)$. Notice that if the word $c(x,y)$ is cyclically reduced then $c$ intersects $x\sqcup y$ minimally, but not conversely.

For convenience we use the notation $X=x^{-1}$ and $Y=y^{-1}$ to denote the inverses of $x$ and $y$ in $\grp{x,y \ | \ -}$.

\begin{figure}
\Figw{4.7in}{k11}{Construction of the circles $\partial D_1=q_1\omega_1+p_1\beta_1$ and $\partial D_3=q_3\omega_3'+p_3\beta_3$ in $\partial R_{3,1}$ (with $p_4=2$).}{k11}
\end{figure}

The following relations now follow from Fig.~\ref{k11}, bottom (relative to some base point and intersection signs scheme):
\begin{itemize}
\item[(E5)(a)]
$\omega_3(x,y)=x^{a_3}$ and $\omega'_5(x,y)=y^{a_5}$: since $\omega_3$ and $\omega'_5$ are $p_3$ and $p_4$ power circles in $R_{3,1}$, respectively, we may choose $a_3=p_3=2$ and $a_5=p_5=2$;

\medskip
\item[(E5)(b)]
$\omega_1(x,y)=(y^{p_5}x^{p_3})^{p_4}y^{q_5}$ and 
$\omega'_2(x,y)=(y^{p_5}x^{p_3})^{p_4}x^{q_3}$.\\
Observe that $\omega_1(x,y)=W(x^{p_3},y)$ where $W(x,y)=(y^{p_5}x)^{p_4}y^{q_5}$. As $\omega_1$ is a Seifert circle in $R_{3,1}$,
by the argument of \cite[Lemma 7.11]{valdez14} we must have that $W(x,y)$ is a primitive word in the free group $\grp{x,y \ | \ -}$ and hence that $q_5=\pm 1$. In a similar way we must have that $q_3=\pm 1$.
\end{itemize}
For each circle $\omega_3,\omega'_5, \beta_3,\beta_5$ in Fig.~\ref{k11}, bottom, the coefficient in the box on top of the circle represents the number of copies of that circle used in the homological sum construction of a given meridian circle $x=\partial D_3$ and $y=\partial D_5$.

We summarize the above facts in the next result.

\begin{lem}
The 7-tuple $(\partial R_{3,1},\partial E,K,\omega_1,\omega'_2,\omega_3,\omega'_5)$ is homeomorphic to the 7-tuple in Fig.~\ref{k11}, top (where we use $p_4=2$ for simplicity). Moreover the circles $\partial D_3,\partial D_5\subset\partial R_{3,1}$ can be represented as the homological sums
\[
\partial D_3=q_3\omega_3+p_3\beta_3\quad\text{and}\quad\partial D_5=q_5\omega_5'+p_5\beta_5
\]
where $\beta_3,\beta_5$ are the circles indicated in Fig.~\ref{k11}, bottom, with $p_3=2=p_5$ and $q_3,q_5=\pm1$.
\hfill\qed
\end{lem}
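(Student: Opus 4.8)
The plan is to treat this lemma as the consolidation of the facts already extracted in the discussion (E1)--(E5), assembling them into the single normal form of Fig.~\ref{k11}. First I would recall the standing hypotheses of \S\ref{RK}: since $|\mT|=5$ and $R_{1,3}$ is an exchange region, the complementary region $R_{3,1}$ is a genus two handlebody and the pair $(R_{3,1},K)$ is maximal. By \S\ref{max} this maximal pair carries the two power circles $\omega_3\subset T_3$ and $\omega'_5\subset T_1$, one in each boundary once-punctured torus, together with the unique separating disk $E\subset R_{3,1}$ of \cite[Lemma 3.4]{valdez14} that separates them; cutting along $E$ yields the two solid tori $V_3,V_5$ in which $\omega_3,\omega'_5$ are power circles of indices $p_3,p_5$, respectively.

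To pin down the homeomorphism type of the $7$-tuple $(\partial R_{3,1},\partial E,K,\omega_1,\omega'_2,\omega_3,\omega'_5)$ I would invoke the Heegaard-diagram construction of \cite[\S7.3 and Lemma 7.6(2)]{valdez14}, applied to the Heegaard decomposition $\mS^3=R_{1,3}\cup R_{3,1}$ (equivalently to the maximal pair $(R_{3,1},K)$). That bookkeeping places the six curves on $\partial R_{3,1}$ into one of two normal forms interchanged by reflection across the plane of the page; since these two forms are themselves homeomorphic, only the form drawn in Fig.~\ref{k11}, top, needs to be analyzed.

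With the normal form fixed, the homological sums follow from intersection numbers on the boundary tori of $V_3$ and $V_5$. The meridian $\partial D_3\subset V_3$ must meet the $p_3$-power circle $\omega_3$ in $p_3$ points and meet the transversal circle $\beta_3$ (with $\Delta(\beta_3,\omega_3)=1$) coherently, forcing $\partial D_3=q_3\omega_3+p_3\beta_3$, and symmetrically $\partial D_5=q_5\omega'_5+p_5\beta_5$. The values $p_3=2=p_5$ are exactly the content of (E2), where $K_3,K_5$ are identified as hyperbolic Eudave-Mu\~noz knots of index $2$. Finally, $q_3,q_5=\pm1$ is recorded in (E5)(b): because $\omega_1,\omega'_2$ are Seifert circles in $R_{3,1}$ by (E4), the reduced words $W(x,y)$ representing them must be primitive in $\pi_1(R_{3,1})=\grp{x,y \mid -}$, which forces $q_5=\pm1$ and, by the mirror argument using Fig.~\ref{k11}, right, $q_3=\pm1$.

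The hard part is the normal-form step rather than the homological arithmetic: verifying that the arrangement of the six curves on the genus two surface $\partial R_{3,1}$ is \emph{precisely} the one depicted in Fig.~\ref{k11} rests on the explicit Heegaard-diagram machinery of \cite{valdez14} for maximal pairs, together with the observation that the only ambiguity is the reflection symmetry, which is absorbed by the homeomorphism. Once that form is established the remaining claims are direct consequences of the intersection data and of (E2), (E4), and (E5)(b).
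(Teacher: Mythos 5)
Your outline is essentially the paper's own: the lemma is stated there with a \qed precisely because it summarizes the discussion of \S\ref{62}--\S\ref{RK}, and you assemble the same three ingredients in the same way --- the normal form of Fig.~\ref{k11} via \cite[\S 7.3 and Lemma 7.6(2)]{valdez14} with the reflection ambiguity absorbed by a homeomorphism, the values $p_3=2=p_5$ from (E2), and $q_3,q_5=\pm1$ from (E4) together with the primitivity argument of (E5)(b).

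There is, however, a gap at the step everything else rests on: the existence of the properly embedded separating disk $E\subset R_{3,1}$. You attribute it to \S\ref{max} together with \cite[Lemma 3.4]{valdez14}, but neither gives it as stated. The disk supplied by the maximal-pair structure of \S\ref{max} is the one separating the two basic circles inside the middle region $R_{4,5}$; its boundary lies on the interior tori $T_4\sqcup T_5$, so it is not properly embedded in $R_{3,1}$ and cannot serve as $E$. Moreover \cite[Lemma 3.4]{valdez14} produces a \emph{separating} compression disk for $\partial R_{3,1}\setminus(\omega_3\sqcup\omega'_5)$ only when the two power circles are \emph{not} coannular in $R_{3,1}$; if they were coannular the disk would be nonseparating, $R_{3,1}|E$ would not consist of two solid tori, and the Fig.~\ref{k11} normal form together with the homological sums would collapse. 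So one must first show the maximal pair $(R_{3,1},K)$ is not annular, which is exactly how the paper opens \S\ref{RK}: by Lemma~\ref{ann-powers}(6), a spanning annulus for $(R_{3,1},K)$ would split along $T_4,T_5$ into spanning annuli for $(R_{3,4},K)$, $(R_{4,5},K)$, $(R_{5,1},K)$, at most one of which can have index $\geq 2$, contradicting that all three are simple pairs of index $\geq 2$ by (E1). That clause must be restored before \cite[Lemma 3.4]{valdez14} can be invoked. Two smaller slips: in (E5)(b) the word that must be primitive is the derived word $W(x,y)$, not the word $W(x^{p_3},y)$ that represents the Seifert circle $\omega_1$ (a Seifert circle is never represented by a primitive word); and Fig.~\ref{k11} has a top and a bottom panel, so the ``mirror'' argument for $q_3$ uses the word for $\omega'_2$, not a right-hand panel.
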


Since the circles $\omega_1,\omega'_2\subset\partial R_{1,3}$ cobound an annulus in $R_{1,3}$, by \cite[Lemma 3.4]{valdez14} the surface $\partial R_{1,3}\setminus(\omega_1\sqcup\omega'_2)$ compresses along a  nonseparating disk $D\subset R_{1,3}$ (unique up to isotopy), and necessarily $R_{1,3}|D$ is a solid torus neighborhood of the knot $K_1$. Therefore we may set 
\[
X_{K_1}=R_{3,1}(\partial D)
\] 

By (E3) the core knot $K_1\subset\mS^3$ of the simple pair $(R_{1,2},K)$ is either trivial or a torus knot. Therefore $X_{K_1}=R_{3,1}(\partial D)$ is either a solid torus or a Seifert fiber space of the form $\mD^2(*,*)$ for $*\geq 2$, or, equivalently, 

\begin{enumerate}
\item[(E6)]
{\it the circle $\partial D$ is either primitive or Seifert in $R_{3,1}$. }
\end{enumerate}

The next two results will be useful in restricting the possible embeddings of the circle $\partial D$ in $\partial R_{3,1}$.

\begin{lem}\label{string}
Let $\omega\subset\partial R_{3,1}$ be any circle that intersects $x\sqcup y\subset\partial R_{3,1}$ minimally. If some cyclic reorderings of the word $\omega(x,y)\in\pi_1(R_{3,1})$ contain strings of the form $x^{a}$ and $y^{b}$ for some integers $|a|,|b|\geq 2$ then $\omega(x,y)$ is cyclically reduced and $\omega$ is neither a primitive nor a power circle in $R_{3,1}$.
\end{lem}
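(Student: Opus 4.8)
My plan is to split the statement into two parts: first that $\omega(x,y)$ is cyclically reduced, and then, granting this, that $\omega$ is neither primitive nor a power; the second part will be a direct application of Cohen's theorem, while the first is where the geometry of $\partial R_{3,1}$ must be used. Throughout I would work with the decomposition recalled before the lemma: $E$ splits $R_{3,1}$ into solid tori $V_3,V_5$ with meridians $x=\partial D_3$ and $y=\partial D_5$, and correspondingly $\partial R_{3,1}=S_3\cup_{\partial E}S_5$, where $S_3=\partial V_3\cap\partial R_{3,1}$ and $S_5=\partial V_5\cap\partial R_{3,1}$ are once-punctured tori carrying the nonseparating meridians $x\subset S_3$ and $y\subset S_5$.

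For cyclic reducedness I would first place $\omega$ in minimal position with respect to $x\sqcup y$ and, subject to that, minimize $|\omega\cap\partial E|$, and then read the word $\omega(x,y)$ off the decomposition of $\omega$ into arcs in $S_3$ and $S_5$. The key local fact is that an essential arc in a once-punctured torus meets a nonseparating meridian coherently, so each maximal $x$-run and each maximal $y$-run in $\omega(x,y)$ carries a single sign. Consequently the only way the word could fail to be cyclically reduced is that between two crossings of the same meridian, of opposite sign and with no crossing of the other meridian in between, $\omega$ makes a meridian-free excursion across $\partial E$; such an excursion is a boundary-parallel arc in the opposite punctured torus. I would then argue that, after the above minimization, no such excursion survives: an innermost one could be isotoped across $\partial E$ to lower either $|\omega\cap(x\sqcup y)|$ or $|\omega\cap\partial E|$, against minimality, the only obstruction being a ``linked'' configuration, which I would exclude using the hypothesis that the word records blocks $x^a$ and $y^b$ with $|a|,|b|\ge2$. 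With no excursions the word strictly alternates $x$- and $y$-blocks and is therefore cyclically reduced.

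Granting cyclic reducedness, the remaining conclusion is immediate from the rank-two free group combinatorics recalled in \S\ref{prim-pwr}. Since some cyclic reordering of $\omega(x,y)$ contains both a block $x^a$ and a block $y^b$ with $|a|,|b|\ge2$, the word involves both generators and is neither $x^{\pm1}$ nor $y^{\pm1}$. If $\omega$ were primitive, Cohen's theorem \cite{cohen} would force all exponents of one of $x,y$ in the cyclically reduced word to equal a single $\varepsilon\in\{\pm1\}$; and if $\omega$ were a power of a primitive element, the same conclusion applies to its cyclically reduced form, as noted in \S\ref{prim-pwr}. In either case one of the two generators would have all its exponents equal to $\pm1$, contradicting the simultaneous presence of $x^a$ and $y^b$ with $|a|,|b|\ge2$. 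Hence $\omega$ is neither primitive nor a power in $R_{3,1}$.

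The main obstacle is the cyclic-reducedness step, and specifically the exclusion of the irreducible meridian-free excursions: these are exactly the configurations responsible for the fact, noted just before the lemma, that a circle in minimal position with $x\sqcup y$ need not read off a cyclically reduced word. I expect the careful part to be combining the embeddedness of $\omega$, the double minimization of intersections with $x\sqcup y$ and with $\partial E$, and the hypothesis on the strings to rule out the linked case; once the word is known to alternate, both conclusions follow cleanly.
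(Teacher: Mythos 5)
Your second step (deducing from cyclic reducedness, via Cohen's theorem as recalled in \S\ref{prim-pwr}, that $\omega$ is neither primitive nor a power) is correct and is exactly what the paper does. The genuine gap is in the first step, at the very point you flag as delicate. Your reduction up to that point is sound: after the double minimization every arc of $\omega\cap S_3$ and $\omega\cap S_5$ is essential and crosses its meridian coherently, so a cancelling pair forces a meridian-free excursion across $\partial E$. But your claim that such an excursion ``is a boundary-parallel arc in the opposite punctured torus'' is false: a once-punctured torus contains \emph{essential} arcs disjoint from the meridian, namely the arcs whose complementary annulus has core isotopic to $y$. After your minimizations, a $y$-free excursion is precisely such an essential arc, and no isotopy pushes it across $\partial E$ (there is no parallelism region to push across; such excursions genuinely persist in minimal position). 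So the ``isotope an innermost excursion to lower $|\omega\cap(x\sqcup y)|$ or $|\omega\cap\partial E|$'' move is unavailable in the only case that matters, and the fallback --- ``exclude the linked configuration using the hypothesis'' --- is not an argument. Indeed, one can check that an essential $y$-free excursion only forces every arc of $\omega\cap S_5$ to meet $y$ at most once (and then, symmetrically, every arc of $\omega\cap S_3$ to meet $x$ at most once), while the strings $x^a$ and $y^b$ can still be assembled from several one-crossing arcs; ruling out this configuration is essentially the whole content of the lemma, and your sketch contains no mechanism for it.

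For comparison, the paper never cuts along $\partial E$ at all: it works in the $4$-punctured sphere $Q=\partial R_{3,1}\setminus\intr\,N(x\sqcup y)$, whose boundary circles are the two copies $x^{\pm}$ of $x$ and $y^{\pm}$ of $y$. The string $x^a$, $|a|\geq 2$, yields an arc $c_x$ of $\omega\cap Q$ joining $x^+$ to $x^-$, while a cancelling $y$-string yields an arc $c_y$ with both endpoints on a single copy of $y$; since $c_y$ is essential and disjoint from $c_x$, it must separate $x^+\sqcup x^-$ from the remaining copy of $y$ in $Q$, and this separation is incompatible with the arcs of a single embedded closed circle $\omega$, which must also realize the transitions between $x$-letters and $y$-letters (the symmetric argument, using the arc supplied by $y^b$, kills cancelling $x$-strings). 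That separation argument, available after cutting along $x\sqcup y$ but not in your decomposition along $\partial E$, is the missing idea you would need to import or replace to complete your approach.
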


\begin{proof}
Let $Q$ be the 4-punctured 2-sphere $\partial R_{3,1}\setminus\intr\,N(x\sqcup y)$ with boundary components the circles $x^+,x^-$ and $y^+,y^-$ corresponding to the two sides of $x$ and $y$ in $\partial R_{3,1}$, respectively. Since $\omega$ intersects $x\sqcup y$ minimally, $\omega\cap Q$ consists of a collection of properly embedded arcs none of which is parallel into $\partial Q$.

\begin{figure}
\Figw{3in}{k29}{The arc components $c_x,c_y\subset\omega\cap Q$.}{k29}
\end{figure}

By hypothesis some cyclic reordering of the word $\omega(x,y)$ contains a string of the form $x^a$ for some integer $|a|\geq 2$ and so there is an arc component $c_x\subset Q$ with one endpoint in $x^+$ and the other in $x^-$. Thus the circle $\gamma=\fr\,N(x^+\cup c_x\cup x^-)\subset Q$ separates $x^+\sqcup x^-$ from $y^+\sqcup y^-$.

Suppose that some cyclic reordering of the word $\omega(x,y)$ has a canceling string of the form $yY$ or $Yy$. Then there is an arc component $c_y\subset\omega\cap Q$ with both endpoints on, say, the boundary component $y^+\subset\partial Q$. As $c_y$ is disjoint from $c_x$, it is also disjoint from $\gamma$ and so $c_y$ separates $x^+\sqcup x^-$ from $y^-$ in $Q$ (see Fig.~\ref{k29}), which is impossible since $\omega$ is a closed circle in $\partial R_{3,1}$.

Therefore no cyclic reordering of the the word $\omega(x,y)$ has canceling strings of the form $yY, Yy$, and in a similar way neither of the form $xX,Xx$, whence it is a cyclically reduced word. 
Since the word $\omega(x,y)$ contains strings of the form $x^{a}$ and $y^{b}$ for some $|a|,|b|\geq 2$, by \S\ref{prim-pwr} the circle $\omega\subset\partial R_{3,1}$ is neither primitive nor a power in $R_{3,1}$.
\end{proof}

We now take parallel copies $\omega_1^+,\omega_1^-$ of $\omega_1$ and ${\omega'_2}^+,{\omega'_2}^-$ of $\omega' _2$ in $\partial R_{3,1}$ as shown in Fig.~\ref{k11}, bottom, and let $P$ be the 4-punctured 2-sphere in $\partial R_{3,1}$ cobounded by the 4 circles
$\omega_1^+,\omega_1^-,{\omega'_2}^+,{\omega'_2}^-$. 

For each pair of values of $q_3,q_5=\pm 1$ let $\Gamma\subset P$ denote the  graph $P\cap(\partial D_3\sqcup\partial D_5)$ and $\ov{\Gamma}\subset P$ the reduced graph obtained by amalgamating each collection of parallel edges of $\Gamma$ into a single edge.
By minimality of $|(x\sqcup y)\cap\partial D_3|$ and $|(x\sqcup y)\cap\partial D_5|$ no edge of $\Gamma$ or $\ov{\Gamma}$ is parallel into $\partial P$, that is, the graphs $\Gamma,\ov{\Gamma}$ are essential.

\begin{figure}
\Figw{4.5in}{k12b}{The circles $\partial D_3,\partial D_5\subset\partial R_{3,1}$ for $q_3=+1$ and $q_5=-1$.}{k12}
\end{figure}

\begin{lem}\label{redd}
If a circle $c\subset P$ intersects the reduced graph $\ov{\Gamma}$ minimally then the word $c(x,y)$ is cyclically reduced. In particular, any circle in $P$ which is primitive in $R_{3,1}$ is isotopic to the circle $\alpha\subset P$ in Fig.~\ref{k11}, bottom.
\end{lem}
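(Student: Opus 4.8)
The plan is to establish the two assertions separately, using the cut-surface technique of Lemma~\ref{string} for the first and Cohen's normal form for primitive words in a rank-two free group (\S\ref{prim-pwr}) for the second.

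For the first assertion I would place $c$ in minimal position with respect to $\ov{\Gamma}$ and read off $c(x,y)$ one syllable at a time: since every edge of $\ov{\Gamma}$ amalgamates a bundle of mutually parallel arcs of $x=\partial D_3$ or of $y=\partial D_5$ in $P$, each transverse crossing of $c$ with such an edge contributes a single syllable $x^{\pm k}$ or $y^{\pm k}$, all of whose letters carry the same sign. Thus $c(x,y)$ can fail to be cyclically reduced only if two cyclically consecutive syllables are powers of the same generator, say $x$, with opposite signs and no intervening $y$-syllable. Cutting $\partial R_{3,1}$ along $x\sqcup y$ to form the $4$-punctured sphere $Q$ as in Lemma~\ref{string}, such a cancellation produces a ``backtrack'' arc of $c\cap Q$ with both endpoints on the same side $x^{+}$ (or $x^{-}$) of $x$. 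I would then argue, as in Lemma~\ref{string} and using that $c$ is an embedded circle together with the explicit face structure of $\ov{\Gamma}$ in $P$ read off from Fig.~\ref{k12}, that this backtrack arc cuts off a bigon between $c$ and a single edge of $\ov{\Gamma}$; removing the bigon lowers $|c\cap\ov{\Gamma}|$ and contradicts minimality. Hence $c(x,y)$ is cyclically reduced.

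For the second assertion, let $c\subset P$ be primitive in $R_{3,1}$ and isotoped into minimal position with $\ov{\Gamma}$. By the first assertion $c(x,y)$ is a cyclically reduced word representing a primitive element of $\pi_1(R_{3,1})=\grp{x,y\mid -}$, so if it is neither $x^{\pm1}$ nor $y^{\pm1}$ then by \cite{cohen} (see \S\ref{prim-pwr}) there are $\ve\in\{\pm1\}$ and $n\in\mZ$ with, after possibly interchanging $x$ and $y$, every exponent of $x$ equal to $\ve$ and every exponent of $y$ in $\{n,n+1\}$. I would then feed this normal form into the explicit picture of $\ov{\Gamma}$ in $P$ from Fig.~\ref{k11}, bottom, and Fig.~\ref{k12}: because $c$ is a simple closed curve embedded in the planar surface $P$ whose four boundary circles carry the words $\omega_1(x,y)=(y^{2}x^{2})^{p_4}y^{q_5}$ and $\omega'_2(x,y)=(y^{2}x^{2})^{p_4}x^{q_3}$ from (E5)(b), the exponents allowed by Cohen's form are forced to be so small that the only embedded curve realizing them is $\alpha$. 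This identifies $c$ up to isotopy with the circle $\alpha$ of Fig.~\ref{k11}, bottom.

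The hard part will be the combinatorial bookkeeping underlying both bigon arguments. Turning ``two consecutive same-generator syllables of opposite sign'' into a genuinely removable bigon for $\ov{\Gamma}$ requires controlling the complementary faces of $\ov{\Gamma}$ in $P$ and how the bundles of $x$- and $y$-arcs abut the four punctures $\omega_1^{\pm},{\omega'_2}^{\pm}$, so that a backtrack arc cannot instead enclose a puncture; this is where the explicit diagram, rather than a purely group-theoretic argument, is indispensable. Likewise, finishing the second assertion depends on ruling out the larger-exponent words permitted by Cohen's form by showing they cannot be realized by a curve embedded disjointly from the prescribed boundary pattern of $P$, and not merely on the abstract classification of primitive words.
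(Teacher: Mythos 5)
There is a genuine gap in your treatment of the first (and main) assertion. Your mechanism is: a canceling pair in $c(x,y)$ gives a backtrack arc, which ``cuts off a bigon between $c$ and a single edge of $\ov{\Gamma}$,'' contradicting minimality. That implication is false as a general principle. A canceling pair $xX$ corresponds to an arc of $c$ lying in a single face of $\ov{\Gamma}$ whose two endpoints lie on edges coming from the same circle $x=\partial D_3$; but those may be two \emph{distinct} edges of the face (e.g.\ opposite sides of one of the 4-sided faces $R_1,R_2$). Such an arc is essential in the face, bounds no bigon, and is perfectly compatible with minimal position and embeddedness --- so neither minimality nor the cut-surface trick of Lemma~\ref{string} can remove it. Whether a crossing of two distinct same-label edges produces a cancellation is governed entirely by the \emph{orientations}: cancellation occurs precisely when the face lies on the same side of both edges. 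This is what the paper checks: in Fig.~\ref{k12} the faces of $\ov{\Gamma}$ are two 4-sided disks, and the sink/source pattern of the oriented edges around $R_1$ and $R_2$ shows that consecutive crossings never cancel. Minimality is only needed for the easy case of a backtrack through a single edge (a genuine bigon). Your proposal never engages with the edge orientations, which are the crux; with adverse orientations the statement would simply be false for some minimal-position curve, so no purely topological bigon argument can suffice.

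For the second assertion your route is essentially the paper's (Cohen's normal form plus the explicit diagram), but it is left vague where the paper is crisp: the paper observes that any crossing of a \emph{horizontal} edge of $\ov{\Gamma}$ forces a substring $x^{\pm2}y^{\pm2}$ or $y^{\pm2}x^{\pm2}$ in the (already cyclically reduced) word $c(x,y)$, which violates the primitivity criterion of \S\ref{prim-pwr}; hence a primitive circle can be isotoped off the horizontal edges, and since those edges cut $P$ into an annulus with core $\alpha$, the circle is isotopic to $\alpha$. Note also that this step quotes the first assertion, so the gap above propagates: without the orientation check you cannot legitimately apply Cohen's criterion to $c(x,y)$, since that criterion concerns cyclically reduced words.
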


\begin{proof}
We consider the case $q_3=+1$ and $q_5=-1$, the other cases being similar. Fig.~\ref{k12}, top, shows the graph $\Gamma\subset P$ where the thicker lines represent 2 amalgamated parallel edges of one of the circles $\partial D_3,\partial D_5$ (corresponding to the values $p_3=2=p_5$), while the thinner lines represent single arcs.

The reduced graph $\ov{\Gamma}\subset P$ is shown in Fig.~\ref{k12}, middle, where each amalgamated edge shows the common orientation of its components. The set of faces of $\ov{\Gamma}$ consists of the two 4-sided disk faces $R_1,R_2$ in Fig.~\ref{k12}, bottom, where each edge of $R_i$ is labeled and oriented as the corresponding edge in the unreduced graph $\Gamma\subset P$. 

Let $c\subset P$ be any circle which intersects $\ov{\Gamma}\subset P$ minimally. Then the sink/source pattern of the oriented edges around the faces $R_1,R_2$ guarantee that the word $c(x,y)$ does not contain any of the canceling pairs $xX,Xx,yY,Yy$ and hence that it is cyclically reduced.

Notice that if $c$ intersects any of the horizontal edges of $\ov{\Gamma}$ then the word $c(x,y)$ contains one of the strings $x^2y^2$, $y^2x^2$ or their inverses and hence by \S\ref{prim-pwr} the word $c(x,y)$ cannot be primitive in the free group $\pi_1(R_{3,1})=\grp{x,y \ | \ -}$.
Therefore if $c\subset P$ is a primitive circle in $R_{3,1}$ then $c$ can be isotoped in $P$ so as to be disjoint from the horizontal edges of the graph $\ov{\Gamma}\subset P$. As the horizontal edges of $\ov{\Gamma}$ cut $P$ into an annulus with core the circle $\alpha\subset P$, it follows that $c$ must be isotopic to $\alpha$ in $P$, hence in $\partial R_{3,1}$.
\end{proof}

\medskip

We now complete the proof of Proposition~\ref{maxcase2}.

\begin{proof}[Proof of Proposition~\ref{maxcase2}]
By (E6) the circle $\partial D\subset\partial R_{3,1}$ is either a primitive or a Seifert circle in $R_{3,1}$. We consider two cases and arrive at a contradiction in each.

\medskip

{\bf Case 1:} $\partial D\subset\partial R_{3,1}$ is a primitive circle in $R_{3,1}$ ($K_1\subset\mS^3$ is a trivial knot). 
\\
As  the circle $\partial D\subset\partial R_{3,1}$ is disjoint from the circles $\omega_1\sqcup\omega'_2\subset\partial R_{3,1}$ it can be isotoped so as to lie in $P$ and so by Lemma~\ref{redd} it must be isotopic in $P$ to the circle $\alpha$ in Fig.~\ref{k11}, top. Since $|\partial D\cap K|=|\alpha\cap K|=2$ and $D\subset R_{1,3}$ by \S\ref{simple}
the pair $(R_{1,3},K)$ is simple, hence minimal, which is not the case. Therefore this case does not occur.

\medskip
{\bf Case 2:} $\partial D\subset P$ is a Seifert circle in $R_{3,1}$ ($K_1\subset\mS^3$ is a nontrivial torus knot). 
\\
By \cite[Lemma 6.7]{valdez14} there is a circle $h\subset\partial R_{3,1}\setminus\partial D$ which is a power circle in $R_{3,1}$.

By Lemma~\ref{redd} isotopying $\partial D$ in $P$ so as intersect $\ov{\Gamma}$ minimally yields a cyclically reduced word $\partial D(x,y)$. Once $\partial D$ has been isotoped, isotopying $h$ in $\partial R_{3,1}\setminus\partial D$ so as to intersect $x\sqcup y=\partial D_3\sqcup\partial D_5$ minimally produces the minimal intersection in $\partial R_{3,1}$ between $h$ and $x\sqcup y$.

Now, by \cite[Lemma 6.7]{valdez14} the circle $h\subset R_{3,1}(\partial D)=X_{K_1}$ is a fiber of the Seifert fiber space $X_{K_1}=\mD^2(*,*)$. Since by (E3)
$X_{K_1}(r_1)\approx R_{3,1}(\partial D)(\omega_1)\approx R_{3,1}(\partial D)(\omega'_2)$ is either $\mS^3$ or a lens space, it follows that 
$\Delta(h,\omega_1)=1=\Delta(h,\omega'_2)$ in $\partial X_{K_1}$ and hence that $h$ intersects each circle $\omega_1,\omega'_2$ nontrivially in $\partial R_{3,1}$.

Therefore there is an arc component $h'$ of $h\cap P$ with one endpoint in 
$\omega_1^+\sqcup\omega_1^-$ and the other endpoint in ${\omega'_2}^+\sqcup{\omega'_2}^-$. 

If $h'$ intersects transversely at least one of the horizontal edges in the reduced graph $\ov{\Gamma}\subset P$ then the word $h'(x,y)$, and hence $h(x,y)$, contains one of the strings $x^2y^2,y^2x^2$ or their inverses, contradicting Lemma~\ref{string} since $h$ is a power circle in $R_{3,1}$. 
So if $h'$ has endpoints on $\omega_1^+\sqcup{\omega'_2}^+$ or $\omega_1^-\sqcup{\omega'_2}^-$ then  $h'$ can be isotoped so as to be parallel to one of the horizontal edges of $\ov{\Gamma}\subset P$, which implies that $\partial D$, being disjoint from $h'$, is isotopic in $P$ to the primitive circle $\alpha\subset P$, contradicting the hypothesis that $\partial D$ is a Seifert circle in $R_{3,1}$. 

Therefore the arc $h'\subset P$ must have endpoints on, say, $\omega_1^+$ and ${\omega'_2}^-$. As $\partial D$ and $h'$ are disjoint, in the first integral  homology group 
\[
H_1(R_{3,1})=H_1(R_{3,1};\mZ)=x\mZ\oplus y\mZ
\]
the circle $\partial D$ is the homological sum 
$\omega_1^+ 
+_{h'} 
{\omega'_2}^-$ 
of $\omega_1^+$ and ${\omega'_2}^-$ along the arc $h'\subset P$. Using the orientations for $\omega_1^+$ and ${\omega'_2}^-$ in Fig.~\ref{k11}, top,
and the relations $\omega_1^+(x,y)=(y^2x^2)^{p_4}y^{q_5}$, ${\omega'_2}^-(x,y)=(x^2y^2)^{p_4}x^{q_3}$ (up to conjugation) found in (E5)(b) we obtain, in $H_1(R_{3,1})$,
\begin{align*}
\omega_1^+&=2p_4x+(2p_4+q_5)y
\quad\text{and}\quad 
{\omega'_2}^-=(2p_4+q_3)x+2p_4y\\
\implies 
\partial D&
=\omega_1^+ + {\omega'_2}^-
= (4p_4+q_3)x+(4p_4+q_5)y
\end{align*}
On the other hand, as $R_{3,1}(\partial D)$ is a knot exterior in $\mS^3$ and hence a homology solid torus, the circle $\partial D$ must be primitive in the abelian group $H_1(R_{3,1})=x\mZ\oplus y\mZ$, so we must have 
\[
1=\gcd(4p_4+q_3,4p_4+q_5)=\gcd(4p_4+q_3,q_3-q_5)=\gcd(4p_4+q_3,1-q_3q_5)
\]
As $q_3,q_5\in\{\pm 1\}$ and $p_4\geq 2$ this implies that 
\[
q_3q_5=-1
\]
and hence that
\begin{align*}
H_1(X_{K_1}(r_1))&=H_1\left(R_{3,1}(\omega_1^+\sqcup{\omega'}_2^+)\right)\\
&=x\mZ\oplus y\mZ/\grp{2p_4x+(2p_4+q_5)y, (2p_4+q_3)x+2p_4y}\\
&=\{0\}\leftarrow\text{since }
\det\begin{bmatrix}
2p_4 & 2p_4+q_5\\
2p_4+q_3 & 2p_4
\end{bmatrix}=1
\end{align*}
Since by (E3) the manifold $X_{K_1}(r_1)$ is $\mS^3$ or a lens space, it follows that $X_{K_1}(r_1)=\mS^3$. But then, as $r_1$ is a nonintegral slope of the form $a_1/p_1$, $p_1\geq 2$, by \cite{cgls} $K_1$ is a trivial knot, contradicting the fact that $K_1$ is a nontrivial torus knot. 

Therefore Case 2 does not occur and so the simplicial collection $\mT\subset X_K$ does not produce any exchange regions. By Lemma~\ref{new01c}(4) any Seifert torus in $X_K$ is then isotopic to some component of $\mT$ (see the proof of Proposition~\ref{mainp} for more details), hence the collection $\mT\subset X_K$ is unique up to isotopy.
\end{proof}

\section{Simplicial collections $\mT\subset X_K$ with minimal pairs
$(R_{i,i+1},J)$}\label{maxcoll}

In this section we show that for a genus one hyperbolic knot $K\subset\mS^3$ there are, up to isotopy, at most two  maximal simplicial collections of Seifert tori in $X_K$. This restricted number of such collections is the result of the interplay between the restrictions on the complementary regions of a maximal simplicial collection $\mT\subset X_K$ in Lemma~\ref{ann-powers}, the small size of of an annular pair $(R_{i,j},J)$ of index $\geq 2$ found in Lemma~\ref{annmin}, and the bound $|\mT|\leq 5$.

\begin{lem}\label{onex}
Any simplicial collection $\mT$ of Seifert tori in $X_K$ with minimal pairs $(R_{i,i+1},J)$ has at most one exchange region, and if so then $2\leq|\mT|\leq 4$.
\end{lem}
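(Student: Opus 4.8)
The plan is to prove Lemma~\ref{onex} by combining the structural results about exchange pairs (Proposition~\ref{annmin}), the bound $\dim MS(K)\le 4$ (so $|\mT|\le 5$), and the key nonexistence result Proposition~\ref{maxcase2} for the case $|\mT|=5$. First I would recall the setup: by definition an exchange region is of the form $R_{i,i+2}$ where the pair $(R_{i,i+2},J)$ is annular of index $p\ge 2$, so it contains a single interior Seifert torus $T_{i+1}$ not isotopic to $T_i,T_{i+2}$, together with the alternate induced $J$-tori $T'_i,T'_{i+2}$. By Proposition~\ref{annmin}(2), such a nonminimal annular pair forces the enclosed region to have width exactly $2$, i.e. $R_{i,i+1+1}=R_{i,i+2}$, which is consistent with the indexing. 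The key point is that an exchange pair $(R_{i,i+2},J)$ requires an interior torus, so it cannot occur unless $|\mT|\ge 3$; hence if $\mT$ has any exchange region we immediately get $|\mT|\ge 3$.

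The first substantive step is to rule out $|\mT|=5$: Proposition~\ref{maxcase2} states directly that when $|\mT|=5$ no pair $(R_{i,i+2},J)$ is an exchange pair. Therefore the presence of any exchange region forces $|\mT|\le 4$, and together with the previous paragraph this gives $3\le|\mT|\le 4$, which is stronger than the claimed $2\le|\mT|\le 4$. The second, and main, step is to show uniqueness: no simplicial collection with minimal pairs $(R_{i,i+1},J)$ can have two distinct exchange regions. Here I would argue by contradiction. Suppose $R_{i,i+2}$ and $R_{k,k+2}$ are two exchange regions. By Proposition~\ref{annmin}(1), each index~1 annular pair within such a region is minimal, while each exchange region is annular of index $p\ge 2$ and (by Proposition~\ref{annmin}(2)) is \emph{not} a handlebody, whereas its complement is. Two nonhandlebody regions $R_{i,i+2}$ and $R_{k,k+2}$ cannot both sit inside $X_K$ disjointly, because by Lemma~\ref{genprop}(P2) at most one complementary region can fail to be a handlebody: if $R_{i,i+2}$ is not a handlebody then $R_{i+2,i}$ \emph{is}, and by (P3) every subregion of the handlebody $R_{i+2,i}$ is itself a handlebody; since any second exchange region $R_{k,k+2}$ with $\{k,k+2\}$ not contained in $\{i,i+1,i+2\}$ would lie inside $R_{i+2,i}$, it would be a handlebody, contradicting that an exchange region is non-handlebody.

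The remaining configuration to eliminate is when a would-be second exchange region overlaps the first, i.e. when its indices interleave with $R_{i,i+2}$. In the case $|\mT|=3$ there is only one pair of the form $R_{i,i+2}$ up to the cyclic relabeling, so at most one exchange region can occur. In the case $|\mT|=4$, the candidate exchange regions are $R_{1,3},R_{2,4},R_{3,1},R_{4,2}$; an exchange region $R_{i,i+2}$ and its cyclic complement $R_{i+2,i}=R_{i+2,i+4}$ together account for all four tori, and by Lemma~\ref{genprop}(P2) exactly one of each such complementary pair fails to be a handlebody, so only one of the two opposite regions can be an exchange region. The two \emph{adjacent} candidate regions (say $R_{1,3}$ and $R_{2,4}$) both being exchange regions is ruled out because $R_{2,4}$ would then have to be non-handlebody while lying across $T_3$ inside the handlebody complement $R_{3,1}$ of $R_{1,3}$, again contradicting (P3). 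The main obstacle in writing this cleanly is the bookkeeping for the $|\mT|=4$ case, where one must carefully track which of the four cyclically-indexed width-two regions can simultaneously be non-handlebodies; I expect the correct statement to follow from a short case analysis using (P2), (P3) and Proposition~\ref{annmin}(2) that an exchange region is never a handlebody, concluding that at most one exchange region exists and $2\le|\mT|\le 4$.
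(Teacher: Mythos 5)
Your proposal has a fatal flaw at its core: you claim that, by Proposition~\ref{annmin}(2), an exchange region is never a handlebody, and you then build the entire uniqueness argument on handlebody bookkeeping via Lemma~\ref{genprop}(P2),(P3). Proposition~\ref{annmin}(2) asserts nothing of the sort, and the claim is false: exchange regions can be (and often are) genus two handlebodies. By \S\ref{oper}(3),(4) and Remark~\ref{remkoz}(1), a handlebody pair that splits into a simple pair and a primitive pair is precisely an exchange pair, and the examples of \S\ref{82} have an exchange region $R_{1,3}$ with \emph{both} $R_{1,3}$ and $R_{3,1}$ handlebodies (see also \S\ref{hk23}, whose title is ``one exchange handlebody region''). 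Consequently your disjoint-region, opposite-region and adjacent-region arguments all collapse. The adjacent case is also incoherent on its own terms: $R_{2,4}$ is not contained in $R_{3,1}$ (they merely overlap in $R_{3,4}$), so (P3) cannot be applied to it even if the non-handlebody premise were true.

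There are two further gaps. First, your opening claim that an exchange region forces $|\mT|\ge 3$ is false: with $|\mT|=2$ the region $R_{i,i+2}=R_{i,i}=\cl(X_K\setminus T_i\times[-1,1])$ contains the other torus $T_{i+1}$ in its interior and can be an exchange region --- this is exactly what Proposition~\ref{K3}(2) realizes, and it is why the lemma states $2\le|\mT|\le 4$. So the case $|\mT|=2$, where one must show $R_{1,1}$ and $R_{2,2}$ cannot both be exchange regions (the paper does this via Lemma~\ref{ann-powers}(4)), is not addressed at all. Second, for $|\mT|=3$ your dismissal ``there is only one pair of the form $R_{i,i+2}$ up to cyclic relabeling'' is wrong: there are three distinct such regions ($R_{1,3}$, $R_{2,1}$, $R_{3,2}$), any two of which overlap, and excluding two simultaneous exchange regions requires a real argument. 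Only your appeal to Proposition~\ref{maxcase2} for $|\mT|=5$ matches the paper. The paper's actual proof proceeds quite differently: in each intersecting configuration it applies the exchange trick and compares boundary slopes of spanning annuli on a shared torus, deriving contradictions from Lemma~\ref{annp}, Lemma~\ref{ann-powers}(4),(6),(7) and the minimality statement of Proposition~\ref{annmin}(1), rather than from any handlebody/non-handlebody dichotomy.
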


\begin{proof}
Set $|\mT|=N$ where $1\leq N\leq 5$ by \cite{valdez14}. Clearly there are no exchange regions when $N=1$.
If $N=2$ then by Lemma~\ref{ann-powers}(4) only one of $R_{1,1}$ or $R_{2,2}$ can be an exchange region, while in the case $N=5$ there are no exchange regions by Proposition~\ref{maxcase2}.

Therefore we may assume that $N=3,4$ in which case any two exchange regions must intersect. Arguing by contradiction, we only need to consider the following two cases.

\begin{figure}
\Figw{5.5in}{k27}{Intersecting exchange regions in $X_K$.}{k27}
\end{figure}

\medskip
{\bf Case 1:} {\it $R_{1,3}$ and $R_{2,4}$ are exchange regions (with $T_1=T_4$ allowed).}\\
The situation is represented in Fig.~\ref{k27}, left.
By the exchange trick of \S\ref{trick} we may assume that the pair $(R_{2,3},J)$ is annular of index 1 while the pairs $(R_{1,2},J)$ and $(R_{3,4},J)$ are simple of indices $\geq 2$. So if $A$ and $B$ are spanning annuli for $R_{1,3}$ and $R_{2,4}$, respectively, then by Lemma~\ref{ann-powers}(6) we may assume that
$A\cap R_{2,3}$ and $B\cap R_{2,3}$ are spanning annuli of index 1 in $R_{2,3}$, hence isotopic by Lemma~\ref{annp}. 
Thus the circles $A\cap T_2$ and $B\cap T_2$ have the same slope on $T_2$, and similarly the circles $A\cap T_3$ and $B\cap T_3$ have the same slope on $T_3$. This implies that the boundary components of the spanning annulus $A\cap R_{2,3}\subset R_{2,3}$ have companion annuli in $R_{1,2}$ and $R_{3,4}$, contradicting Lemma~\ref{ann-powers}(4).

\medskip
{\bf Case 2:} {\it $N=4$ and $R_{1,3}$ and $R_{3,1}$ are exchange regions.}\\
Let $A$ and $B$ be spanning annuli for $R_{1,3}$ and $R_{3,1}$, respectively, and let $\Delta_3=\Delta(A\cap T_3,B\cap T_3)$.

Suppose that $\Delta_3=0$. By the exchange trick and Lemma~\ref{ann-powers}(6) we may assume that $A\cap R_{2,3}$ and $B\cap R_{3,4}$ are spanning annuli of index 1 in $R_{2,3}$ and $R_{3,4}$, respectively, as shown in Fig.~\ref{k27}, right, below the dashed line.

Therefore the annulus $A\cap R_{2,3}$ can be isotoped in $R_{2,3}$ so that $A\cap R_{2,3}=B\cap R_{3,4}$, in which case their union becomes an index 1 spanning annulus for the region $R_{2,4}$, contradicting Lemma~\ref{annmin}(1).

Therefore $\Delta_3\neq 0$ and, by the exchange trick, this time we may assume that the pairs 
$(R_{1,2},J)$ and $(R_{3,4},J)$ are annular of index 1 while the pairs 
$(R_{2,3},J)$ and $(R_{4,5},J)$ are simple of index $\geq 2$, as shown in Fig.~\ref{k27}, right, above the dashed line.

If $R_{2,4}$ is a handlebody then $R_{3,4}$ is a handlebody by Lemma~\ref{genprop}(3) and so, being of index 1, $(R_{3,4},J)$ is a primitive pair with spanning annulus $B\cap R_{3,4}$. 
On the other hand $A\cap R_{2,3}$ is a spanning annulus for the simple pair $(R_{2,3},J)$. As $\Delta_3\neq 0$, the slopes of the spanning annuli $A\cap R_{2,3}$ and $B\cap R_{3,4}$ disagree on $T_3$, contradicting Lemma~\ref{ann-powers}(7). Therefore $R_{2,4}$ is not a handlebody and hence the region $R_{4,2}$ is a handlebody by Lemma~\ref{genprop} (P2).

However, as $N=4$, by the argument above we also have that 
$\Delta_1=\Delta(A\cap T_1,B\cap T_1)\neq 0$ and hence that the region $R_{4,2}$ is not a handlebody, a contradiction.

Therefore there cannot be two exchange regions for the collection $\mT$ in $X_K$.
\end{proof}

\begin{prop}\label{mainp}
Let $K$ be a hyperbolic knot in $\mS^3$.
\begin{enumerate}
\item
A simplicial collection $\mT=\sqcup_i T_i\subset X_K$ of Seifert tori is maximal iff its complementary pairs $(R_{i,i+1},J)$ are all minimal. In particular, any simplicial collection of Seifert tori in $X_K$ can be extended to a maximal such collection by suitably adding $J$-tori to each nonminimal pair of the collection.

\item
Up to isotopy, there are at most two maximal simplicial collections of  Seifert tori in $X_K$. 
Specifically, if $\mT\subset X_K$ is a maximal such collection then either
\begin{enumerate}
\item
$\mT$ has no exchange region and $\mT$ is the unique maximal simplicial collection of Seifert tori in $X_K$; any Seifert torus in $X_K$ is isotopic to some component of $\mT$,

\item
$2\leq|\mT|\leq 4$ and 
$\mT$ has a unique exchange region $R_{i-1,i+1}$ with induced tori $T_{i+1}, T'_{i+1}\subset R_{i-1,i+1}$, and $\mT$ and $(\mT\setminus T_i)\sqcup T'_{i+1}$ are the unique maximal simplicial collections of Seifert tori in $X_K$; any Seifert torus in $X_K$ is isotopic to some component of $\mT$ or to $T'_{i+1}$.
\end{enumerate}
\end{enumerate}
\end{prop}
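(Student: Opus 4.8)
The plan is to reduce both parts to a single classification statement: \emph{if every complementary pair $(R_{i,i+1},J)$ is minimal, then up to isotopy every Seifert torus in $X_K$ is a component of $\mT$, with exactly one extra class allowed, namely the alternate induced torus of an exchange region when one is present.} I will prove this classification first and then read off the proposition. To begin, let $S\subset X_K$ be a Seifert torus isotoped to meet $\mT$ minimally and suppose $S$ is isotopic to no component of $\mT$. If $S\cap\mT=\emptyset$, then $S$ is a $J$-torus inside some region $R_{i,i+1}$ (inside $R_{1,1}$ when $|\mT|=1$), and minimality of that pair forces $S$ to be parallel to a boundary component, hence isotopic to a component of $\mT$ --- a contradiction. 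So we may assume $S\cap\mT\neq\emptyset$.

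Now Lemma~\ref{new01c} applies: $S=P\cup A$, where $P$ is a pants meeting $\mT$ only along two circles lying in a single $T_i$, $A$ is a companion annulus on the opposite side of $T_i$, and there is a $J$-torus $T$ with $(R,J)$ a nontrivial index~$1$ annular pair, $R$ cobounded by $T_i$ and $T$ with $R\cap\mT=T_i$ and $P\subset R$. Say $P$, hence $T$, lies on the $R_{i,i+1}$ side. Since $R\cap\mT=T_i$ and $(R,J)$ is nontrivial, $T$ is a non-boundary-parallel $J$-torus caught between $T_i$ and $T_{i+1}$, so minimality of $(R_{i,i+1},J)$ forces $T$ parallel to $T_{i+1}$; thus $R=R_{i,i+1}$ and $(R_{i,i+1},J)$ is a nontrivial index~$1$ annular pair whose spanning annulus has boundary slope $\omega\subset T_i$. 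On the opposite side, $A\subset R_{i-1,i}$ is a companion annulus for $\omega$, so $\omega$ is a power circle there; the simple pair induced by $\omega$ in $R_{i-1,i}$ (see \S\ref{induced}) has an induced $J$-torus which, by minimality of $(R_{i-1,i},J)$, is parallel to $T_{i-1}$, so $(R_{i-1,i},J)$ is simple of index $p\geq2$. Its spanning annulus and that of $(R_{i,i+1},J)$ share the slope $\omega$ on $T_i$ and glue to a spanning annulus of $R_{i-1,i+1}$; by Lemma~\ref{ann-powers}(6), $(R_{i-1,i+1},J)$ is annular of index $p\geq2$, i.e.\ $R_{i-1,i+1}$ is an exchange region. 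Since $S=P\cup A\subset R_{i-1,i+1}$, Proposition~\ref{annmin}(2)(b) says $S$ is isotopic to $T_{i-1},T_{i+1},T_i$, or to the alternate induced torus $T'_{i+1}$; as $S$ is isotopic to no component of $\mT$, we get $S\simeq T'_{i+1}$. By Lemma~\ref{onex} there is at most one exchange region, so all such $S$ are isotopic to the single torus $T'_{i+1}$, proving the classification.

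Maximality now follows. Under minimal pairs, each component of any simplicial collection is isotopic to a class in the finite list $\{[T_1],\dots,[T_N]\}$, together with $[T'_{i+1}]$ in the exchange case. Since $T_i$ and $T'_{i+1}$ are the two induced tori of the exchange region, they are nonisotopic (Lemma~\ref{nonpar}) and admit no product region between them, so by Lemma~\ref{sakuma} they cannot be isotoped apart; hence no collection contains both. Thus every simplicial collection has at most $N$ mutually disjoint, nonparallel components, so $\mT$ is maximal, proving the ``if'' half of part~(1). For the ``only if'' half, if some $(R_{i,i+1},J)$ is not minimal then $R_{i,i+1}$ contains a $J$-torus $T$ not parallel to $T_i$ or $T_{i+1}$, and a product-region argument via Lemma~\ref{sakuma} shows $T$ is not parallel to any other $T_j$ either, so $\mT\sqcup T$ is a strictly larger collection and $\mT$ is not maximal. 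The extension statement then follows by repeatedly adjoining such $J$-tori to nonminimal pairs; the process terminates since $|\mT|\leq5$, and the result has only minimal pairs, hence is maximal.

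Part~(2) is read off directly. If $\mT$ has no exchange region --- automatic when $|\mT|=5$ by Proposition~\ref{maxcase2}, and also covering $|\mT|=1$ --- the classification gives that every Seifert torus is isotopic to a component of $\mT$, so $\mT$ is the unique maximal collection, which is (2)(a). Otherwise $\mT$ has a unique exchange region $R_{i-1,i+1}$ with $2\leq|\mT|\leq4$ and induced tori $T_i,T'_{i+1}$; the isotopy classes of Seifert tori are exactly $[T_1],\dots,[T_N],[T'_{i+1}]$, and since $T_i,T'_{i+1}$ cannot coexist in a collection, the only maximal collections are $\mT$ and $(\mT\setminus T_i)\sqcup T'_{i+1}$, both of size $N$, which is (2)(b). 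The main obstacle is the middle of the classification step: reconstructing a full index~$p$ exchange pair $(R_{i-1,i+1},J)$ from the purely local data of Lemma~\ref{new01c}, which requires correctly propagating minimality across $T_i$ to force $(R_{i,i+1},J)$ of index~$1$ and $(R_{i-1,i},J)$ simple of index~$p$, and then gluing along the shared slope $\omega$ via Lemma~\ref{ann-powers}(6); once this is in place, the identification $S\simeq T'_{i+1}$ and the non-coexistence of $T_i,T'_{i+1}$ (Lemmas~\ref{nonpar} and~\ref{sakuma}) close the argument.
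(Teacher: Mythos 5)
Your strategy mirrors the paper's own proof (classify every Seifert torus via Lemma~\ref{new01c}, Proposition~\ref{annmin} and Lemma~\ref{onex}, then read off parts (1) and (2)), but there is a genuine gap at exactly the step you yourself call the main obstacle. You assert that the companion annulus satisfies $A\subset R_{i-1,i}$, and from this that $\omega$ is a power circle in $R_{i-1,i}$, so that $(R_{i-1,i},J)$ is simple of index $p\geq 2$. But Lemma~\ref{new01c}(4)(c) only gives $A=S\cap R'$, where $R'$ is the \emph{entire} complementary region of $R=R_{i,i+1}$, and $R'$ contains every torus of $\mT\setminus T_i$; conclusion (2) of that lemma explicitly allows $S\setminus\mT$ to have several annular components, which is precisely the situation in which $A$ crosses $T_{i-1}$. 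If $A\cap T_{i-1}\neq\emptyset$, the piece of $A$ in $R_{i-1,i}$ adjacent to $T_i$ is a spanning annulus rather than a companion annulus, $\omega$ is not exhibited as a power circle there, and $(R_{i-1,i},J)$ would be annular of index 1 instead of simple --- so your construction of the index-$p$ exchange pair, and with it the classification, collapses in that case. (When $|\mT|=2$ there is no torus for $A$ to cross and your argument is complete; the gap concerns $|\mT|\geq 3$.)

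The paper closes this missing case with a dedicated contradiction: if $A$ crosses the adjacent torus, some component of $A\cap R_{i-1,i}$ is a spanning annulus with the same boundary slope $\omega$ on $T_i$ as $A_R$, and some component of $A$ beyond $T_{i-1}$ is a companion annulus, so by Lemma~\ref{ann-powers}(4),(6) the pair $(R_{i-1,i},J)$ is annular of index 1; gluing it to the index-1 annular pair $(R_{i,i+1},J)$ along $\omega$ makes $(R_{i-1,i+1},J)$ an index-1 annular pair which is not minimal (it contains $T_i$, which is not boundary-parallel), contradicting Proposition~\ref{annmin}(1). Only after this contradiction may one conclude $A\subset R_{i-1,i}$ and proceed as you do. The remainder of your argument --- the identification $S\simeq T'_{i+1}$ via Proposition~\ref{annmin}(2)(b), uniqueness of the exchange region via Lemma~\ref{onex}, the non-coexistence of $T_i$ and $T'_{i+1}$ via Lemmas~\ref{nonpar} and~\ref{sakuma}, and the deduction of parts (1) and (2) --- agrees with the paper's proof and is sound.
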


\begin{proof}
Let $\mT=T_1\sqcup\cdots\sqcup T_N\subset X_K$ be a simplicial collection of  Seifert tori such that each pair $(R_{j,j+1},J)$ is minimal, and let $\mS\subset X_K$ be any simplicial collection of  Seifert tori. 

Isotope $\mS$ in $X_K$ so as to intersect $\mT$ minimally with 
$\partial\mS\cap\partial\mT=\emptyset$. By the argument in Lemma~\ref{new01c}(1) it follows that each component of $\mS$ is either disjoint from $\mT$ and hence parallel to some component of $\mT$, or intersects $\mT$ minimally in $X_K$.

Suppose that $S\subset\mS$ is a Seifert torus which is not isotopic to any component of $\mT$. By Lemma~\ref{new01c} there is a component $T_j\subset\mT$ such that 
\begin{enumerate}
\item[(i)]
$|S\cap T_j|=2$, 

\item[(ii)]
the closures of the components of $S\setminus \mT$ consist of a pants $P$ and a companion annulus $A$ with $P\cap T_j=P\cap\mT=A\cap T_j$ that lie on opposite sides of $T_j$,

\item[(iii)]
there is a Seifert torus $T\subset X_K\setminus(P\cup\mT)$ which is not parallel to $T_j$ in $X_K$, 

\item[(iv)]
if $R\subset X_K$ is the region cobounded by $T$ and $T_j$ that contains $P$ then the pair $(R,J)$ is annular of index 1 with spanning annulus $A_R\subset R$ having the same boundary slope on $T_j$ as $A$, 
and $R\subset R_{j-1,j}$ or $R\subset R_{j,j+1}$.
\end{enumerate}
Since each pair $(R_{k,k+1},J)$ is minimal, by (iii) we must have $N\geq 2$ and by (iv) and Proposition~\ref{annmin}(1) we may assume that $R=R_{j-1,j}$, in which case by (ii) we have $A\subset R_{j,j-1}$. 

If $A\cap T_{j+1}\neq\emptyset$ then some component of $A\cap R_{j,j+1}$ is a spanning annulus of $R_{j,j+1}$, of the same boundary slope on $T_j$ as $A_R$ by (iv), and some component of $A\cap R_{j+1,j-1}$ is a companion annulus. Therefore by Lemma~\ref{ann-powers}(6) the pair $(R_{j,j+1},J)$ is annular of index  1 and so the pair $(R_{j-1,j+1},J)$ is also annular of index 1, contradicting Proposition~\ref{annmin}(1).

It follows that the companion annulus $A$ lies in $R_{j,j+1}$ and hence that the minimal pair $(R_{j,j+1},J)$ is simple by Lemma~\ref{annp}(5)(b). Therefore the pair $(R_{j-1,j+1},J)$ is annular of index $\geq 2$ and hence that $R_{j-1,j+1}$ is the unique exchange region of $\mT$. 
Moreover, since $S\subset R_{j-1,j+1}$ and
$T_{j}\subset R_{j-1,j+1}$ is the $J$-torus induced by $T_{j+1}$, by Proposition~\ref{annmin}(2) $S$ is isotopic in $R_{j-1,j+1}$ to the $J$-torus $T'_j\subset R_{j-1,j+1}$ induced by $T_{j-1}$ and $2\leq |\mT|\leq 4$.

Therefore the collection $\mS$ is isotopic to some subset of one of the collections $\mT$ or $(\mT\setminus T_j)\sqcup T'_j$, both of which have size $|\mT|$, and so the collection $\mT$ is maximal.

That a maximal simplicial collection produces minimal pairs follows by definition of maximality. Therefore (1) holds, and now (2) holds by the above argument.
\end{proof}

\begin{proof}[Proof of Theorem~\ref{main1} parts (1) and (2):]
Set $d=\dim\,MS(K)$.
That $0\leq d\leq 4$ follows from the bound $|\mT|\leq 5$ given in \cite{valdez14} for any maximal simplicial collection of Seifert tori $\mT\subset X_K$. Hence part (1) holds.

Each $d$-dimensional simplex of $MS(K)$ corresponds to the isotopy class of some such maximal collection $\mT\subset X_K$, and by Proposition~\ref{mainp}(2) any two such maximal collections differ up to isotopy by at most one component. Therefore $MS(K)$ consists of at most two $d$-simplices, and two $d$-dimensional simplices in $MS(K)$ intersect in a common $(d-1)$-face. Hence part (2) holds.
\end{proof}

\section{Examples of hyperbolic knots in $\mS^3$.}\label{examples}

By Propositions~\ref{maxcase2} and \ref{mainp}, a maximal simplicial collection $\mT\subset X_K$ of size $|\mT|=5$ produces no exchange regions and is therefore unique up to isotopy.
In this section we construct examples of hyperbolic knots $K\subset\mS^3$ with maximal simplicial collections of Seifert tori $\mT\subset X_K$ of sizes $2\leq|\mT|\leq 4$ that produce exchange regions and hence $MS(K)$ consists of two top-dimensional simplices.

One example of such a knot $K$ with a collection $\mT\subset X_K$ having an exchange region was constructed in \cite[\S6]{sakuma3}. In that example it is proved that there are nonisotopic Seifert tori in $X_K$ that intersect nontrivially and hence the diameter of $MS(K)$ must be 2; thus the presence of an exchange region for $\mT$ is inferred from Proposition~\ref{mainp}(2). 
We follow a different strategy in the construction of examples along with the results obtained so far which allows us to determine both the size of their maximal simplicial collection of Seifert tori and the Kakimizu complex of the constructed knots.

\subsection{Detecting primitive pairs and exchange pairs}

In the case of handlebody pairs by \S\ref{oper}(3)(4) an exchange pair can be thought of as an extension of a primitive pair by a simple pair. Both simple and exchange pairs are annular pairs of index $\geq 2$, and the next result will be useful in distinguishing these types of pairs form each other.

\begin{lem}\label{dis}
Let $(H,J)$ be a handlebody pair with $\partial H=T_1\cup_J T_2$, $\omega_1\subset T_1$ and $\omega_2\subset T_2$ coannular circles in $H$, and $\gamma\subset T_1$ a circle with $\Delta(\omega_1,\gamma)=1$. Then the surface $\partial H\setminus(\omega_1\sqcup\omega_2)$ compresses in $H$ along a nonseparating disk $D\subset H$, unique up to isotopy, and the following hold:
\begin{enumerate}
\item
$\omega_1$ and $\omega_2$ are both primitive in $H$ or both $p$-power circles for some $p\geq 2$,

\item
$\gamma\cdot\partial D=\pm1$,

\item
$|\gamma\cap\partial D|_{\text{min}}=1$
iff $(H,J)$ is a trivial or simple pair,

\item
if $|\gamma\cap\partial D|_{\text{min}}>1$
then $(H,J)$ is a primitive or an exchange pair if $\omega_1$ is a primitive or a power circle in $H$, respectively.
\end{enumerate}
\end{lem}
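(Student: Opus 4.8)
The plan is to set up the combinatorial picture around the coannular circles and then read off each part from results already available in the excerpt. First I would produce the compressing disk $D$: since $\omega_1\subset T_1$ and $\omega_2\subset T_2$ are coannular in the handlebody $H$, they cobound a spanning annulus $A\subset H$, which is nonseparating; cutting $H$ along $A$ and cutting $A$ to a disk shows that $\partial H\setminus(\omega_1\sqcup\omega_2)$ compresses along a nonseparating disk $D$. Alternatively, by \cite[Lemma 3.3]{valdez14} a surface $\partial H\setminus\omega$ compresses iff $\omega$ is primitive or a power, and the coannular circles have this property by \cite[Lemma 3.4]{valdez14}, which simultaneously gives part (1): $\omega_1,\omega_2$ are both primitive or both $p$-power circles. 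Uniqueness of $D$ up to isotopy follows because $H|D$ is a solid torus (a cut genus-two handlebody along a nonseparating disk) and the disk dual to the annulus $A$ is determined by $A$, which is itself essentially unique; I would cite the uniqueness statements for spanning annuli in Lemma~\ref{annp}.

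For part (2), the key observation is that $D$ is disjoint from $\omega_1$ in $\partial H$: indeed $D$ compresses the complement of $\omega_1\sqcup\omega_2$, so $\partial D$ lies in $\partial H\setminus(\omega_1\sqcup\omega_2)$ and is disjoint from $\omega_1$. Since $H|D$ is a solid torus whose boundary carries both $\omega_1$ and $\partial D$, and $\omega_1$ runs a nonzero number of times longitudinally (it is primitive or a power, hence nontrivial in $\pi_1(H)$), the algebraic intersection $\gamma\cdot\partial D$ is governed by how $\gamma$ meets the meridian of this solid torus. I would argue homologically inside the once-punctured torus $T_1$: in $H_1(T_1)$ the class $\partial D$ is a multiple of $\omega_1$ (both are disjoint from $\omega_1$ and $\partial D$ is isotopic in $T_1$ to a curve parallel to $\omega_1$ after the compression), and since $\Delta(\omega_1,\gamma)=1$ we get $\gamma\cdot\partial D=\pm1$. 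The cleanest route is to note that $\partial D\cap T_1$ is isotopic in $T_1$ to $\omega_1$ (or empty, forcing the degenerate trivial case), so their algebraic intersection with $\gamma$ agree up to sign.

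Parts (3) and (4) are where I would invoke the characterizations of pair types. For (3), by the criterion recorded in \S\ref{simple} (via \cite[Lemma 3.11]{valdez14}), a handlebody pair is trivial or simple iff there is a disk in $H$ meeting $J$ minimally in exactly two points; I would convert the hypothesis $|\gamma\cap\partial D|_{\min}=1$ into the existence of such a disk by tubing or banding $D$ along $\gamma$ to build a disk hitting $J$ twice, and conversely, in a trivial or simple pair the standard disk $D$ can be arranged to meet the $\Delta(\omega_1,\gamma)=1$ curve in a single point. For (4), when $|\gamma\cap\partial D|_{\min}>1$ the pair is neither trivial nor simple by (3); I would then use the dichotomy from part (1) together with the annular-pair classifications: if $\omega_1$ is primitive the pair is annular of index $1$, hence primitive by \S\ref{index}(A3); if $\omega_1$ is a $p$-power circle the pair is annular of index $p\geq2$ but not simple, which by the structure of exchange pairs in Proposition~\ref{annmin}(2) and \S\ref{oper}(3)(4) forces it to be an exchange pair.

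The main obstacle I anticipate is the bookkeeping in parts (3)--(4) that translates the purely combinatorial condition $|\gamma\cap\partial D|_{\min}=1$ versus $>1$ into the existence (or nonexistence) of a disk meeting $J$ in two points. This requires carefully controlling how $\gamma$, which meets $\omega_1$ once, interacts with $J$ and with the separating disk structure of the pair; the delicate point is ensuring that a single intersection of $\gamma$ with $\partial D$ genuinely produces a two-point disk for $J$ (and not more), and dually that more than one intersection obstructs any such disk. I would handle this by an outermost-arc analysis of $\gamma\cap\partial D$ inside $T_1$, reducing to the half-Dehn-twist picture of Lemma~\ref{FF}, which already quantifies how the minimal intersection with $J$ grows under twisting; this gives the sharp correspondence between the intersection number and the pair type.
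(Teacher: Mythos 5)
Your treatment of the existence and uniqueness of $D$ and of part (1) matches the paper (both are quoted from \cite[Lemma 3.4]{valdez14}), but parts (2) and (3) contain genuine gaps. In (2), your key claim --- that $\partial D\cap T_1$ is isotopic in $T_1$ to $\omega_1$, equivalently that $[\partial D]$ is a multiple of $[\omega_1]$ in $H_1(T_1)$ --- is false: in any pair the sides $T_1,T_2$ are incompressible in $H$, so $\partial D$ can never be isotoped into $T_1$; it must cross $J$ (in a trivial or simple pair it crosses $J$ exactly twice), so $\partial D\cap T_1$ is a union of arcs, not a circle. Moreover, even if $[\partial D]$ were a multiple $k[\omega_1]$, you would only obtain $\gamma\cdot\partial D=\pm k$, with nothing forcing $k=\pm1$. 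The paper's argument is different and does work: since $D$ is nonseparating, the circles $\omega_1\sqcup\omega_2\sqcup\partial D$ cut $\partial H$ into two pants, so $[\partial D]=\pm[\omega_1]\pm[\omega_2]$ in $H_1(\partial H)$, and as $\gamma\subset T_1$ is disjoint from $\omega_2\subset T_2$ this gives $\gamma\cdot\partial D=\gamma\cdot\omega_1=\pm 1$.

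In (3) the missing idea --- which you correctly flag as the main obstacle but never supply --- is the counting identity $|J\cap\partial D|_{\text{min}}=2\,|\gamma\cap\partial D|_{\text{min}}$. It is immediate from the hypothesis $\Delta(\omega_1,\gamma)=1$: this makes $J$ parallel in $T_1$ to $\partial N(\omega_1\cup\gamma)$, and since $\partial D$ is disjoint from $\omega_1$, each intersection of $\partial D$ with $\gamma$ contributes exactly two intersections with $\partial N(\omega_1\cup\gamma)$. With this identity no tubing or banding of $D$ is needed: $D$ itself meets $J$ minimally in two points iff it meets $\gamma$ minimally in one point, and then the criterion of \S\ref{simple} (\cite[Lemma 3.11]{valdez14}), combined with the uniqueness of $D$ as the compressing disk of $\partial H\setminus(\omega_1\sqcup\omega_2)$, gives both directions of (3) at once. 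Your proposed fallback, an outermost-arc analysis ``reducing to the half-Dehn-twist picture of Lemma~\ref{FF}'', is off target: that lemma concerns disks satisfying (E2) in the basic-pair construction of \S\ref{baspa}, and the pair $(H,J)$ here need not be basic. Your part (4) is essentially the paper's argument (primitive circle gives a primitive pair, power circle gives an exchange pair), except that you cite Proposition~\ref{annmin}(2), which is stated for pairs inside a knot exterior $X_K$, whereas the paper invokes Lemma~\ref{koz1-b}(1) together with Remark~\ref{remkoz}(1).
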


\begin{proof}
That the surface $\partial H\setminus(\omega_1\sqcup\omega_2)$ compresses in $H$ along a nonseparating disk $D\subset H$ which unique up to isotopy and part (1) follow from \cite[Lemma 3.4]{valdez14}.

Isotope $\partial D$ in $\partial H\setminus(\omega_1\sqcup\omega_2)$ so as to intersect $\gamma$ minimally.
As the circles $\omega_1\sqcup\omega_2\sqcup\partial D$ separate $\partial H$ into two pants the circle $\partial D$ is homologous in $\partial H$ to $\omega_1\sqcup\omega_2$, and since $\partial D\cap\omega_2=\emptyset$ we must have (up to some orientation scheme)
\[
\gamma\cdot\partial D=
\gamma\cdot\omega_1+\gamma\cdot\omega_2=\gamma\cdot\omega_1=\pm 1
\]
so (2) holds.

\medskip
Since $|\omega_1\cap\gamma|_{\text{min}}=1$ and $\omega_1\cup\gamma\subset T_1$, it follows that the circles $J$ and $\partial N(\omega_1\cup\gamma)\subset T_1$ are parallel in $T_1$ and hence that 
\[
|J\cap\partial D|_{\text{min}}=
2\cdot|\gamma\cap\partial D|_{\text{min}}
\]
By \S\ref{simple} the pair $(H,J)$ is trivial or simple iff  the disk $D$ intersects $J$ minimally in two points, hence iff $D$ intersects $\gamma$ minimally in one point so (3) holds.

For the case $|\gamma\cap\partial D|_{\text{min}}>1$ by (3) the pair $(H,J)$ is nontrivial and not simple, so if $\omega_1$ is a primitive circle in $H$ then $(H,J)$ is a primitive pair.
Otherwise by (1) $\omega_1$ is a $p\geq 2$ power circle in $H$ and so $(H,J)$ is an exchange pair by Lemma~\ref{koz1-b}(1) and Remark~\ref{remkoz}(1), hence (4) holds.
\end{proof}

In the construction of examples of knots $K\subset\mS^3$ we will make use of Lemma~\ref{Rpair} to justify that the regions $R_{i,j}$ involved form pairs $(R_{i,j},J)$ or $(R_{i,j},K)$ before knowing that the knot $K\subset\mS^3$ is hyperbolic. As this is automatically the case whenever the region $R_{i,j}$ is a handlebody, we will only invoke Lemma~\ref{Rpair} when $R_{i,j}$ is not a handlebody.

\subsection{Hyperbolic knots with $|\mT|=4$ and one exchange region.}
\label{82}
Suppose that $\mT\subset X_K$ is a maximal simplicial collection of size $|\mT|=4$ such that $R_{1,3}$ is an exchange region with $(R_{1,2},J)$ an index one annular pair. By Proposition~\ref{annmin}(1) the region $R_{1,2}$ must then be a handlebody, in which case the pair $(R_{1,2},J)$ is primitive and $R_{1,3}$ is a handlebody by \S\ref{oper}(3).
Arguments similar to those in \S\ref{61}--\ref{62} can be used to prove that the region $R_{3,1}$ must also be a handlebody and at least one of the pairs $(R_{3,4},J)$ or $(R_{4,1},J)$ be simple.

In this section we construct a family of knots $K=K(q,k,\ve)\subset\mS^3$ for integers $q\geq 1$, $k\in\mZ$, and $\ve=\pm1$, each of which bounds a maximal simplicial collection of 4 Seifert tori $\mT=T_1\sqcup T_2\sqcup T_3\sqcup T_4$ with an exchange region $R_{1,3}$ such that the regions $R_{1,3}$ and $R_{3,1}$ are handlebodies and both pairs $(R_{3,4},J)$ and $(R_{4,1},J)$ are simple. The collection $\mT\subset X_K$ is represented in Fig.~\ref{k15b},

\begin{figure}
\Figw{6in}{k15b-2}{The knot $K=K(q,k,\ve)\subset\mS^3$ with exchange pair $(R_{1,3},J)$.}{k15b}
\end{figure}

\medskip
(I) {\bf Construction of the circles $\omega_1,\omega'_4\subset T_1$ and $\omega'_2,\omega_3\subset T_3$ relative to the handlebody $R_{3,1}$.}
\\
Fig.~\ref{k18} shows the genus two handlebody $R_{3,1}$ with complete meridian system given by the disks $x\sqcup y\subset R_{3,1}$ and a disk $E\subset R_{3,1}$ separating $x$ and $y$. We identify $\pi_1(R_{3,1})$ with the free group
\[
\pi_1(R_{3,1})=\grp{x,y \ | \ -}
\]
relative to some base point. For $w_1,w_2\in \grp{x,y \ | \ -}$ we write $w_1\equiv w_2$ to indicate that the words differ by a cyclic permutation.

The circles $\omega_1,\omega'_4,\omega'_2$ in $\partial R_{3,1}$ are constructed as indicated in Fig.~\ref{k18}, along with an extra circle $u$. Notice that to the left of the separating disk $E$ all arcs in the figure are mutually parallel and intersect the disk $x$ minimally in one point. To the right of $E$ there are 3 disjoint arcs which intersect $y$ minimally in $q$, $q+\ve$ and $2q+\ve$ points, as well as the circle $\omega'_4$ which intersects $y$ minimally in $2q+\ve$ points (since it is disjoint from the arc that intersects $y$ in $2q+\ve$ points). 
Fig.~\ref{k18} corresponds to the case $(q,\ve)=(1,-1)$.

\begin{figure}
\Figw{5in}{k19a}{
The circles $\omega_1,\omega'_2,\omega'_4$ and $u$ in $\partial R_{3,1}$ for $(q,\ve)=(1,-1)$.
}{k18}
\end{figure}

\begin{figure}
\Figw{5in}{k19b}{The circles $\omega_1,\omega'_2,\omega_3,$ and $\omega'_4$ in $\partial R_{3,1}$.}{k18c}
\end{figure}

\begin{figure}
\Figw{5in}{k19c}{The circles $\omega'_2$, $u$ and $v_0$ in $\partial R_{3,1}$.}{k18d}
\end{figure}

The circle $\omega_3\subset T_2$ is constructed in Fig.~\ref{k18c}.
With their given orientations these circles satisfy the relations
\begin{align*}
(\omega_1\cup\omega'_4)\cap(\omega'_2\cup\omega_3)=\emptyset
\quad &\text{and} \quad
\Delta(\omega_1,\omega'_4)=1=\Delta(\omega'_2,\omega_3)
\\
\omega_1(x,y)\equiv xy^qxy^{2q+\ve}
\quad &\text{and} \quad
\omega'_4(x,y)\equiv y^{2q+\ve}
\\
\omega'_2(x,y)=x
\quad &\text{and} \quad
\omega_3(x,y)\equiv (xy^{2q+\ve})^2 
\end{align*}

Therefore we define
\[
K=\partial N(\omega_1\sqcup\omega'_4)\subset\partial R_{3,1}
\]

Notice that for $(q,\ve)=(1,-1)$ we have $\omega_1(x,y)\equiv(xy)^2\equiv\omega_3(x,y)$, and in fact from Fig.~\ref{k18c} it follows directly that in this case the power circles $\omega_1$ and $\omega'_3$ are coannular in $R_{3,1}$.

\medskip
(II) {\bf Construction of the handlebody $R_{1,3}$.}

We construct two disjoint and nonseparating circles $u,v\subset \partial R_{3,1}=\partial R_{1,3}$ representing the boundary of the complete system of disks for $R_{1,3}$.

The circle $u=\partial D\subset\partial R_{3,1}=\partial R_{1,3}$ is given in Fig.~\ref{k18} and satisfies the relations
\begin{align*}
u\cap(\omega_1\sqcup\omega'_2)=\emptyset
\quad &\text{and} \quad
u(x,y)=(xy^q)^3y^{\ve}=\text{ primitive in }R_{3,1}
\end{align*}

To obtain the circle $v\subset\partial R_{1,3}\setminus u$ we first construct the auxiliary circle $v_0\subset\partial R_{1,3}$ in Fig.~\ref{k18d} such that
\[
v_0\cap u=\emptyset,\quad
|v_0\cap\omega'_2|=1 \quad\text{and}\quad v_0(x,y)=y^{\ve}\text{ in }\pi_1(R_{3,1})
\]
The circle $v\subset\partial R_{1,3}\setminus u$ is then constructed in $\partial R_{1,3}$ as the homological sum 
\[
v=(1+3k)\cdot\omega'_2+[q+k(3q+\ve)]\ve\cdot v_0
\]
where $k\in\mZ$.

\medskip
(III) {\bf The Heegaard decomposition $R_{1,3}\cup_{\partial}R_{3,1}\approx\mS^3$.}
\\
In the first integral homology group $H_1(R_{3,1})=H_1(R_{3,1};\mZ)=x\mZ\oplus y\mZ$ we have
\[
u=3x+(3q+\ve)y \quad\text{and}\quad 
v=(1+3k)\,x+[q+k(3q+\ve)]\,y
\]
where
\begin{align*}
\det\begin{bmatrix}
3 &3q+\ve\\
1+3k &q+k(3q+\ve)
\end{bmatrix}=-\ve =\pm 1
\end{align*}
Therefore
\begin{align*}
H_1(R_{3,1}(u\sqcup v))&= 
x\mZ\oplus y\mZ/
\grp{3x+(3q+\ve)y, \ (1+3k)\,x+[q+k(3q+\ve)]\,y}
=0
\end{align*}
and since
\[
R_{1,3}\cup_{\partial}R_{3,1}\approx R_{3,1}(u\sqcup v)=R_{3,1}(u)(v)
\]
and $R_{3,1}(u)$ is a solid torus it follows that 
$
R_{1,3}\cup_{\partial}R_{3,1}\approx\mS^3
$.

\medskip
(IV) {\bf The exchange region $R_{1,3}$.}

By construction, in $\pi_1(R_{1,3})=\grp{u,v \ | \ -}$ we have
$\omega'_2\equiv v^{p}$ for $p=|q+k(3q+\ve)|$. 

Since $u\cap(\omega_1\sqcup\omega'_2)=\emptyset$ by (II),
the nonseparating disk $u\subset R_{1,3}$ is a compression disk  for $\partial R_{1,3}\setminus(\omega_1\sqcup\omega'_2)$ and so the circles $\omega_1$ and $\omega'_2$ are coannular in $R_{1,3}$
by \cite[Lemma 3.4(2)]{valdez14}. From Fig.~\ref{k18} we can see that $|u\cap\omega'_4|_{\text{min}}=3$ in $\partial R_{1,3}$ and so, by Lemma~\ref{dis}, $(R_{1,3},J)$ is an exchange pair iff $p=|q+k(3q+\ve)|\geq 2$.

\medskip

We summarize the information above in the next result.

\begin{prop}\label{K4}
For integers $q\geq 1$, $k$, and $\ve=\pm1$, except for $(q,k)=(1,0)$ and $(q,\ve)=(1,-1)$,
each of the knots $K$ in the family $K(q,k,\ve)\subset\mS^3$ is hyperbolic and bounds a maximal simplicial collection $\mT=T_1\sqcup T_2\sqcup T_3\sqcup T_4\subset X_K$ of 4 Seifert tori such that
\begin{enumerate}
\item
the regions $R_{1,3}$ and $R_{3,1}$ are handlebodies,

\item
$(R_{1,3},J)$ is an exchange pair of index $p=|q+k(3q+\ve)|\geq 2$,

\item
$(R_{3,4},J)$ is a simple pair of index 2 and 
$(R_{4,1},J)$ is a simple pair of index $2q+\ve\geq 3$,

\item
$\Delta(\omega'_2,\omega_3)=\Delta(\omega'_3,\omega_4)=\Delta(\omega'_4,\omega_1)=1$,

\item
the Kakimizu complex $MS(K)$ is a union of two 3-simplices intersecting in a common 2-face, and each surgery manifold $X_K(r)$ is hyperbolic whenever $\Delta(r,J)\geq 2$.
\end{enumerate}

In the two exceptional cases $(q,k,\ve)=(1,0,-1), (1,-1,-1)$ the knot $K$ is hyperbolic and bounds a maximal collection of 2 Seifert tori $T_1\sqcup T_3$ with $(R_{3,3},J)$ an exchange pair 
(where $R_{3,3}=\cl[X_K\setminus T_3\times I\,]$), and $MS(K)$ the union of two 1-simplices along a common vertex (Fig.~\ref{k15b-3}, top left).

In the exceptional case $(q,k,\ve)=(1,0,1)$ the knot $K$ is hyperbolic and bounds a unique maximal collection of 3 Seifert tori $T_1\sqcup T_3\sqcup T_4$ with no exchange region
(Fig.~\ref{k15b-3}, bottom).

In the remaining exceptional cases $(q,k,\ve)=(1,k,-1)$ with $k\neq0,-1$ the knot $K$ is a satellite of a $(2,2k+1)$ torus knot 
(Fig.~\ref{k15b-3}, top right). 
\end{prop}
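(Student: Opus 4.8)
The plan is to extract every assertion from the construction (I)--(IV) preceding the statement, taking the homeomorphism $R_{1,3}\cup_\partial R_{3,1}\approx\mS^3$ of step (III) as the point of departure. First I would observe that $R_{1,3}\cup_\partial R_{3,1}$ is a genus two Heegaard splitting: since $R_{3,1}(u)$ is a solid torus, filling it along $v$ yields a lens space, $\mS^1\times\mS^2$ or $\mS^3$, and the computation $H_1(R_{3,1}(u\sqcup v))=0$ in (III) forces $\mS^3$. Hence $K=\partial N(\omega_1\sqcup\omega'_4)$ is a knot in $\mS^3$ with longitude $J=\partial T_i$, and the once-punctured tori $T_1,T_3\subset\partial R_{3,1}$ are Seifert tori for $K$. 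Both $R_{1,3}$ and $R_{3,1}$ are handlebodies by construction, which is conclusion (1); consequently all four regions $R_{i,i+1}$ are genus two handlebodies and every $(R_{i,i+1},J)$ is a handlebody pair by Lemma~\ref{Rpair}(1)(b), so none of the non-handlebody bookkeeping of \S\ref{min5} is needed here.

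Next I would verify (2)--(4) by the word computations in $\pi_1(R_{3,1})=\grp{x,y \ | \ -}$. The relation $\omega'_4(x,y)\equiv y^{2q+\ve}$ exhibits $\omega'_4\subset T_1$ as a $(2q+\ve)$-power circle, so the simple pair $(R_{4,1},J)$ has index $2q+\ve\geq 3$; the relation $\omega_3(x,y)\equiv(xy^{2q+\ve})^2$ exhibits $\omega_3\subset T_3$ as a square, so $(R_{3,4},J)$ is simple of index $2$, which together with the intersection relations of the figures gives (3) and (4). For (2) I would apply Lemma~\ref{dis} to the handlebody pair $(R_{1,3},J)$ with $\gamma=\omega'_4$: the disk $u$ compresses $\partial R_{1,3}\setminus(\omega_1\sqcup\omega'_2)$, so $\omega_1,\omega'_2$ are coannular, and $\omega'_2\equiv v^{p}$ with $p=|q+k(3q+\ve)|$ shows they are $p$-power circles; since $|u\cap\omega'_4|_{\text{min}}=3>1$ and $\Delta(\omega_1,\omega'_4)=1$, Lemma~\ref{dis}(4) makes $(R_{1,3},J)$ an exchange pair exactly when $p\geq 2$.

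The genuine obstacle is conclusion (5): hyperbolicity of $K$ and the surgery statement. Since $X_K$ is an irreducible Haken manifold and $T_1,T_3$ are nonparallel minimal genus Seifert surfaces (the pairs being nontrivial), $K$ is not the trefoil, the only genus one torus knot, and hence not a torus knot; so by geometrization it suffices to prove $X_K$ atoroidal. I would argue by contradiction: an essential closed torus $F$, isotoped to meet $\mT=T_1\sqcup\cdots\sqcup T_4$ minimally, cannot be disjoint from $\mT$ (else it compresses inside some handlebody $R_{i,i+1}$), so each piece $F\cap R_{i,i+1}$ is an incompressible annulus whose boundary slopes on the $T_i$ are forced by the classification of spanning and companion annuli in simple and primitive handlebody pairs (Lemma~\ref{annp} and Lemma~\ref{ann-powers}(1)--(4)); reassembling $F$ around the cycle $T_1\to T_2\to T_3\to T_4\to T_1$ then produces the slope-agreement contradiction already exploited in Lemma~\ref{onex}, so no such $F$ exists. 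For the surgery statement I would show that reducible, toroidal or Seifert fillings occur only at slopes with $\Delta(r,J)\leq 1$: the $J$-tori and all pair annuli carry boundary slope $J$, so for $\Delta(r,J)\geq 2$ none of them caps off in $X_K(r)$, and the same annulus analysis together with the bounds of \cite{cgls} shows $X_K(r)$ is irreducible, atoroidal and anannular, hence hyperbolic; in particular $X_K$ itself is hyperbolic. With $K$ hyperbolic, $\mT$ is maximal by Proposition~\ref{mainp}(1) (its pairs being minimal by Lemma~\ref{prim-2}), it carries the single exchange region $R_{1,3}$, and Proposition~\ref{mainp}(2)(b) gives exactly the two maximal collections $\mT$ and $(\mT\setminus T_2)\sqcup T'_2$; thus $MS(K)$ is two $3$-simplices sharing a common $2$-face, completing (5).

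Finally I would treat the exceptional cases by tracking the two indices $p=|q+k(3q+\ve)|$ and $2q+\ve$ as they degenerate to $1$. When $p=1$ the pair $(R_{1,3},J)$ is primitive rather than an exchange pair (Lemma~\ref{dis}(4) and Lemma~\ref{koz1-b}(1)), so $T_2$ is absent; when $(q,\ve)=(1,-1)$ we have $2q+\ve=1$, $\omega'_4\equiv y$ is primitive, $(R_{4,1},J)$ degenerates, and $T_4$ is absent. For $(q,k,\ve)=(1,0,1)$ only $p=1$ holds, leaving the unique collection $T_1\sqcup T_3\sqcup T_4$ with no exchange region and $MS(K)$ a single $2$-simplex. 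For $(1,0,-1)$ and $(1,-1,-1)$ both degeneracies occur, collapsing $\mT$ to $T_1\sqcup T_3$, with the region $R_{3,3}=\cl[X_K\setminus T_3\times I]$ now an exchange region around $T_1$; hence $MS(K)$ consists of the two $1$-simplices $\{T_1,T_3\}$ and $\{T'_1,T_3\}$ sharing the vertex $T_3$. For $(1,k,-1)$ with $k\neq 0,-1$ only the degeneracy $2q+\ve=1$ occurs: $(R_{4,1},J)$ collapses, $R_{3,1}$ reduces to the simple (cable) pair $(R_{3,4},J)$ of index $2$, and the exchange region $R_{1,3}$ of index $p=|1+2k|\geq 3$ survives, so the resulting essential cabling torus exhibits $K$ as a satellite of the $(2,2k+1)$ torus knot. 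Hyperbolicity in the two non-satellite exceptional families follows from the same atoroidal argument applied to the smaller collection.
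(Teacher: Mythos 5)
Your treatment of conclusions (1) and (2) and of the exceptional cases tracks the paper's proof, but there are two genuine gaps in the middle of your argument.

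First, conclusions (3) and (4) do not follow from ``the word computations together with the intersection relations of the figures.'' The torus $T_4$ is not drawn anywhere: it must be \emph{produced}, and the circles $\omega'_3,\omega_4$ appearing in $\Delta(\omega'_3,\omega_4)=1$ live on that induced torus, so no figure can certify their intersection number. The paper's argument here is essential and you have skipped it entirely: since $\omega_3(x,y)\equiv(xy^{2q+\ve})^2$ and $\omega'_4(x,y)\equiv y^{2q+\ve}$ are non-coannular power circles, the argument of \cite[Lemma 3.4(3)]{valdez14} gives a disk $E\subset R_{3,1}$ separating them, so the two induced $J$-tori $T'_3$ and $T_4$ can be made disjoint; then, because $\omega'_2(x,y)=x$ is \emph{primitive} in $R_{3,1}$, the pair $(R_{3,1},J)$ cannot be maximal (\S\ref{max}), forcing $T'_3$ and $T_4$ to be parallel; finally \cite[Lemma 6.8(2)]{valdez14} identifies $(R_{3,1},J)$ as a double pair, which simultaneously yields the two simple pairs of indices $2$ and $2q+\ve$ in (3), the equality $\Delta(\omega'_3,\omega_4)=1$ in (4), and the fact that the collection really has exactly four components (if $T'_3$ and $T_4$ were disjoint but non-parallel you would get a size-5 collection and a completely different structure). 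Without this step you have not even defined the collection $\mT$ whose properties the proposition asserts.

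Second, your argument for (5) cannot deliver the surgery statement. Showing $X_K$ itself is atoroidal along the lines you sketch is plausible, but the claim that every filling $X_K(r)$ with $\Delta(r,J)\geq 2$ is hyperbolic is much stronger: an essential torus or annulus in $X_K(r)$ need not miss the core of the filling torus, so it does not decompose into annuli inside the regions $R_{i,i+1}$ of $X_K$, and the bounds of \cite{cgls} do not exclude, say, toroidal fillings at distance $2\leq\Delta(r,J)\leq 8$ from the toroidal slope $J$. The paper does not attempt such a direct argument; it verifies the hypothesis of \cite[Lemma 8.1]{valdez14} — namely that no slope on $T_1$ (resp.\ $T_3$) has companion annuli on both sides, which follows from $\Delta(\omega_1,\omega'_4)=1$ (resp.\ $\Delta(\omega'_2,\omega_3)=1$) and the uniqueness of companion-annulus slopes in \S\ref{pcgp} — and then quotes that lemma to get hyperbolicity of $K$ \emph{and} of all fillings with $\Delta(r,J)\geq 2$ in one stroke. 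Your proposal needs either this citation or a substitute for its (substantial) proof; the sketch as written would fail.
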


\begin{proof}
Observe that 
\begin{align*}
q\geq 1\implies 0<\frac{q}{3q+\ve}<1
\quad\text{and}\quad
q\geq 2\implies 0<\frac{q\pm1}{3q+\ve}\leq \frac{q+1}{3q-1}
<1
\end{align*}
and so, for $q\geq 1$,
\begin{align*}
|q+k(3q+\ve)|\leq 1 &\iff |q+k(3q+\ve)|= 1\iff -k=\frac{q\pm 1}{3q+\ve}\in\mZ\\
&\iff
q=1\text{ and } 
\begin{cases}
k=0\\
\text{or}\\
k=-1\text{ and } q+\ve=0
\end{cases}
\end{align*}
Therefore for integers $q\geq 1$, $k$, and $\ve=\pm1$ with $(q,k)\neq (1,0)$ and $(q,\ve)\neq(1,-1)$ we have
\[
p=|q+k(3q+\ve)|\geq 2
\quad\text{and}\quad
2q+\ve\geq 3
\]
and so by (IV) $(R_{1,3},J)$ is an exchange pair. Moreover the circles $\omega_1\subset T_1$ and $\omega'_2\subset T_3$ are coannular in $R_{1,3}$ and $\omega_1\equiv v^p$ in $\pi_1(R_{1,3})$, therefore the index of each core knot $K_1,K_2\subset R_{1,3}$ in Fig.~\ref{k15b} is $p=|q+k(3q+\ve)|\geq 2$.

By (I) 
$\omega_3(x,y)\equiv (xy^{2q+\ve})^2$ and $\omega'_4(x,y)\equiv y^{2q+\ve}$ in $\pi_1(R_{3,1})$ and so $\omega_3\subset T_3$ and $\omega'_4\subset T_4$ are noncoannular power circles in $R_{3,1}$. By the argument in the proof of \cite[Lemma 3.4(3)]{valdez14} it follows that there is a properly embedded disk $E\subset R_{3,1}$ that separates $\omega_3$ and $\omega'_4$. So if $A_3$ and $A_4$ are companion annuli in $R_{3,1}$ for the circles $\omega_3$ and $\omega'_4$, respectively, then $A_3$ and $A_4$ can be isotoped to be disjoint from $E$ and hence from each other. Therefore the $J$-tori $T'_3$ and $T_4$ induced by $\omega_3$ and $\omega'_4$ in $R_{3,1}$, respectively, are disjoint in $R_{3,1}$.

As we also have $\omega'_2(x,y)=x$ in $\pi_1(R_{3,1})$, the circle $\omega'_2\subset T_3$ is primitive in $R_{3,1}$ and so the pair $(R_{3,1},J)$ is not maximal by \S\ref{max}. This implies that the $J$-tori $T'_3$ and $T_4$ are mutually parallel in $R_{3,1}$, and since by construction they cobound simple pairs with $T_3$ and $T_1$, respectively, by \cite[Lemma 6.8(2)]{valdez14} $(R_{3,1},J)$ is a double pair with $T_4\subset R_{3,1}$ the unique $J$-torus not parallel into $T_3$ nor $T_1$ and $\Delta(\omega'_3,\omega_4)=1$ in $T_4$.

It now follows that the indices of the core knots $K_3$ and $K_4$ of the simple pairs $(R_{3,4},J)$ and $(R_{4,1},J)$ are $2$ and $2q+\ve\geq 3$, respectively. 

Finally, in the case of $T_1$, the circle $\omega_1\subset T_1$ has a companion annulus in $R_{1,3}$ since it is the boundary of a spanning annulus in $R_{1,3}$ of index $p\geq 2$, while $\omega'_4\subset T_1$ has a companion annulus in $R_{3,1}$ since it is a power circle in $R_{3,1}$.
As $\Delta(\omega_1,\omega'_4)=1$ and by construction each region $R_{1,3}$ and $R_{3,1}$ is a handlebody, hence atoroidal, it follows from Lemma~\ref{compa2} that no circle on $T_1$ has a companion annulus on either side of $T_1$. A similar conclusion holds for the Seifert torus $T_3\subset X_K$ using the circles $\omega'_2\sqcup\omega_3$.

Therefore, by \cite[Lemma 8.1]{valdez14} applied to the simplicial collection $T_1\sqcup T_3\subset X_K$, the knot $K=K(q,k,\ve)\subset\mS^3$ is hyperbolic and each surgery manifold $X_K(r)$ is hyperbolic for any slope $r\subset\partial X_K$ with $\Delta(r,J)\geq 2$. 

Since each of the pairs $(R_{i,i+1},J)$ is minimal and $R_{1,3}$ is an exchange region, by Proposition~\ref{mainp} the collection $\mT=T_1\sqcup T_2\sqcup T_3\sqcup T_4$ is maximal and $MS(K)$ consists of two 3 simplices intersecting in a common 2-face. Therefore (1)--(5) hold.

\begin{figure}
\Figw{5.5in}{k15b-3}{
The Seifert tori bounded by the knot $K(q,k,\ve)$ in the exceptional cases.}{k15b-3}
\end{figure}

In the cases $(q,\ve)=(1,-1)$ we have  $2q+\ve=1$ and by definition the simple pair $(R_{4,1},J)$ degenerates into a trivial pair. Moreover, by (I) the 2-power circles $\omega_3\subset T_3$ and $\omega_1\subset T_1$ become coannular in $R_{3,1}$, so $(R_{3,1},J)$ becomes a simple pair of index 2. 

If $k=0,-1$ then $p=1$ and so $(R_{2,3},J)$ becomes a trivial pair, whence $(R_{1,3},J)$ becomes a primitive pair with primitive circles $\omega_1$ and $\omega'_2$. It follows that the region $R_{3,3}$ is an exchange region for the collection $T_1\sqcup T_3$. Since $\Delta(\omega'_2,\omega_3)=1$, by the above general argument the knot $K$ is hyperbolic and bounds the maximal simplicial collection $T_1\sqcup T_3$ with exchange region $R_{3,3}$, hence $MS(K)$ is the union of two 1-simplices along a vertex (see Fig.~\ref{k15b-3}, top right).

If $k\neq 0,-1$ then $p=|2k+1|\geq 3$ and so the spanning annulus $A$ in $R_{1,2}$ has companion annuli on either boundary circle, so $K$ is not a hyperbolic knot by Lemma~\ref{ann-powers}(4); more precisely, by \cite[Lemma 5.1]{valdez14} the knot $K$ is a satellite of a $(2,2k+1)$ torus knot (see Fig.~\ref{k15b-3}, top left).

In the last exceptional case $(q,k,\ve)=(1,0,1)$ we have $p=1$ and $2q+\ve=3$. By a similar argument it follows that $K$ is a hyperbolic knot that bounds the unique maximal simplicial collection $T_1\sqcup T_3\sqcup T_4$ of 3 Seifert tori with no exchange region (see Fig.~\ref{k15b-3}, bottom) and so $MS(K)$ is a single 2-simplex.
\end{proof}

\subsection{Hyperbolic knots with $|\mT|=2,3$ and one exchange handlebody region.}\label{hk23}

We construct a family of hyperbolic knots $K=K(-1,n,2)\subset\mS^3$, $n\in\mZ$, each of which bounds a maximal simplicial collection $\mT\subset X_K$ of $|\mT|=2,3$ Seifert tori with a handlebody exchange region $R_{1,3}$. A projection of the knot $K(-1,n,2)$ with at most $14+6|n|$ crossings is shown in Fig.~\ref{m19}.

Unlike the case $|\mT|=4$ of \S\ref{82}, for $|\mT|=2,3$ the exchange region $R_{1,3}$ need not be a handlebody; examples where $R_{1,3}$ is not a handlebody will be constructed in Section~\ref{hk23-2}.

\begin{figure}
\Figw{4in}{k28-2}{
The complete disk systems $x\sqcup y\subset R_{1,2}$, $a\sqcup b\subset R_{2,1}$ and the circles $\omega_1,\omega'_1,\gamma_0\subset\partial R_{1,2}$.}{k28}
\end{figure}

\medskip
{\bf (I) Construction of the primitive pair $(R_{1,2},J)$.}

Fig.~\ref{k28} shows a genus two handlebody $R_{1,2}$ standardly embedded in $\mS^3$ with the following features:
\begin{enumerate}
\item
The disks $x\sqcup y\subset R_{1,2}$ form a complete disk system.

\item
The complementary handlebody $R_{2,1}=\mS^3\setminus\intr\,R_{1,2}$ has complete disk system $a\sqcup b\subset R_{2,1}$.

\item
The circles $\omega_1,\omega'_1\subset\partial R_{1,2}$ are disjoint from each other and from the disk $x\subset R_{1,2}$, and each intersects the disk $y$ minimally in one point. Thus $\omega_1$ and $\omega'_1$ cobound an annulus $A\subset R_{1,2}\setminus x$, and by \cite[Lemma 3.4]{valdez14} $x$ is the unique compression disk of the surface $\partial R_{1,2}\setminus(\omega_1\sqcup\omega'_1)$.

\item
The circles $\omega'_1$ and $\gamma_0$ intersect minimally in one point labeled $*$ in the figure.

\item
$\gamma_0$ and  $\partial x$ intersect minimally in 3 points.
\end{enumerate}

For any circle $\delta\subset\partial R_{1,2}$ and integers $k,n\in\mZ$ we denote by $\delta(k,n)\subset\partial R_{1,2}$ the circle obtained by performing $k$ full Dehn twists on $\delta$ around $\partial x$ and $n$ full Dehn twists  around $\partial y$, where the Dehn twists are performed by cutting $\partial R_{1,2}$ along the circles $\partial x\sqcup\partial y$ and twisting on the side of these circles in the direction indicated by the arrows for positive twists.

For integers $k,n,p\in\mZ$ we construct the following circles in $\partial R_{1,2}$:
\begin{enumerate}\setcounter{enumi}{5}
\item
The homological sum $\gamma_p=\gamma_0+p\omega'_1\subset\partial R_{1,2}$, constructed so that it intersects $\omega'_1$ minimally in one point.

\item
The circles $\omega_1(k,n)$, $\omega'_1(k,n)$ and 
$\gamma_p(k,n)\subset\partial R_{1,2}$.

By (4) the circles $\omega'_1(k,n)$ and $\gamma_p(k,n)$ intersect minimally in one point, and $\gamma_p(k,n))$ and $\partial x$ intersect minimally in 3 points  by (5).

\item
The separating circle $J=J(k,n,p)=\partial N(\omega'_1(k,n)\cup\gamma_p(k,n))\subset\partial R_{1,2}$.
\end{enumerate}

\medskip
(II) {\bf Fundamental groups}\\
The fundamental groups of $R_{1,2}$ and $R_{2,1}$ have the presentations
\[
\pi_1(R_{1,2})=\grp{x,y \ | \ -}
\quad\text{and}\quad
\pi_1(R_{2,1})=\grp{a,b \ | \ -}
\]
relative to the base point $*=\omega'_1\cap\gamma_0$ in Fig.~\ref{k28}.
Therefore in $\pi_1(R_{1,2})$ we have
\[
\omega_1(k,n)(x,y)=\omega'_1(k,n)(x,y)=y
\quad\text{and}\quad
\gamma_p(k,n)(x,y)=xyXyxy^p 
\]
while in $\pi_1(R_{2,1})$ we compute
\[
\omega_1(k,n)(a,b)=b^n,
\quad
\omega'_1(k,n)(a,b)=b^n a
\quad\text{and}\quad
\gamma_p(k,n)(a,b)=a^k b^n A^k b^n a^{k+1} (b^n a)^p
\]
where $X=x^{-1}$ and $A=a^{-1}$ as usual. 
Thus $\omega'_1(k,n)$ is a primitive circle in $R_{2,1}$.

\medskip
(III) {\bf The knot $K=K(k,n,p)\subset\mS^3$.}\\
The circle $J=J(k,n,p)$ separates $\partial R_{1,2}$ into two once-punctured tori $T_1,T_2\subset\partial R_{1,2}$ with $\omega_1(k,n)\subset T_1$ and $\omega'_1(k,n)\cup\gamma_p(k,n)\subset T_2$. We let $K=K(k,n,p)\subset\mS^3$ be the knot represented by $J(k,n,p)$ and consider $T_1,T_2\subset X_K$ as Seifert tori for $K$.

By (I)(3), (I)(7) and Lemma~\ref{dis} it follows that the pair $(R_{1,2},J)$ is primitive. 

Since by (II) the circle $\omega'_1(k,n)\subset T_2$ is primitive in $R_{2,1}$, the pair $(R_{2,1},J)$ is not maximal by \S\ref{max}.

\medskip
(IV) {\bf The power circles $\omega_1(k,n),\gamma_p(k,n)$ in $R_{2,1}$.}\\
By \cite{cassongor}, $\gamma_p(k,n)\subset T_2$ is a power circle in $R_{2,1}$ iff the word $\gamma_p(k,n)(a,b)$ is a power of some primitive word $w(a,b)$ in $\pi_1(R_{2,1})$, where by \S\ref{prim-pwr} in the cyclic reduction of $w(a,b)$ all exponents of $a$ ($b$) are $1$ or all $-1$ while all the exponents of $b$ ($a$, respectively) are of the form $\ell,\ell+1$ for some integer $\ell$.

Now, the following words are powers in $\pi_1(R_{2,1})$:
\begin{align*}
\omega_1(k,n)&=b^n \quad\text{for}\quad |n|\geq 2,\\
\gamma_{2}(-1,n)&\equiv (b^{2n}a)^2 \quad\text{for all}\quad n,\\
\gamma_{-1}(0,n)&=b^n \quad\text{for}\quad |n|\geq 2
\end{align*}
and we claim that these are the only cases when both $\omega_1(k,n)$ and $\gamma_p(k,n)$ are power circles in $R_{2,1}$. Indeed, suppose that $|n|\geq 2$ so that $\omega_1(k,n)$ is a power circle. 

If $k\neq 0,-1$ then the cyclic reduction of the  word $\gamma_p(k,n)(a,b)$ contains both $a$ and $A$ factors and hence it is not a power.
If $k=0$ then $\gamma_p(0,n)=b^n (b^n a)^{p+1}$ is a power iff $p=-1$, while if $k=-1$ then 
\[
\gamma_p(-1,n)(a,b)=A b^n a b^n (b^n a)^p = 
AB^n\cdot (b^{2n}a)^2 \cdot(b^n a)^{p-2}\cdot b^n a\equiv
(b^{2n}a)^2 \cdot(b^n a)^{p-2}
\]
is a power iff $p=2$.

\medskip
The next result summarizes the information above.

\begin{prop}\label{K3} For $|n|\geq 2$,
\begin{enumerate}
\item
the knot $K=K(-1,n,2)\subset\mS^3$ is hyperbolic and bounds a maximal  collection of 3 Seifert tori $\mT=T_1\sqcup T_2\sqcup T_3\subset X_K$ with one exchange pair $(R_{3,2},J)$ of index $|n|$ and a simple pair $(R_{2,3},J)$ of index $2$, and $MS(K)$ is the union of two 2-simplices along a common 1-subsimplex (see Fig.~\ref{m19} and Fig.~\ref{k15-2}, left)

\item
the knot $K=K(0,n,-1)\subset\mS^3$ is hyperbolic and bounds a maximal  collection of 2 Seifert tori $\mT=T_1\sqcup T_2\subset X_K$ with one exchange pair $(R_{1,1},J)$ of index $|n|$, and $MS(K)$ is the union of two 1-simplices along a common vertex (see Fig.~\ref{k15-2}, right).
\end{enumerate}
\end{prop}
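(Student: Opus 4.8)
```latex
\textbf{Proof plan for Proposition~\ref{K3}.}

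The plan is to verify the two claims by checking, for each of the two parameter families, the three ingredients that the earlier machinery reduces them to: (a) the pairs $(R_{1,2},J)$ and the relevant complementary region form pairs with the asserted structure (simple, primitive, exchange), (b) the knot $K$ is hyperbolic, and (c) the collection is maximal, so that $MS(K)$ is determined by Proposition~\ref{mainp}. The bulk of the work has already been laid out in items (I)--(IV) preceding the statement; what remains is to assemble those computations and invoke the structural results.

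First I would treat the case $(k,n,p)=(-1,n,2)$ with $|n|\geq 2$. By (III) the pair $(R_{1,2},J)$ is primitive. By (IV) the circle $\omega_1(-1,n)=b^n$ is a power circle in $R_{2,1}$ (as $|n|\geq 2$) and $\gamma_2(-1,n)\equiv(b^{2n}a)^2$ is also a power circle in $R_{2,1}$; moreover by (II) $\omega'_1(-1,n)=b^n a$ is primitive in $R_{2,1}$. The power circle $\omega_1(-1,n)\subset T_1$ then induces, as in \S\ref{induced}, a $J$-torus $T_3\subset R_{2,1}$ splitting $R_{2,1}$; the plan is to identify the subpair adjacent to $T_1$ as simple of index $|n|$ (the index being the $b$-power $n$) and the subpair $(R_{3,2},J)$ as the exchange pair, with the primitive pair $(R_{1,2},J)$ of index $1$ completing it, exactly as in the exchange-region description of Lemma~\ref{koz1-b}(1) and \S\ref{trick}. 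Hyperbolicity of $K$ I would deduce by applying \cite[Lemma 8.1]{valdez14} to the collection $T_1\sqcup T_2$, after checking as in the proof of Proposition~\ref{K4} that no circle on $T_1$ or $T_2$ has companion annuli on both sides (the slopes $\omega_1,\gamma_2$ give the relevant power circles on one side, primitivity on the other, and Lemma~\ref{compa2} rules out companions on the handlebody side). Maximality and the identification $MS(K)=$ two $2$-simplices glued along a $1$-face then follow from Proposition~\ref{mainp}, since $\mT=T_1\sqcup T_2\sqcup T_3$ has one exchange region $R_{3,2}$ and $|\mT|=3$.

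For the case $(k,n,p)=(0,n,-1)$ with $|n|\geq 2$, the argument is parallel but one dimension lower. By (III) the pair $(R_{1,2},J)$ is primitive, and by (IV) $\gamma_{-1}(0,n)=b^n$ together with $\omega_1(0,n)=b^n$ are power circles in $R_{2,1}$; since both boundary slopes of the annulus $A\subset R_{1,2}$ become power circles in $R_{2,1}$, the region $R_{1,1}=\cl[X_K\setminus T_1\times I]$ becomes an exchange region with $(R_{1,2},J)$ primitive of index $1$ and the complementary simple subpair of index $|n|$. Here $|\mT|=2$ and the single exchange region is $R_{1,1}$, so Proposition~\ref{mainp}(2)(b) gives $MS(K)$ as two $1$-simplices sharing a common vertex. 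Hyperbolicity again follows from \cite[Lemma 8.1]{valdez14} applied to $T_1\sqcup T_2$.

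The main obstacle I anticipate is item (a) in the first case: confirming that the induced splitting of $R_{2,1}$ produces an \emph{exchange} pair $(R_{3,2},J)$ rather than merely an annular pair, i.e.\ that the power circle $\omega_1(-1,n)$ and the primitive circle $\omega'_1(-1,n)$ really sit in the ``simple $\times$ primitive'' configuration characterizing an exchange region. This requires reconciling the two group-theoretic presentations of (II) on the two sides of $T_1$ and verifying that the $J$-torus induced by the power circle is disjoint from a separating disk realizing the primitivity of $\omega'_1(-1,n)$, so that Lemma~\ref{koz1-b}(1) applies with the correct splitting. The computation $\gamma_p(-1,n)(a,b)\equiv(b^{2n}a)^2(b^n a)^{p-2}$ in (IV) is the crux: it is precisely this factorization, specialized at $p=2$, that exhibits the $2$-power needed for the simple subpair, so the care is in tracking which side carries index $|n|$ and which carries index $2$.
```
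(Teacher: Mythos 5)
Your skeleton (primitive pair from (III), tori induced by power circles, exchange region obtained by gluing a simple pair to the primitive pair, hyperbolicity via \cite[Lemma 8.1]{valdez14} as in Proposition~\ref{K4}, then maximality and $MS(K)$ from Proposition~\ref{mainp}) is the same as the paper's, but two essential steps are missing or wrong. In case (1) you never prove that $(R_{2,3},J)$ is simple of index $2$, which is both part of the statement and indispensable for Proposition~\ref{mainp}(1): maximality of $\mT$ requires \emph{every} complementary pair, including $(R_{2,3},J)$, to be minimal. Inducing $T_3$ from $\omega_1(-1,n)$ only controls the side $(R_{3,1},J)$; to control the other side you must show that the $J$-torus induced from $T_2$ by the $2$-power circle $\gamma_2(-1,n)$ is \emph{parallel} to $T_3$. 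The paper gets this by noting that $\omega_1(-1,n)=b^n$ and $\gamma_2(-1,n)\equiv(b^{2n}a)^2$ are non-coannular power circles in $R_{2,1}$ (so their induced tori can be made disjoint, via a separating disk), while the primitivity of $\omega'_1(-1,n)$ makes $(R_{2,1},J)$ non-maximal by \S\ref{max}; hence the two induced tori are parallel and $(R_{2,1},J)$ is a double pair splitting into simple pairs of indices $|n|$ and $2$. You instead spend the primitivity of $\omega'_1(-1,n)$ on your ``main obstacle'' --- the simple$\times$primitive matching across $T_1$ --- which is in fact automatic: $\omega_1(-1,n)$ is literally the same circle that is the power circle of $(R_{3,1},J)$ and the $T_1$-boundary of the spanning annulus of $(R_{1,2},J)$, so the slopes agree by construction. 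The genuine difficulty sits exactly where you did not place it.

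In case (2) your key assertion --- ``both boundary slopes of the annulus $A\subset R_{1,2}$ become power circles in $R_{2,1}$'' --- is false and would in fact contradict the conclusion: the boundary slopes of $A$ are $\omega_1(0,n)$ and $\omega'_1(0,n)$, and by (II) $\omega'_1(0,n)=b^na$ is \emph{primitive} in $R_{2,1}$, not a power; the circle that is an $|n|$-power on $T_2$ is $\gamma_{-1}(0,n)$, which meets $\omega'_1(0,n)$ in one point. Moreover, if both boundary circles of a spanning annulus of $(R_{1,2},J)$ did have companion annuli in $R_{2,1}$, then $K$ would be a satellite and not hyperbolic, by Lemma~\ref{ann-powers}(4) --- this is precisely the degenerate phenomenon in the exceptional cases of Proposition~\ref{K4}. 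What must be proved instead, and what the paper proves, is that $\omega_1(0,n)$ and $\gamma_{-1}(0,n)$ are \emph{coannular} in $R_{2,1}$ (equality of the words $b^n$ in $\pi_1$ does not suffice; this is read off the diagram) and that $(R_{2,1},J)$ is itself a \emph{simple} pair of index $|n|$, via Lemma~\ref{dis}(3) applied to the compression disk $a$, which meets $\omega'_1(0,n)$ minimally in one point. Only then is the complement of a neighborhood of one torus of the collection the union of a simple and a primitive pair with matching spanning-annulus slopes, i.e.\ an exchange region of index $|n|$, and only then is $(R_{2,1},J)$ minimal, so that Proposition~\ref{mainp} yields maximality of $T_1\sqcup T_2$ and the description of $MS(K)$ as two $1$-simplices sharing a vertex.
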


\begin{figure}
\Figw{5.5in}{k15-2}{
The Seifert tori bounded by the knots $K(-1,n,2)$ and $K(0,n,-1)$.}{k15-2}
\end{figure}

\begin{proof}
We sketch the proof following the argument in Proposition~\ref{K4} closely. 

By (III), for all $k,n,p$ the pair $(R_{1,2},J)$ with $J=J(k,n,p)$ is primitive  with primitive circles $\omega_1(k,n)\subset T_1$ and $\omega_1'(k,n)\subset T_2$. Let $|n|\geq 2$.

For the knot $K=K(-1,n,2)\subset\mS^3$, by (IV) the circles $\omega_1(-1,n)\subset T_1$ and $\gamma_2(-1,n)\subset T_2$ are power circles in $R_{2,1}$ with words of the form $b^n$ and $(b^{2n}a)^2$, respectively, and hence are not coannular in $R_{2,1}$. Since by (III) the pair $(R_{2,1},J)$ is not maximal, 
either power circle $\omega_1(-1,n)$ or $\gamma_2(-1,n)$ induces a
$J$-torus $T_3\subset R_{2,1}$ which
splits the pair $(R_{2,1},J)$ into two simple pairs 
$(R_{2,3},J)$ and $(R_{3,1},J)$ of indices $2$ and $|n|$, respectively.
Therefore $R_{3,2}=R_{3,1}\cup_{T_1} R_{1,2}$ is an exchange region of index $|n|$.

\medskip
For the knot $K=K(0,n,-1)\subset\mS^3$, by (IV) we have $\omega_1(0,n)=b^n\equiv\gamma_{-1}(0,n)$ in $\pi_1(R_{2,1})$, and it can be seen directly from the corresponding diagram in Fig.~\ref{k28} that $\omega_1(0,n)$ and $\gamma_{-1}(0,n)$ are coannular $|n|$-power circles in $R_{2,1}$. It is also not hard to see that $a\subset R_{2,1}$ is the compression disk of the surface $\partial R_{2,1}\setminus[\omega_1(0,n)\sqcup\gamma_{-1}(0,n)]$.  Since the disk $a\subset R_{2,1}$ intersects $\omega'_1(0,n)\subset T_2$ minimally in one point, by the definition of $J$ and Lemma~\ref{dis} it follows that $(R_{2,1},J)$ is a simple pair of index $|n|$. Therefore 
$R_{1,1}=\cl[X_K\setminus N(T_1)]$ is an exchange region of index $|n|$.

As in the proof of Proposition~\ref{K4}, that the knots $K(-1,n,2)$ and $K(0,n,-1)$ are hyperbolic now follows from \cite[Lemma 8.1]{valdez14}. Therefore (1) and (2) hold.
\end{proof}

\subsection{Hyperbolic knots with $|\mT|=4$, two hyperbolic pairs, and no exchange region.}\label{ht4}
We will use the notation set up in \S\ref{FF} in the classification of basic pairs.

Fig.~\ref{k04}, top, shows a basic pair $(H,J)$ constructed as in \S\ref{FF} using basic circles $\alpha\sqcup\beta\subset\partial H$ separated by the disk $D\subset H$, with parameters set for this example as $m=4$ and $n=3$.  By Lemma~\ref{baspa2} the pair $(H,J)$ is therefore hyperbolic. 

Cutting $H$ along $D$ produces two solid tori $V_1,V_2$ with $H=V_1\cup(D\times[-1,1])\cup V_2$ and $\alpha\subset\partial V_1$, $\beta\subset\partial V_2$, as shown in Fig.~\ref{k04}.

We assume that the handlebody $H$ is standardly embedded in $\mS^3$ with complete disk system $D_1\sqcup D_2$.
Its complementary handlebody $H'=\mS^3\setminus\intr\,H$ has complete disk system $D'_1\sqcup D'_2$ such that 
$|\partial D_i\cap\partial D'_j|=0$ for $i\neq j$ and $1$ for $i=j$. 

Specifically the following circles are constructed on $\partial H=\partial H'$.

\begin{figure}
\Figw{5in}{k04-4b}{The hyperbolic basic pair $(H,J)$ (top) and the associated hyperbolic pair $(H,J(p,q))$ for $p=2$, $q=1$ (bottom).}{k04}
\end{figure}

\begin{enumerate}
\item[(H1)]
Basic circles $\alpha\sqcup\beta\subset\partial H$ parallel to $\partial D'_1$ and $\partial D'_2$, respectively.

\item[(H2)]
The circle $\partial D\subset\partial H=\partial H'$ bounds a nontrivial separating disk $D'\subset H'$.

\item[(H3)]
We write $\partial H=T_1\cup_JT_2$ with $\alpha\subset T_1$
and $\beta\subset T_2$.

\item[(H4)]
For integers $p,q\in\mZ$ (and some orientation scheme) let $\alpha_p\subset T_1$  be a circle homologous to $\partial D_1+p\alpha$, $\beta_q\subset T_2$ a circle homologous to $\partial D_2+q\alpha$, and construct the separating circle $J(p,q)\subset\partial H$ by matching the endpoints of 8 arcs in $\partial V_1\setminus\alpha_p$ with those of  8 arcs in $\partial V_2\setminus\beta_q$ using the same pattern as for $J$.

The circle $J(p,q)$ is represented in Fig.~\ref{k04}, bottom, for $|p|=2$ and $|q|=1$.
\end{enumerate}

\medskip
We denote by $K=K(p,q)$ the knot corresponding in $\mS^3$ corresponding to $J(p,q)$. Thus $K$ bounds two simplicial Seifert tori $T_1\sqcup T_2\subset X_K$ with $R_{1,2}=H$ and $R_{2,1}=H'$.

Equivalently $K(p,q)$ is the knot obtained by performing $p$ and $q$ full twists on the indicated strands of the trivial knot $K(0,0)$ shown in Fig.~\ref{k04-3}. 

\begin{figure}
\Figw{3.7in}{k04-3}{The knot $K(p,q)\subset\mS^3$.}{k04-3}
\end{figure}

If $p=0$ then $T_1$ compresses in $H'$ along the disk $D'_1$ an so the knots $K(0,q)$ are trivial. Similarly the knots $K(p,0)$ are trivial.
The knot $K(2,2)$ is represented in Fig.~\ref{k04-2e}.

\medskip
\begin{enumerate}
\item[(H5)]
By (H2) and (H4) the circles $\alpha_p,\beta_q\subset\partial H'$ are basic circles in $H'$ separated by the disk $D'\subset H'$ with $\partial D'=\partial D$. 

Recall from \S\ref{FF} that the construction parameters $m,n$ of a circle like $J(p,q)\subset\partial H'$ depend only on the distribution of parallel arcs in the intersection of $J(p,q)$ with the annulus $(\partial D')\times I\subset\partial H'$. As $\partial D'=\partial D$ the circles $J$ and $J(p,q)$ share the same parameters $m=4,n=3$ and so  the basic pair $(H',J(p,q))$ is hyperbolic and homeomorphic to $(H,J)$.

Similarly, for $|p|=1=|q|$ the pair $(H,J(p,q))$ is hyperbolic and homeomorphic to $(H,J)$.
\end{enumerate}

\medskip

\begin{prop}\label{Khyper}
For integers 
$p,q\neq 0$ the knot $K=K(p,q)\subset\mS^3$ is hyperbolic. Specifically, setting $J^*=J(p,q)$,
\begin{enumerate}
\item 
For $|p|,|q|\geq 2$ the knot $K$ bounds a unique maximal simplicial collection of 4 Seifert tori $\mT=T_1\sqcup T_2\sqcup T_3\sqcup T_4\subset X_K$, such that each pair $(R_{1,2},J^*)$ and $(R_{3,4},J^*)$ is homeomorphic to the hyperbolic pair $(H,J)$, while each pair $(R_{2,3},J^*)$ and $(R_{4,1},J^*)$ is simple of index $|q|$ and $|p|$, respectively.

\medskip
\item
If $|p|\geq 2$ and $|q|=1$ then in (1) the pair $(R_{4,1},J^*)$ becomes a trivial pair and so $K$ bounds the unique maximal simplicial collection of 3 Seifert tori $T_1\sqcup T_2\sqcup T_3$ with the pairs $(R_{1,2},J^*),(R_{3,1},J^*)$ homeomorphic to the hyperbolic pair $(H,J)$ and $(R_{2,3},J^*)$ a pair of index $|p|$. A similar conclusion holds when $|p|=1$ and $|q|\geq 2$. 

\medskip
\item 
If $|p|=1=|q|$ then $K$ bounds the unique maximal simplicial collection of two Seifert tori $T_1\sqcup T_2\subset X_K$ with
$(R_{1,2},J^*),(R_{2,1},J^*)$ homeomorphic to the hyperbolic pair $(H,J)$.
\end{enumerate}
\end{prop}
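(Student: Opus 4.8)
The plan is to analyze the decomposition $X_K=H\cup H'$ coming directly from the construction, where the two Seifert tori bounding $K$ split $\mS^3$ into the genus two handlebodies $H=R_{1,2}$ and $H'=R_{2,1}$ and, by (H5), the pair $(H',J^*)$ is the basic hyperbolic pair homeomorphic to $(H,J)$. Since a basic hyperbolic pair is minimal by Lemma~\ref{prim-2}(1), the region $H'$ contributes a single hyperbolic block to every maximal collection, and the entire problem reduces to reading off the internal structure of the pair $(H,J^*)$ as $p,q$ vary. First I would carry out a fundamental group computation in $\pi_1(H)$, with generators the cores $c_1,c_2$ of $V_1,V_2$ read off via the meridian disks $D_1,D_2$, exactly parallel to the computations in (H4)--(H5), to exhibit the words represented by $\alpha_p\subset T_1$ and $\beta_q\subset T_2$ and to conclude that $\alpha_p$ is a $|p|$-th power of a primitive element when $|p|\geq 2$ and is primitive when $|p|=1$, and symmetrically for $\beta_q$ and $|q|$.

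Assume $|p|,|q|\geq 2$. Then $\alpha_p$ and $\beta_q$ are power circles, so by \S\ref{induced} each induces a simple subpair of $H$, of index $|p|$ and $|q|$ respectively, together with $J$-tori $T',T''\subset H$. Because $\alpha_p$ and $\beta_q$ are separated by the disk $D$, their companion solid tori can be pushed off $D$ and hence made disjoint, so $T'$ and $T''$ are disjoint and split $(H,J^*)$ into a simple pair of index $|p|$, a middle pair, and a simple pair of index $|q|$. The crux of the argument is to identify the middle pair as the hyperbolic pair $(H,J)$: I would show it is again a basic pair, its basic circles being the primitive cores underlying $\alpha_p$ and $\beta_q$ and still separated by $D$, and that, since the twisting producing $\alpha_p,\beta_q$ changes only their power count and not the pattern of arcs of $J^*$ along $\partial D$, its construction parameters are again $m=4$, $n=3$. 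Lemma~\ref{baspa2} then identifies the middle pair as hyperbolic and homeomorphic to $(H,J)$, and in particular nontrivial, so $T',T''$ are non-parallel. Re-indexing the four resulting tori cyclically yields the pattern of conclusion (1).

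To see that $K$ is hyperbolic I would follow the template of Propositions~\ref{K4} and \ref{K3}. On each of the two bounding tori the only slope carrying a companion annulus on the $H$ side is the power circle $\alpha_p$ or $\beta_q$ (by \S\ref{pcgp} there is at most one such slope per side), while on the $H'$ side no slope carries a companion annulus at all: a power circle on $\partial H'$ would, by \S\ref{induced} and the minimality of the hyperbolic pair $(H',J^*)$, force that pair to be simple. Hence no slope has companion annuli on both sides; since $H$ and $H'$ are atoroidal handlebodies, Lemma~\ref{compa2} applies and \cite[Lemma 8.1]{valdez14} gives that $K$ is hyperbolic with $X_K(r)$ hyperbolic whenever $\Delta(r,J)\geq 2$. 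For the Kakimizu complex, every complementary pair $(R_{i,i+1},J^*)$ is simple or hyperbolic, hence minimal by Lemma~\ref{prim-2}(1), so the collection is maximal by Proposition~\ref{mainp}(1); and since each region $R_{i,i+2}$ is a union of two adjacent pairs one of which is hyperbolic, hence anannular, Lemma~\ref{ann-powers}(6) shows $R_{i,i+2}$ has no spanning annulus and so is not an exchange pair. By Proposition~\ref{mainp}(2)(a) the maximal collection is unique and $MS(K)$ is a single simplex, of dimension $3$.

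The degenerate cases follow by the same mechanism. When one of $|p|,|q|$ equals $1$ the corresponding circle $\alpha_p$ or $\beta_q$ is primitive rather than a power, so the associated simple pair degenerates to a trivial pair and its intermediate torus is absorbed; after re-indexing this leaves three Seifert tori with the two surviving hyperbolic pairs flanking a single simple pair, as in conclusion (2). When $|p|=|q|=1$ both circles are primitive, $(H,J^*)$ does not split and is itself the hyperbolic pair $(H,J)$ by the final assertion of (H5), giving the two-torus collection of conclusion (3); in both degenerate cases the hyperbolicity of $K$ and the determination of $MS(K)$ proceed exactly as above. I expect the main obstacle to be the middle step of the second paragraph: verifying from the explicit $\pi_1$-words that $\alpha_p$ and $\beta_q$ are genuinely $|p|$-th and $|q|$-th powers of primitives, and then pinning down the homeomorphism type of the middle region by matching its $(m,n)$ parameters to those of $(H,J)$, rather than merely certifying that it is hyperbolic.
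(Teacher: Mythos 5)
Your proposal is correct and takes essentially the same route as the paper's proof: one side of the splitting is the hyperbolic basic pair via (H5), the power circles $\alpha_p,\beta_q$ induce simple pairs of indices $|p|,|q|$ on the other side, and the middle piece is identified with $(H,J)$ by cutting along the companion annuli and re-running the parameter argument of (H5)/Lemma~\ref{baspa2}. The differences are cosmetic (your labels of $H$ and $H'$ are swapped relative to the paper, forcing the re-indexing you mention) plus the fact that you spell out the hyperbolicity of $K$ and the maximality/uniqueness of the collection -- via \cite[Lemma 8.1]{valdez14}, Lemma~\ref{ann-powers}(6) and Proposition~\ref{mainp} -- which the paper's own proof leaves implicit, exactly as in the template of Propositions~\ref{K4} and \ref{K3}.
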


\begin{proof}
Set $R_{1,2}=H'$, $R_{2,1}=H$ and $J^*=J(p,q)$. 

Suppose that $|p|,|q|\geq 2$.
Then in $R_{2,1}=H$ the circle $\alpha_p\subset T_1$ is a $|p|$-power circle and $\beta_q\subset T_2$ is a $|q|$-power circle. Let $T_3,T_4\subset R_{2,1}$ be the $J^*$-tori induced by $\beta_q$ and $\alpha_p$, respectively, so that $(R_{2,3},J^*)$ and $(R_{4,1},J^*)$ are simple pairs of indices $|q|$ and $|p|$, respectively (see \S\ref{induced}).

Let $B,V\subset R_{2,3}$ be the companion annulus and solid torus of the power circle $\beta_q\subset T_2$, with $V\cap T_2$ an annular regular neighborhood of $\beta_q$ in $T_2$. Similarly let
$A,W\subset R_{2,3}$ be the companion annulus and solid torus of the power circle $\alpha_p\subset T_1$, with $W\cap T_1$ an annular regular neighborhood of $\alpha_p$. The situation is represented in Fig.~\ref{k05c}.

\begin{figure}
\Figw{3.4in}{k05c}{Construction of the Seifert tori $T_3,T_4\subset X_K$.}{k05c}
\end{figure}

Then $J^*$ is disjoint in $\partial R_{1,2}$ of the annuli $W\cap T_1$ and $V\cap T_2$, and we may assume that $V$ and $W$ are disjoint from the separating disk $D\subset H$.

Let $H^*\subset R_{2,1}=H$ be the genus two handlebody component of $R_{2,1}$ cut along the companion annuli $A\sqcup B$. By \S\ref{oper}(1) the pair $(H^*,J^*)$ is homeomorphic to $(R_{3,4},J^*)$,
and by \cite[Lemma 6.8]{valdez14}
the core circles $\alpha'_p$ and $\beta'_q$ of the companion annuli $A$ and $B$ are basic circles in $H^*$ disjoint from $J^*$. 

The disk $D\subset H$ separates the solid tori $V$ and $W$ and hence it lies in $H^*$, which by the argument in (H5) implies that the pairs $(H^*,J^*)\approx(R_{3,4},J^*)$ are homeomorphic to $(H,J)$, hence hyperbolic.
Thus (1) holds, and (2) and (3) follow by a similar argument.
\end{proof}

\begin{figure}
\Figw{4.5in}{k05b}{The Seifert tori in $X_K$ for $|p|\geq 2,|q|=1$ (left) and $|p|,|q|\geq 2$ (right).}{k05}
\end{figure}

\begin{rem}\label{rem-hyper}
\begin{enumerate}
\item
In Fig.~\ref{k05}, right, 
$K_2\sqcup K_4\subset\mS^3$ is the unlink formed by the cores of the solid tori components of $H|D_0$ in Fig.~\ref{k04}, bottom. This is one of the reasons why it is easy to render a projection of the knots $K(p,q)$ as in Fig.~\ref{k04-3}.
Moreover the regions $R_{1,3}$ and $R_{3,1}$ are handlebodies but $R_{2,1}$ and $R_{4,3}$ are not, making the latter the exteriors of nontrivial handlebody knots in $\mS^3$ with an interesting internal structure. 

\item
Similarly in Fig.~\ref{k05}, left, the knot $K_3\subset\mS^3$ is trivial and
the region $R_{1,3}$ is not a handlebody.

\item
We have used the values $m=4,n=3$ for the parameters defined in \S\ref{FF}; conclusions similar to those reached in Proposition~\ref{Khyper} can be reached for any other suitable values of $m,n$.
\end{enumerate}
\end{rem}

\subsection{Hyperbolic knots with $|\mT|=2,3$ and one exchange nonhandlebody region.}\label{hk23-2}

In this section we construct hyperbolic knots that bound a maximal simplicial collection $\mT\subset X_K$ of $|\mT|=2,3$ Seifert tori with a nonhandlebody exchange region $R_{1,3}$.

The pair $(R_{3,1},J)$ is necessarily basic by Lemma~\ref{koz1-b}(2)
and in the context of \cite{ozawa01} represents a handlebody knot in $\mS^3$ whose exterior $R_{1,3}$ contains an essential annulus. Such handlebody knots are completely classified in \cite{ozawa01} and the spanning annulus in $R_{1,3}$ corresponds to a Type 3-3 annulus as constructed in \cite[\S3, Fig.~8]{ozawa01}.

As in \S\ref{RK} we retract the regular neighborhood $N(K)\subset\mS^3$ radially onto $K$ so that the circles $J$ and $\partial T_i$ become identified with $K$ and use the notation $(R_{i,i+1},K)$ for the pairs $(R_{i,i+1},J)$
 
\medskip
{\bf (I) Construction of handlebody knots $V\subset\mS^3$ with an incompressible  Type 3-3 annulus in their exterior.}

(i) Let $L\subset\mS^3$ be a fixed knot with exterior $X_L=\mS^3\setminus\intr\,N(L)\subset\mS^3$ and let $\alpha\subset\partial X_L=\partial N(L)$ be a circle of nonintegral slope of the form $r=a/p$ for some $p\geq 2$. If $L$ is the trivial knot we choose $\alpha$ to be a nontrivial torus knot.

Let $L'\subset \intr\,X_L$ be a knot which cobounds an annulus with $\alpha$; thus $L'\subset\mS^3$ is a nontrivial knot which is a cable of $L$. The exterior $X=\mS^3\setminus\intr[N(L)\cup N(L')]$ of the link $L\sqcup L'\subset\mS^3$ then contains a properly embedded annulus $A$ with the circle $\alpha=\partial A\cap\partial N(L)$ of nonintegral slope in $N(L)$, and the circle $\partial A\cap\partial N(L')$ of integral slope $r'$ in $\partial N(L')$. The situation is represented in Fig.~\ref{m03}, top.

Since at least one component of the link $L\sqcup L'$ is a nontrivial knot, the boundary tori $\partial X$ and the annulus $A$ are incompressible in $X$.

\medskip
(ii) Let $N(A)$ be a thin regular neighborhood of $A$ in $X$ and $M=\cl[X\setminus N(A)]$ the exterior of $A$ in $X$; thus $\partial M$ is a torus.
By \cite[Theorem 1.1]{myers1} there is a properly embedded arc $e\subset M$ such that
\begin{enumerate}
\item[(M1)]
$e$ has one endpoint on each of the annuli $\partial N(L)\cap\partial M$ and $\partial N(L')\cap\partial M$,

\item[(M2)]
$e$ is an excellent 1-submanifold of $M$; that is, 
the exterior $M_e=\cl[M\setminus N(e)]$ of $e$ in $M$ is irreducible, boundary irreducible, atoroidal
and {\it anannular} (any incompressible annulus in $M_e$ is boundary parallel). 
\end{enumerate}

\medskip
(iii) We set 
\[
V=N(L)\cup N(L')\cup N(e)\subset\mS^3
\quad\text{and}\quad
W=\mS^3\setminus\intr\, V=\cl[X\setminus N(e)]=M_e\cup N(A)
\]
Thus $V$ is a genus two handlebody knot in $\mS^3$ whose exterior $W$ contains the nonseparating annulus $A$.

Being a subset of $X\subset\mS^3$, the manifold $W$ is irreducible and the annulus $A\subset W$ is incompressible in $W$. It is then not hard to see from (M2) that $W$ is boundary irreducible and atoroidal, and that any annulus in $W$ is isotopic to $A$.

\medskip
(iv) The cocore disk $D$ of the 1-handle $N(e)$ attached to $N(L)\sqcup N(L')$ is a nontrivial disk in $V$ which separates the components of $\partial A\subset V$. 
By (i), in $V$ the circle $\alpha=\partial A\cap\partial N(L)$ is a $p\geq 2$ power circle and $\partial A\cap\partial N(L')$ is a primitive circle.
The situation is represented in Fig.~\ref{m03}, bottom.

\begin{figure}

\Figw{2.8in}{m03b}{Construction of the genus two handlebody knot $V\subset\mS^3$ with exterior $W$.}{m03}
\end{figure}

\medskip
{\bf (II) The genus one hyperbolic knots $K\subset\mS^3$.}
\\
We consider the family of knots $K\subset\mS^3$ consisting of separating circles in $\partial V=\partial W$ that are disjoint from $\partial A$ and nontrivial in $V$, or, equivalently, such that $K$ and $\partial D\subset V$  intersect  minimally in $|K\cap\partial D|\geq 4$ points. 

Since the circle $K\subset\partial V=\partial W$ is nontrivial in $V$ and $W$, $K$ is a genus one knot in $\mS^3$ that bounds two Seifert tori from the decomposition $\partial V=T_1\cup_K T_2$. We set $R_{1,2}=W$ and $R_{2,1}=V$, so that $N(L)\sqcup D\subset R_{2,1}$, and choose the notation 
$\partial_1A\subset T_1$ and $\partial_2A=\alpha\subset T_2$.

We summarize the above conclusions in the next result.

\begin{prop}\label{K3-nh}
There exist infinitely many genus one hyperbolic knots $K\subset\mS^3$ which bound a maximal simplicial collection of 2 or 3 Seifert tori with one nonhandlebody exchange region.
\hfill
\qed
\end{prop}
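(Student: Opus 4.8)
The plan is to show that every knot $K$ produced by the construction (I)--(II) has the asserted properties, so that the statement reduces to exhibiting infinitely many inequivalent such $K$. First I would record, directly from the construction and \emph{before} invoking hyperbolicity, the structure of the two sides of $T_1\sqcup T_2$. By (I)(iii) the region $R_{1,2}=W$ is irreducible, boundary irreducible and atoroidal, so Lemma~\ref{Rpair}(2) shows $(R_{1,2},K)$ is a pair; the spanning annulus $A\subset W$, with $\partial_1A\subset T_1$ and $\partial_2A=\alpha\subset T_2$ nonseparating by (I)(iv), makes it annular, and since $W$ is not a handlebody the pair is nontrivial. Because every annulus in $W$ is isotopic to $A$ by (I)(iii), Lemma~\ref{annp}(3)--(5) forces $A$ to be unique up to isotopy, so $(R_{1,2},K)$ has index $1$. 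On the other side, $R_{2,1}=V$ is a genus two handlebody in which, by (I)(iv), $\alpha$ is a $p$-power circle and $\partial_1A$ is primitive, separated by the disk $D$. Applying \S\ref{induced} to $\alpha$ produces an induced $J$-torus $T_3\subset V$ with $(R_{2,3},K)$ simple of index $p\ge 2$, and then \S\ref{oper}(3) with \S\ref{basic} identifies $(R_{3,1},K)$ as a basic pair. I deliberately avoid Lemma~\ref{koz1-b} and Proposition~\ref{annmin} here, since both presuppose hyperbolicity.

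Next I would establish the key conclusion, the hyperbolicity of $K$. I would argue, as in the proofs of Propositions~\ref{K4} and \ref{K3}, through \cite[Lemma 8.1]{valdez14} applied to $T_1\sqcup T_2\subset X_K$: it suffices to check that no circle on $T_1$ or $T_2$ has companion annuli on both sides. This is where the excellence of the arc $e$ (Myers, \cite[Theorem 1.1]{myers1}) does its work. By \S\ref{pcgp} each $T_i$ carries at most one slope with a companion annulus on a given side, and these are pinned down by the pairs above: on the handlebody side $V$ the only such slope is the power circle $\alpha\subset T_2$, while $\partial_1A\subset T_1$ is primitive and so has no companion annulus in $V$; on the side $W$ the only essential annulus is $A$, which is \emph{spanning} rather than companion, so no slope on $T_1$ or $T_2$ has a companion annulus in $W$. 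Hence $\alpha$ has a companion annulus only in $V$ and $\partial_1A$ in neither region, and by Lemma~\ref{ann-powers}(3),(4) no slope can have companion annuli on both sides; equivalently, cutting an essential torus or annulus of $X_K$ along $T_1\sqcup T_2$ yields only pieces that are inessential or that reassemble into the boundary of a companion solid torus, so $X_K$ is atoroidal and anannular. Being also irreducible and boundary irreducible (as $K$ has genus one, hence is nontrivial, and both $V,W$ are irreducible), $X_K$ is hyperbolic by \cite{thurs2}; anannularity in particular rules out the torus-knot Seifert fibered case.

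With hyperbolicity in hand the remaining statements are formal. The simple and basic pairs $(R_{2,3},K)$, $(R_{3,1},K)$ are minimal by Lemma~\ref{prim-2}(1), and the index $1$ annular pair $(R_{1,2},K)$ is minimal by Proposition~\ref{annmin}(1); since all complementary pairs of $\{T_1,T_2,T_3\}$ are minimal, Proposition~\ref{mainp}(1) shows this collection is maximal. By Lemma~\ref{ann-powers}(6) the pair $(R_{1,3},K)=(R_{1,2}\cup R_{2,3},K)$ is annular of index $p\ge 2$, hence an exchange pair, and it is not a handlebody by Lemma~\ref{genprop}(P3) since it contains the nonhandlebody $R_{1,2}=W$. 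Thus $X_K$ has a nonhandlebody exchange region. The size of the maximal collection is $3$ when $(R_{3,1},K)$ is nontrivial and $2$ when it is trivial (then $T_3$ is parallel to $T_1$, $V$ is a simple pair, and the exchange region is $R_{1,1}$); by Lemma~\ref{baspa2} this dichotomy is governed by the parameter $m$ of the basic curves, i.e.\ by the number $|K\cap\partial D|\ge 4$, so both sizes are realized by suitable choices of $K$.

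Finally I would produce infinitely many inequivalent examples. The construction is parametrized by $L$, the nonintegral slope $r=a/p$ with $p\ge 2$, the cable $L'$, the excellent arc $e$, and the separating curve $K$; holding the rest fixed and varying $p$ yields exchange regions of pairwise distinct indices $p$, an invariant of $X_K$, so the resulting knots are pairwise non-isotopic (alternatively one may vary $L$ over infinitely many knots or Dehn twist $K$). I expect the genuine obstacle to be the hyperbolicity step: because $W$ is not a handlebody, the companion-annulus bookkeeping cannot be read off from a Heegaard picture as in Sections~\ref{classif}--\ref{82}, and one must instead extract atoroidality and anannularity from the excellence of $e$ and the uniqueness of $A$ in $W$, which is exactly what the Myers arc in (I)(ii) was introduced to guarantee.
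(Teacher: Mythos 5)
Your proposal follows the paper's own proof in all essentials: the same structural bookkeeping ($(R_{1,2},K)=(W,K)$ annular of index $1$ because every incompressible annulus in $W$ is isotopic to the spanning annulus $A$; the induced torus $T_3$ making $(R_{2,3},K)$ simple of index $p$; the exchange pair $(R_{1,3},K)$ of index $p$ which is not a handlebody), the same hyperbolicity argument (no slope on $T_1\cup T_2$ has companion annuli on both sides, since the only essential annulus in $W$ is spanning, followed by \cite[Lemma 8.1]{valdez14} exactly as in Proposition~\ref{K4}), the same maximality argument (minimality of all complementary pairs plus Proposition~\ref{mainp}(1)), and the same $|\mT|=2,3$ dichotomy governed by triviality of the basic pair $(R_{3,1},K)$.

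Two points need repair, though neither is fatal. First, your pre-hyperbolicity identification of $(R_{3,1},K)$ as basic is under-justified: \S\ref{oper}(3) yields only that $R_{3,1}$ is a handlebody in which the power circle $\omega_3\subset T_3$ of the simple pair becomes primitive. To conclude ``basic'' you also need a circle on $T_1$ that is primitive \emph{in $R_{3,1}$} (primitivity of $\partial_1A$ in $V$ does not transfer to the sub-handlebody without an argument) and a disk in $R_{3,1}$ separating the two circles, which here must be manufactured from the cocore disk $D$ of $N(e)$ after cutting along $T_3$. This is precisely the content of Lemma~\ref{koz1-b}(2)(b). Your stated reason for avoiding that lemma --- that it presupposes hyperbolicity --- buys you nothing, because you only ever use basic-ness \emph{after} hyperbolicity is established (for minimality via Lemma~\ref{prim-2}(1) and for the size dichotomy); so you can invoke Lemma~\ref{koz1-b} at that point with no circularity, which is exactly what the paper does. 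Second, inside your hyperbolicity argument the appeal to Lemma~\ref{ann-powers}(3),(4) is itself circular, since that lemma is stated for hyperbolic knots; but it is also unnecessary: once no slope on $T_1$ or $T_2$ has a companion annulus in $W$ at all, no slope can have companion annuli on both sides, which is all that \cite[Lemma 8.1]{valdez14} requires. Delete that citation and your argument stands as the paper's does.
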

\begin{proof}
By (iii) and Lemma~\ref{Rpair}(2) $(R_{1,2},K)$ and $(R_{2,1},K)$ are pairs. Since the region $R_{1,2}$ contains the spanning annulus $A$, which is unique by (M2), the pair $(R_{1,2},K)$ is annular of index 1.

By (I)(iv) the circle $\partial_2A=\alpha\subset T_2$ is a power $p$-circle in $R_{2,1}$ with companion solid torus constructed out of $N(L)\subset R_{2,1}$, and we set  $T_3\subset R_{2,1}$ as the $K$-torus induced by $\alpha$ so that $(R_{2,3},K)$ is a simple pair of index $p\geq 2$.
Therefore we have that 
\begin{itemize}
\item 
the pair $(R_{1,3},K)$ is an exchange pair, 

\item
$R_{1,3}$ is not a handlebody by \S\ref{oper}(3), 

\item
the knot $K\subset\mS^3$ is hyperbolic by the argument used in Proposition~\ref{K4},

\item
$(R_{3,1},K)$ is a basic pair by
Lemma~\ref{koz1-b}.
\end{itemize}

If the pair $(R_{3,1},K)$ is trivial then $T_1$ and $T_3$ are parallel in $X_K$ and so by Proposition~\ref{mainp}(1) $\mT=T_1\sqcup T_2\subset X_K$ is a maximal simplicial collection of Seifert tori bounded by $K$. Otherwise $\mT=T_1\sqcup T_2\sqcup T_3\subset X_K$ is such a maximal simplicial collection. 

Knots with a maximal simplicial collection of size $|\mT|=3$ can be constructed by choosing $K\subset\partial V$ to follow the pattern of the knot $J(p,q)\subset\partial H$ with $q=1$ in Fig.~\ref{k04}, bottom, so that
$(V,K,\partial_1A,\partial_2A,\partial D)=(H,J(p,q),\beta_q,\alpha_p,D)$.
By the argument in the proof of Proposition~\ref{Khyper}
the pair $(R_{3,1},K)$ is then homeomorphic to the basic hyperbolic pair $(H,J(1,1))$. 

By Lemma~\ref{baspa2} replacing $J(p,q)\subset H$ in \S\ref{ht4} with a similar circle constructed with parameters $m\geq 2$ and $n=1$ then turns $(H,J(1,1))$ into a simple pair of index $m$. 

Finally, to obtain a maximal simplicial collection of size $|\mT|=2$ it suffices to choose $K\subset\partial V$ with $|K\cap\partial D|=4$ in which case the pair $R_{2,1}$ is simple of index $p$ as in Fig.~\ref{m02}, top left.
\end{proof}

\begin{proof}[Proof of Theorem~\ref{main1} part (3):]
The claim follows from Propositions~\ref{K4}, \ref{K3} and \ref{K3-nh}.
\end{proof}

\bibliographystyle{amsplain}

\providecommand{\bysame}{\leavevmode\hbox to3em{\hrulefill}\thinspace}
\providecommand{\MR}{\relax\ifhmode\unskip\space\fi MR }
\providecommand{\MRhref}[2]{%
  \href{http://www.ams.org/mathscinet-getitem?mr=#1}{#2}
}
\providecommand{\href}[2]{#2}


\end{document}